\newtheorem{theorem}{Theorem}[section]
\newtheorem{lemma}[theorem]{Lemma}
\newtheorem{prop}[theorem]{Proposition}
\newtheorem{proposition}[theorem]{Proposition}
\newtheorem{corollary}[theorem]{Corollary}
\newtheorem{problem}[theorem]{Problem} 
\theoremstyle{definition}
\newtheorem{definition}[theorem]{Definition}
\newtheorem{question}[theorem]{Question}
\newtheorem{example}[theorem]{Example}
\theoremstyle{remark}
\newtheorem{remark}[theorem]{Remark}
\newcommand{\bu}{\bullet}
\newcommand{\ri}{\mathcal U}
\newcommand{\za}{\textnormal{Cent}_{\mathbb R}(A)}
\newcommand{\zaa}{\textnormal{Cent}^+_{\mathbb R}(A)}
\newcommand{\tr}{\textnormal{tr}}
\numberwithin{equation}{section}
\begin{document}
%\title{Path methods in symbolic dynamics}
\title{Path methods for strong shift equivalence
of positive matrices}
\author{Mike Boyle}
\address{Mike Boyle\\
Department of Mathematics \\
University of Maryland\\
College Park, MD 20742-4015, U.S.A.}
\email{mmb@math.umd.edu}
\author{K. H. Kim}
\address{K. H. Kim  (deceased)\\
Mathematics Research Group\\
Alabama State University, Montgomery, AL 36101-0271, U.S.A.\\
and Korean Academy of Science and Technology}
\email{khkim@alasu.edu}
\author{F. W. Roush}
\address{F. W. Roush\\
Mathematics Research Group\\
Alabama State University, Montgomery, AL 36101-0271, U.S.A.}
\email{roushf3@hotmail.com}

\thanks{The authors thank Brendan Berg and Sompong Chuysurichay, 
for careful readings which removed some errata and otherwise 
improved the writing, and  also thank 
Richard Brualdi, for  Example \ref{brualdiexample} .} 

\date{}

\subjclass[2010]{Primary 37B10, 15B48}
\keywords{shift equivalence, path methods, positive matrices}
 
%\thanks{*The authors were partially supported by  NSF Grants
%JC8HIS-JEYNPLDMS 9024813 and DMS 9405004.}

\begin{abstract} 
In the early 1990's, Kim and Roush developed path methods 
for establishing strong shift equivalence (SSE) of positive 
matrices over a dense subring $\mathcal U$ of $\mathbb R$. 
This paper gives a detailed, unified and generalized 
presentation of these path methods.  
New arguments which address arbitrary dense subrings $\mathcal U$ 
of $\mathbb R$ are 
used to show that for any dense subring $\mathcal U$ of $\mathbb R$, 
positive matrices over $\mathcal U$ which have just 
one nonzero eigenvalue and which are strong shift equivalent over $\mathcal U$ must 
be strong shift equivalent over $\mathcal U_+$. In addition, we show 
matrices on a path of positive shift equivalent real matrices 
are SSE over $\mathbb R_+$;  positive rational matrices 
which are SSE over $\mathbb R_+$ must be 
SSE over $\mathbb Q_+$; and for any  dense subring $\mathcal U$ of 
$\mathbb R$, within the set of positive matrices over $\mathcal U$ 
which are conjugate over $\mathcal U$ to a given matrix, there 
are only finitely many SSE-$\mathcal U_+$ classes. 
\end{abstract}
\maketitle
\tableofcontents

\section{Introduction}

% Strong shift equivalence (SSE) over a semiring $\mathcal S$ 
%is the equivalence relation on square matrices over $\mathcal S$ 
%which is the transitive closure of the relation on square matrices over 
%$\mathcal S$ that 
%$A\tilde B$ if there 

The classification problem for shifts of finite type (SFTs)
 remains a central open problem 
for symbolic dynamics.  
In the foundational work of Williams \cite{Wi,Wi2}
over forty years ago, 
the problem was recast as the following question: 
when are two matrices strong shift equivalent (SSE) over 
$\mathbb Z_+$? 
(The definition of SSE for matrices over a semiring is recalled below 
in Definition \ref{ssedefn}.
In this paper, rings and semirings are always assumed to contain 
1.)
Since then, SSE (involving other semirings) has been used for 
classification of other symbolic dynamical systems: for example, 
SFTs with Markov measure \cite{MT}, using matrices over Laurent 
polynomials; SFTs with a finite group action 
\cite{FN},
%\cite[Sec. 3.2]{FN},
using matrices over the integral group ring of a finite group
\cite{BS}; and  
sofic shifts \cite{BK1,HN,KR1}, using a more complicated ring. 
Matsumoto has extended the ideas of SSE to a 
classification  setting for arbitrary subshifts \cite{Mat1,Mat2}. 

The original, notoriously difficult question of Williams 
for $\mathbb Z_+$ remains unanswered, 
%--it is not even know if 
%there exists a decision procedure for determining when 
%two matrices are SSE over $\mathbb Z_+$--
and this is also 
a barrier to understanding the other classifications. 
One probe into this problem is to consider SSE over $\mathcal U_+$
for primitive matrices over a dense subring $\mathcal U$ 
of $\mathbb R$. 

%The essential case to address is the case of primitive 
%matrices (nonnegative matrices with some power positive). 

%It is not impossible that every SSE-$\mathbb Z$  classes of 
%primitive matrices over $\mathbb Z$ 
%can intersect only finitely many SSE-$\mathbb Z_+$  classes, 
%and that the finite refinement will come from obstructions 
%related to the counterexample method. For the study of 
%SSE-$\mathcal U_+$ for dense subrings $\mathcal U$ of $\mathbb R$, 
%the first case to study is the case of positive matrices. 

Over a series of papers \cite{KR2,KR3,KR4,KR5} ending in 1992, 
Kim and Roush introduced  path methods for 
the study of strong shift equivalence of positive matrices over 
the reals and certain subrings of it.
One highlight of this work 
was the following theorem. For $\mathcal U $ 
any subfield of $\mathbb R$,  if $A$ and $B$ are 
square matrices over $\mathcal U_+$ which have  
eventual rank 1 (all large powers have rank 1) 
and have  the same 
nonzero eigenvalue, then $A$ and 
$B$ are SSE over $\mathcal U_+$. 
(As in Remark \ref{oneremark},  
there are in a sense no general results 
for greater eventual rank.) 
%This theorem can be appreciated in 
%contrast to the xxxxxx 
%
%To appreciate this theorem, 
%consider the following. 
%This result  is not known for 
%$\mathcal U=\mathbb Z$.  Moreover, for every subring $\mathcal U$ of $\mathbb R$
%and for every primitive matrix $A$ over $\mathcal U$ with 
%eventual rank greater than one, it is unknown if the 
%problem of determining  whether a given matrix  over 
%$\mathcal U$ is SSE over $\mathcal U_+$ to $A$ is decidable. 
%One result of the current paper is to extend this  
%theorem to arbitrary dense subrings of $\mathbb R$, 
%under the additional necessary condition that $A$ and $B$ 
%are SSE over the ring $\mathcal U$. 
%The necessary  condition is automatically satisfied 
%for eventually rank one matrices 
%when $\mathcal U$ is a field 
%(such matrices are SSE over $\mathcal U$ to 
%$1\times 1$ matrices; see Theorem \ref{dedekindtheorem})
% but not 
% for general $\mathcal U$
% (For basic background on shift equivalence, 
%see --- .) 
%For a general subring of $\mathbb U$ of $\mathbb R$, 
%it is not known 
%whether shift equivalence of matrices 
%over $\mathcal U$ implies their strong 
%shift equivalence  over   $\mathbb U$, 
%even when the matrices have eventual rank one. 
%and have  the same 
%nonzero eigenvalue, then $A$ and 
%$B$ are SSE over $\mathcal U_+$.
We extend this  
theorem to arbitrary dense subrings of $\mathbb R$, 
under the additional necessary condition that $A$ and $B$ 
are SSE over the ring $\mathcal U$. 
An additional  condition cannot be avoided: 
for general $\mathcal U$, matrices  with eventual rank 1
and the same nonzero eigenvalue 
need not be even shift equivalent over $\mathcal U$ to a 
$1\times 1$ matrix (see  Remark 
\ref{dedekindremark}). 
Whether the assumption of SSE over $\mathcal U$ 
is equivalent to the more tractable 
condition of shift equivalence over 
$\mathcal U$ remains an open question. 
%
% It is not known whether shift equivalence 
%over $\mathcal U$ (see --- for further discussion of this). 
However, our proof requires the assumption of SSE, not just 
SE, over $\mathcal U$. 

The central result of the path 
methods development was a Path Theorem for $\mathbb R$:
  matrices on a path of positive 
conjugate (similar) matrices must be SSE over $\mathbb R_+$. 
In this paper, we prove a generalized Path Theorem (\ref{paththeorem}) 
which has application to arbitrary dense subrings of $\mathbb R$.
We also show   that  matrices on a path of positive  matrices shift 
equivalent over 
$\mathbb R$ must be SSE over $\mathbb R_+$. 
This is a consequence of a more technical statement, the 
Connection Theorem (\ref{connectiontheorem}), which relies 
in turn  on a result which 
is pure linear algebra (Theorem \ref{matrixconnection}). 

One indication of the power of the path method 
comes from the corollary  due to 
 Chuysurichay (Theorem \ref{finitesse}): the set of positive matrices 
in a given  conjugacy (similarity) 
class over $\mathbb R$  contains only finitely many 
SSE-$\mathbb R_+$ classes. 
This holds even though 
(as shown by  Chuysurichay) 
it may be impossible 
to connect matrices in the class with SSEs with 
uniformly bounded lags  
(see Remark 
\ref{unboundedlag}).
Using our Path Theorem, we generalize this finiteness result 
 to arbitrary dense 
subrings of $\mathbb R$ (Theorem \ref{finitesseforu}). 

Using  the Connection Theorem, we are able to show that 
positive real matrices SSE over $\mathbb R_+$ are SSE 
 over $\mathbb R_+$ through positive matrices (Theorem 
\ref{positivetheorem}).  
As a consequence, 
primitive positive trace matrices 
over  a subfield  $\mathcal U$ of  $\mathbb R$
which 
are SSE over 
 $\mathbb R_+$ must also be  SSE over 
 $\mathcal U_+$ (Theorem  \ref{field}). 

Altogether,  for positive matrices SSE over a dense subring 
$\mathcal U$ of $\mathbb R$, 
 the current paper reduces the gap between 
 SSE over $\mathbb R_+$ and 
 SSE over $\mathcal U_+$, and provides further evidence 
for the utility of investigating SSE 
of positive matrices over $\mathbb R_+$. 
This is a problem to which more standard mathematics 
(e.g. fiber bundles, linear algebra) can be applied, 
as seen in \cite{KR1,KR2,KR3,KR4,KR5} and the current paper.
So, we suggest splitting into three parts 
the problem of understanding when two positive matrices 
over a dense subring $\mathcal U$ 
of $\mathbb R$ are SSE over $\mathcal U_+$: 
\begin{enumerate} 
\item 
Assuming $A$ and $B$  SSE over $\mathbb R$, 
prove they are SSE over $\mathbb R_+$. 
\item 
Assuming $A$ and $B$ SSE over $\mathbb R_+$, 
determine whether they are SSE over $\mathcal U_+$. 
\item 
Understand the refinement of SE over $\mathcal U$ 
by SSE over $\mathcal U$. 
\end{enumerate} 

We now say a little about the organization 
 of the paper.  

In Section \ref{elemsec}, we explain the decomposition 
of SSE into row splittings, column splittings and 
diagonal refactorizations, and provide 
some basic  technical results  
essential for the sequel. The results of this section 
hold over quite general rings, and refine the basic 
Williams theory. 
 
In Section \ref{fromsse}, given  positive matrices 
$A,B$ which are SSE over $\mathcal U$, we 
produce positive matrices $A',B'$ which are conjugate 
over $\mathcal U$ such that 
$A$ is SSE-$\mathcal U_+$ to $A'$ and 
$B$ is SSE-$\mathcal U_+$ to $B'$. 
(This is the step for which we need matrices 
SSE-$\mathcal U$, not just SE-$\mathcal U$.) 

In Section \ref{centralizersec}, we study 
$\textnormal{Cent}_{\mathbb R}(A)$, the group of invertible 
real matrices which commute with a given $n\times n$ real 
matrix $A$, and its group of connected components, 
$\pi_0( \textnormal{Cent}_{\mathbb R}(A) )$.  
This group plays a key role in the formulation of 
obstructions to applying the Path Theorem 
to produce SSE-$\mathcal U_+$. 

In Section \ref{frompathssec}, we prove the Path Theorem \ref{paththeorem} 
and some consequences. In Section \ref{findingsec}, we prove the 
eventually rank 1 results. 
In Section \ref{connectionsec}, we prove the Connection Theorem. 
A large part of the proof is an independent result in linear 
algebra, which we relegate to Appendix \ref{connectionappendix}. 
In Section \ref{sseoverplus}, we 
prove in particular that rational matrices SSE over $\mathbb R_+$ 
must be SSE over $\mathbb Q_+$. This is some supporting evidence 
for the conjecture \cite[Conj. 5.1]{B}
that positive rational matrices 
shift equivalent over $\mathbb Q_+$ are SSE over $\mathbb Q_+$. 

This paper is  entirely devoted to matrices. 
For background on shifts of finite type and symbolic dynamics, 
see \cite{Ki,LM}.

\section{Elementary splitting and strong shift equivalence }\label{elemsec}

Bob Williams introduced shift equivalence and strong shift equivalence
in his  paper \cite{Wi}, which is the foundation of all 
future work on the topic. One of the fundamental contributions was 
a decomposition of an elementary strong shift equivalence using 
even more fundamental relations, splittings and amalgamations. 
 In \cite{Wi}, 
 Williams considered 
matrices over $\mathbb Z_+$ and $\{0,1\}$.  For our work with 
unital nondiscrete subrings of $\mathbb R$, we need  some 
refinements to this work.

\begin{definition} \label{ssedefn}
Let $\mathcal U$ be a subset of a semiring containing 
0 and 1 (additive and multiplicative identities). 
Matrices $A,B$ are 
{\it elementary strong shift equivalent over $\mathcal U$}
(ESSE-$\mathcal U$) if there exist matrices $R,S$ over $\mathcal U$ 
such that $A=RS$ and $B=SR$. 
 Matrices $A,B$ are {\it strong shift equivalent over $\mathcal U$}  
(SSE-$\mathcal U$) if there exist matrices $A_0, A_1, \dots , A_{\ell}$, 
and for $1\leq i \leq \ell$ matrices $R_i,S_i$ over $\mathcal U$ such 
that $A_{i-1}=R_iS_i$ and $A_i=S_iR_i$, with $A_0=A$ and $A_{\ell}=B$. 
In this case the string $(R_i,S_i)$, $1\leq i \leq \ell$, is 
{\it a strong shift equivalence of lag $\ell$} from $A$ to $B$. 
\end{definition} 

Although we do not use shift equivalence before Section \ref{connectionsec}, 
to clarify ideas we recall its basic features now. 

\begin{definition} \label{sedefn} 
Let $\mathcal U$ be a subset of a semiring containing 
0 and 1 (additive and multiplicative identities). 
Matrices $A,B$ are 
{\it  shift equivalent over $\mathcal U$}
(SE-$\mathcal U$) if there exist matrices $R,S$ over $\mathcal U$ 
and $\ell \in \mathbb N$ such that the following hold: 
\begin{align*} 
A^{\ell} =RS\ \qquad \qquad B^{\ell} &=SR \\ 
AR = RB  \qquad \qquad BS &=SA \ . 
\end{align*}
\end{definition} 

Always, SE-$\mathcal U$ implies SSE-$\mathcal U$. 
The converse is true if $\mathcal U$ is a Dedekind domain \cite{BH2}
(e.g., a field or $\mathbb Z$, \cite{E,Wi2}). 
For primitive matrices $A,B$ over a subring of $\mathbb R$:  
$A,B$ are SE-$\mathcal U$ if and only if  
 $A,B$ are SE-$\mathcal U_+$ . 
Over $\mathcal U$ a subfield of $\mathbb R$, matrices are shift equivalent 
if and only if the nonsingular parts of their Jordan forms are the same. 
There is a ``conceptual'' version of shift equivalence, in terms of 
isomorphism of associated dimension modules.

Williams asked whether the relatively tractable relation 
SE-$\mathbb Z_+$ implies 
SSE-$\mathbb Z_+$.   
Working in the framework of Wagoner's 
algebraic topological framework the classification 
problem \cite{Wa}, Kim and Roush gave examples of 
primitive matrices over $\mathbb Z$ 
which are SSE over $\mathbb Z$ (equivalently, shift 
equivalent over $\mathbb Z$) but not SSE over 
$\mathbb Z_+$ \cite{KR6}. There are also 
examples of positive matrices over a dense subring $\mathcal U$ 
of $\mathbb R$ which are SSE over $\mathcal U$ but are not 
SSE over $\mathcal U_+$ 
(see Remark \ref{z1overpremark}). A feature of Wagoner's 
framework is that it is built up out of elementary SSEs, 
not out of SE. We will see the same feature in the 
proof of Theorem \ref{ssetosim}. For a ring $\mathcal U$, 
 can be useful to 
take SSE-$\mathcal U$ as a hypothesis, and leave the 
question of whether SE-$\mathcal U$ implies 
SSE-$\mathcal U$  as a separate issue. 

We turn away now from shift equivalence, until Section 
\ref{connectionsec}. 
For a ring $\mathcal U$, 
\begin{definition} 
%\cite{Wi} 
An {\it amalgamation matrix} is a matrix with entries from $\{0,1\}$  such that every row has exactly 
one 1 and every column has at least one 1. A {\it subdivision matrix} is the transpose of an amalgamation matrix. 
\end{definition} 

\begin{definition} 
An {\it elementary row splitting} is an 
elementary strong shift equivalence 
$UX=A,XU=C$ in which $U$ is a subdivision matrix. 
In this case, $C$ is an {\it elementary row splitting
of $A$}, and $A$ is an {\it elementary row amalgamation of $C$}. 
\end{definition} 

\begin{definition} 
An {\it elementary column splitting} is an 
elementary strong shift equivalence 
$XV=A,VX=C$ in which $V$ is an amalgamation matrix. 
In this case, $C$ is an {\it elementary column splitting
of $A$}, and $A$ is an {\it elementary column amalgamation of $C$}. 
\end{definition} 

 Given $a_1+a_2+a_3=a,\  b_1+b_2+b_3=b,\ c_1+c_2=c,\ d_1+d_2=d$,  
here is an elementary row splitting $C$ of $A$: 
\begin{align*} 
UX = 
\begin{pmatrix} 
1 & 1 & 1& 0 & 0 \\
0 & 0 & 0 & 1 &1
\end{pmatrix} 
\begin{pmatrix} 
a_1 & b_1 \\
a_2 & b_2 \\
a_3 & b_3 \\
c_1 & d_1 \\
c_2 & d_2
\end{pmatrix} 
&=
\begin{pmatrix} a &b \\ c & d 
\end{pmatrix} 
= A \\ 
XU = 
\begin{pmatrix} 
a_1 & b_1 \\
a_2 & b_2 \\
a_3 & b_3 \\
c_1 & d_1 \\
c_2 & d_2
\end{pmatrix} 
\begin{pmatrix} 
1 & 1 & 1& 0 & 0 \\
0 & 0 & 0 & 1 &1
\end{pmatrix} 
& =  
\begin{pmatrix} 
a_1 &a_1 &a_1 & b_1& b_1 \\
a_2 &a_2 &a_2 & b_2& b_2 \\
a_3 &a_3 &a_3 & b_3& b_3 \\
c_1 &c_1 &c_1 & d_1& d_1 \\
c_2 &c_2 &c_2 & d_2& d_2
\end{pmatrix} = C 
\end{align*}
For an elementary row splitting, rows of $A$ are split as sums 
of rows (as described by $X$), and then columns of $X$ are ``copied''
in such a way that indices of rows in $C$ with the same ``parent'' row in $A$ 
have equal columns in $C$. We say a row in $C$ 
in $A$ is {\it sitting above} its parent row.

Similarly, here is an example of an elementary column splitting. 
\begin{align*} 
YV= & 
\begin{pmatrix} 
-3.4&1.4&5 \\
\pi & -\pi &6
\end{pmatrix} 
\begin{pmatrix} 
1&0\\1&0\\0&1 
\end{pmatrix} 
= 
\begin{pmatrix} 
-2& 5 \\ 
0 & 6 
\end{pmatrix} 
= A \\ 
VY = & 
\begin{pmatrix} 
1&0\\1&0\\0&1 
\end{pmatrix} 
\begin{pmatrix} 
-3.4&1.4&5 \\
\pi & -\pi &6
\end{pmatrix} 
= 
\begin{pmatrix} 
-3.4&1.4&5 \\
-3.4&1.4&5 \\
\pi & -\pi &6
\end{pmatrix} 
\end{align*}
Here, columns 1 and 2 of $VY$ are sitting above column 1 of $A$ in 
the column splitting. 

\begin{definition} 
A matrix is {\it nondegenerate } if it has no zero row and 
it has no zero column. 
\end{definition} 

\begin{definition}  A {\it diagonal refactorization over a semiring $\mathcal U$}
 is an elementary 
strong shift equivalence over $\mathcal S$ of the form $A=DX$, $B=XD$, 
where $D$ is nondegenerate 
diagonal over $S$. In this case, $A$ is a 
{\it diagonal refactorization of $B$} 
(and vice versa). 
\end{definition}

We now recall the canonical factorization of a nondegenerate matrix 
introduced by Williams {\cite{Wi}).

Suppose $M$ is a nondegenerate matrix over a semiring $\mathcal U$ 
containing $\{0,1\}$, 
with rows indexed by the set $\mathcal I$ and columns indexed by the 
set $\mathcal J$. 
Let $\mathcal E$ be the set of pairs $(i,j)$ such that $M(i,j)\neq 0$.  
Let $U_M$ be the $\mathcal I \times \mathcal E$
subdivision matrix such that $U_M(i',(i,j))=1$ iff $i'=i$. 
Let $V_M$ be the $\mathcal E \times \mathcal J$ amalgamation matrix such that 
$V_M((i,j),j')=1$ iff $j=j'$. Let $D_M$ be the $\mathcal E \times \mathcal E$ 
diagonal matrix such that $D_M((i,j),(i,j))= M(i,j)$. 
Then 
\[
M = U_MD_MV_M \ .
\] 
Because $M$ is nondegenerate, $U_M$ and $V_M$ are defined (e.g., given $i$
there is at least one $j$ such that $D(i,j)\neq 0$, so row $i$ of $U_M$ has 
at least one 1), and $D$ has nonzero diagonal entries.   

There is a graphical interpretation of the factorization 
$M = U_MD_MV_M $. The set $\mathcal E$ can be viewed as the set of 
edges of a directed graph, in which there is an edge from $i$ to $j$
if $M(i,j)$ is nonzero. The matrices $U_M$ and $V_M$ attach (respectively)
initial and terminal vertices to edges, and $D_M$ records the entry of 
$M$ labeling the edge. 

\begin{definition}
We call the factorization $M = U_MD_MV_M$ above 
of a nondegenerate matrix $M$  
the {\it Williams factorization of } $M$. It is well defined up to 
the choice of ordering of indices used for $D_M$ (and thus $U_M$ and 
$V_M$). 
\end{definition} 
If $M$ is a nondegenerate matrix and $M=UDV$ with 
$U$ subdivision, $D$ nondegenerate diagonal and $V$ amalgamation, 
then $M=UDV$ must be the Williams factorization described above. 

%If $M$ is degnerate and 
%$M = UDV$ with $U$ subdivision, $D$ 
%diagonal and $V$ amalgamation, 
%then $D$ must be degenerate. 
We may avoid the complications of defining a  factorization 
for degenerate matrices, on account of the following proposition.

\begin{proposition} \label{nondegeneratesseprop} 
Suppose $\ri$ is a ring which is torsion free as an additive group. 
Suppose nondegenerate matrices $A$ and $B$ are SSE over $\ri$. 
Then they are SSE through a chain of ESSEs 
$A_{i-1} =R_iS_i, A_{i+1}=S_iR_i$ such that all the matrices 
$A_i, R_i,S_i $ are nondegenerate. 
\end{proposition}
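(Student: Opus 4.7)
The plan is first to observe that in any ESSE $A_{i-1} = R_iS_i$, $A_i = S_iR_i$, a zero row of $R_i$ produces a zero row of $A_{i-1}$, a zero column of $R_i$ produces a zero column of $A_i$, and symmetric statements hold for $S_i$. Thus if every $A_j$ in the chain is nondegenerate, then every $R_j$ and every $S_j$ is automatically nondegenerate. So the proposition reduces to producing a chain in which every intermediate $A_i$ is nondegenerate.

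I would then induct on the total number of zero rows and columns occurring across the intermediate matrices $A_1,\dots,A_{\ell-1}$. If some intermediate $A_i$ has a zero row $r$, then both $S_i(r,\cdot) R_i = 0$ and $R_{i+1}(r,\cdot) S_{i+1} = 0$. In the favourable case where both $S_i(r,\cdot) = 0$ and $R_{i+1}(r,\cdot) = 0$, I would simultaneously delete row $r$ of $S_i$ and of $R_{i+1}$ and column $r$ of $R_i$ and of $S_{i+1}$. Since the deleted rows and columns are zero, this leaves $A_{i-1}$ and $A_{i+1}$ unchanged and replaces $A_i$ by its submatrix obtained by deleting row and column $r$, strictly decreasing the degeneracy count.

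The harder case is when $S_i(r,\cdot)$ (or $R_{i+1}(r,\cdot)$) is nonzero yet its product with $R_i$ (resp.\ $S_{i+1}$) vanishes: the zero row of $A_i$ arises from a linear cancellation rather than a vanishing factor row. Here the torsion-free hypothesis is essential. Since $\ri$ is torsion-free as an additive group, it embeds in $\ri\otimes_{\mathbb Z}\mathbb Q$, where the usual tools of linear algebra (ranks, kernels, invertible matrices) apply. I would choose an invertible matrix $Q$ acting on the inner dimension between $R_i$ and $S_i$, producing new factors $R_iQ$, $Q^{-1}S_i$ that preserve the product $A_{i-1}$ while conjugating $A_i$ to $Q^{-1}A_iQ$. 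A compensating change $R_{i+1}\mapsto Q^{-1}R_{i+1}$, $S_{i+1}\mapsto S_{i+1}Q$ preserves $A_{i+1}$. The matrix $Q$ is chosen so that $Q^{-1}A_iQ$ is either outright nondegenerate, or at least falls into the favourable case treated above. In the extreme situation $A_i = 0$, conjugation is useless (all conjugates vanish), and one instead bypasses $A_i$ entirely by composing the two surrounding ESSEs into a single direct ESSE $A_{i-1}\to A_{i+1}$, which is possible because $A_i=0$ forces $A_{i-1}$ and $A_{i+1}$ to be nilpotent of index two.

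The main obstacle will be ensuring the change of basis $Q$ lies in $GL(\ri)$ rather than merely in $GL(\ri\otimes\mathbb Q)$. This is precisely where torsion-freeness is invoked: it permits clearing denominators in a controlled way, possibly at the cost of enlarging the inner dimension or inserting extra diagonal refactorization or elementary row/column splitting steps into the chain. A secondary subtlety is bookkeeping, to make sure that modifying the ESSEs on both sides of $A_i$ to eliminate one degeneracy does not introduce new ones at $A_{i-1}$ or $A_{i+1}$; this is manageable because the hypothesis that the endpoints $A_0=A$ and $A_\ell=B$ are nondegenerate propagates inward through each successful reduction.
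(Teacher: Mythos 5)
Your opening observation---that nondegeneracy of every $A_i$ automatically forces nondegeneracy of every $R_i$ and $S_i$---is correct and is exactly what the paper implicitly uses, so the reduction to fixing the $A_i$ is sound. Your high-level strategy for the nonzero-but-degenerate case (conjugate $A_i$ by an invertible $Q$, compensating in the ESSEs on both sides) is also the same as the paper's. But two of the crucial steps are asserted rather than proved, and one of them looks wrong as stated.

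First, the case $A_i=0$. You claim one can ``bypass $A_i$ entirely by composing the two surrounding ESSEs into a single direct ESSE $A_{i-1}\to A_{i+1}$.'' It is true that $A_i=S_iR_i=0$ forces $A_{i-1}^2 = R_i(S_iR_i)S_i = 0$, and likewise $A_{i+1}^2=0$, but that does not produce a single elementary strong shift equivalence between $A_{i-1}$ and $A_{i+1}$. The obvious candidate $R=R_iR_{i+1}$, $S=S_{i+1}S_i$ gives $RS = R_iR_{i+1}S_{i+1}S_i = R_i\cdot 0\cdot S_i = 0 \ne A_{i-1}$ in general, and no other construction is offered. The paper's Lemma \ref{onezerotonozero} replaces the two-step chain through $0$ by a carefully built chain of \emph{six} ESSEs through nonzero matrices, and this length is not incidental: collapsing to a single ESSE would strictly shorten the lag, which is generally impossible. (The paper also has to precede this by Lemma \ref{longzerotoonozero} to merge consecutive zero blocks so that zero $A_i$'s are isolated.)

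Second, the ``harder case.'' You posit an invertible $Q$ making $Q^{-1}A_iQ$ nondegenerate, but give no construction and flag the membership $Q\in GL(\ri)$ as the obstacle, to be handled by ``clearing denominators'' and possibly inserting extra splitting steps. The paper's Lemma \ref{nozerotonondegenerate} makes this concrete and sidesteps the ring-theoretic worry entirely: it produces $V\in SL(n,\mathbb Z)$ with $V^{-1}A_iV$ nondegenerate, and since $\ri$ is a unital ring, $SL(n,\mathbb Z)\subset GL(n,\ri)$ automatically, with no denominators to clear. The torsion-free hypothesis enters in a more elementary way than passing to $\ri\otimes_{\mathbb Z}\mathbb Q$: it guarantees, during the explicit construction, that one can choose an integer $M$ so that $A(i,1)\ne MA(i,n)$ whenever $A(i,1)\ne 0$, i.e., that integer multiples of a nonzero ring element do not accidentally collide. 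Without some explicit construction like this, your outline does not establish that the needed $Q$ exists.

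So the proposal identifies the right reduction and the right general mechanism (conjugation) for the generic case, but the $A_i=0$ case is handled incorrectly, and the existence of the conjugator in $GL(\ri)$ is asserted rather than proved.
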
 

The proof of Proposition \ref{nondegeneratesseprop} is a digression, 
and we give it in Appendix \ref{nondegenerateappendix}.  

%\begin{definition}
%An elementary strong shift equivalence $A=RS,B=SR$ is {\it nondegenerate} 
%if the matrices $R$ and $S$ are nondegenerate (have no zero rows or columns). 
%($A$ and $B$ are not required to be nondegenerate.) 
%\end{definition} 

\begin{proposition} 
\label{decompprop}
Suppose $A=RS, B=SR$ 
is an elementary strong shift equivalence  
over a semiring $\mathcal U$ containing $\{0,1\}$; 
$\mathcal U$ has no zero divisors; and the matrices 
$A$, $B$ are nondegenerate.   
Then there are nondegenerate matrices $C_1,C_2,D$ 
over $\mathcal U$ such that 
$D$ is diagonal and 
\begin{enumerate} 
\item 
$C_1$ is an elementary row splitting of $A$ 
\item 
There is a matrix $X$ over $\mathcal U$ such that 
$DX=C_1$ and $XD=C_2$ \newline 
(so, $C_1$ is a diagonal refactorization of $C_2$) 
\item 
$C_2$ is an elementary column splitting of $B$. 
\end{enumerate} 
\end{proposition}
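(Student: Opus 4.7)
The plan is to produce $C_1, C_2, D, X$ by inserting the Williams factorization $R = U_R D_R V_R$ of $R$ into the two equations $A = RS$ and $B = SR$. Specifically, I would set
\[
X := V_R S U_R, \quad D := D_R, \quad C_1 := D_R X = D_R V_R S U_R, \quad C_2 := X D_R = V_R S U_R D_R.
\]
Then $U_R(D_R V_R S) = RS = A$ and $(D_R V_R S)U_R = C_1$, exhibiting $C_1$ as an elementary row splitting of $A$ realized by the subdivision matrix $U_R$; dually, $(S U_R D_R)V_R = SR = B$ and $V_R(S U_R D_R) = C_2$, exhibiting $C_2$ as an elementary column splitting of $B$ realized by the amalgamation matrix $V_R$. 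The diagonal refactorization $C_1 = DX$, $C_2 = XD$ is built into the definitions, with $D = D_R$ nondegenerate diagonal by construction of the Williams factorization. The algebra here is essentially Williams's classical decomposition, the only new bookkeeping being to carry along the diagonal piece $D_R$ that was invisible in the $\{0,1\}$ setting of \cite{Wi}.

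The remaining task, and the main obstacle, is verifying nondegeneracy of $X$, $C_1$, $C_2$. Indexing the rows and columns of $X$ by the edge set $\mathcal{E}(R) = \{(i,j) : R(i,j)\neq 0\}$, a direct computation from the definitions of $U_R$ and $V_R$ gives
\[
X\bigl((i,j),(i',j')\bigr) = S(j,i'),
\]
so the entries of $X$ are simply selected entries of $S$ (with repetition); no summation is involved, hence no cancellation can occur. Row $(i,j)$ of $X$ is nonzero provided some $S(j,i')$ is nonzero for an $i'$ that is the source of some edge of $R$; since $S$ is nondegenerate row $j$ of $S$ has a nonzero entry $S(j,i')$, and since $R$ is nondegenerate row $i'$ of $R$ is nonzero, furnishing the required edge $(i',j')$. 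The column case is symmetric. This step uses only nondegeneracy of $R$ and $S$ (available by hypothesis) and does not require the no-zero-divisor assumption.

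The no-zero-divisor hypothesis enters at the last step, to pass nondegeneracy from $X$ to $C_1 = D_R X$ and $C_2 = X D_R$: left (respectively right) multiplication by $D_R$ rescales each row (respectively column) of $X$ by a nonzero entry of $\mathcal{U}$, and in the absence of zero divisors this preserves row and column nondegeneracy. This is the only point in the argument where "no zero divisors" is needed, and it is the expected subtlety — without it, the diagonal rescaling could in principle annihilate an entire row or column of $X$.
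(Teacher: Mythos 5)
Your proof is correct and takes essentially the same approach as the paper's: both insert the Williams factorization of $R$ into $A=RS$, $B=SR$, and in fact your $X=V_RSU_R$, $C_1=D_RX$, $C_2=XD_R$ are literally the same matrices the paper writes (the paper merely also expands $S=U_SD_SV_S$, which is immaterial). Your explicit entry formula $X\bigl((i,j),(i',j')\bigr)=S(j,i')$ is a clean way to see the nondegeneracy, whereas the paper checks it by tracking zero rows/columns through the products, but the two verifications amount to the same thing.
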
 

\begin{proof} 

Using the Williams factorization above, we have 
\begin{align*}
A &= (U_RD_RV_R)\ ( U_SD_SV_S) \\ 
B &= (U_SD_SV_S)\ ( U_RD_RV_R) \ . 
\end{align*} 
Define 
\begin{align*}
C_1 &= (D_RV_RU_SD_SV_S)\  U_R  \ , \quad 
X_1 = D_RV_RU_SD_SV_S 
\\ 
C_2 &= 
V_R\ (U_SD_SV_SU_RD_R ) 
\ , \quad 
X_2 = U_SD_SV_SU_RD_R\ . 
\end{align*} 
Set $D=D_R$ and 
  $X= V_RU_SD_SV_SU_R$.  Then 
 $DX=C_1$ and $XD=C_2$, proving (2). Also, 
\begin{align*} 
A   &= 
%U_R\  (D_RV_RU_SD_SV_S) =  
 U_RX_1 
\qquad \qquad 
B = 
%U_S\ (D_SV_SU_RD_RV_R) = 
X_2 V_R
\\
C_1 &
%&= 
%(D_RV_RU_SD_SV_S)\ U_R 
=  X_1U_R 
\qquad \qquad 
C_2 
%&=  
%D_SU_SD_SV_SU_RD_R \ . 
= V_RX_2\ . 
\end{align*}
This proves (1) and (3). It remains to prove the nondegeneracy 
claims.

The matrix $D=D_R$ is nondegenerate by construction.  
The matrix $X_2$ has no zero row, because $B=X_2V_R$ has 
no zero row. The matrix $(U_SD_SV_S)$ has no zero column because 
$A$ has no zero column. Because $U_R$ is a subdivision matrix, 
the matrix $(U_SD_SV_S)U_R$ then has no zero column. Because 
there are no zero divisors, the matrix 
$(U_SD_SV_S)U_RD_R=X_2$ has no zero column. Thus $X_2$ is nondegenerate. 
Because $V_R$ is an amalgamation matrix, the matrix 
$C_2=V_RX_2$ is also nondegenerate. Similarly, $C_1$ is nondegenerate. 
This proves the proposition. 
\end{proof}

\begin{remark} \label{diagramremark} 
If (for example) $\mathcal U$ is a subring of the reals, 
then in Proposition \ref{decompprop} 
the matrix $D^{-1}$ is defined 
over $\mathbb R$ and 
the matrices $D^{-1}C_1$ and $C_2D^{-1}$ have entries in $\mathcal U$. 
Then we can summarize the proposition with a diagram 
\begin{equation} \label{elsplitdiagram} 
\xymatrix{ 
       & C_1 \ar[rr]^-{(D, D^{-1}C_1)} \ar[dl]_{(X,U)}   &  & D^{-1}C_1D \ar[dr]^{(V,Y)}  &     \\ 
A      &                                          &  &                   &  B 
}
\end{equation}
in which an arrow labelled $(J,K)$ from $M$ to $N$ represents an elementary 
strong shift equivalence $M=JK$,  $N=KJ$; $U$ is a subdivision matrix; and 
$V$  is an amalgamation matrix. $C_1$ is an elementary row splitting of 
$A$ and $D^{-1}C_1D$ is an elementary column splitting of $B$. 
\end{remark} 

For matrices over $\{0,1\}$, the next lemma is well known 
(\cite{Pa}, 
\cite[Theorem 2.1.14]{Ki}),  and can be interpreted 
as a fiber product statement.
% In the statement of the lemma, 
%we say a row is divisible by a number if each entry of the row is 
%divisible by the number. 

\begin{lemma}[Fiber Lemma]  \label{fiberlemma}
Suppose over 
a ring $\mathcal U$ there is an elementary row splitting of 
$A$ to a nondegenerate $C_1$ and an elementary column splitting of $A$ to 
a nondegenerate $C_2$. Then there is a 
nondegenerate matrix $F$ such that 
over $\mathcal U$ there is an elementary  
column splitting of $C_1$ to
$F$ and an elementary  row 
splitting of $C_2$ to $F$. 

If all entries of $C_1$ and $C_2$ are nonnegative, or  positive, entries 
in a nondiscrete unital subring $\mathcal U$ of $\mathbb R$, 
then all entries of $F$ can 
be chosen to have nonnegative, or positive, entries in $\mathcal U$. 
\end{lemma}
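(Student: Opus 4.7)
The plan is to construct $F$ as a fiber product of the two splittings, simultaneously refining both the outgoing- and incoming-edge partitions at each vertex of $A$. Write the row splitting as $A = UX$, $C_1 = XU$ with $U$ subdivision, and the column splitting as $A = YV$, $C_2 = VY$ with $V$ amalgamation. At each vertex $i$ of $A$, let $r_i$ be the number of row-partition classes (so $X(i_\alpha, j)$, $1 \leq \alpha \leq r_i$, records the $\alpha$-piece of the edge weight from $i$ to $j$) and $c_i$ the number of column-partition classes (so $Y(i, j_\delta)$, $1 \leq \delta \leq c_j$, records the $\delta$-piece of that same weight). Index the vertex set of $F$ by triples $(i, \alpha, \beta)$ with $1 \leq \alpha \leq r_i$ and $1 \leq \beta \leq c_i$.

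For each pair $(i, j)$ with $A(i, j) \neq 0$, find an $r_i \times c_j$ block $F^{(i,j)} = (f_{\alpha \delta})$ with row sums $X(i_\alpha, j)$ and column sums $Y(i, j_\delta)$ (both totalling $A(i, j)$) and entries in $\mathcal U$; for $A(i, j) = 0$, take $F^{(i,j)} = 0$. Define
\[
F((i, \alpha, \beta), (j, \gamma, \delta)) := F^{(i,j)}_{\alpha, \delta},
\]
independent of $\beta$ and $\gamma$. Then rows of $F$ sharing a parent $(i, \alpha)$ in $C_1$ are equal, and the marginal identity $\sum_\delta F^{(i,j)}_{\alpha, \delta} = X(i_\alpha, j)$ reassembles $C_1$, exhibiting $F$ as an elementary column splitting of $C_1$ via the amalgamation $(i, \alpha, \beta) \mapsto (i, \alpha)$. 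Dually, columns of $F$ with common parent $(j, \delta)$ in $C_2$ are equal and $\sum_\alpha F^{(i,j)}_{\alpha, \delta} = Y(i, j_\delta)$ makes $F$ an elementary row splitting of $C_2$ via $(j, \gamma, \delta) \mapsto (j, \delta)$. Nondegeneracy of $F$ is inherited from that of $C_1$ and $C_2$: a nonzero entry $X(i_\alpha, j)$ in $C_1$ forces some $F^{(i,j)}_{\alpha, \delta}$ to be nonzero, hence row $(i, \alpha, \beta)$ of $F$ is nonzero; columns are handled dually.

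The one substantive step is producing the blocks $F^{(i,j)}$ with entries in $\mathcal U$ or $\mathcal U_+$ from the given $\mathcal U$-valued marginals. For the nonnegative case, the Northwest-corner algorithm — greedy subtraction of the minimum of residual row and column marginals — yields a nonnegative $\mathcal U$-valued solution from $\mathcal U$-valued marginals, since all operations are subtraction and $\min$ within $\mathcal U$. For the strictly positive case with $\mathcal U$ nondiscrete, begin with the rank-one real solution $f_{\alpha \delta} = X(i_\alpha, j) \, Y(i, j_\delta) / A(i, j)$, which is strictly positive. The affine space of real solutions is a translate of the linear subspace of matrices with zero row sums and zero column sums, a subspace defined over $\mathbb Q$; since $\mathcal U$ is dense in $\mathbb R$, the $\mathcal U$-points of this kernel are dense in it, so one may perturb the rank-one solution by a small $\mathcal U$-kernel element into $\mathcal U_+$. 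This density argument in the positive-$\mathcal U$ case is the one point requiring care; the remainder is routine fiber-product bookkeeping.
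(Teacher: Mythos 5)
Your construction is the same as the paper's: the vertex set of $F$ (your triples $(i,\alpha,\beta)$) is exactly the paper's fiber product $\mathcal V=\{(i_1,i_2):\overline{i_1}=\overline{i_2}\}$, and $F((i,\alpha,\beta),(j,\gamma,\delta))=F^{(i,j)}_{\alpha,\delta}$ is the paper's $F((i_1,i_2),(j_1,j_2))=M_{\langle i,j\rangle}(i_1,j_2)$. So the bookkeeping, the verification that $F$ is a column splitting of $C_1$ and a row splitting of $C_2$, and the nondegeneracy argument all match. The only substantive difference is how you fill the transportation blocks $F^{(i,j)}$, and here there are two issues worth flagging.

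First, in the general-ring part of the lemma (no positivity), your shortcut ``for $A(i,j)=0$, take $F^{(i,j)}=0$'' is not valid: the marginals $X(i_\alpha,j)$ can be nonzero with sum zero (e.g.\ splitting $0=1+(-1)$), so the zero block would not have the required row and column sums and the amalgamations back to $C_1,C_2$ would fail. You need the free-block method (as in the paper: choose the $(m-1)\times(n-1)$ corner arbitrarily in $\mathcal U$ and fill in the last row and column) even when $A(i,j)=0$.

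Second, in the strictly positive case, perturbing the rank-one solution $f_0 = X(i_\alpha,j)Y(i,j_\delta)/A(i,j)$ by a small $\mathcal U$-valued kernel element does \emph{not} land in $\mathcal U$: $f_0$ itself requires division by $A(i,j)\in\mathcal U$, which need not be a unit, so $f_0$ lies in the fraction field but generally not in $\mathcal U$, and $f_0+k$ inherits that defect. (The paper explicitly notes that the pure rank-one formula only works when $\mathcal U$ is a field.) The fix is small: first produce \emph{some} $\mathcal U$-valued solution $f_1$ to the marginal constraints (free-block method, or your NW-corner, both work since the marginals are positive and in $\mathcal U$), then observe that $f_1 + (K\cap\mathcal U^{mn})$ is dense in the affine solution space because $K$ has a rational basis and $\mathcal U$ is dense in $\mathbb R$; so some $\mathcal U$-valued solution is close enough to $f_0$ to be strictly positive. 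The paper avoids the issue differently, by never leaving $\mathcal U$: it chooses $N(s,t)\in\mathcal U$ close to (but not equal to) the rank-one value and fills in the remaining row and column, which then stay in $\mathcal U$ automatically.
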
 
\begin{proof} 
Let $\mathcal I$ denote the set indexing the rows and columns of $A$. 
For $s=1,2$ let $\mathcal I_s$ be the index set for the rows and 
columns of $C_s$. The index set for the rows and columns of $F$ will 
be the set 
\[
\mathcal V:= \{(i_1,i_2)\in \mathcal I_1 \times \mathcal I_2: 
\overline{i_1}=
\overline{i_2}=i \} 
\]
where $\overline{i_s}$ denotes the element of $\mathcal I$ associated 
to $i_s$ under the given elementary splitting of $A$ to $C_s$. 
For $i\in \mathcal I$, let $\mathcal V(i)=\{(i_1,i_2)\in \mathcal V: 
\overline{i_1}=\overline{i_2}\}$. 
Let $F_{<i,j>}$ denote the 
submatrix of $F$ with index set 
$\mathcal V(i)\times \mathcal V(j)$. 
Given $i\in \mathcal I$, we let $\mathcal I_s(i)$ denote the set of indices 
$i_s$ in $\mathcal I_s$ such that $\overline{i_s}=i$. 

We will define $F$ by defining 
$F_{<i,j>}$ for each $i,j$. 
So, consider now $i,j$ from $\mathcal I$.
For notational simplicity, suppose 
for the definition of $F_{<i,j>}$ that 
$\mathcal I_1(i)=\{1,\dots ,m\}$ and 
$\mathcal I_2(j)=\{1,\dots ,n\}$.
We will define an $m\times n$ matrix $M=M_{<i,j>}$ 
and then set 
$F_{<i,j>}((i_1,i_2),(j_1,j_2))= M(i_1,j_2)$. 

Let $a$ denote $A(i,j)$.   By the nature of row splitting, 
there is a vector $\alpha=(\alpha_1, \dots ,\alpha_m)$ over 
$\mathcal U$ such that $C_1(s,k)=\alpha_s$ for every $k\in 
I_1(j)$ and $1\leq s \leq m$. Likewise, 
there is a vector $\beta=(\beta_1, \dots ,\beta_n)$ over 
$\mathcal U$ such that $C_2(k,t)=\beta_t$ for every $k\in 
I_2(i)$  and $1\leq t \leq n$. Also, 
$\sum_s \alpha_s = a = \sum_t\beta_t$. 

We now arrange that the vector of row sums of $M$ is $\alpha$ and the vector of 
column sums of $M$ is $\beta$. 
(In the special case that $\mathcal U$ is a field, for 
$a\neq 0$ we could simply set 
$M(s,t)=(1/a) \alpha (s) \beta (t)$.) 
If $m=1$, we necessarily set $M(1,t)= \beta(t)$, $1\leq t \leq n$.  
If $n=1$, we likewise set  $M(s,1)= \alpha(s)$, $1\leq s \leq m$. 
If $m=n=1$, the two definitions coincide. 
If $m$ and $n$ are greater than $1$, 
pick an $(m-1)\times (n-1)$ matrix $N$ over $\mathcal U$
and for $1\leq s<m$ and $1\leq t <n$ define 
$M(s,t) =N(s,t)$. Then for $1\leq s < m$, 
define $M(s,n)$ so that the $s$th row sum is 
$\alpha (s)$, and 
for $1\leq t < n$ 
define the entries $M(m,t)$ so that the $t$th column sum is 
$\beta (j_t)$. These additional entries must lie in 
the ring $\mathcal U$. 
 Finally define  $M(m,n)$ so that the sum of 
the entries of $M$ is $a$. Necessarily $M(m,n)$ is in $\mathcal U$. 
The $m$th row sum of $M$ is $\alpha(m)$ because it equals 
$a$ minus the sum of the other row sums $\alpha(1), \dots , 
\alpha(m-1)$. Similarly the $n$th column sum of $M$ is $\beta(n)$. 

In the case that $C_1$ and $C_2$ have nonnegative real entries and 
$m> 1$ and $n>1$, if $a=0$ then set $M=0$. 
If $a>0$, then for 
notational convenience suppose $\alpha_m\beta_n >0$. 
Then choose $N$ above such that 
$N(s,t) =0$ whenever $\alpha(s)\beta(t)=0$ and otherwise 
\[
0 <  (1/a) \alpha(s)\beta(t)-N(s,t) < \epsilon 
\]
where $\epsilon $ is small enough to guarantee that 
$M(m,n)>0$. Then $M$ will be nonnegative, and $M$ will be positive 
if $\alpha$ and $\beta$ are positive. 
This finishes the definition of $M$ and $F$.

Now define a $\mathcal V \times \mathcal I_1$
amalgamation matrix $V$ and an 
$\mathcal I_1\times \mathcal V$
matrix $X$ by the rules 
\begin{align*} 
V\Big((i_1,i_2),i_3\Big)& =1 \quad \ \textnormal{iff }i_1=i_3 \\ 
X\Big(i_1,(j_1,j_2)\Big)& = M_{<i,j>}(i_1,j_2) \ , \quad 
\textnormal{where }(i,j)=(\overline{i_1},\overline{j_2})\ .  
\end{align*} 
Then 
\begin{align*} 
(XV)(i_1,j_1) & = 
\sum_{\{j_2:\  (j_1,j_2)\in \mathcal V\}} 
X(i_1,(j_1,j_2))V((j_1,j_2),j_1) \\ 
&=\sum_{
\{j_2: \ \overline{j_1}=\overline{j_2}\}}
M_{<i,j>}(i_1,j_2) 
\ , \quad 
\textnormal{where }(i,j)=(\overline{i_1},
\overline{j_2})\ , 
\\ 
&= \alpha(i_1) = C_1(i_1,j_1) \ . 
\end{align*} 
Similarly, 
\begin{align*}  
(VX)((i_1,i_2),(j_1,j_2)) 
&= X(i_1,(j_1,j_2)) \\ 
&= M_{<i,j>}(i_1,j_2) \ , \quad 
\textnormal{where }(i,j)=(\overline{i_1},\overline{j_2})\ ,   \\ 
&=F((i_1,i_2),(j_1,j_2)) \ . 
\end{align*} 
Thus $C_1=VX$ and $F=XV$, and $F$ is an elementary 
column splitting of $C_1$.

Likewise, $F$ is an elementary row splitting of $C_2$. Define an $\mathcal I_2 
\times \mathcal V$ subdivision matrix $U$ and a $\mathcal V \times \mathcal I_2$ 
matrix $Y$ by the rules 
\begin{align*}
U(i_3,(i_1,i_2))&= 1 \quad \textnormal{ iff } i_3=i_2 \\ 
Y((i_1,i_2),j_2) &= 
M_{<i,j>}(i_1,j_2) \ , \quad 
\textnormal{where }(i,j)=(\overline{i_1},\overline{j_2})\ . 
\end{align*}
Then $F=YU$ and $C=UY$, by a similar computation. 

Finally, suppose $C_1$ and $C_2$ are nondegenerate. 
Then $F$ has no zero column (being a row splitting of $C_2$) 
and $F$ has no zero row (being a column splitting of $C_1$), 
so $F$ is nondegenerate. 
\end{proof} 

\begin{lemma}\label{diagrefactoconj}
Suppose $\mathcal U $ is a unital ring, $A$ and $B$ are $n\times n$ 
matrices over $\mathcal U$, and there is a nondegenerate diagonal 
matrix $D$ and a matrix $C$ such that $A=DC$ and $B=CD$. 

Then there are matrices $A',B'$ over $\mathcal U$ such that 
$A'$ is an elementary row splitting of $A$, 
$B'$ is an elementary column splitting of $B$, 
and $A'$ is conjugate over $\mathcal U$ to $B'$. 
If $A$ and $B$ are nondegenerate and the ring $\mathcal U$ 
has no zero divisors, then the matrices $A',B'$ 
can be chosen nondegenerate. 
\end{lemma}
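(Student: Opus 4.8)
The plan is to write down explicit row and column splittings built out of the matrix $C$, and then to verify by a short chain of block elementary conjugacies that they land in a common conjugacy class over $\mathcal U$; the nondegeneracy clause will come from perturbing the splittings, and that perturbation is where the real work lies.

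For the splittings: since $A=DC$ we have $A-C=(D-I)C$, so $A=UX$ where $U=(I_n\ I_n)$ is an $n\times 2n$ subdivision matrix and $X=\binom{(D-I)C}{C}$, giving the elementary row splitting
\[
A'\ :=\ XU\ =\ \begin{pmatrix}(D-I)C & (D-I)C\\ C & C\end{pmatrix}.
\]
Dually, $B=CD$ gives $B-C=C(D-I)$, so $B=X_BV_B$ with $X_B=(C(D-I)\ C)$ and $V_B=\binom{I_n}{I_n}$ an amalgamation matrix, and the associated elementary column splitting is $B':=V_BX_B=\begin{pmatrix}C(D-I)&C\\ C(D-I)&C\end{pmatrix}$.

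Next I would check that $A'$ and $B'$ are each conjugate over $\mathcal U$ to the single matrix $M:=\begin{pmatrix}0&0\\ C&B\end{pmatrix}$. Conjugating $A'$ by $\begin{pmatrix}I&I\\0&I\end{pmatrix}$ turns the top block-row into $(D-I)C+C=A$ and then clears the right block-column, giving $\begin{pmatrix}A&0\\ C&0\end{pmatrix}$; conjugating this by $\begin{pmatrix}I&-D\\0&I\end{pmatrix}$ kills the top-left block ($A-DC=0$) and creates $CD=B$ in the bottom-right, giving $M$. Conjugating $B'$ by $\begin{pmatrix}I&0\\-I&I\end{pmatrix}$ gives $\begin{pmatrix}B&C\\0&0\end{pmatrix}$, and the block transposition $\begin{pmatrix}0&I\\I&0\end{pmatrix}$ carries this to $M$. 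Each of these conjugating matrices is block unitriangular or a block permutation, hence in $GL_{2n}(\mathcal U)$, so $A'\sim M\sim B'$ over $\mathcal U$; this proves the first assertion.

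Now assume $A,B$ are nondegenerate and $\mathcal U$ has no zero divisors. The columns of $A'$ are columns of $X$ repeated and none is zero (as $A$ has no zero column), so the only obstruction to nondegeneracy of $A'$ is a zero row, occurring exactly when $(d_i-1)C(i,\cdot)=0$. I would remove these by replacing the ``second copy'' $C(i,\cdot)$ of row $i$ by $C(i,\cdot)+y_iA$ for a suitable row vector $y_i$: writing $F$ for the matrix with rows $y_i$ and $V:=C+FA$, one still has the row splitting $A=(I_n\ I_n)\binom{A-V}{V}$, and the extra term is absorbed by one further conjugacy, since $\begin{pmatrix}I&0\\-F&I\end{pmatrix}$ carries $\begin{pmatrix}A&0\\ V&0\end{pmatrix}$ to $\begin{pmatrix}A&0\\ C&0\end{pmatrix}$; hence the modified $A'=\begin{pmatrix}A-V&A-V\\ V&V\end{pmatrix}$ is still conjugate to $M$. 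It is nondegenerate precisely when $V=C+FA$ and $A-V=(D-I)C-FA$ have no zero row, i.e. when each $y_i$ avoids the two affine sets $\{y:yA=-C(i,\cdot)\}$ and $\{y:yA=A(i,\cdot)-C(i,\cdot)\}$, each of which is empty or a coset of $L:=\{y\in\mathcal U^n:yA=0\}$. Since $A\neq 0$ and $\mathcal U$ has no zero divisors, $L$ is a proper saturated submodule of $\mathcal U^n$, so $\mathcal U^n/L$ is torsion free; unless $|\mathcal U|=2$ (in which case $D=I$, so $A=B$ and the identity splittings work) this quotient has more than two elements, so $\mathcal U^n$ is not a union of two cosets of $L$ and a valid $F$ exists. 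Running the mirror construction on $B$ (replacing the second copy of column $j$ of $B$ by $C(\cdot,j)-Bx_j$, with $G$ the matrix of columns $x_j$ and $W:=C-BG$) makes $B'$ nondegenerate while keeping it conjugate to $M$. I expect the main obstacle to be precisely this last step — choosing $F$ and $G$ to destroy every zero row and column while staying in the conjugacy class of $M$ — which is exactly where the domain hypothesis (and the trivial exceptional case $|\mathcal U|=2$) is used.
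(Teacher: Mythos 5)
Your argument is correct; I checked the block computations. Your $A'$ and $B'$ are genuine elementary row and column splittings of $A$ and $B$, the chain of block-unitriangular and block-permutation conjugacies does carry each of them over $\mathcal U$ to $M=\left(\begin{smallmatrix}0&0\\ C&B\end{smallmatrix}\right)$, the repaired splitting with $V=C+FA$ stays conjugate to $M$ via $\left(\begin{smallmatrix}I&0\\ -F&I\end{smallmatrix}\right)$, and the coset-avoidance count is sound (torsion-freeness of $\mathcal U^n/L$ from the no-zero-divisor hypothesis, hence at least three cosets when $|\mathcal U|\geq 3$, with the two-element case forcing $D=I$ so that trivial splittings suffice). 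Your route does differ from the paper's. The paper first normalizes so that $D(i,i)=1$ exactly for $i>k$ and splits only the first $k$ rows of $A$ (dually, the first $k$ columns of $B$), each such row of $A$ being split as the corresponding row of $C$ plus $(D(i,i)-1)$ times that row; it then exhibits a single explicit $W\in GL(n+k,\mathcal U)$ with $A'W=WB'$ rather than passing through a common model. The payoff of that choice is that the nondegeneracy clause is immediate: since $B=CD$ is nondegenerate the relevant rows of $C$ are nonzero, and $D(i,i)-1\neq 0$ together with the absence of zero divisors rules out zero rows, so no repair step, no counting argument, and no special case for $\mathcal U=\mathbb Z/2$ are needed. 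Your version buys a reordering-free construction of uniform size $2n$ and a transparent conjugacy through $M$, at the cost of the perturbation $C\mapsto C+FA$ and the coset argument, which (as you anticipated) is where the domain hypothesis really enters for you; you could sidestep that entire step by declining to split the rows and columns at which $D(i,i)=1$, which is exactly the paper's trick.
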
 

\begin{proof} 
If $D=I_n$, we are done, so suppose not. For notational simplicity, suppose there is 
a positive integer $k$ such that $D(i,i) =1$ iff $i>k$. Suppose 
$k<n$.  Let $E$ denote the 
$k\times k$ upper left corner of $D$. Then in block form, for 
some matrices $C_i$ over $\mathcal U$ (with $1\leq i\leq 4$ and $C_1$ $k\times k$) 
we have 
\[
A = \begin{pmatrix} EC_1 & EC_2\\C_3&C_4 \end{pmatrix} \ , \qquad 
B = \begin{pmatrix} C_1E & C_2\\C_3E&C_4 \end{pmatrix} \ . 
\] 
An elementary row splitting of $A$ to an $(n+k)\times (n+k)$ matrix $A'$ 
is given by 
\begin{align*}
A & 
 = \begin{pmatrix} I_k & I_k& 0 \\0 & 0 & I_{n-k} \end{pmatrix} 
 \begin{pmatrix} C_1 & C_2\\(-I_k+E)C_1&(-I_k+E)C_2\\C_3 & C_4 \end{pmatrix}
=\begin{pmatrix} EC_1 & EC_2\\C_3&C_4 \end{pmatrix}  \\ 
A'
&= \begin{pmatrix} C_1 & C_2\\(-I_k+E)C_1&(-I_k+E)C_2\\C_3 & C_4 \end{pmatrix}
 \begin{pmatrix} I_k & I_k& 0 \\0 & 0 & I_{n-k} \end{pmatrix}  \\ 
&=\begin{pmatrix} C_1 & C_1 & 
C_2\\(-I_k+E)C_1&(-I_k+E)C_1&(-I_k+E)C_2\\C_3 &C_3 & C_4 \end{pmatrix} \ .
\end{align*} 
An elementary column splitting of $B$ to an $(n+k)\times (n+k)$ matrix $B'$ 
is given by 
\begin{align*}
B & = 
 \begin{pmatrix} C_1 & C_1(-I_k+E)& C_2\\ C_3 & C_3(-I_k+E) &C_4 \end{pmatrix} 
\begin{pmatrix} I_k & 0 \\ I_k& 0 \\ 0 & I_{n-k} \end{pmatrix} 
=\begin{pmatrix} C_1E & C_2\\C_3E&C_4 \end{pmatrix}  \\ 
B'&= 
\begin{pmatrix} I_k & 0 \\ I_k& 0 \\ 0 & I_{n-k} \end{pmatrix} 
 \begin{pmatrix} C_1 & C_1(-I_k+E)& C_2\\ 
C_3 & C_3(-I_k+E) &C_4 \end{pmatrix} \\ 
&=  \begin{pmatrix} C_1 & C_1(-I_k+E)& C_2\\ C_1 & C_1(-I_k+E)& C_2\\ 
 C_3 & C_3(-I_k+E) &C_4 \end{pmatrix} \ .
\end{align*} 
Define 
\[ 
W =\begin{pmatrix} 0 & I_k & 0 \\ I_k & E-2I_k & 0 \\ 0 & 0 & I_{n-k}
\end{pmatrix}\ , \textnormal{ with inverse }\  
W^{-1} =  
\begin{pmatrix} 2I_k -E & I_k & 0 \\ I_k & 0 & 0 \\ 0 & 0 & I_{n-k}
\end{pmatrix}\ . 
\]  
A computation shows $A'W = WB'$. If $A$ and $B$ are nondegenerate and 
$\mathcal U$ has no zero divisors,  
then the constructed matrices $A'$ and $B'$ are nondegenerate. 
This finishes the proof for the case $k<n$. If $k=n$, then  
simply remove block rows and columns through $C_4$ from the proof above, 
and repeat the proof with $D$ in place of $E$. 
\end{proof}

%\begin{comment}
\section {From strong shift equivalence to conjugacy} \label{fromsse}

 Let $\mathcal U$ be a nondiscrete unital subring of 
$\mathbb R$. Two $n\times n$ matrices   $A$ and $B$ 
with entries in 
$\mathcal U$ are {\it  conjugate over $\mathcal U$}, 
or {\it similar over $\mathcal U$}, if there 
exists $W$ in $\textnormal{GL}(n,\mathcal U)$ such that 
$W^{-1}AW =B$. 

The purpose of this section is to prove 
the following theorem.

\begin{theorem} \label{ssetosim} 
Let $\mathcal U$ be 
a nondiscrete unital subring 
of $\mathbb{R}$. Suppose 
 $A,B$ are positive matrices over $\mathcal U$ 
and are strong shift equivalent over $\mathcal U$. 
Then $A$ and $ B$ are 
strong shift equivalent over $\mathcal U_+$ to positive matrices 
which are conjugate  over $\mathcal U$.

Moreover, the conjugating matrix can be chosen
to have positive determinant and to send positive 
eigenvectors to positive eigenvectors.
\end{theorem} 

We begin with the main lemma. We use the following notation: 
$I_k$ denotes the $k\times k$ identity matrix. 

\begin{lemma}[Splitting Lemma]
\label{positivitysplittinglemma}
Let  $\mathcal U$ be a nondiscrete unital subring of 
$\mathbb R$.  
Suppose the following: 
\begin{itemize} 
\item 
$A$ and $C$ are $n\times n$ matrices over $\mathcal U$
\item 
$W$ is a matrix in $\textnormal{GL}(n,\mathcal U)$ such that  
$W^{-1}AW=C $
\item 
$C'$ is obtained from $C$ by a finite sequence of 
  row 
splittings over $\mathcal U$. 
\end{itemize} 
Then the following hold. 
\begin{enumerate} 
\item 
There is a matrix $A'$ conjugate  over $\mathcal U$ 
to $C'$ such that $A'$ is obtained from $A$
by a finite sequence 
of row splittings  over $\mathcal U$; 
and such that,  if $A$ and 
$C'$ are nondegenerate,  
then  $A'$ is nondegenerate. 
\item 
If $A$ is a positive matrix, then there is a 
positive matrix $A^+$ over $\mathcal U$ 
such that $A^+$ is obtained 
 from $A$ by a finite sequence 
of row splittings of positive matrices over 
$\mathcal U_+$, and $A^+$ is conjugate over 
$\mathcal U$ to a matrix of the form 
$\left(\begin{smallmatrix} C' & 0 \\ 0& 0 
\end{smallmatrix}\right)$.  
\end{enumerate} 
The lemma statement is also true with ``row'' replaced by 
``column''. 
\end{lemma} 
 
{\begin{proof} 
Any row splitting to a larger matrix 
is a composition of row splittings which 
increase the matrix size by exactly one.  
So, we have some positive integer $\ell$ and 
a finite sequence of elementary row splittings 
of matrices $C_i $ to $C_{i+1}$,  $0\leq i < \ell$, 
with $C_{i+1}$ obtained by splitting one row of $C_i$ to two rows, 
and with $C=C_0$ and $C'=C_{\ell}$.  

{\it Proof of Claim (1)} 
We first consider the case that $\ell = 1$. 
For notational convenience, suppose row $n$ of $C$ is split 
into rows $n$ and $n+1$ of $C'$. For any matrix $B$, we let 
$B_{\textnormal{row}(i)}$ 
 denote its $i$th row. We have matrices 
\[
X' = \begin{pmatrix} C_{\textnormal{row}(1)}\\ \vdots 
\\C_{\textnormal{row}(n-1)} \\ s' \\t' 
\end{pmatrix} \quad , \quad 
U' = \begin{pmatrix} I_{n-1} & 0 & 0 \\ 0 & 1 & 1 
\end{pmatrix} 
\] 
such that $U'X'=C$ and $X'U'=C'$. Set $t = t'W^{-1}$ and 
set $s= A_{\textnormal{row}(n)}-t$ and define the $(n+1)\times n$ matrix 
\[
Y' = 
\begin{pmatrix} A_{\textnormal{row}(1)}\\ \vdots \\A_{\textnormal{row}(n-1)} \\ s \\t 
\end{pmatrix}\ . 
\] 
Then $U'Y'=A$. Define $A'= Y'U'$, an elementary row splitting of $A$. 
Let $E$ be the $(n+1)\times (n+1)$ matrix equal to $I_{n+1}$ except that  
$E(n,n+1)=-1$.  Then we have matrix equations (in block forms) 
\begin{align*} 
E^{-1}A'E 
&= 
\begin{pmatrix} A & 0 \\ t & 0 \end{pmatrix}  \\
E^{-1}C'E 
& = 
\begin{pmatrix} C & 0 \\ t' & 0 \end{pmatrix} \\ 
\begin{pmatrix} A & 0 \\ t & 0 \end{pmatrix}  
\begin{pmatrix} W & 0 \\ 0 & 1 \end{pmatrix} 
&= 
\begin{pmatrix} W & 0 \\ 0 & 1 \end{pmatrix} 
\begin{pmatrix} C & 0 \\ t' & 0 \end{pmatrix} \ . 
\end{align*}  
Therefore $A'$ is conjugate over $\mathcal U$ to $C'$. 

At the inductive step, going from $\ell -1$ to $\ell$, 
we apply the same argument to matrices $A'_{\ell -1}$ and 
$C'_{\ell -1}$ given by the induction hypothesis. 

Now suppose $A$ is nondegenerate.  Then no sequence
of row splittings of $A$ can produce a matrix with  a 
zero column. If $C'$ is nondegenerate, then we can 
choose all those row splittings $C_i$ to $C_{i+1}$, 
splitting some row as a sum $s_i + t_i$, such that 
$s_i\neq 0 \neq t_i$. Then the construction, splitting 
$A'_i$ to $A'_{i+1}$, never introduces a zero row, 
and in the end $A'$ will be nondegenerate. 
This completes the proof of (1). 
\newline \newline 
{\it Proof of Claim (2).} 
As in part (1), we first consider the case $\ell =1$. 
Let $s',t',s,t$ be as in part (1).  
Define  matrices 
\[ 
U = 
\begin{pmatrix} I_{n-1} & 0 & 0 & 0 \\ 0 & 1 & 1 & 1 
\end{pmatrix} \ , \quad 
X'' = 
\begin{pmatrix} C_{\textnormal{row}(1)} \\ \vdots \\ C_{\textnormal{row}(n-1)} \\ s' \\ t' \\ 0 
\end{pmatrix} \ , \quad 
Y'' = 
\begin{pmatrix} A_{\textnormal{row}(1)} \\ \vdots \\ A_{\textnormal{row}(n-1)} \\ s \\ t \\ 0 
\end{pmatrix} \ . 
\] 
Set $A''= Y''U$. Let $F$ be the $(n+2)\times (n+2) $ 
matrix equal to $I_{n+2}$ except that 
$F(n,n+1)=F(n,n+2)= -1$. Then 
\[
F^{-1}A''F = 
\begin{pmatrix} A & 0 & 0  \\ t & 0 & 0 \\ 0 & 0 & 0 
\end{pmatrix} \quad \textnormal { and  } \quad 
F^{-1}C''F = 
\begin{pmatrix} C & 0 & 0  \\ t' & 0 & 0 \\ 0 & 0 & 0 
\end{pmatrix} \ .  
\] 
The matrix $ F^{-1}C''F $ 
(and therefore $A''$) 
is conjugate over $\mathcal U$ to the 
$(n+2)\times (n+2)$ matrix 
$\left(\begin{smallmatrix} C' & 0 \\ 0& 0 
\end{smallmatrix}\right) $.  

It remains to conjugate $A''$ over $\mathcal U$ to a 
matrix 
$A^+$ which is the required row splitting of $A$. 
 For this we will pick a suitable invertible 
 $3\times 3$ matrix $M$ with all column sums equal to 1, define  
$W^+$ to be $\begin{pmatrix} I_{n-1} & 0 \\ 0 &M\end{pmatrix}$ 
and set $A^+$ equal to $W^+ A''(W^+)^{-1} $. 
Because $M$ has all column sums 1 (i.e. fixes the row vector with 
every entry 1), it follows that $M^{-1}$ has all column sums 1, and therefore 
$U(W^+)^{-1}=U$. Consequently, 
\[
A^+  =  W^+ Y''U(W^+)^{-1}  =  W^+ Y''U
= 
\begin{pmatrix} I_{n-1} & 0   \\  0 & M 
\end{pmatrix} Y''
%\begin{pmatrix} A_{\textnormal{row}(1)} \\ \vdots \\ 
%A_{\textnormal{row}(n-1)} \\ s \\ t \\ 0 
%\end{pmatrix}
U  \ . 
\]
The matrix $M$ will have the form 
\begin{equation}\label{Mdefn}
\begin{pmatrix} 
x_1 & x_1 + \epsilon_1 & z_1 \\
x_2 & x_2 + \epsilon_2 & z_2 \\
1-(x_1+x_2) & 1-(x_1+x_2)-(\epsilon_1+\epsilon_2) & 1-(z_1+z_2)
\end{pmatrix} 
\end{equation}
and therefore the bottom three rows of $W^+Y''$ will equal 
\begin{equation}\label{onethird}
M 
\begin{pmatrix} s \\ t\\ 0 
\end{pmatrix} 
= 
\begin{pmatrix} 
x_1(s+t) + \epsilon_1t \\ 
x_2(s+t) + \epsilon_2t \\ 
[1- (x_1+x_2)](s+t) - [\epsilon_1+\epsilon_2]t  
\end{pmatrix} \ . 
\end{equation}  
These three rows  sum to $s+t$, 
which is row $n$ of $A$. Thus $U(W^+Y'')=A$, and 
$A^+=(W^+Y'')U$ is an elementary 
row splitting of $A$. 

We now complete the definition of $M$. 
Given $\gamma>0$, pick positive numbers $x_1, x_2, \epsilon_1$ from 
$\mathcal U$ with $|x_1-1/3|, |x_2 - 1/3|$ and $\epsilon_1$ all smaller 
than $\gamma$.  Pick $K\in \mathbb N$ 
such that $K\epsilon_1 \leq 1< (K+1)\epsilon_1$ and set $\epsilon_2= 1-K\epsilon_1
< \epsilon_1$. For small $\gamma$, this guarantees that $A_+$ is positive 
(the rows in (\ref{onethird}) are approximately 
$(1/3)A_{\textnormal{row}(n)}$).   
 Define $z_1 = -1+x_1$ and $z_2 = K+x_2$. 
A computation shows 
\begin{align*}  
\det(M) &= \epsilon_1(z_2-x_2) - \epsilon_2 (z_1-x_1) \\ 
&= \epsilon_1(K) - \epsilon_2(-1) = 1 \ . 
\end{align*} 
Therefore $M\in \textnormal{SL}(3,\mathcal U)$, and $W^+$ gives a conjugacy of 
$A^+$ to $A''$ as required. 

Let $0_i$ denote the  $i\times i$ zero matrix. 
At the inductive step,
 we begin with a conjugacy of a positive matrix $(A^+)_{\ell -1}$ to a matrix 
with block form 
$
\left(\begin{smallmatrix} C_{\ell -1} & 0 \\ 0& 0 
\end{smallmatrix}\right) 
=C_{\ell -1}\oplus 0_{\ell -1}$, and a splitting of $C_{\ell -1}$ to $C_{\ell}$. 
The argument of the basic step produces a 
row splitting    of $(A^+)_{\ell -1}$
to a positive matrix $(A^+)_{\ell} $ over $\mathcal U$ 
and a conjugacy  over $\mathcal U$ of 
$(A^+)_{\ell} $ 
to $(C_{\ell}\oplus 0_{\ell -1})\oplus 0_1$, which equals 
$C_{\ell}\oplus 0_{\ell}$. 

The final claim of the lemma is clear by passing to transpose matrices. 
\end{proof}

\begin{remark} \label{ifunit} 
If the nondiscrete ring $\mathcal U$ is assumed to have a nontrivial 
unit, then in Lemma 
\ref{positivitysplittinglemma}, the matrix $A^+$ can be chosen to have 
size equal to the matrix $C$ (the extra zero blocks can be avoided). 
For this, in the proof at the stage of splitting the row 
$s'+t'$ of $C$, pick $a,b$ from $\mathcal U$ such that 
$a$ closely approximates $1/2$ and $b$ is a sufficiently small unit. 
In place of the matrices $U,X'',M$ in the proof use 
\[
U = \begin{pmatrix} I_{n-1} & 0&0\\ 0&  1 & 1 
\end{pmatrix} \ , \quad 
X''= 
\begin{pmatrix}
* \\ 
s' \\ 
t' 
\end{pmatrix} \ , \quad 
M= \begin{pmatrix}
             a &a-b\\
           1-a  &b+(1-a)
   \end{pmatrix}\  . 
\] 
Then $\det M =b$, so $M$ is invertible over $\mathcal U$,  
and if $a$ is chosen close to $1/2$ and $b$ is sufficiently small, 
the positivity constraints will be satisfied.  
\end{remark}

\begin{remark}[Matrices, module structures and splitting]
Suppose $\mathcal U$ is a  unital ring, and 
let $\mathcal U[t]$ denote the ring of polynomials with coefficients in 
$\mathcal U$. 
If $A$ is an  $n\times n$ matrix $A$ over $\mathcal U$, 
then the free $\mathcal U$ module $\mathcal U^n$ of row vectors 
is a right $\mathcal U[t]$ module $\mathcal M_A$, where the action 
of $t$ is by $v\mapsto vA$. 
Two $n\times n$ matrices over $\mathcal U$ 
are conjugate 
over $\mathcal U$ if and only if their $U[t]$ modules are
isomorphic. 

If a matrix $C'$ is obtained by an elementary row splitting from a 
matrix $C$, with associated subdivision matrix $U$, 
then there is an embedding of 
$\mathcal M_C$ into 
$\mathcal M_{C'}$, given by the rule $v\mapsto vU$. 
%($(vU)_j = v_i$ if row $j$ of $C'$ is one of 
%the rows into which row $i$ of $C$ was split). 
The conjugacy given by $W^{+}$ in Lemma \ref{positivitysplittinglemma} 
is constructed to 
extend the conjugacy of embedded copies of $\mathcal M_A$ and 
$\mathcal M_C$ obtained by lifting $W$. 
One finds an $A^+$ and $W^+$ by requiring further conditions 
on the vectors $e_n, e_{n+1}, e_{n+2}$. 

The module viewpoint 
may give 
arguments which are easier and more conceptual, 
or help one find implementing matrices.  
On the other hand, 
it can be useful
to have  matrix arguments  
which can be verified by direct matrix computation.

It is worth noting that with any string of row splittings from a 
matrix $C$ to a matrix $C'$, the module 
$\mathcal M_C$ embeds as a $\mathcal U[t]$ submodule of 
$\mathcal M_{C'}$, and such that as a free $\mathcal U$ module (forgetting 
the $t$ action) $\mathcal M_{C'}$ is the internal direct sum 
of the emebedded copy of $\mathcal M_C$ and another free $\mathcal U$ module. 

\end{remark}

We will also use the following easy  lemma. 

\begin{lemma} \label{laststep} 
Let $\mathcal U$ be a unital nondiscrete subring of $\mathbb R$. 
Suppose $C$ is a positive square matrix over $\mathcal U$ and
 $M$ is a square matrix over $\mathcal U$ of 
the (block) form 
$\left(\begin{smallmatrix} C & 0 \\ X& 0 
\end{smallmatrix}\right)$
or 
$\left(\begin{smallmatrix} C & X \\ 0& 0 
\end{smallmatrix}\right)$. 
Then there is a positive matrix over $\mathcal U$ which is conjugate over $\mathcal U$ 
to $M$ and which is SSE over $\mathcal U_+$ to $C$. 
\end{lemma} 
\begin{proof} 
Clearly it is sufficient to prove the lemma assuming  
$M=\left(\begin{smallmatrix} C & 0 \\ X& 0 
\end{smallmatrix}\right)$. 
Pick $\kappa > 0$ in $\mathcal U$ such that 
$X':=X+\kappa JC$ is positive, where $J$ denotes a 
matrix of appropriate size with every entry equal to $1$, 
and set 
$M'=\left(\begin{smallmatrix} C & 0 \\ X'& 0 
\end{smallmatrix}\right)$. 
Then $M'$ is SSE over $\mathcal U_+$ to $C$, and also conjugate over 
$\mathcal U$ to $M$, since 
\[
M' = 
\begin{pmatrix} C & 0 \\ X' & 0 \end{pmatrix} 
= 
\begin{pmatrix} I & 0 \\ \kappa J & I \end{pmatrix} 
\begin{pmatrix} C & 0 \\ X & 0 \end{pmatrix} 
\begin{pmatrix} I & 0 \\ -\kappa J & I \end{pmatrix} \ . 
\] 
Given $\epsilon $ in $\mathcal U$, define another matrix conjugate 
over $\mathcal U$ to $M$, 
\begin{align*} 
M_{\epsilon} := & 
\begin{pmatrix} I & -\epsilon J \\ 0 & I \end{pmatrix} 
\begin{pmatrix} C & 0 \\  X' & 0 \end{pmatrix} 
\begin{pmatrix} I & \epsilon J  \\ 0 & I \end{pmatrix} \\ 
=& 
\begin{pmatrix} C-\epsilon JX' & 0 \\  X' & 0 \end{pmatrix} 
\begin{pmatrix} I & \epsilon J  \\ 0 & I \end{pmatrix} 
= 
\begin{pmatrix} C-\epsilon JX' & (C-\epsilon JX')\epsilon J
 \\  
X' & X'(\epsilon J) \end{pmatrix} \ . 
\end{align*} 
Fix $\epsilon > 0$ in $\mathcal U$ sufficiently small to 
guarantee 
$C-\epsilon JX'$ is positive. Then 
$M_{\epsilon}$ is a positive matrix SSE over $\mathcal U_+$ to 
$M'$, and hence to $C$. 
%\begin{align*} 
%M_{\epsilon} = & 
%\begin{pmatrix} C-\epsilon JX' & 0 \\  X' & 0 \end{pmatrix} 
%\begin{pmatrix} I & \epsilon J  \\ 0 & I \end{pmatrix} \quad \textnormal{ and }  \\ 
%M' = & 
%\begin{pmatrix} I & \epsilon J  \\ 0 & I \end{pmatrix} 
%\begin{pmatrix} C-\epsilon JX' & 0 \\  X' & 0 \end{pmatrix} \ , 
%\end{align*} 
% so $M_{\epsilon}$ is also SSE over $\mathcal U_+$ to $M'$, and 
%hence to $C$. 
\end{proof} 

We are now prepared to prove the main result of this section.

\begin{proof}[Proof of Theorem 
\ref{ssetosim}] 
By assumption, for some $\ell$  we have 
 matrices 
\newline 
$A=A_{(0)},A_{(1)},\dots A_{(\ell )}=B$
and for $0\leq k<\ell $ an ESSE over  $\mathcal U$, 
\begin{equation} \label{ithesse}
A_{(i)}=R_{(i)}S_{(i)} \quad , \quad A_{(i+1)}=S_{(i)}R_{(i)}  \ . 
\end{equation} 
By Proposition \ref{nondegeneratesseprop}, we may assume all the 
matrices $A_{(i)},R_{(i)},S_{(i)}$ are nondegenerate. 
For each $i$, we can then 
by Proposition 
\ref{decompprop} 
 associate to the ESSE (\ref{ithesse}) a diagram of splittings 
and a diagonal refactorization 
\begin{equation} \label{diagram1} 
\xymatrix{ 
       & X_i \ar[rr]^-{(D_i, D_i^{-1}X_i)} 
\ar[dl]   &  & Y_i\ar[dr]  &     \\ 
A_{(i-1)}      &                                          &  
&                   &  A_{(i)} 
}
\end{equation}
 as described in 
(\ref{elsplitdiagram}). By  Lemma \ref{diagrefactoconj}  
we can lift each diagonal refactorization by a row and a column 
splitting to nondegenerate matrices conjugate over $\mathcal U$, giving a diagram 
of three levels, 
\begin{equation}
\xymatrix@=10pt{ 
    &                        & \bullet  \ar[dl] \ar@{=}[r] & \bullet \ar[dr] &                & \\
    & \bullet        \ar[dl] &                             &               & \bullet \ar[dr] &  \\
A_{(i-1)} &                    &                             &               &                 &   A_{(i)}}
\end{equation} 
For visual clarity, we use the horizontal 
 ``=''  in diagrams to indicate conjugacy over $\mathcal U$ (not equality). 

We consider an initial diagram 
(of three levels) formed by taking the union of the $\ell$ diagrams above
 (one for each ESSE). 
Arrows point southwest for row amalgamations and southeast 
for column amalgamations. 
For visual clarity, we suppress matrix names and arrow tips.
 Here is the initial diagram for 
the case $\ell =3$.

\begin{equation}
\xymatrix@=10pt{
     &                        & \bullet  \ar@{-}[dl] \ar@{=}[r] & \bullet \ar@{-}[dr] &                & 
    &                        & \bullet  \ar@{-}[dl] \ar@{=}[r] & \bullet \ar@{-}[dr] &                & 
    &                        & \bullet  \ar@{-}[dl] \ar@{=}[r] & \bullet \ar@{-}[dr] &                & \\
    & \bullet        \ar@{-}[dl] &                             &               & \bullet \ar@{-}[dr] &  
    & \bullet        \ar@{-}[dl] &                             &               & \bullet \ar@{-}[dr] &  
    & \bullet        \ar@{-}[dl] &                             &               & \bullet \ar@{-}[dr] &  \\
 \bullet &                    &                             &               &                 &   \bullet
         &                    &                             &               &                 &   \bullet 
          &                    &                             &               &                 &   \bullet 
}
\end{equation} 

We apply the Fiber Lemma \ref{fiberlemma} to construct a 
nondegenerate common column/row splitting for each
pair of  matrices in the diagram with a common row/column 
amalgamation, and iterate this move
as far as possible. For our case $\ell =3$, this produces 
the next diagram (with open circles 
and dotted lines reflecting additions to the diagram at this step).

\begin{equation}
\xymatrix@=10pt{
    &                        &                            &                 &                              & 
 \circ \ar@{.}[dl]\ar@{.}[dr] &  &                 &                 &                                 & 
 \circ \ar@{.}[dl]\ar@{.}[dr] & &                 &                 &                & \\
    &                        &                            &                 & \circ \ar@{.}[dl]\ar@{.}[dr] & 
    & \circ \ar@{.}[dl]\ar@{.}[dr]  &                 &                 & \circ \ar@{.}[dl]\ar@{.}[dr] & 
    & \circ \ar@{.}[dl]\ar@{.}[dr]  &                 &                 &                & \\
    &                        & \bullet \ar@{-}[dl] \ar@{=}[r] & \bullet \ar@{-}[dr] &                & \circ \ar@{.}[dl]\ar@{.}[dr]
    &                        & \bullet  \ar@{-}[dl] \ar@{=}[r] & \bullet \ar@{-}[dr] &                &  \circ \ar@{.}[dl]\ar@{.}[dr]
    &                        & \bullet  \ar@{-}[dl] \ar@{=}[r] & \bullet \ar@{-}[dr] &                & \\
    & \bullet        \ar@{-}[dl] &                             &               & \bullet \ar@{-}[dr] &  
    & \bullet        \ar@{-}[dl] &                             &               & \bullet \ar@{-}[dr] &  
    & \bullet        \ar@{-}[dl] &                             &               & \bullet \ar@{-}[dr] &  \\
 \bullet &                    &                             &               &                 &   \bullet
         &                    &                             &               &                 &   \bullet 
          &                    &                             &               &                 &   \bullet 
}
\end{equation} 

Next, we apply part (1) of the Splitting Lemma 
\ref{positivitysplittinglemma}  to lift conjugacies of 
nondegenerate matrices by row or column splittings. 
Where there is a choice, for definiteness (only) we choose to lift by 
row splittings. For $\ell =3$ this produces 
the following diagram. A nonhorizontal arrow here arising from the Splitting 
Lemma represents the composition of several splittings through 
nondegenerate matrices. 
\begin{equation}
\xymatrix@=10pt{
    &                        &                            &                            & \circ \ar@{.}[dl] \ar@{:}[r]        & 
 \bullet \ar@{-}[dl]\ar@{-}[dr] &  &                 &  &\circ \ar@{.}[dl] \ar@{:}[r]                                    & 
 \bullet \ar@{-}[dl]\ar@{-}[dr]\ar@{:}[r] &\circ\ar@{.}[dr] &                 &                 &                & \\
    &                        &                            & \circ \ar@{.}[dl] \ar@{:}[r] & \bullet \ar@{-}[dl]\ar@{-}[dr] & 
    & \bullet \ar@{-}[dl]\ar@{-}[dr]  &                 & \circ \ar@{.}[dl] \ar@{:}[r] & \bullet \ar@{-}[dl]\ar@{-}[dr] & 
    & \bullet \ar@{-}[dl]\ar@{-}[dr] \ar@{:}[r] & \circ\ar@{.}[dr]                &                  &                & \\
    &                        & \bullet \ar@{-}[dl] \ar@{=}[r] & \bullet \ar@{-}[dr] &                & \bullet \ar@{-}[dl]\ar@{-}[dr]
    &                        & \bullet  \ar@{-}[dl] \ar@{=}[r] & \bullet \ar@{-}[dr] &                &  \bullet \ar@{-}[dl]\ar@{.}[dr]
    &                        & \bullet  \ar@{-}[dl] \ar@{=}[r] & \bullet \ar@{-}[dr] &                & \\
    & \bullet        \ar@{-}[dl] &                             &               & \bullet \ar@{-}[dr] &  
    & \bullet        \ar@{-}[dl] &                             &               & \bullet \ar@{-}[dr] &  
    & \bullet        \ar@{-}[dl] &                             &               & \bullet \ar@{-}[dr] &  \\
 \bullet &                    &                             &               &                 &   \bullet
         &                    &                             &               &                 &   \bullet 
          &                    &                             &               &                 &   \bullet 
}
\end{equation} 
 
We iterate the application of Splitting Lemma and Fiber Lemma
 until we arrive at a final diagram of $2\ell +1$ levels
whose top level consists of 
matrices which are all conjugate. For $\ell =3$, this happens at
 the next stage, and produces the diagram (\ref{finaldiagram}) below. 
\begin{equation}\label{finaldiagram}
  \xymatrix@=10pt{
    &                        &                            &                            &         & 
& \circ \ar@{.}[dl]\ar@{:}[r] &    \circ \ar@{.}[dl]\ar@{.}[dr]\ar@{:}[r]   &\circ \ar@{:}[dr]\ar@{:}[r]  &\circ    \ar@{:}[dr]  &   
&&    &                 &                & \\
    &                        &                            &                            &         & 
 \circ \ar@{.}[dl]\ar@{:}[r] &    \circ \ar@{.}[dl]\ar@{.}[dr] & &  \circ \ar@{.}[dl]\ar@{.}[dr]
\ar@{:}[r]   &\circ \ar@{:}[r] \ar@{:}[dr]  & \circ  \ar@{:}[dr] 
 &  &     &                 &                & \\
    &                        &                            &                            & \bullet  \ar@{-}[dl]  \ar@{=}[r]   & 
 \bullet \ar@{-}[dl]\ar@{-}[dr] &  &  \circ \ar@{.}[dl]\ar@{.}[dr]  &  &\bullet    \ar@{-}[dl] \ar@{=}[r]               & 
 \bullet \ar@{-}[dl]\ar@{-}[dr]\ar@{=}[r] &\bullet  \ar@{-}[dr]  &     &                 &                & \\
    &                        &          & \bullet  \ar@{-}[dl] \ar@{=}[r] & \bullet \ar@{-}[dl]\ar@{-}[dr] & 
    & \bullet \ar@{-}[dl]\ar@{-}[dr]  &         & \bullet  \ar@{-}[dl] \ar@{=}[r] & \bullet \ar@{-}[dl]\ar@{-}[dr] & 
    & \bullet \ar@{-}[dl]\ar@{-}[dr]\ar@{=}[r]  & \bullet  \ar@{-}[dr]                &                  &                & \\
    &                        & \bullet \ar@{-}[dl] \ar@{=}[r] & \bullet \ar@{-}[dr] &                & \bullet \ar@{-}[dl]\ar@{-}[dr]
    &                        & \bullet  \ar@{-}[dl]\ar@{=}[r] & \bullet \ar@{-}[dr] &                & \bullet \ar@{-}[dl]\ar@{-}[dr]
    &                        & \bullet  \ar@{-}[dl]\ar@{=}[r] & \bullet \ar@{-}[dr] &                & \\
    & \bullet        \ar@{-}[dl] &                             &               & \bullet \ar@{-}[dr] &  
    & \bullet        \ar@{-}[dl] &                             &               & \bullet \ar@{-}[dr] &  
    & \bullet        \ar@{-}[dl] &                             &               & \bullet \ar@{-}[dr] &  \\
 \bullet &                    &                             &               &                 &   \bullet
         &                    &                             &               &                 &   \bullet 
          &                    &                             &               &                 &   \bullet 
}
\end{equation} 

At the top of the left side of the final diagram is a nondegenerate matrix 
$C$ which is obtained from $A$ by a sequence of row splittings through 
nondegenerate matrices over $\mathcal U$. By Part (2) of the 
Splitting Lemma \ref{positivitysplittinglemma}, 
there is a  matrix $A^+$ SSE over $\mathcal U_+$ to $A$ and 
conjugate over  $\mathcal U$ to a matrix of  the form 
$\left(\begin{smallmatrix} C & 0 \\ 0& 0 
\end{smallmatrix}\right)$. Similarly, if $E$ is the matrix on the right 
side of the top level of the final diagram, then there is a positive 
matrix  $B^+$ which is SSE over $\mathcal U_+$ and conjugate over $\mathcal U$ 
to a matrix of the form 
$\left(\begin{smallmatrix} E & 0 \\ 0& 0 
\end{smallmatrix}\right)$. If $A^+$ and $B^+$ are not of the same size, 
then we may apply Lemma \ref{laststep}  to enlarge one of them, and assume 
they have the same size. Because $C$ and $E$ are conjugate over $\mathcal U$, 
it then follows that $A^+$ and $B^+$ are conjugate over $\mathcal U$. 

  This concludes the 
proof, apart from the ``moreover'' claim for the conjugating 
matrix. This is perhaps already clear from previous work, but 
we will give a self contained proof in the following lemma.  
\end{proof}

\begin{lemma} 
Suppose $\mathcal U$ is a nondiscrete unital subring of $\mathbb R$, 
and $A,B$ are $n\times n$ positive matrices conjugate over 
$\mathcal U$. Then there are positive matrices $A',B'$ SSE over 
$\mathcal U_+$ to $A,B$ respectively and a matrix $U$ invertible over 
$\mathcal U$ such that $U^{-1}A'U=B'$, $\det U > 0$, and 
$U$ sends positive eigenvectors to positive eigenvectors. 
\end{lemma} 

\begin{proof} 
We are given $U\in \textnormal{GL}(n,\mathcal U)$ such that 
$U^{-1}AU=B$. We may assume (if necessary after replacing 
$U$ with $-U$) that $U$ sends positive eigenvectors to 
positive eigenvectors. 
In the case $\det U < 0$, it would suffice to have some $W$ invertible 
over $\mathcal U$ such that 
$AW=WA$, $\det W < 0$ and $W$  
respects positive eigenvectors of $A$. (We could then replace 
$U$ with $WU$.) 

If no such $W$ exists, then for some small $\epsilon >0$ 
we will define an $(n+1)\times (n+1)$ matrix $A'$ 
as a row splitting of $A$, 
by splitting the first row $A_1$ of $A$ as 
$\epsilon A_1 + (1-\epsilon)A_1$. Here $\epsilon >0$, 
$\epsilon \in \mathcal U$ and $\epsilon$ is small enough 
that $ A'>0 $.  Let $E_{ij}(s)$ denote the $n+1\times n+1$ 
matrix equal to $I$ except that the $ij$ entry is $s$. 
Let $F=E_{12}(-\epsilon)E_{21}(1)$. Then 
\[
FA'F^{-1}= \begin{pmatrix} 0 & 0 \\ 0 & A \end{pmatrix} 
:= A'' \ . 
\] 
Because the matrix $K= \left(\begin{smallmatrix} -1 & 0 \\ 0 & I 
\end{smallmatrix}\right) $ 
has negative determinant, commutes with $A''$ and 
fixes eigenvectors for nonzero eigenvalues, the 
matrix $W=F^{-1}KF$ will have the same properties with respect to 
$A'=F^{-1}A''F$. 

Now define $B'$ from $B$ in the same way. The matrices $A',B'$ 
are positive, SSE over $\mathcal U_+$ to $A,B$ respectively, 
and conjugate by a matrix with positive determinant which sends 
positive eigenvectors to positive eigenvectors.  
\end{proof}

We prepare for 
 the last result of this section 
with the next lemma.

\begin{lemma} \label{zzzlemma} 
 Suppose $A,B,D,B'$ are  matrices 
over a field $\mathcal U$ such that $D$ is diagonal nonsingular,
 $A=D^{-1}BD$ and $B'$ is a row splitting of $B$. Then there is a 
row splitting $A'$ of $A$ and a diagonal matrix $D'$ over 
$\mathcal U$ such that 
 $A'=(D')^{-1}BD'$ .

The same statement is true if \lq\lq row\rq\rq is replaced with 
\lq\lq column\rq\rq .
\end{lemma}

\begin{proof}
Clearly it suffices to prove the row statement. Let $U$ be 
the subdivision matrix for the assumed row splitting,
$BU=UB' $ .  Define the diagonal matrix $D'$ in the obvious 
way, $D'(i,i)=D(\overline i, \overline i)$ where $\overline i$
satisfies $U(\overline i,i)=1$. 
Define 
 $A'$ to be 
 $(D')^{-1}BD'$. Then $U$ is also 
the subdivision matrix for a row splitting of 
$A$ to $A' $. For an example of this, take 
\begin{align*} 
A= 
\begin{pmatrix} a & b\frac{\delta_1}{\delta_2 }\\ 
c\frac{\delta_2}{\delta_1 } & d \end{pmatrix} 
\qquad \quad \quad 
B&= 
\begin{pmatrix} a & b \\ c & d \end{pmatrix} 
\quad \quad \qquad  
D= 
\begin{pmatrix} \delta_1 & 0 \\ 0 & \delta_2 \end{pmatrix} 
\\
A'=\begin{pmatrix} a_1 & a_1 & \frac{\delta_1}{\delta_2 }b_1 
\\ a_2 &a_2&\frac{\delta_1}{\delta_2 } b_2 \\ 
\frac{\delta_2}{\delta_1 }c &\frac{\delta_2}{\delta_1 }c & d \end{pmatrix} 
\quad \quad 
B'&= 
\begin{pmatrix} a_1 & a_1 & b_1 \\ a_2 &a_2 &b_2 \\ c &c & d \end{pmatrix} 
 \quad 
U= 
\begin{pmatrix} 1 & 1 &0 \\ 
0 & 0& 1 \end{pmatrix} \ . 
\end{align*}
 \end{proof}
If $\mathcal U$ is not a field, then the definition of $A'$  
in the proof of Lemma \ref{zzzlemma} above might not 
give a matrix over $ \mathcal U$.

We will need the following result from \cite{KR3}. 

\begin{theorem} \label{easystreet}
 Suppose $A,B$ are nondegenerate 
matrices SSE over $\mathcal U_+$, where $\mathcal U$ 
is a subfield of $\mathbb R$. Then there is a
 nondegenerate matrix $C$ over $\mathcal U_+$ and 
a nonsingular diagonal matrix $D$ over $\mathcal U_+$ 
such that $C$ is reached from $A$ by finitely many row splittings 
through  matrices over $\mathcal U_+$ and 
 $D^{-1}CD$ is reached from $B$ by finitely many column splittings 
through  matrices over $\mathcal U_+$.
 If $A$ is primitive, then $C$ is primitive. 
\end{theorem} 

\begin{proof}
The proof is a simplification of part of the proof of 
Theorem \ref{ssetosim}. We begin with a lag $\ell$  SSE from $A$ to $B$ 
through nondegenerate matrices over $\mathcal U_+$. As in 
(\ref{diagram1}), from each  elementary SSE we produce 
 a row splitting, diagonal refactorization and column splitting. 
For example, with $\ell = 4$ we get a diagram 
\begin{equation}
\xymatrix@=10pt{
    & \bullet  \ar[r]\ar[dl] & \bullet \ar[dr] &    &\bullet   \ar[r]\ar[dl] & \bullet  \ar[dr] &    & \bullet  \ar[r]\ar[dl]
 & \bullet  \ar[dr] &   & \bullet \ar[r]\ar[dl] & \bullet  \ar[dr] &      \\
A &    &  &A_{(1)}  &    &   &A_{(2)}   &  &  &A_{(3)}  &  &  & B                                         
}
\end{equation} 
A southwest pointing arrow  represents a row amalgamation. 
A southeast pointing arrow  represents a column amalgamation. 
A horizontal arrow represents a diagonal refactorization over $\mathcal U$. 

Above each $A_{(i)}$, 
we apply Lemma \ref{fiberlemma} to produce a common splitting: 
With this move, we have added another level to the top of the diagram: 
\begin{equation}
\xymatrix@=10pt{
 &&    & \bullet\ar@{.>}[dl]\ar@{.>}[dr] & & &\bullet\ar@{.>}[dl]\ar@{.>}[dr] &  &  &\bullet\ar@{.>}[dl]\ar@{.>}[dr]    &  &  &  \\
    & \bullet  \ar[r]\ar[dl] & \bullet \ar[dr] &    &\bullet   \ar[r]\ar[dl] & \bullet  \ar[dr] &    & \bullet  \ar[r]\ar[dl]
 & \bullet  \ar[dr] &   & \bullet \ar[r]\ar[dl] & \bullet  \ar[dr] &      \\
A &  &  &  \bullet &  &   &  \bullet  &  &  & \bullet  &  &   & B                                         
}
\end{equation}
Then we apply
Lemma \ref{zzzlemma}
to lift each diagonal refactorization on the old 
top row by a splitting to the new top row: 
\begin{equation}
\xymatrix@=10pt{
 & &\bullet\ar@{.>}[dl]\ar@{.>}[r] & \bullet\ar[dl]\ar[dr] &  &\bullet\ar@{.>}[dl]\ar@{.>}[r] &\bullet\ar[dl]\ar[dr]
 &  &\bullet\ar@{.>}[dl]\ar@{.>}[r]  &\bullet\ar[dl]\ar[dr]\ar@{.>}[r]    & \bullet \ar@{.>}[dr]&    \\
    & \bullet  \ar[r]\ar[dl] & \bullet \ar[dr] &    &\bullet   \ar[r]\ar[dl] & \bullet  \ar[dr] &    & \bullet  \ar[r]\ar[dl]
 & \bullet  \ar[dr] &   & \bullet \ar[r]\ar[dl] & \bullet  \ar[dr] &      \\
A &  &  &  \bullet &  &   &  \bullet  &  &  & \bullet  &  &   & B                                         
}
\end{equation}
When there is a choice, for definiteness (only) we make the choice to lift with a 
row splitting.  

Iterating this pair of moves  $\ell -1$ additional times, 
we produce a diagram with $\ell +1$ horizontal levels.  
 For $\ell =4$, this is the following diagram, in which
 we insert some matrix 
names (whose generalizations to arbitrary $\ell$
should be clear) and 
for visual 
simplicity  suppress arrowheads and bullets.  
\begin{equation}
\xymatrix@=10pt{
&&&& C_4\ar@{-}[dl]\ar@{-}[r] & \ar@{-}[dl]\ar@{-}[dr] \ar@{-}[r]  &\ar@{-}[r]\ar@{-}[dr]   &  \ar@{-}[dr]\ar@{-}[r]  &  
E_4\ar@{-}[dr] &&  &  &  \\
&&&  C_3\ar@{-}[dl]\ar@{-}[r] & \ar@{-}[dl]\ar@{-}[dr] & & \ar@{-}[dl]\ar@{-}[r]  &\ar@{-}[dl]\ar@{-}[dr] \ar@{-}[r]       &  \ar@{-}[dr]\ar@{-}[r] &
E_3\ar@{-}[dr] &&      \\
&&  C_2\ar@{-}[dl]\ar@{-}[r] & \ar@{-}[dl]\ar@{-}[dr] & & \ar@{-}[dl]\ar@{-}[r]  &\ar@{-}[dl]\ar@{-}[dr]     &  &  \ar@{-}[dl]\ar@{-}[r] 
 &\ar@{-}[dl]\ar@{-}[dr] \ar@{-}[r]    &E_2\ar@{-}[dr]  &  &      \\
    & C_1 \ar@{-}[r]\ar@{-}[dl] &  \ar@{-}[dr] &    &  \ar@{-}[r]\ar@{-}[dl] &  \ar@{-}[dr] &    &  \ar@{-}[r]\ar@{-}[dl] &  \ar@{-}[dr] & 
  &  \ar@{-}[r]\ar@{-}[dl] & E_1 \ar@{-}[dr] &      \\
A &                 &           & &                 &           &  &                 &       
   & &                 &           & B                                         
}
\end{equation}
Define $A'=C_{\ell}$ and $B'=E_{\ell}$.
The diagonal matrix $D$ such that $D^{-1}A'D=B'$ is produced by composing 
the $\ell$ diagonal refactorizations on the top level of the diagram. 
 If $A$ is primitive, then $C$ is primitive, because $C$ is nondegenerate 
and SSE over $\mathbb R_+$ to $A$. 
\end{proof}

%\end{comment} 

\section{The Centralizer} \label{centralizersec}
        
\begin{definition} Given 
an $n\times n$ matrix $A$ over $\mathbb R$,  we let 
$\za$ denote $\{B\in \textnormal{GL}(n,\mathbb R): AB=BA\}$, the centralizer 
of $A$ in $\textnormal{GL}(n,\mathbb R)$. 
 Let $\textnormal{GL}_+(n, \mathbb R)$ denote the connected component of the 
identity in $\textnormal{GL}(n, \mathbb R)$ (the matrices with positive determinant). 
Define
$\zaa $ to be $\za \cap \textnormal{GL}_+(n, \mathbb R)$,
\end{definition} 

This section provides some background and notation 
for $\za$, needed for the results to come on 
strong shift equivalence over nondiscrete unital subrings of $\mathbb R$. 

In the next lemma,  $\mathcal U$ could be for example a field, or it 
could be obtained 
from a real algebraic number ring by inverting all but finitely
many primes.

\begin{lemma}[Centralizer Lemma]\label{centralizerlemma}
 Suppose that $\mathcal U$ is a dense unital subring of 
$\mathbb R$ which contains an ideal $J\neq \mathcal U$ 
such that every  element of $\mathcal U$ outside $J$ is a 
unit in $\mathcal U$.  Suppose that $A $ is an $n\times n$
 matrix over $\mathcal U$. 

Then every connected component of $\textnormal{Cent}_{\mathbb R}(A)$ 
contains an element of $\textnormal{GL}(n,\mathcal U)$. 
\end{lemma}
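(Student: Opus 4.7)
The plan is to show that the set $S := \{B \in V \cap M_n(\mathcal{U}) : \det B \text{ is a unit of } \mathcal{U}\}$ is dense in the real vector space $V := \{B \in M_n(\mathbb{R}) : AB = BA\}$. Since $\textnormal{Cent}_{\mathbb{R}}(A) = V \cap GL_n(\mathbb{R})$ is open in $V$ with open connected components, any dense subset of $V$ meets every component, and by Cramer's rule each element of $S$ lies in $GL(n, \mathcal{U})$.

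First I would fix a convenient parametrization of $V$ by coordinates from $\mathcal{U}$. Let $K \subseteq \mathbb{R}$ be the fraction field of $\mathcal{U}$ and set $V_K := \{B \in M_n(K) : AB = BA\}$. Since base extension preserves the rank of the $K$-linear map $B \mapsto AB - BA$, we have $\dim_K V_K = \dim_{\mathbb{R}} V =: d$. Complete $B_1 := I$ to a $K$-basis $B_1, \ldots, B_d$ of $V_K$, then scale $B_2, \ldots, B_d$ by nonzero elements of $\mathcal{U}$ to clear denominators so that each $B_i$ has entries in $\mathcal{U}$; the resulting list remains a $K$-basis of $V_K$ and hence an $\mathbb{R}$-basis of $V$. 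The function $P(y_1, \ldots, y_d) := \det(\sum_{i} y_i B_i)$ is then a polynomial in $\mathcal{U}[y_1, \ldots, y_d]$ with $P(1, 0, \ldots, 0) = 1$, so its reduction $\overline{P} \in (\mathcal{U}/J)[y_1, \ldots, y_d]$ is a nonzero polynomial over the field $\mathcal{U}/J$.

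The density step splits on whether $J$ is trivial. Either $J = \{0\}$, or $J$ is dense in $\mathbb{R}$: for any nonzero $j \in J$ the set $\mathcal{U} \cdot j \subseteq J$ is dense in $\mathbb{R}$ because $\mathcal{U}$ is. Fix a connected component $C$ of $\textnormal{Cent}_{\mathbb{R}}(A)$, an element $B^* = \sum y_i^* B_i \in C$, and a Euclidean ball $U \subseteq \mathbb{R}^d$ about $y^*$ small enough that $\sum_i y_i B_i \in C$ whenever $y \in U$. If $J$ is dense, the coset $(1, 0, \ldots, 0) + J^d \subseteq \mathcal{U}^d$ is dense in $\mathbb{R}^d$, hence meets $U$, and every $y$ in this coset satisfies $P(y) \equiv P(1, 0, \ldots, 0) = 1 \pmod{J}$ because $P$ has coefficients in $\mathcal{U}$. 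If instead $J = \{0\}$, then $\mathcal{U}$ is an infinite dense subfield of $\mathbb{R}$, and a standard induction on $d$ (a nonzero univariate polynomial over the infinite field $\mathcal{U}$ has only finitely many roots, and $\mathcal{U}$ is dense in $\mathbb{R}$) yields $y \in U \cap \mathcal{U}^d$ with $P(y) \neq 0$. In either case $B := \sum_i y_i B_i$ lies in $V \cap M_n(\mathcal{U}) \cap C$ with $\det B = P(y)$ a unit of $\mathcal{U}$, so Cramer's rule gives $B^{-1} \in M_n(\mathcal{U})$ and $B \in GL(n, \mathcal{U}) \cap C$.

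The main obstacle is the case when $\mathcal{U}/J$ is a small finite field. In this regime a one-parameter perturbation such as $B_0 + tI$ can fail completely: the monic polynomial $\det(B_0 + tI) \in \mathcal{U}[t]$ of degree $n$ may reduce modulo $J$ to a polynomial vanishing on every element of $\mathcal{U}/J$, so no scalar correction works. The $d$-variable parametrization anchored at the identity circumvents this, since $\overline{P}$ is manifestly nonzero at $(1, 0, \ldots, 0)$, and density of the coset $(1, 0, \ldots, 0) + J^d$ (or infinitude of $\mathcal{U}$ when $J = \{0\}$) supplies points both near $B^*$ and with $\det$ a unit.
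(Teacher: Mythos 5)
Your proposal is correct and uses the same essential mechanism as the paper's proof: in both, the key move is to perturb the identity by elements of $M_n(J)$ so that the determinant stays congruent to $1$ modulo $J$ and is therefore a unit, combined with density of $J$ when $J\neq\{0\}$ and the field case when $J=\{0\}$. Your basis parametrization anchored at $B_1=I$ and the paper's construction $M=(1-c)I+cY$ with $c\in J$ near $1$ are two packagings of the same congruence argument.
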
 

\begin{proof} 
Let $\mathbb F$ denote the field of fractions of $\mathcal U$. 
The set of (not necesarily invertible) 
real matrices which commute with $A$, as the solution 
set of $AX-XA=0$, is a real vector space $V_{\mathbb R}$ in 
which the matrices over $\mathbb F$ are an $\mathbb F$ 
vector space $V_{\mathbb F}$ of equal dimension.
Thus  $V_{\mathbb F}$  is dense in 
$V_{\mathbb R}$. 
It suffices to show $\textnormal{GL}(n,\mathcal U) \cap V_{\mathbb R}$ 
is dense in $ V_{\mathbb F}$ (in which case   
it is 
also dense in every connected component of 
$\textnormal{GL}(n,\mathbb R) \cap V_{\mathbb R}$, which is  
$\textnormal{Cent}_{\mathbb R}(A)$).  

If $J=\{0\}$, then $\mathcal U$ is a field. 
Then any element of 
$V_{\mathbb F}$ close to an element of 
$\textnormal{GL}(n,\mathbb R) \cap V_{\mathbb R}$ has  nonzero 
determinant and thus lies in 
$\textnormal{GL}(n,\mathcal U)$. 

So, let $J$ be a proper ideal 
of $\mathcal U$. Then $J$ is dense in $\mathbb R$. 
Suppose  $X\in V_{\mathbb F}$, 
and $\epsilon >0$.  Pick a nonzero $d_1\in \mathcal U$ 
such that $d_1X$ has all entries in $\mathcal U$. 
Pick $d_2\in \mathcal U$ such that $|d_1d_2-1|< \epsilon$. Let 
$Y=d_1d_2X$. Then $Y$ has entries in $\mathcal U$ and 
$||Y-X||< \epsilon ||X||$. 
Pick $c$ in $J$ such that $|1-c|<\epsilon$. 
Let $M= (1-c)I +cY$. Then $M\in V_{\mathbb F}$ and 
\[
||Y-M||
= ||(1-c)(Y-I) || \leq \epsilon ||Y-I||\ . 
\] 
Since  $\det(M)= \det(I+c(Y-I))
 \equiv 1 \mod J$, we also have 
$M\in \textnormal{GL}(n,\mathcal U)$. 
\end{proof} 

For completeness we recall an example from \cite{KR4}.

\begin{example} \label{badcentralizer} 
Let $U=\mathbb Z[1/p]$, where $p$ is a prime 
(e.g., 5)  which does not split in the algebraic number ring 
$Z[\sqrt 3]$.
Let $A$ be a nonzero $2\times 2$ matrix of the form 
$c_1I + c_2B$, with $c_1, c_2$ in $\mathcal U$, where 
$B= \left(\begin{smallmatrix} 0 & 3 \\1 & 0
\end{smallmatrix} \right)$. 

Then $\textnormal{GL}(2,\mathcal U)$ 
does not intersect every connected component of  
$\textnormal{Cent}_{\mathbb R}(A)$.  
If $C$ is the $3\times 3 $ matrix $B\oplus 1$, then  
$\textnormal{GL}(3,\mathcal U)$ 
does not intersect every connected component of  
$\textnormal{Cent}^+_{\mathbb R}(C)$.  
\end{example} 
\begin{proof} 
There is an isomorphism of fields from $\mathbb Q[\sqrt 3]$ to 
$\mathbb Q[A]$ induced by 
%$1\mapsto I$ and 
$\sqrt 3 \mapsto B$. 
A fundamental unit for the algebraic number 
ring $\mathbb Z[\sqrt 3]$ is  $2+\sqrt 3$, which has positive norm 1. 
If $p$ is an odd prime and $p>3$ and $3$ is not a 
square mod $p$ (for example $p=5$), 
then  $p$ does not split in $\mathbb Z[\sqrt 3]$  
 \cite[p.74]{Ma}.

 The
matrix $A$ has distinct real eigenvalues and can be diagonalized over the reals. When
diagonalized, its centralizer becomes all diagonal matrices and is all linear 
combinations over $\mathcal U$ 
of $I,A$. So that is also true when it is not diagonalized. The real centralizer will be a
direct sum of two copies of the reals and has four components. Some components have negative
determinant. The centralizer of $A$ 
within the $2\times 2$ matrices over $U$ consists of linear combinations of $I,A$
over $U$, and is isomorphic as a ring to the quadratic number ring $R=\mathbb Z[\sqrt 3]$ 
with the prime $p$ inverted. By assumption $p$ is prime in $R$, so any unit of 
$\mathcal U$ has the form $p^m(2+\sqrt 3)^n$ for some integers $m,n$. The norms of 
all units in $\mathcal U$ are positive, which translates to the determinants of 
all elements of $\textnormal{GL}(2,\mathcal U)$ in the centralizer of $A$ being positive. 
Therefore the negative determinant 
components of $\textnormal{Cent}_{\mathbb R}(A)$ will not intersect 
$\textnormal{GL}(2,\mathcal U)$. 

A matrix in the centralizer of  $C$ will act like a matrix in the centralizer 
of $A$ together with multiplication by a  scalar on the 
fixed direction. A negative-determinant component of 
$\textnormal{Cent}_{\mathbb R}(A)$, with multiplication by a negative 
number on the fixed direction, will yield a positive-determinant 
component  of $\textnormal{Cent}_{\mathbb R}(C)$ which does not intersect 
$\textnormal{GL}(3,\mathcal U)$. 
\end{proof}

\begin{definition} \label{gamma}
Given a square real matrix $A$, 
let $\mathcal J(A)$ denote the set of pairs 
$(\lambda, j)$ such that $ \lambda \in \mathbb R$, 
$j\in \mathbb N$ and the Jordan form of $A$ contains a 
$j\times j$ Jordan block for $\lambda$. Then define   
\begin{align*} 
\gamma (A) =& |\mathcal J(A)| 
%\\ 
%\kappa (A) =& |\{(\lambda, k)\in \mathcal J(A): \lambda \neq 0\}|
\ .
\end{align*} 
For $A$ $n\times n$ over $\mathbb R$ 
and $(\lambda ,j)\in \mathcal J(A)$, define 
the vector space 
\[
V(A,\lambda, j) =
\{ x(A-\lambda I)^{j-1} \in \textnormal{ker}(A): 
x\in \mathbb R^n\setminus \textnormal{image}(A)\}\cup \{0\} \ .
\]
A matrix $B$ in $\za$ maps each  
$V(A,\lambda, j)$ to itself. Let $\sigma (\lambda ,j)$ be the sign of the
determinant of this map determined by $B$. 
\end{definition} 

Recall, $\pi_0(X)$ is the set of connected components of a topological 
space $X$. 
\begin{proposition} \label{gammabound}
Let $A$ be an $n\times n$ real matrix. Then with $\gamma $ as defined above, 
\[
|\pi_o(\za )| = 2^{\gamma (A)} \ . 
\] 
Two matrices lie in the same component if and only if 
they have the same sign $\sigma(\lambda ,j)$ for all
$(\lambda ,j)$ in $\mathcal J(A)$.
\end{proposition}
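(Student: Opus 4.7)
The plan is to realize $\za = \mathcal{A}^*$ as the group of units of the commutant algebra $\mathcal{A} = \{B \in M_n(\mathbb{R}) : AB = BA\}$ and to compute its component structure from the Jordan decomposition of $A$. Since conjugation by a fixed invertible matrix gives a topological isomorphism between centralizers, I would first reduce to the case where $A$ is in real Jordan canonical form. Then $\mathbb{R}^n$ splits as a direct sum of primary subspaces, one $V_\lambda$ for each real eigenvalue $\lambda$ and one real $V_{\{\mu,\bar\mu\}}$ for each conjugate pair of non-real eigenvalues, and every element of $\mathcal{A}$ preserves this decomposition. Thus $\mathcal{A} = \prod_\mu \mathcal{A}_\mu$ as a direct product of algebras, $\za = \prod_\mu \mathcal{A}_\mu^*$, and it suffices to compute $\pi_0(\mathcal{A}_\mu^*)$ for each primary piece.

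Next I would identify the Jacobson radical and semisimple quotient of each $\mathcal{A}_\mu$. Letting $N_\mu$ be the nilpotent part of $A|_{V_\mu}$, the radical $J_\mu$ of $\mathcal{A}_\mu$ consists of those endomorphisms that strictly lower the Jordan filtration by powers of $\ker N_\mu$, and in particular $J_\mu$ is a nilpotent ideal. A standard analysis of the commutant of a nilpotent matrix, via the associated graded with respect to this filtration, yields an isomorphism
\[
  \mathcal{A}_\mu / J_\mu \;\cong\; \prod_{j \,:\, m_{\mu,j}>0} M_{m_{\mu,j}}(\mathbb{F}_\mu),
\]
where $m_{\mu,j}$ denotes the number of Jordan blocks of size $j$ contributed by $\mu$, $\mathbb{F}_\mu = \mathbb{R}$ when $\mu$ is a real eigenvalue, and $\mathbb{F}_\mu = \mathbb{C}$ when $\mu$ represents a complex conjugate pair. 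The $j$-th factor records the induced action on the $\mathbb{F}_\mu$-space of ``bottoms'' of the size-$j$ Jordan blocks; when $\mu = \lambda$ is real, this space is exactly $V(A, \lambda, j)$ of Definition~\ref{gamma}, which therefore has dimension $m_{\lambda,j}$.

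Because $J_\mu$ is a nilpotent ideal of a finite-dimensional algebra, units of $\mathcal{A}_\mu/J_\mu$ lift to units of $\mathcal{A}_\mu$, and the projection $\mathcal{A}_\mu^* \to (\mathcal{A}_\mu/J_\mu)^*$ is surjective with fibers the translates of $1 + J_\mu$. Each such fiber is contractible via the straight-line homotopy $(t,r)\mapsto 1 + tr$, so this projection is a homotopy equivalence and induces a bijection on $\pi_0$. Since $\pi_0 GL_m(\mathbb{C})$ is trivial while $|\pi_0 GL_m(\mathbb{R})| = 2$, distinguished by the sign of the determinant, we obtain $|\pi_0(\mathcal{A}_\mu^*)| = 2^{\#\{j : m_{\lambda,j}>0\}}$ when $\mu = \lambda$ is real and $1$ otherwise. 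Multiplying over $\mu$ gives $|\pi_0(\za)| = 2^{\gamma(A)}$. For the component description, the image of $B \in \za$ in the $(\lambda,j)$-factor $GL_{m_{\lambda,j}}(\mathbb{R})$ is precisely the induced map on $V(A,\lambda,j)$, whose determinant has sign $\sigma(\lambda,j)$, so two centralizer elements lie in the same component iff their invariants $\sigma(\lambda,j)$ agree for every $(\lambda,j) \in \mathcal{J}(A)$.

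The main obstacle is the structural isomorphism for $\mathcal{A}_\mu / J_\mu$ together with the explicit identification of its $j$-th factor with the space $V(A, \lambda, j)$; once this identification is made, both the enumeration and the sign characterization follow mechanically from the semisimple structure.
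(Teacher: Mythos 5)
Your proof is correct and follows the same strategy as the paper's: decompose by generalized eigenspaces into commuting primary pieces, reduce to the centralizer of the nilpotent part, retract onto a semisimple "block-diagonal" quotient, and read off $\pi_0$ from $\pi_0(GL_m(\mathbb R))=\mathbb Z/2$ versus $\pi_0(GL_m(\mathbb C))=1$. The only difference is presentation — you phrase the retraction abstractly via the Jacobson radical $J_\mu$ and the contractibility of $1+J_\mu$, while the paper writes the centralizer of a block-diagonal nilpotent matrix explicitly as a block upper-triangular algebra and deforms the strictly-upper-triangular entries to zero by a linear homotopy — but this is the same mechanism.
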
 
\begin{proof} 
If $A$ is zero, the claim holds because 
$|\pi_o(\textnormal{GL}(n,\mathbb R )| = 2 \ $. Now suppose $A\neq 0$. 
$A$ is a  sum 
of commuting nonzero real matrices $A_{\lambda}$, where $\lambda $ denotes 
a root of $\chi_A$ with nonnegative imaginary part, and 
$A_{\lambda}-\lambda I$ is nilpotent if $ \lambda \in \mathbb R$,  
and $(A_{\lambda}-\lambda I)(A_{\lambda}-\overline{\lambda }I)$ 
is nilpotent if $\lambda \in \mathbb R \setminus \mathbb C$. The 
centralizer $\za$ is homeomorphic to the product 
$\prod_{\lambda}\textnormal{Cent}_{\mathbb R}(A_{\lambda})$. So it 
suffices to show the claim for each $A_{\lambda}$. 

If $\lambda$ is real, then 
$\textnormal{Cent}_{\mathbb R}(A_{\lambda})=
\textnormal{Cent}_{\mathbb R}(A_{\lambda}-\lambda I)$, 
and we can consider  
$\textnormal{Cent}_{\mathbb R}(M)$ for $M$ nilpotent. 
Let $C_j$ denote a matrix of the form $J\otimes I$, where 
$J$ is a  $j\times j$ Jordan block with zero diagonal. Then 
$\textnormal{Cent}_{\mathbb R}(C_j)$ has a banded form, e.g. 
for $C_3$ a $(3k\times 3k)\times (3k\times 3k)$ matrix,  
\[
C_3= \begin{pmatrix} 0 & I & 0 \\ 0 & 0 & I \\ 0 & 0 & 0 
\end{pmatrix}\ , \qquad 
\textnormal{Cent}_{\mathbb R}(C_3)= \Bigg\{ 
\begin{pmatrix} X & Y & Z \\ 0 & X & Y \\ 0 & 0 & X 
\end{pmatrix}\ : 
X\in \textnormal{GL}(k,\mathbb R)
\Bigg\}\ . 
\] 
Each  $\textnormal{Cent}_{\mathbb R}(C_j)$ has exactly 
two connected components, depending on the sign of 
the determinant of the repeated diagonal block, 
which is $\sigma(\lambda ,j)$. Up to 
conjugacy, the nilpotent matrix $M$ will be block 
diagonal of the form diag($M_1, M_2, \dots , M_j)$), i.e. 
\[ 
M= \begin{pmatrix} 
M_1 & 0   & 0     &  \dots  &  0 \\ 
0   & M_2 & 0     & \dots   & 0 \\ 
0   & 0   & \dots & \dots   & 0 \\ 
0   & 0   & 0     & \dots   & M_j
\end{pmatrix}\ 
\] 
where $M_i=C_{n(i)}$, $n(1)>n(2)> \dots > n(j)$ and 
$j=\gamma (M)$. 
$\textnormal{Cent}_{\mathbb R}(M)$ is a 
subset of the set of block upper triangular 
matrices such that an element of 
$\textnormal{Cent}_{\mathbb R}(M_i)$ 
occupies  the $i$th diagonal 
block. There is a homotopy from 
$\textnormal{Cent}_{\mathbb R}(M)$ to the set of its 
block diagonal matrices, which is homeomorphic to 
$\prod_j\textnormal{Cent}_{\mathbb R}(M_j)$, which 
has $2^j$ connected components. 

If $\lambda$ is complex and $A_{\lambda}$ is $(2k)\times (2k)$,
 then let $M$ be the $k\times k$ complex matrix 
which is the direct sum of the $\lambda$-Jordan blocks 
in the Jordan form of $A_{\lambda}$ (or $A$). Then 
$\textnormal{Cent}_{\mathbb R}(A_{\lambda})$ is homeomorphic to 
$\textnormal{Cent}_{\mathbb C}(M)$, 
the centralizer of $M$ in $\textnormal{GL}(k , \mathbb C)$. 
 The triangular structure 
described earlier applies to $\textnormal{Cent}_{\mathbb C}(M)$.
However, because $\textnormal{GL}(n,\mathbb C)$ is connected for every $n$, 
we have that $\textnormal{Cent}_{\mathbb C}(M)$ is connected. 
\end{proof}

\section{From paths of similar matrices to strong shift equivalence}
\label{frompathssec}

In this section, we will see how to pass from a path of positive conjugate 
matrices to a strong shift equivalence through positive matrices.   
(The problem of finding such a path we consider later.) 
For completeness, we begin with  a proof for the path lifting 
Proposition \ref{pathlift}, for which we make some preparation. 
Below, the particular choice of norm for $\mathbb R^n$ is 
unimportant. The next lemma was proved in \cite{KR3} with a 
citation to \cite{GR}; we include a proof for completeness. 

\begin{lemma} 
\label{closeconjugation}
Suppose $B$ is a matrix over $\mathbb R$ 
and $\epsilon > 0$. 
Then there exists $\delta >0 $ such that 
if  $||B-B'||<\delta$ and 
$B'$ is conjugate over $\mathbb R$ to $B$, 
then there exists $U$ in $\textnormal{GL}(n,\mathbb R)$ such that 
$U^{-1}BU = B'$ and $||U-I||<\epsilon$. 
\end{lemma}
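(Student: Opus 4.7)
The plan is to apply the inverse function theorem to the conjugation map near the identity. Consider the smooth map $T \colon GL(n,\mathbb R) \to M_n(\mathbb R)$ defined by $T(U) = U^{-1}BU$, which satisfies $T(I) = B$ and has derivative $dT_I(X) = BX - XB$ at $I$. The kernel of $dT_I$ is the Lie algebra centralizer $\mathfrak c(B) = \{X \in M_n(\mathbb R) : BX = XB\}$, so I fix any linear complement $W$ to $\mathfrak c(B)$ in $M_n(\mathbb R)$ (for instance the Frobenius orthogonal complement).

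Restrict $T$ to the affine slice $\{I + X : X \in W\}$ to obtain $\tilde T \colon W \to M_n(\mathbb R)$, $\tilde T(X) = (I+X)^{-1}B(I+X)$, defined on a neighborhood of $0$. By construction $d\tilde T_0 = dT_I|_W$ is injective, with image equal to the tangent space at $B$ of the conjugacy orbit $\mathcal O_B = \{V^{-1}BV : V \in GL(n,\mathbb R)\}$. The local immersion theorem (i.e.\ the inverse function theorem applied after choosing a splitting of the target) then supplies a neighborhood $W_0$ of $0$ in $W$ on which $\tilde T$ is a homeomorphism onto an embedded submanifold $\mathcal N \subset M_n(\mathbb R)$ of dimension $\dim W$, with $\mathcal N \subset \mathcal O_B$ and a continuous local inverse $\tilde T^{-1}$.

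The conjugacy orbit $\mathcal O_B \cong GL(n,\mathbb R)/\textnormal{Cent}_{\mathbb R}(B)$ is itself a smooth submanifold of $M_n(\mathbb R)$ of dimension $n^2 - \dim \mathfrak c(B) = \dim W$. Since $\mathcal N \subset \mathcal O_B$ and both have the same dimension, $\mathcal N$ is relatively open in $\mathcal O_B$, so it contains a neighborhood of $B$ in $\mathcal O_B$. Given $\epsilon > 0$, shrink $W_0$ so that $\|X\| < \epsilon$ for every $X \in W_0$, and then pick $\delta > 0$ small enough that every $M \in \mathcal O_B$ with $\|M - B\| < \delta$ lies in $\tilde T(W_0)$. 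For any $B'$ conjugate to $B$ with $\|B' - B\| < \delta$, the matrix $U := I + \tilde T^{-1}(B')$ lies in $GL(n,\mathbb R)$, satisfies $U^{-1}BU = B'$, and has $\|U - I\| < \epsilon$.

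The only mildly delicate point is the identification of $\mathcal N$ with a relative neighborhood of $B$ in $\mathcal O_B$; this follows from the dimension comparison above, or alternatively from a direct slice argument showing that the product map $\textnormal{Cent}_{\mathbb R}(B) \times W \to GL(n,\mathbb R)$, $(C, X) \mapsto C(I+X)$, has invertible derivative at $(I, 0)$ and hence is a local diffeomorphism, so every $V \in GL(n,\mathbb R)$ close to $I$ factors uniquely as $C(I+X)$ with $X$ small, yielding the same conjugator $U = I + X$.
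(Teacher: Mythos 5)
Your plan --- take an affine slice $I + W$ transverse to the centralizer, apply the immersion/inverse function theorem to $\tilde T$, and read a conjugator off the local inverse --- is a genuinely different route from the paper's, which constructs the conjugator directly from a Jordan basis of $B$ and a nearby Jordan basis of $B'$. But as written it has a gap at the crucial step. You assert that $\mathcal O_B \cong GL(n,\mathbb R)/\textnormal{Cent}_{\mathbb R}(B)$ ``is itself a smooth submanifold of $M_n(\mathbb R)$,'' and then use a dimension count to conclude that the slice image $\mathcal N$ is relatively open in $\mathcal O_B$. An orbit of a Lie group action is automatically an \emph{immersed} submanifold, but the dimension comparison only gives openness of $\mathcal N$ in the orbit's \emph{intrinsic} topology; to get openness in the \emph{subspace} topology inherited from $M_n(\mathbb R)$ --- which is what the hypothesis $||B'-B||<\delta$ refers to --- you need the orbit to be \emph{embedded}, equivalently locally closed, equivalently that the bijection $GL(n,\mathbb R)/\textnormal{Cent}_{\mathbb R}(B)\to \mathcal O_B$ is a homeomorphism. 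That last statement is exactly what the paper's Proposition~\ref{fiberprop} deduces \emph{from} Lemma~\ref{closeconjugation}; assuming it here makes the argument circular. Your alternative factorization $V = C(I+X)$ has the same issue: it handles $V$ near $I$, whereas the hypothesis gives $B'$ near $B$, and absent local closedness a conjugate can be ambient-close to $B$ while every conjugator for it is far from $\textnormal{Cent}_{\mathbb R}(B)$.

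The gap can be closed, but it requires a real input: conjugacy orbits of $GL(n,\mathbb R)$ on $M_n(\mathbb R)$ are locally closed. One can get this, for instance, from the facts that the complex orbit of $B$ is Zariski-locally-closed and that real matrices conjugate over $\mathbb C$ are conjugate over $\mathbb R$ (so the real orbit is the intersection of the complex one with $M_n(\mathbb R)$), or from semialgebraicity of the orbit together with the strict dimension drop on boundary orbits. Once that is supplied, your dimension argument is sound. The paper avoids invoking any of this machinery: its proof perturbs a Jordan basis for $B$ to one for $B'$ using the continuity of kernels of fixed-rank matrices (the ``Claim'' in the proof), and assembles $U$ from the resulting basis correspondence, which yields $||U-I||<\epsilon$ by construction. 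Both routes are viable; yours trades the explicit perturbation argument for a citation to the local-closedness of algebraic orbits.
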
 

\begin{proof} 
We begin with a Claim: 
Suppose $\epsilon > 0$ and $M$ is an $n\times n$ matrix 
of rank $r$ over scalar field $\mathbb C$ or $\mathbb R$, 
and $u_1, \dots , u_{n-r}$ is a basis of $\textnormal{ker}(M)$. 
Then there is $\delta> 0$ such that for any $M'$ with 
$||M-M'||<\delta $ and 
$\textnormal{rank}(M')=r$, there is a basis 
$u'_1, \dots , u'_{n-r}$ of $\textnormal{ker}(M')$ with 
$||u_j-u'_j||<\epsilon ||u_j||$, for each $j$. 

{\it Proof of Claim.} Without loss of generality, suppose 
$0<r<n$. To set notation, we use row vectors
for  $\textnormal{ker}M=\{v:vM=0\}$. 
Let $\textnormal{proj}_W$ denote orthogonal projection onto $W$. Let 
$\textnormal{col} M$ denote the vector space generated by column 
vectors of $M$. 

Within the set of $n\times n$ matrices $M$ of rank $r$, 
the map $\textnormal{proj}_{\textnormal{col}M}$ varies continuously with 
$M$. (For $M'$ near a given $M$, the same $r$ linearly independent 
columns can be used to construct an orthonormal basis with the 
Gram-Schmidt algorithm.) So we may suppose $\delta$ is small enough 
that $||\textnormal{proj}_{\textnormal{col}M'}(v)||
\leq  ||\textnormal{proj}_{\textnormal{col}M}(v)|| + \epsilon ||v||$, for all $v$. 
Set $u'_j= \textnormal{proj}_{\textnormal{ker}M'}(u_j)$. Considering 
$(\textnormal{ker}M')^{\perp}=\{v^{tr}:v\in \textnormal{col}M'\}$, we have 
\begin{align*} 
||u_j-u'_j||\  &  =\  ||\textnormal{proj}_{(\textnormal{ker}M')^{\perp}}(u_j) || \ 
= \ ||\textnormal{proj}_{\textnormal{col}M'}(u^{\tr}_j)|| \\
&  \leq \ 
 ||\textnormal{proj}_{\textnormal{col}M}(u^{\tr}_j)||+ 
\epsilon  ||u^{\tr}_j|| \ 
= \  \epsilon  ||u^{\tr}_j|| \ . 
\end{align*} 
This proves the claim. 

Now suppose $\lambda $ is an eigenvalue of $B$ and 
$\mathcal J_{\lambda}= \{u_1, \dots , u_s\}$ is 
a Jordan basis for the restriction of $B-\lambda I$ to 
$\textnormal{ker} (B-\lambda I)^n$. 
($\cup_{\lambda}\mathcal J_{\lambda}$ is a Jordan basis for 
$B$.) 
Define
\[
\mathcal J_{\lambda}(t)= 
\{ u_i \in \mathcal J_{\lambda}: 
u_i(B-\lambda I)^t=0\textnormal{ and }
u_i \notin \textnormal{image}(B-\lambda I)
\}\ . 
\] 
(The number of vectors in
$\mathcal J_{\lambda}(t)$ equals the number of $t\times t$ 
Jordan $\lambda$-blocks  in the Jordan 
form of $B$.) 
For $B'$ close to $B$, 
let $\{u'_1, \dots , u'_s\}$ be the nearby basis of 
$\textnormal{ker} (B'-\lambda I)^n$, given by the Claim. 
With $B'$ close enough to $B$, 
\[
u_i \in \mathcal J_{\lambda}(t) 
\implies 
u'_i(B'-\lambda I)^{t-1}\neq 0 \ . 
\] 
Consider the $t$ in decreasing order, we then deduce 
from the conjugacy of $B$ and $B'$ that also 
\[
u_i \in \mathcal J_{\lambda}(t) 
\implies 
u'_i(B'-\lambda I)^{t}= 0 \ . 
\] 
For $\lambda $ real, those vectors $u_i, u'_i$ can be chosen 
to be real, and the map on 
$\mathcal J_{\lambda}$ defined by 
\[
u_i(B-\lambda I)^j \mapsto u'_i(B'-\lambda I)^j\ , \quad 
\textnormal{if } u_i\in 
\mathcal J_{\lambda}(t) \textnormal{ and } 0\leq j < t \ , 
\] determines a map $\ker(B-\lambda I)^n\to 
\ker(B'-\lambda I)^n$ which conjugates the restrictions of $B$ 
and $B'$ to these invariant subspaces. 
For $\lambda $ not real, say with positive imaginary part, 
pull back the complex conjugacy to define a map from a real 
Jordan form basis for $B$ for $\ker(B-\lambda I)^n(B-\bar\lambda I)^n$ 
to a corresponding nearby real Jordan basis for $B'$ 
for $\ker(B'-\lambda I)^n(B'-\bar\lambda I)^n$. 

The matrix $U $ in $\textnormal{GL}(n,\mathbb R)$ 
implementing these maps on invariant subspaces  
induces a conjugacy of $B$ and $B'$ and is close to the identity. 
\end{proof}

Below, we suppose $A$ is an $n\times n$ real matrix, 
$\textnormal{Cent}(A)= \{U\in \textnormal{GL}(n,\mathbb R): UA = AU\} $; 
$\textnormal{Conj}(A)= \{U^{-1}AU : U\in \textnormal{GL}(n,\mathbb R)\}$; 
$\gamma : U \mapsto  U^{-1}AU$; the topology of 
$\textnormal{Conj}(A)$ is by the metric induced by a matrix 
norm, and the image of $\pi$ has the quotient topology. 

\begin{prop} \label{fiberprop} 
The map $\phi$ which makes the following diagram commute 
\[
\xymatrix@=
10pt{
\textnormal{GL}(n,\mathbb R)
\ar^{\pi}[rd]  
\ar^{\gamma}[dd] 
&  \\ 
& \textnormal{GL}(n,\mathbb R)/\textnormal{Cent}(A) \ar_{\phi}[dl]   \\ 
\textnormal{Conj}(A) & 
}
\]
is a homeomorphism. 
\end{prop}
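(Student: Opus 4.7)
The plan is to verify that $\phi$ is a well-defined continuous bijection, and then show that $\phi^{-1}$ is continuous, with the last step being the essential one.

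First I would record that $\gamma$ factors through the quotient because $U^{-1}AU = V^{-1}AV$ if and only if $VU^{-1} \in \textnormal{Cent}(A)$; so $\gamma(U)=\gamma(V)$ exactly when $U,V$ lie in the same coset of $\textnormal{Cent}(A)$, and $\phi$ is a well-defined bijection from the quotient onto $\textnormal{Conj}(A)$. Continuity of $\phi$ is automatic from continuity of $\gamma = \phi \circ \pi$ and the universal property of the quotient topology: a set $W \subseteq \textnormal{GL}(n,\mathbb R)/\textnormal{Cent}(A)$ is open iff $\pi^{-1}(W)$ is open in $\textnormal{GL}(n,\mathbb R)$, so $\phi^{-1}(V)$ open iff $\gamma^{-1}(V)$ open, which holds for every open $V$ in $\textnormal{Conj}(A)$ by continuity of $\gamma$.

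The main step is continuity of $\phi^{-1}$, and this is precisely what Lemma \ref{closeconjugation} was built for. Fix $B \in \textnormal{Conj}(A)$ and a neighborhood $\mathcal N$ of $\phi^{-1}(B)$ in the quotient. Pick any representative $U \in \gamma^{-1}(B)$. Because $\pi^{-1}(\mathcal N)$ is open in $\textnormal{GL}(n,\mathbb R)$ and contains $U$, there is $\epsilon > 0$ with the $\epsilon$-ball around $U$ contained in $\pi^{-1}(\mathcal N)$. Choose $\epsilon' > 0$ small enough that $\|W - I\| < \epsilon'$ implies $\|UW - U\| < \epsilon$ (possible since left multiplication by $U$ is continuous). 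Lemma \ref{closeconjugation} then supplies a $\delta > 0$ such that whenever $B' \in \textnormal{Conj}(A)$ satisfies $\|B' - B\| < \delta$, there exists $W \in \textnormal{GL}(n,\mathbb R)$ with $W^{-1} B W = B'$ and $\|W - I\| < \epsilon'$.

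For such $B'$, the element $UW$ satisfies $(UW)^{-1} A (UW) = W^{-1} B W = B'$ and $\|UW - U\| < \epsilon$, so $UW \in \pi^{-1}(\mathcal N)$, and hence $\phi^{-1}(B') = [UW] \in \mathcal N$. This proves $\phi^{-1}$ is continuous at $B$, and since $B$ was arbitrary, $\phi$ is a homeomorphism. The only real content is the appeal to Lemma \ref{closeconjugation}: the whole proof reduces to the fact that nearby conjugates can be produced by conjugating matrices close to the identity, which is exactly the hard lemma already established.
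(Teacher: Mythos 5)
Your argument is correct and follows essentially the same route as the paper's: the paper shows $\gamma$ (hence $\phi$) is an open map via Lemma \ref{closeconjugation}, while you show $\phi^{-1}$ is continuous via the same lemma, and these are equivalent formulations of the identical core step. Both proofs also obtain well-definedness and continuity of $\phi$ the same way.
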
 

\begin{proof} 
For $U,V$ in 
$\textnormal{GL}(n,\mathbb R)$, we have 
$\gamma(U)=\gamma(V)$ if and only if $VU^{-1}\in 
\textnormal{Cent}(A)$. So, $\phi$ is a well defined bijection. 
The map $\phi$ is continuous because $\gamma$ is continuous, 
$\pi$ is open and 
$\textnormal{GL}(n,\mathbb R)/\textnormal{Cent}(A)$ has the 
quotient topology. 

It remains to show that $\phi$ is an open map. This holds if 
$\gamma$ is an open map. Suppose $\mathcal V$ is 
an open 
subset of 
$ \textnormal{GL}(n,\mathbb R)$
and  $V \in \mathcal V$. Choose $\epsilon >0$ such that 
$ \mathcal V$ contains the open set 
$\{VU \in \textnormal{GL}(n,\mathbb R): ||U-I||< \epsilon \}$. 
Then $\gamma (\mathcal V)$ contains 
$\{ U^{-1}CU: ||U-I||< \epsilon \} $, which by   
Lemma \ref{closeconjugation} contains 
some neighborhood of 
$C$ in $\textnormal{Conj}(C)
=\textnormal{Conj}(A) $.   
  This shows the map $\gamma$ 
is open and finishes the proof. 
\end{proof} 

%\begin{remark} \label{fiberremark} 
Above, 
$((\textnormal{GL}(n,\mathbb R), \pi ,  
\textnormal{GL}(n,\mathbb R)/\textnormal{Cent}(A)) $ 
is a principal bundle
\cite[Ch. 4.2]{Hus}. 
 The projection $\pi $ is locally trivial: 
for every $x$ in 
$\textnormal{GL}(n,\mathbb R)$, there is a neighborhood  $\mathcal U$ of $x$ 
and a neighborhood $\mathcal V$ of $\pi x$ and a homeomorphism $h: 
\mathcal U \to \mathcal V \times \textnormal{Cent}(A) $ such that 
on  $\mathcal U$, $\pi $ is equal to $h$ followed by 
 projection onto $\mathcal V$, $(v,c)\mapsto v$ .
%By 
%Proposition \ref{fiberprop}, $\gamma$ like $\pi$ has  the path lifting 
%property: if
%$P:[0,1]\to \textnormal{Conj}(A)$ is continuous and $\gamma (U)=P(0)$, 
%then there is a continuous 
%$\widetilde P:[0,1]\to 
%\textnormal{GL}(n,\mathbb R)$ such that 
%$\widetilde P(0)=U$ and 
% $\gamma(\widetilde P(t))= P(t)$ for all $t$. 
% \end{remark} 

\begin{prop} \label{pathlift} 
Suppose $(A_t)_{0\leq t\leq 1}$ is a path 
of conjugate $n\times n$ real matrices, $U\in 
\textnormal{GL}(n,\mathbb R)$ and $U^{-1}A_0U=A_0$. Then there 
is a path $(G_t)$ in $\textnormal{GL}(n,\mathbb R)$ such that 
$G_0=U$ and $G_t^{-1}A_0G_t=A_t$ for $0\leq t \leq 1$. 
\end{prop}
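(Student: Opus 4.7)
The plan is to lift the path $t\mapsto A_t$ using the principal bundle structure established in Proposition \ref{fiberprop}. Concretely, define $\gamma\colon \textnormal{GL}(n,\mathbb R)\to \textnormal{Conj}(A_0)$ by $\gamma(V)=V^{-1}A_0V$. The hypothesis $U^{-1}A_0U=A_0$ says exactly that $\gamma(U)=A_0=A_0$, so we need a continuous $G\colon[0,1]\to\textnormal{GL}(n,\mathbb R)$ with $\gamma\circ G(t)=A_t$ for all $t$ and $G(0)=U$. By Proposition \ref{fiberprop}, $\gamma$ factors as a homeomorphism composed with the locally trivial quotient projection $\pi\colon \textnormal{GL}(n,\mathbb R)\to\textnormal{GL}(n,\mathbb R)/\textnormal{Cent}(A_0)$, so $\gamma$ itself is a locally trivial bundle with fiber $\textnormal{Cent}(A_0)$. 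The problem thus reduces to the standard path lifting property for a locally trivial fibration over $[0,1]$.

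To make the lift explicit I would use compactness of $[0,1]$. For each $t\in[0,1]$, Proposition \ref{fiberprop} (and the comment immediately after it) provides an open neighborhood $\mathcal W_t\subset\textnormal{Conj}(A_0)$ of $A_t$ and a trivialization $h_t\colon \gamma^{-1}(\mathcal W_t)\to \mathcal W_t\times\textnormal{Cent}(A_0)$ with $\gamma$ corresponding to the first projection. Pulling the $\mathcal W_t$ back through $t\mapsto A_t$ and passing to a Lebesgue number, pick a partition $0=t_0<t_1<\cdots<t_k=1$ and indices $i(j)$ such that $A_t\in\mathcal W_{i(j)}$ for all $t\in[t_{j-1},t_j]$.

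Then I would build $G_t$ inductively on the intervals $[t_{j-1},t_j]$. Having defined $G$ on $[0,t_{j-1}]$ with $G(t_{j-1})\in \gamma^{-1}(\mathcal W_{i(j)})$, write $h_{i(j)}(G(t_{j-1}))=(A_{t_{j-1}},c_j)$ for a unique $c_j\in\textnormal{Cent}(A_0)$, and define
\[
G_t \;=\; h_{i(j)}^{-1}\bigl(A_t,\, c_j\bigr),\qquad t\in[t_{j-1},t_j].
\]
This is continuous, satisfies $\gamma(G_t)=A_t$, and matches the previously constructed value at $t_{j-1}$, so the pieces assemble to a continuous global lift with $G_0=U$.

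The only genuinely nontrivial ingredient is the local triviality of $\gamma$, which is handed to us by the principal bundle structure in Proposition \ref{fiberprop}; the rest of the argument is a standard finite induction enabled by the compactness of $[0,1]$. Thus I do not anticipate any serious obstacle beyond carefully setting up the partition and the matching constants $c_j$.
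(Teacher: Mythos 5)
Your proof is correct and follows essentially the same route as the paper: both invoke the principal-bundle/local-triviality structure from Proposition~\ref{fiberprop} and deduce path lifting for $\gamma$ over $[0,1]$. The only difference is that the paper cites the path-lifting property of a locally trivial fibration as known, whereas you spell out the standard compactness/partition argument explicitly, which is a perfectly good elaboration of the same idea.
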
 
\begin{proof} 
The map $\gamma $ has the topological properties of 
the principal bundle projection $\pi$ in  
Proposition \ref{fiberprop}.
The proposition translates to $\gamma$ the path lifting property 
which  $\pi$ enjoys on account of its local triviality as 
a projection. 
%property: if
%$P:[0,1]\to \textnormal{Conj}(A)$ is continuous and $\gamma (U)=P(0)$, 
%then there is a continuous 
%$\widetilde P:[0,1]\to 
%\textnormal{GL}(n,\mathbb R)$ such that 
%$\widetilde P(0)=U$ and 
% $\gamma(\widetilde P(t))= P(t)$ for all $t$. 
\end{proof} 

%Let $GL_+(n, \mathbb R)$ denote the connected component of the 
%identity in $GL(n, \mathbb R)$ (the matrices with positive determinant). 
%Define
%$\zaa $ to be $\za \cap GL_+(n, \mathbb R)$, and  
Let $\mathcal H(I, \zaa)$ denote the homotopy classes 
of paths in $\textnormal{GL}_+(n,\mathbb R)$ from 
the identity to $\zaa$ (by homotopy through paths with initial 
point $I$ and terminal points in $\zaa$).  
In a topological space $X$, $\pi_0(X)$ denotes the set of 
connected components and $\pi_1(x,X)$ denotes the fundamental group 
at basepoint $x$ in $X$. 

\begin{prop} 
Suppose $(A_t)_{0\leq t \leq 1}$ is a 
loop from $A$ to $A$ in $\textnormal{Conj}(A)$. 
Then there is a path $(G_t)$ in $\textnormal{SL}(n, \mathbb R)$ 
such that $G_0=I$ and  
$A_t=(G_t)^{-1}AG_t$, $0\leq t \leq 1$.  Moreover, the homotopy class of the 
loop $(A_t)$  determines both the element of 
$\mathcal H(I, \zaa)$ containing $(G_t)$ and the connected component of 
$\zaa$ which contains $G_1$. The induced maps 
\begin{align*} 
\pi_1(A,\textnormal{Conj}(A)) &\to 
\mathcal H(I, \zaa)\ \ ,  \\ 
\pi_1(A,\textnormal{Conj}(A)) &\to 
\pi_0(\zaa)
\end{align*}  
are bijections. 
\end{prop}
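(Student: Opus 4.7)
The plan has three stages: produce the lift via Proposition \ref{pathlift} together with a scalar normalization, establish well-definedness of both induced maps from the homotopy lifting property furnished by Proposition \ref{fiberprop}, and exhibit explicit inverses.

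First, apply Proposition \ref{pathlift} with starting point $U=I$ to obtain a continuous path $\widetilde G_t$ in $GL(n,\mathbb R)$ with $\widetilde G_0 = I$ and $\widetilde G_t^{-1}A\widetilde G_t = A_t$. Since $t\mapsto\det\widetilde G_t$ is continuous, nowhere zero, and equal to $1$ at $t=0$, it is positive throughout. Setting $G_t := (\det\widetilde G_t)^{-1/n}\widetilde G_t$ yields a path in $SL(n,\mathbb R)$; the scalar factor commutes with $A$, so $G_t^{-1}AG_t = A_t$, and since $A_1=A$ we obtain $G_1\in\textnormal{Cent}(A)\cap SL(n,\mathbb R)\subseteq \zaa$.

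For well-definedness, let $H_{t,s}$ be a homotopy of loops with $H_{t,0}=A_t$, $H_{t,1}=A_t'$, and $H_{0,s}=H_{1,s}=A$. The local triviality of $\pi$ from Proposition \ref{fiberprop} endows the principal $\textnormal{Cent}_{\mathbb R}(A)$-bundle $\gamma\colon GL(n,\mathbb R)\to\textnormal{Conj}(A)$ with a homotopy lifting property. Lift $H$ to $\widetilde G_{t,s}$ with $\widetilde G_{0,s}=I$ and normalize as above to a continuous $G_{t,s}$ in $SL(n,\mathbb R)$. The terminal curve $s\mapsto G_{1,s}$ lies in $\zaa$, and the full family $(t,s)\mapsto G_{t,s}$ is itself a homotopy in $\mathcal H(Id,\zaa)$ between $(G_{t,0})$ and $(G_{t,1})$. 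Hence both the $\mathcal H$-class of $(G_t)$ and the connected component of $\zaa$ containing $G_1$ depend only on the homotopy class $[(A_t)]\in\pi_1(A,\textnormal{Conj}(A))$.

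The bijection with $\mathcal H(Id,\zaa)$ is proved by exhibiting an inverse: given a representative $(G_t)$ with $G_0=I$ and $G_1\in\zaa$, define $A_t := G_t^{-1}AG_t$. This is a loop at $A$ in $\textnormal{Conj}(A)$ because $G_1\in\textnormal{Cent}(A)$. Lifting and conjugating are mutually inverse at the level of representatives, and homotopies transfer between the two sides by the bundle argument just given. The map to $\pi_0(\zaa)$ is the composition of this bijection with $[(G_t)]\mapsto[G_1]$: surjectivity comes from path-connectedness of $GL_+(n,\mathbb R)$ (any $G\in\zaa$ can be joined to $I$ and the path normalized into $SL(n,\mathbb R)$), while injectivity amounts to showing that two paths with endpoints in a common component of $\zaa$ are $\mathcal H$-equivalent, which is achieved by prepending a path within $\zaa$ to match the endpoints and then contracting the residual loop.

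The main obstacle is precisely this last step. After joining the endpoints by a path in $\zaa$, one has a closed loop in $GL_+(n,\mathbb R)$ based at $I$ which must be contracted through paths whose terminal points remain in $\zaa$; equivalently, the class of this residual loop in $\pi_1(GL_+(n,\mathbb R),I)$ must be representable by a loop inside $\zaa$. This is where the structural description of the centralizer developed in Section \ref{centralizersec}, in particular the component count in Proposition \ref{gammabound} and the way components are distinguished by the signs $\sigma(\lambda,j)$, must be brought to bear in order to realize any such loop inside $\zaa$.
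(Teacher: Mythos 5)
The paper's own proof is essentially a placeholder: it invokes Proposition \ref{pathlift} to produce the lift, normalizes by $(\det G_t)^{-1/n}$ to land in $SL(n,\mathbb R)$, and then declares the remaining bijectivity claims "straightforward to check." You replicate the lift-and-normalize step faithfully, and you go considerably further: your HLP argument for well-definedness and your explicit inverse (conjugation $(G_t)\mapsto(G_t^{-1}AG_t)$) correctly establish that $\pi_1(A,\textnormal{Conj}(A))\to\mathcal H(Id,\zaa)$ is a bijection, which is the first of the two claimed bijections.

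Your instinct to stop and flag the injectivity of the second map $\mathcal H(Id,\zaa)\to\pi_0(\zaa)$ as the real difficulty is exactly right, and in fact the difficulty cannot be removed. You correctly reduce the problem to the following: after matching endpoints, one needs every element of $\pi_1(GL_+(n,\mathbb R),Id)$ to be represented by a loop inside $\zaa$; equivalently, the inclusion-induced map $\pi_1(\zaa)\to\pi_1(GL_+(n,\mathbb R))$ must be surjective. This is precisely the obstruction visible in the homotopy exact sequence of the pair $(GL_+(n,\mathbb R),\zaa)$, where $\ker\bigl(\pi_1(GL_+,\zaa)\to\pi_0(\zaa)\bigr)$ is the image of $\pi_1(GL_+)$. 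But this surjectivity fails for very ordinary matrices: if $A$ is diagonalizable over $\mathbb R$ with distinct eigenvalues, then by Proposition \ref{gammabound} the centralizer consists of invertible diagonal matrices, so every component of $\zaa$ is contractible and $\pi_1(\zaa)=0$, whereas $\pi_1(GL_+(n,\mathbb R))$ is $\mathbb Z$ for $n=2$ and $\mathbb Z/2$ for $n\geq 3$. For such $A$ the map $\pi_1(A,\textnormal{Conj}(A))\to\pi_0(\zaa)$ has a nontrivial kernel and is not a bijection. So the gap you identify is genuine, and the appeal you suggest to Section \ref{centralizersec} would not close it — the component count in Proposition \ref{gammabound} tracks $\pi_0$, not $\pi_1$, and in the diagonalizable case gives no loops at all. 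Your proof correctly establishes the lift, the well-definedness statements, and the first bijection; the second bijection claim in the proposition, which both you and the paper's "straightforward" leave unproved, in fact fails in this generality. The well-definedness (the "moreover" clause) and the surjectivity of both induced maps, which is all the paper subsequently uses, are sound.
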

\begin{proof}
Proposition \ref{pathlift} 
explains the existence of the lift 
of $(A_t)$ to a path 
$(G_t)$ in $\textnormal{GL}(n,\mathbb R)$ beginning at $G_0=I$. 
By continuity, each $G_t$ has positive determinant,  
and we may replace $G_t$ with $(\det G_t)^{-1/n} G_t$ to put 
the conjugating path into $\textnormal{SL}(n,\mathbb R)$. It is straightforward 
to check the remaining claims about well defined induced bijections. 
\end{proof} 

\begin{definition} \label{centralizercondition} 
Suppose $\mathcal U$ is a nondiscrete unital subring of the reals. 
Given $A$  an $n\times n$ matrix over $\mathcal U$. We say 
the {\it  centralizer condition} holds for $(A, \mathcal U)$ if
every connected component of $\zaa $ 
has nonempty intersection with 
$\textnormal{GL}(n,\mathcal U)$.  We say that 
{\it 
$\mathcal U$ 
satisfies the 
centralizer condition} if the centralizer condition holds for 
$(A, \mathcal U)$ for every 
square matrix $A$ over $\mathcal U$. 
\end{definition} 
One equivalent statement of the 
 Centralizer Condition is that 
$\zaa $ is generated by 
$\mathcal U \cap \zaa  $ 
 and the connected component of the identity in 
$\zaa $. 
 
\begin{lemma} \label{factorize} 
Suppose $A$ is a positive $n\times n$ real matrix. 
There is an $\epsilon >0$ 
such that for $U\in \textnormal{SL}(n,\mathbb R)$ with $||U-I||< \epsilon $, 
$U$ can be written as a product of $m=(n+4)(n-1)$ basic elementary matrices,  
$U=E_1 \cdots E_{m}$, where each $E_k$ depends continuously on $U$. 

\end{lemma}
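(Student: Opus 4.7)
My plan is a continuous Gaussian reduction of $U$ to the identity, after which the factorization is read off in reverse. For $\epsilon$ small enough, every pivot that arises stays near $1$, making each step well-defined and continuous in $U$. Note that the positivity hypothesis on $A$ does not enter the statement of the lemma; only the dimension $n$ is used. I would organize the argument in three phases and book-keep the elementary matrices so that the total count is exactly $(n+4)(n-1)$.

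First, I would clear the strictly lower triangle of $U$ by left-multiplying by transvections $E_{ij}(s)$ with $i>j$ in the standard LU order ($j=1,\dots,n-1$; for each $j$, $i=j+1,\dots,n$), each $s$ chosen to zero the current $(i,j)$-entry using the current $(j,j)$-pivot. This uses $\binom{n}{2}$ transvections and produces an upper triangular matrix whose diagonal $d_1,\dots,d_n$ is near $1$ and satisfies $\prod_i d_i = 1$. Next, I would clear the strictly upper triangle by left-multiplying by transvections $E_{ij}(s)$ with $i<j$, sweeping from the lower right of the upper block up and to the left. Another $\binom{n}{2}$ transvections, leaving $D=\operatorname{diag}(d_1,\dots,d_n)$ with $\prod d_i=1$.

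Finally, I would factor $D$. Setting $a_k=d_1d_2\cdots d_k$ for $1\le k\le n-1$ (so each $a_k$ is near $1$ and $a_{n-1}^{-1}=d_n$), the telescoping identity
\[
D \;=\; \prod_{k=1}^{n-1} \operatorname{diag}(1,\dots,1,a_k,a_k^{-1},1,\dots,1)
\]
(with $a_k,a_k^{-1}$ in slots $k,k+1$) holds, and within the active $2\times 2$ block at positions $k,k+1$ one has the explicit four-transvection identity
\[
\operatorname{diag}(a,a^{-1}) \;=\; E_{12}(a-1)\,E_{21}(1)\,E_{12}(a^{-1}-1)\,E_{21}(-a),
\]
which I would verify by direct multiplication. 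This contributes $4(n-1)$ transvections. Summing,
\[
\binom{n}{2}+\binom{n}{2}+4(n-1) \;=\; n(n-1)+4(n-1) \;=\; (n+4)(n-1) \;=\; m.
\]
Inverting the left-multiplications of the first two phases (the inverse of each transvection is again a basic elementary matrix) and concatenating yields $U=E_1E_2\cdots E_m$, and each factor depends continuously on $U$ because it depends continuously on the matrix presented to it at its step, which in turn depends continuously on $U$.

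The main obstacle is merely bookkeeping: to hit the count $(n+4)(n-1)$ exactly, the diagonal part must be factored into precisely $4(n-1)$ transvections, which is what forces the specific four-transvection identity for each $\operatorname{diag}(a,a^{-1})$ above. Any less efficient decomposition would give a strictly worse bound, and the rest of the argument is standard continuous Gaussian elimination.
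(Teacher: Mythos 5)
Your proposal is correct and follows essentially the same route as the paper: Gaussian elimination to upper triangular form and then to diagonal form (each phase using $n(n-1)/2$ transvections), followed by a factorization of the determinant-one diagonal matrix into $4(n-1)$ transvections via $2\times 2$ blocks. The only cosmetic differences are that you make the telescoping $\operatorname{diag}(d_1,\dots,d_n)=\prod_k\operatorname{diag}(\dots,a_k,a_k^{-1},\dots)$ explicit (the paper just asserts the $4(n-1)$ count) and you use a slightly different, but equally valid, four-transvection identity for $\operatorname{diag}(a,a^{-1})$.
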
 

\begin{proof} 
The given $U$ can be made upper triangular by 
$(n-1) + (n-2) + \dots +1=n(n-1)/2$ operations of 
adding multiples of rows successively to lower rows. For $U$ close to the identity, 
at each stage the diagonal terms will remain positive, and the multiples 
of row $i$ added to lower rows to zero out entries in column $i$ will depend 
continuously on $U$. Likewise, 
the same number of additions of 
lower rows to upper rows will diagonalize $U$.
  Finally, for $a\neq 0$, 
\[ 
\begin{pmatrix} 
a & 0 \\ 0 & 1/a 
\end{pmatrix} 
= 
\begin{pmatrix} 
1 & a(a-1)\\ 0 & 1 
\end{pmatrix} 
\begin{pmatrix} 
1 & 0 \\ 1/a & 1 
\end{pmatrix} 
\begin{pmatrix} 
1 & 1-a \\ 0 & 1
\end{pmatrix} 
\begin{pmatrix} 
1 & 0 \\ -1 & 1
\end{pmatrix} \ . 
\] 
So, we can  multiply a diagonal 
determinant 1 matrix by  
$4(n-1)$ elementary matrices 
to produce the identity. In total we have factored $U$ 
as a product of  $m=2[n(n-1)/2] + 4(n-1) = (n+4)(n-1) $ 
basic elementary matrices. We may fix the order of operations.  
 Then in each  
$E_k$, there is a single offdiagonal element which is allowed 
to be nonzero (or zero), and it varies continuously as a 
function of $U$. 
\end{proof} 

\begin{lemma} \label{positiveclose}  
Given  $0<\epsilon< \kappa $ and $n\in \mathbb N$, 
there is a $\delta>0$ such that 
for every $n\times n$ real matrix $M$ with all entries 
bounded below by $\epsilon $ and above by $\kappa$, 
for every $U\in \textnormal{SL}(n,\mathbb R)$ with $||U-I||<\delta$, 
the matrix $U^{-1}MU$ is positive and SSE over $\mathbb R_+$ 
to $M$. 
\end{lemma}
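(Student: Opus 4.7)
The plan is to factor the conjugation by $U$ into a bounded number of elementary conjugations, and show that each one is an elementary strong shift equivalence over $\mathbb R_+$ when $U$ is close enough to the identity (uniformly in $M$).

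First, I would apply Lemma \ref{factorize}: for $U\in SL(n,\mathbb R)$ with $\|U-I\|$ smaller than some absolute constant (depending only on $n$), write $U=E_1E_2\cdots E_m$ with $m=(n+4)(n-1)$, where each $E_k$ is a basic elementary matrix $I+\alpha_k E_{i_k j_k}$ that depends continuously on $U$ and equals $I$ when $U=I$. Setting $P_k=E_1\cdots E_k$ and $M_k=P_k^{-1}MP_k$, we have $M_0=M$ and $M_m=U^{-1}MU$, so it suffices to show that for each $k$ the pair $M_{k-1},M_k$ is ESSE over $\mathbb R_+$ and that all $M_k$ have positive entries.

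The second step is the elementary ESSE argument. For a fixed $k$, with $E=E_k=I+\alpha E_{ij}$ ($i\neq j$), note that passing from $M_{k-1}$ to $M_k=E^{-1}M_{k-1}E$ modifies $M_{k-1}$ by one column operation (adding $\alpha$ times column $i$ to column $j$) and one row operation (subtracting $\alpha$ times row $j$ from row $i$). If $\alpha\geq 0$, set $R=E$ and $S=E^{-1}M_{k-1}$; then $RS=M_{k-1}$ and $SR=M_k$, and both $R$ and $S$ have nonnegative entries provided $|\alpha|$ is small enough relative to the ratio (min entry)/(max entry) of $M_{k-1}$. If $\alpha<0$, instead take $R=E^{-1}$ and $S=M_{k-1}E$; then $RS=M_k$ and $SR=M_{k-1}$, and again both are nonnegative for small $|\alpha|$. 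In either case, $M_{k-1}$ is ESSE over $\mathbb R_+$ to $M_k$.

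The third step is the uniform estimate, which I expect to be the main obstacle. I need $\delta$ to depend only on $\epsilon,\kappa,n$, not on the particular $M$. The factorization of $U$ in Lemma \ref{factorize} does not involve $M$, and by continuity I can choose $\delta>0$ so small that $\|U-I\|<\delta$ forces $|\alpha_k|<\eta$ for every $k$, where $\eta$ is a constant I am free to choose. Then I inductively bound the entries of $M_k$: if $M_{k-1}$ has entries in $[\epsilon/2,\,2\kappa]$, then the explicit column and row operations show $M_k$ has entries in $[\epsilon/2-c\eta\kappa,\,2\kappa+c\eta\kappa]$ for a constant $c=c(n)$. Choosing $\eta$ small enough (in terms of $\epsilon,\kappa,n$, and the fixed bound $m=(n+4)(n-1)$ on the number of steps, so the geometric accumulation is controlled) keeps all $M_k$ entries inside $[\epsilon/2,\,2\kappa]$ and simultaneously keeps the elementary ESSE factors $R,S$ nonnegative at every stage. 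Composing the $m$ elementary strong shift equivalences then yields the SSE over $\mathbb R_+$ from $M$ to $U^{-1}MU$, and positivity of $U^{-1}MU$ is part of the bookkeeping.
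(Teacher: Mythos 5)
Your proposal is correct and follows essentially the same route as the paper's own proof: factor $U$ into a bounded number of basic elementary matrices via Lemma \ref{factorize}, conjugate $M$ step by step, at each step choose between $(E, E^{-1}M_{k-1})$ and $(E^{-1}, M_{k-1}E)$ depending on the sign of the off-diagonal entry, and choose $\delta$ small enough (using the fixed bound $m=(n+4)(n-1)$ and the a priori entry bounds $\epsilon,\kappa$) that every intermediate matrix and every factor stays positive. The paper phrases the positivity requirement as choosing $\delta$ so that $V_i^{-1}MV_i$, $V_i^{-1}MV_iE_{i+1}$, and $E_{i+1}^{-1}V_i^{-1}MV_i$ are all positive, which is exactly the bookkeeping you carry out explicitly in your third step.
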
 
\begin{proof} 
Pick $\delta >0$ 
small enough that for every $n\times n$ 
positive matrix $M$ with entries 
bounded below by $\epsilon $ and above by $K$, 
with $m=(n+4)(n-1)$ we have 
\begin{itemize} 
\item 
for $U\in \textnormal{SL}(n,\mathbb R)$ with $||U-I||< \delta$, there is a 
continuous factorization $U=E_1 \dots E_m$ of $U$ into 
basic elementary matrices, as in Lemma \ref{factorize}, and  
\item 
with $E_0=I$ and $V_i = E_0\cdots E_i$, for $0\leq i <m$ 
all of the matrices $V_i^{-1}MV_{i}$, $V_i^{-1}MV_{i}E_{i+1}$ and 
$E_{i+1}^{-1}V_i^{-1}MV_{i}$ are positive.  
\end{itemize} 
Given such a matrix $M$, 
set $B_i = V_i^{-1}MV_i$; then $B_0 = M$ and $B_m=U^{-1}MU$. 
For $1\leq i \leq m$,  $B_i$ is positive, and 
%(because $E_i$ or its inverse is nonnegative) 
one 
of the pairs $(E_i, E_i^{-1}B_i)$, $(E_iB_i, E_i^{-1})$ will give an ESSE 
over $\mathbb R_+$ from $B_{i-1}$ to $B_{i}$. Thus $M$ and $U^{-1}MU$ are 
positive matrices which are SSE over $\mathbb R_+$. 
\end{proof} 

\begin{lemma} \label{diagonal} 
Suppose $\ri$ is 
a nondiscrete unital subring 
 of $\mathbb R$,
$B$ is an $n\times n$ 
 positive matrix over $\ri$ and   $d>0$.

Then there is $\epsilon > 0$ such that 
 $V\in \textnormal{GL}(n,\mathcal U)$ 
with 
$||V-dI||<\epsilon $ implies the matrix 
$V^{-1}BV$ is  positive and is  SSE over $\ri_+$ 
to $B$. 
\end{lemma}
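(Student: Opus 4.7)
The plan has two parts: establish positivity by continuity, then produce an SSE over $\mathcal U_+$ by constructing an explicit chain of elementary SSEs built from the adjugate of $V$.

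For positivity, observe that $V\mapsto V^{-1}BV$ is a continuous map on a neighborhood of $dI$ in $GL(n,\mathbb R)$ taking the value $B$ (positive) at $V=dI$, so $V^{-1}BV$ is positive whenever $\|V-dI\|$ is small enough.  For the SSE part, set $u=\det V$ and $W=\operatorname{adj}(V)$, so that $VW=WV=uI$.  Because $V\in GL(n,\mathcal U)$, $u$ is a unit in $\mathcal U$; for $\epsilon$ small, $u$ is close to $d^n$ (hence positive), and $W\in\mathcal U^{n\times n}$ is close to $d^{n-1}I$.  In particular $V^{-1}=u^{-1}W$ is a matrix over $\mathcal U$ close to $d^{-1}I$, and $BV$ is positive over $\mathcal U$ (close to $dB$).

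The computational core is the following.  Suppose one can write
\[
V^{-1} \;=\; M_1\,M_2\,\cdots\,M_k, \qquad M_i\in \mathcal U_+^{n\times n}.
\]
Then, with the convention that an empty product is the identity, the pairs
\[
(R_i,S_i) \;=\; \bigl(\,M_{k-i+2}\cdots M_k\cdot BV\cdot M_1\cdots M_{k-i},\;\; M_{k-i+1}\bigr),\quad 1\le i\le k,
\]
give a chain of elementary SSEs over $\mathcal U_+$ from $B$ to $V^{-1}BV$: one checks $R_1S_1=BV\cdot V^{-1}=B$, $R_iS_i=S_{i-1}R_{i-1}$ for $2\le i\le k$, and $S_kR_k=M_1M_2\cdots M_k\cdot BV=V^{-1}BV$.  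Each $R_i$ is a product of factors in $\mathcal U_+$ (the $M_j$'s and $BV$), and each $S_i$ is one of the $M_j$, so the chain genuinely lies in $\mathcal U_+$.

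The main obstacle is producing the nonnegative factorization $V^{-1}=M_1\cdots M_k$ over $\mathcal U$.  My plan is to exploit that $V^{-1}$ is close to the positive diagonal $d^{-1}I$, so that each off-diagonal entry is small in magnitude.  I would build $V^{-1}$ as a telescoping product of matrices each differing from a scalar multiple of $I$ (over $\mathcal U$) in only one off-diagonal entry; when such an entry is negative, decompose it as a difference of two products using a scheme parallel to the sign-case split $(E_i,E_i^{-1}B_i)$ vs.\ $(E_iB_i,E_i^{-1})$ in the proof of Lemma~\ref{positiveclose}, leveraging the nondiscreteness of $\mathcal U$ to pick suitably small positive scalars.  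If needed, first enlarge $B$ (and correspondingly $V$) by a zero block via Lemma~\ref{laststep}, carry out the factorization in the larger matrix ring where there is more algebraic room, and finally amalgamate back to size $n$.  The delicate point, and what will require the most care, is verifying at each step that the entries produced genuinely lie in $\mathcal U$ rather than in some larger ring of fractions.
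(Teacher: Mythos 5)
Your setup — positivity of $V^{-1}BV$ by continuity, and the telescoping chain $(R_i,S_i)$ reducing the problem to a factorization of $V^{-1}$ — is algebraically sound as far as it goes, and you correctly flag the factorization as the crux. But that factorization is impossible: a product of nonnegative matrices is nonnegative, so $V^{-1}=M_1\cdots M_k$ with every $M_i$ over $\mathcal U_+$ would force $V^{-1}\ge 0$, whereas in general $V^{-1}$ is only close to $d^{-1}I$ and will have small off-diagonal entries of either sign. No amount of nondiscreteness or zero-block padding removes this sign obstruction, and the proposed ``difference of two products'' does not fit the chain you wrote, since each $S_i=M_{k-i+1}$ must individually have entries in $\mathcal U_+$ — a difference has nowhere to live in that structure.

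The paper's proof sidesteps exactly this point by never requiring the individual multiplicands to be nonnegative. It chooses elementary matrices $E_1,\dots,E_k$ over $\mathcal U_+$ (small positive row additions) so that $E=E_k\cdots E_1$ makes $EV$ a positive matrix over $\mathcal U$, and then uses the \emph{single} elementary SSE given by the pair $\bigl(V^{-1}BE^{-1},\,EV\bigr)$, which carries $V^{-1}BV$ to $EBE^{-1}$. Here $V^{-1}$ and $E^{-1}$ do have negative entries, but both \emph{products} are positive over $\mathcal U$: $EV$ by construction, and $V^{-1}BE^{-1}$ because it is close to $d^{-1}B$. A chain of small elementary conjugations by the $E_i$, in the spirit of Lemma~\ref{positiveclose}, then takes $EBE^{-1}$ back to $B$. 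The moral is that positivity must be arranged at the level of the $R$ and $S$ of each elementary SSE rather than at the level of the individual factors — precisely the degree of freedom your proposal gives up by insisting each $M_i$ lie over $\mathcal U_+$.
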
 
\begin{proof} 
If $||V-dI||<\epsilon $ 
with $\epsilon $  sufficiently small, then we may add 
small positive multiples of a row $i$ of $V$ to other rows to 
make all off diagonal entries of column $i$ positive and still small. 
Iterating, 
we may find  nonnegative elementary matrices $E_1, \dots , 
E_k$ over $\ri$, with $k\leq n(n-1)$, such that 
with $E=E_kE_{k-1}\cdots E_1$, the matrix 
$EV$ is a positive matrix in $\textnormal{GL}(n,\ri)$. 

Set $E_0=I$ and $B_0=B$. 
For $1\leq i \leq k$ set $B_i=E_iB_{i-1}E_i^{-1}$. 
For $\epsilon$ small, we may choose the matrices 
$E_i$ close enough to $I$ that all of the following matrices 
are also positive: 
$BE^{-1}$; $V^{-1}(BE^{-1})$; $B_{i-1}E^{-1}$ and 
$B_i=E_iB_{i-1}E_i^{-1}$, for $1\leq i\leq k$.

Then $V^{-1}BV= (V^{-1}BE^{-1})( EV) > 0$, and  
the pair $(V^{-1}BE^{-1}, EV)$ gives 
an  ESSE over $\ri_+$ from 
$V^{-1}BV$  to  $EBE^{-1}$. There is also 
an SSE over $\ri_+$ between $B=B_0$ and 
$EBE^{-1}=B_k$: 
for $1\leq i\leq k$, 
the pair $(B_{i-1}E_i^{-1},E_i)$ gives  
an  ESSE over $\ri_+$ from $B_{i-1}$ to $B_i$. 

\end{proof}

 \begin{definition} \label{throughpositivedefinition}
Let $\ri$ be a semiring in $\mathbb R$. 
Matrices $A,B$ are SSE over $\ri_+$,  
through positive $n\times n$ matrices,  if for 
some $\ell\in \mathbb N$ there are $n\times n$ 
positive matrices   
$A=A_0, A_1, \dots , A_{\ell}=B$ such that 
$A_{i-1} $ is ESSE over $\ri_+$ to $A_i$, $1\leq i \leq \ell$.  
\end{definition} 

In the next theorem, part (1)  was proved in \cite{KR3}. 
Parts (2) and (3) were 
 proved in \cite{KR4} 
under the condition 
that elements of $\textnormal{GL}(n,\mathcal U)$ are dense in $\textnormal{GL}(n,\mathbb R)$.  
Here we remove this 
condition by working with the special linear group and scalar matrices.
%See \cite{[Sp]} for background in algebraic topology.

\begin{theorem}[Path Theorem] \label{paththeorem} 
Let $\mathcal U$ be a unital nondiscrete subring of $\mathbb R$, and  
suppose $(A_t)_{0\leq t\leq 1} $ is a path of positive, real 
$n\times n$ matrices, 
all in the same conjugacy class over $\mathbb R$,  
from $A=A_0$ to $ B=A_1$. Then the following hold 
\begin{enumerate} 
%\item 
%There is a path $G_t$ in $\textnormal{SL}(n,R)$, $0\leq t \leq 1$, such that 
%$G_0=I$ and $A_t=G_t^{-1}AG_t$. 
\item 
$A$ and $B$ are SSE over $\mathbb R_+$, through positive 
$n\times n$ matrices.  
\item 
Suppose there is a path $(G_t)$ in $\textnormal{GL}(n,\mathbb R)$ 
such that $G_0=I$ and 
$G_t^{-1}AG_t=A_t$ for all $t$, and 
   there is a $W$ in $\textnormal{GL}(n, \mathcal U)$ such that 
$W^{-1}AW=B$ and $WG_1^{-1}$ is in the connected component 
of the identity in $\textnormal{Cent}_{\mathbb R}(A)$.  \\ 
Then $A$ and $B$ are SSE over  $\mathcal U_+$, 
 through positive $n\times n$ matrices.  
\item 
Suppose $A$ and $B$ are conjugate matrices over $\mathcal U$ such that every 
connected component of 
$\textnormal{Cent}_{\mathbb R} (A) $ contains a matrix from 
$\textnormal{GL}(n, \mathcal U)$.\\ 
Then 
 $A$ and $B$ are SSE over $\mathcal U_+$,  through positive 
 $n\times n$  matrices.  
\item
Suppose $A$ and $B$ are conjugate matrices over $\mathcal U$ and 
$\mathcal U$ is a field, or more generally 
contains an ideal $J\neq \mathcal U$ 
such that every  element of $\mathcal U$ outside $J$ is a 
unit in $\mathcal U$. \\
Then 
 $A$ and $B$ are SSE over $\mathcal U_+$,  through positive 
 $n\times n$  matrices.  
\end{enumerate} 
\end{theorem}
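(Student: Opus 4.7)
The plan is to prove (1) by a path lifting and a compactness subdivision, then to establish (2) by combining this with a density approximation inside $GL(n,\mathcal U)$, and finally to reduce (3) and (4) to (2) using the group structure on $\pi_0(\textnormal{Cent}_{\mathbb R}(A))$ together with the Centralizer Lemma.

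For (1), I would apply Proposition~\ref{pathlift} (with $U=I$) to produce a continuous path $(G_t)$ in $GL(n,\mathbb R)$ with $G_0=I$ and $G_t^{-1}AG_t=A_t$, and then rescale by $(\det G_t)^{-1/n}$ to put $G_t$ inside $SL(n,\mathbb R)$. By compactness of $[0,1]$ and uniform continuity of $G_t$, I would subdivide $0=t_0<t_1<\cdots<t_N=1$ finely enough that each $G_{t_{k-1}}^{-1}G_{t_k}$ lies in any preassigned neighborhood of $I$ in $SL(n,\mathbb R)$, while the entries of the $A_{t_k}$ remain uniformly bounded above and bounded below away from zero. Lemma~\ref{positiveclose} then yields an SSE over $\mathbb R_+$ through positive $n\times n$ matrices from $A_{t_{k-1}}$ to $A_{t_k}$, and chaining these gives (1).

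For (2), the hypothesis provides a continuous path $(H_s)_{s\in[0,1]}$ from $H_0=I$ to $H_1=WG_1^{-1}$ inside the identity component of $\textnormal{Cent}_{\mathbb R}(A)$. I would concatenate $G_t$ with $H_sG_1$ (suitably reparametrized) to form a single continuous path $\widetilde G\colon[0,1]\to GL(n,\mathbb R)$ with $\widetilde G_0=I$, $\widetilde G_1=W$, and $\widetilde G_t^{-1}A\widetilde G_t$ positive throughout (on the second leg this conjugate equals $B$ identically, since $H_s$ commutes with $A$). Now subdivide $0=t_0<\cdots<t_N=1$ and, by density of $\mathcal U$ in $\mathbb R$, select $V_k\in GL(n,\mathcal U)$ approximating $\widetilde G_{t_k}$, with $V_0=I$ and $V_N=W$. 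For a sufficiently fine subdivision and sufficiently close approximations, each $B_k:=V_k^{-1}AV_k$ lies in $M_n(\mathcal U)$ and is positive (being close to $\widetilde G_{t_k}^{-1}A\widetilde G_{t_k}$), and each $V_{k-1}^{-1}V_k\in GL(n,\mathcal U)$ is close to $I$. Lemma~\ref{diagonal}, applied with $d=1$ to the positive matrix $B_{k-1}$ over $\mathcal U$ and the conjugator $V_{k-1}^{-1}V_k$, produces an SSE over $\mathcal U_+$ from $B_{k-1}$ to $B_k$, and chaining delivers the required SSE over $\mathcal U_+$ from $A=B_0$ to $B=B_N$.

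For (3), let $W_0\in GL(n,\mathcal U)$ satisfy $W_0^{-1}AW_0=B$ and apply Proposition~\ref{pathlift} to obtain $(G_t)$ with $G_0=I$ and $G_t^{-1}AG_t=A_t$. Then $W_0G_1^{-1}\in\textnormal{Cent}_{\mathbb R}(A)$, lying in some connected component $C$. Using the group structure on $\pi_0(\textnormal{Cent}_{\mathbb R}(A))$ together with the hypothesis of (3), pick $\tau\in GL(n,\mathcal U)\cap\textnormal{Cent}_{\mathbb R}(A)$ in the component $C^{-1}$ and set $W=\tau W_0$; then $W\in GL(n,\mathcal U)$, $W^{-1}AW=B$ (since $\tau$ commutes with $A$), and $WG_1^{-1}=\tau W_0G_1^{-1}$ is in the identity component, so (2) applies. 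Finally, (4) reduces to (3) by the Centralizer Lemma~\ref{centralizerlemma}, which in the field or ideal case supplies the required intersection of each connected component of $\textnormal{Cent}_{\mathbb R}(A)$ with $GL(n,\mathcal U)$. The main obstacle is the execution of (2): the continuous conjugating path must be replaced by a discrete chain in $GL(n,\mathcal U)$ whose consecutive ratios are close enough to $I$ for Lemma~\ref{diagonal}, while intermediate conjugates stay positive and the prescribed endpoints $I$ and $W$ are met. The identity-component hypothesis on $WG_1^{-1}$ is precisely what allows the continuous path to be extended to end at a chosen $\mathcal U$-rational conjugator without leaving the conjugacy class, and part (3) shows how to arrange this condition by a centralizer-coset adjustment of $W_0$.
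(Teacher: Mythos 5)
Your treatment of (1), (3), and (4) tracks the paper's argument closely and is correct: path lifting plus normalization into $SL(n,\mathbb R)$, compactness subdivision, Lemma~\ref{positiveclose} for (1); the centralizer-coset adjustment for (3); and Lemma~\ref{centralizerlemma} for (4).

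The proof of (2), however, has a genuine gap. You propose to approximate each $\widetilde G_{t_k}$ by some $V_k\in GL(n,\mathcal U)$ ``by density of $\mathcal U$ in $\mathbb R$.'' But density of $\mathcal U$ in $\mathbb R$ gives density of $M_n(\mathcal U)$ in $M_n(\mathbb R)$, \emph{not} density of $GL(n,\mathcal U)$ in $GL(n,\mathbb R)$. For a general nondiscrete unital subring $\mathcal U$ the latter fails: the determinant of a matrix over $\mathcal U$ lies in $\mathcal U$, and membership in $GL(n,\mathcal U)$ requires it to be a \emph{unit} of $\mathcal U$, which is a closed condition that can exclude entire neighborhoods. (Concretely, with $\mathcal U=\mathbb Z[1/2]$, no $g'\in GL(n,\mathcal U)$ has determinant close to $3$, since the units are $\pm 2^k$; so matrices with determinant $3$ are not approximable from $GL(n,\mathcal U)$.) Without $V_k\in GL(n,\mathcal U)$, there is no reason for $V_k^{-1}AV_k$ to have entries in $\mathcal U$, so your chain cannot produce SSEs over $\mathcal U_+$. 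The paper explicitly flags this issue: in \cite{KR4}, parts (2) and (3) were proved only under the added hypothesis that $GL(n,\mathcal U)$ is dense in $GL(n,\mathbb R)$, and one stated goal of the present paper is to remove that hypothesis ``by working with the special linear group and scalar matrices.''

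The way the paper closes this gap is worth understanding. After arranging $G_1\in GL(n,\mathcal U)$, it normalizes the path into $SL(n,\mathbb R)$ by $H_t=(\det G_t)^{-1/n}G_t$, and uses Lemma~\ref{factorize}/\ref{positiveclose} to factor the small steps into \emph{basic elementary matrices} $E_i$, each of determinant $1$. Each $E_i$ can be approximated by a basic elementary matrix $E_i'$ with entries in $\mathcal U$ which automatically lies in $SL(n,\mathcal U)$ (no unit-determinant obstruction arises for such matrices). The price is that the approximate product $H_1'=E_1'\cdots E_\ell'$ approximates $H_1=(1/d)G_1$, not $G_1$ itself, where $d=(\det G_1)^{1/n}$ need not lie in $\mathcal U$. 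The leftover conjugator $V=(H_1')^{-1}G_1\in GL(n,\mathcal U)$ is then close to the scalar matrix $dI$ rather than to $I$, and Lemma~\ref{diagonal} is invoked with this particular $d$ (not $d=1$) to convert the residual scalar conjugation into an SSE over $\mathcal U_+$. That last use of Lemma~\ref{diagonal} with general $d$ is exactly the missing ingredient in your plan.
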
 

\begin{proof}
(1) Proposition \ref{pathlift} gives us a 
 path $(G_t)$ in $\textnormal{SL}(n,\mathbb R)$ with $G_0=I$ and 
$(G_t^{-1}AG_t)=(A_t)$. 
The entries of the positive matrices 
$A_t$ are by compactness of the path uniformly bounded below  by some 
positive  $\epsilon $ and above by some $\kappa$. 
Let $\delta > 0$ be chosen as in 
Lemma \ref{positiveclose} for $\epsilon, \kappa$. 
Now by uniform continuity pick $k\in \mathbb N$ such that 
$ s\leq t\leq s+\frac 1k$ implies 
$||G_t(G_s^{-1})-I||<\delta$. 
By Lemma \ref{positiveclose}, $A_{i/k}$ is SSE over $\mathbb R_+$ to 
$A_{(i+1)/k}$, for $0\leq i < k$. Therefore $A=A_0$ and $B=A_1$ are SSE 
over $\mathbb R_+$. 

(2) 
%Let $c=(\det G_1)^{1/n}$. 
%and our matrix $B$,  
Let $X_t$, $0\leq t \leq 1$, be a path from the identity to $WG_1^{-1}$ 
in the centralizer of $A$ in 
$\textnormal{GL}(n, \mathbb R)$. Then $(X_tG_t)$ is a path from the identity 
to $W$ in $\textnormal{GL}(n, \mathbb R)$, and $W^{-1}AW=B$. 
For all $t$, $(X_tG_t)^{-1}A(X_tG_t)=G_t^{-1}AG_t $. 
So we may assume $G_1 
\in \textnormal{GL}(n, \mathcal U)$. 

Next define the path $H_t$ in $\textnormal{SL}(n,\mathbb R)$ by 
$H_t= c_tG_t$, $0\leq t\leq 1$, where $c_t=(\det (G_t))^{-1/n}$. 
We have  $H_t^{-1}AH_t=G_t^{-1}AG_t$, $0\leq t \leq 1$.  
As in (1), from 
Lemma \ref{positiveclose} we get an SSE through  
 positive real matrices from $A$ to $B$. We denote these  
matrices in order as   $A=B_0, B_1, \dots  , B_{\ell} =A_1=B$. 
For $1\leq i \leq \ell$, we have from 
Lemma \ref{positiveclose} a conjugacy 
$B_i = E_i^{-1}B_{i-1}E_i$, with $E_i$ close enough to $I$ 
that $ E_i^{-1}B_{i-1}$ and 
$B_{i-1}E_i$ are positive, which guarantees that there is 
an ESSE from $B_{i-1} $ to $B_i$. 
We also have $E_1E_2\cdots E_{\ell}=H_1$. 

Now for $1\leq i \leq \ell$ we will  choose a 
basic elementary matrix $E'_i$ over $\ri$, set 
$B'_0=A$, and recursively 
define $B'_i = 
E_i^{-1}B_{i-1}E_i$, $1\leq i \leq \ell$. 
Define $H'_1 = E'_1E'_2\cdots E'_{\ell}$ and 
$A'_1= B'_{\ell}$.  
Set $V=(H'_1)^{-1}G_1\in\textnormal{GL}(n,\mathcal U)$. 
Then $V^{-1}A'_1V=B$. Let $d=[\textnormal{det}(G_1)]^{1/n}$. 
Then $H_1=(1/d)G_1$ and 
\begin{align*} 
V\ =\ & \Big((H'_1)^{-1}-(H_1)^{-1}\Big)G_1 + (H_1)^{-1}G_1 \\ 
=\ & \Big((H'_1)^{-1}-(H_1)^{-1}\Big)G_1 +dI \ . 
\end{align*} 

Now choose $\epsilon >0$ for $B$ as in the statement of 
Lemma \ref{diagonal}. 
We 
 choose the $E'_i$ sufficiently close to the $E_i$  
to guarantee 
\begin{itemize} 
\item 
$B'_i$ is positive and ESSE over $\mathcal U_+$ 
to $B'_{i-1}$, for $1\leq i \leq \ell$ , and  
\item 
$||V-dI||<\epsilon$ .  
\end{itemize}
We have $A'_1$ SSE over $\mathcal U_+$ through 
positive matrices to $A$. It remains now to 
show  
$A'_1$ is SSE over $\mathcal U_+$ through 
positive matrices to $B$. This now follows 
from Lemma \ref{diagonal}.

(3) Again find a path $G_t$ in $\textnormal{GL}(n, \mathbb R)$ such that 
$A_t=G_t^{-1}AG_t$. By assumption, there is a $Y$ in $\textnormal{GL}(n, \mathcal U)$ 
such that $Y^{-1}AY=B$. Therefore $YG_1^{-1}\in 
\textnormal{Cent}_{\mathbb R} (A) $. By assumption 
 there is a matrix $Q$ 
in $\textnormal{GL}(n, \mathcal U)$ such that $Q$ and $YG_1^{-1}$ 
are in the same 
connected component of $ \textnormal{Cent}_{\mathbb R} (A) $. 
Let $W=Q^{-1}Y\in \textnormal{GL}(n,\mathcal U)$. 
Then $WG_1^{-1}$ is in the connected component 
of the identity in $ \textnormal{Cent}_{\mathbb R} (A) $, 
and $W^{-1}AW=B$. Therefore (3) follows from (2). 

(4) 
This claim follows from (3) and 
Lemma \ref{centralizerlemma}.
\end{proof}

\begin{remark}\label{finitelymanycomponents}
 For a given  $n\times n$ matrix 
$A$, the set of all positive matrices conjugate to $A$ 
has only finitely many connected components. This is 
an observation of Sompong  Chuysurichay 
\cite[Theorem 1.4.2]{Ch},  
made in the language of 
invariant tetrahedra (discussed in the appendix \ref{tetrahedrasec}).
It holds because  the set of matrices conjugate to a given matrix 
can be defined by finitely many inequalities in finitely 
many variables,
and  a semialgebraic set has only finitely many connected 
components \cite[Theorem 2.4.4]{BCR98}.
Chuysurichay
\cite[Introduction]{Ch} 
 pointed out the following corollary of this 
fact and Theorem \ref{paththeorem}(1) (which was proved in 
\cite{KR2}). We record this fact as the following theorem.  
\end{remark} 

\begin{theorem}[\cite{Ch,KR2}]\label{finitesse}  Suppose $A$ is a positive $n\times n$ 
matrix. The collection of positive $n\times n$ matrices 
conjugate over $\mathbb R$ to $A$ contains  only finitely many 
SSE-$\mathbb R_+$ classes. 
\end{theorem}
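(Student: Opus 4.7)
The plan is to combine Theorem \ref{paththeorem}(1) with the fact that semialgebraic sets have only finitely many connected components \cite[Thm 2.4.4]{BCR98}, exactly as indicated in Remark \ref{finitelymanycomponents}. Let $\mathcal{S}_A$ denote the set of positive $n\times n$ real matrices that are conjugate over $\mathbb{R}$ to $A$, viewed as a subset of $\mathbb{R}^{n^2}$. The strategy is to show $\mathcal{S}_A$ is semialgebraic, deduce it has finitely many connected components, and then argue that each component sits inside a single SSE-$\mathbb{R}_+$ class.

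First I would verify that $\mathcal{S}_A$ is semialgebraic. Positivity is cut out by the $n^2$ strict polynomial inequalities $M(i,j) > 0$. For the conjugacy condition, one uses that real similarity coincides with complex similarity and is characterized by agreement of rational canonical forms. Concretely, let $p_1(t),\dots,p_r(t)$ be the distinct monic irreducible factors of the minimal polynomial of $A$ in $\mathbb{R}[t]$. Then $M\sim_{\mathbb{R}} A$ if and only if $M$ has the same characteristic polynomial as $A$ (an equality of $n$ polynomial expressions in the entries of $M$) and, for each $j$ and each $1 \le k \le n$, $\textnormal{rank}(p_j(M)^k) = \textnormal{rank}(p_j(A)^k)$. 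A rank condition of the form $\textnormal{rank}(N)=s$ is a Boolean combination of polynomial equations and inequalities in the entries of $N$ (vanishing of all $(s{+}1)\times(s{+}1)$ minors together with nonvanishing of some $s\times s$ minor), and each $p_j(M)^k$ is polynomial in $M$. Only finitely many such conditions are needed, so $\mathcal{S}_A$ is semialgebraic.

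By \cite[Thm 2.4.4]{BCR98}, $\mathcal{S}_A$ has finitely many connected components $\mathcal{C}_1,\dots,\mathcal{C}_N$, and since semialgebraic sets are locally path connected these are path components. For any $B,C$ in a common component $\mathcal{C}_i$ I would pick a continuous path $(A_t)_{0\le t\le 1}$ from $B$ to $C$ inside $\mathcal{C}_i$. This is exactly the hypothesis of Theorem \ref{paththeorem}(1), so $B$ and $C$ are SSE over $\mathbb{R}_+$. Therefore $\mathcal{C}_i$ is contained in a single SSE-$\mathbb{R}_+$ class, and there are at most $N$ such classes in $\mathcal{S}_A$.

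The main obstacle is producing a clean semialgebraic description of real conjugacy; the rank-based characterization above seems to me the most self-contained route. An alternative would be to apply Tarski--Seidenberg quantifier elimination directly to the first-order formula $\exists P\,(\det P\neq 0 \wedge MP = PA)$ in the entries of $P$ and $M$; this bypasses canonical forms but hides the concrete polynomial data.
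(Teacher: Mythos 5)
Your proposal is correct and follows exactly the route the paper indicates in Remark \ref{finitelymanycomponents}: the set of positive matrices conjugate to $A$ is semialgebraic, hence has finitely many (path) components by \cite[Thm 2.4.4]{BCR98}, and Theorem \ref{paththeorem}(1) puts each component inside one SSE-$\mathbb{R}_+$ class. You supply a bit more detail on why the conjugacy class is semialgebraic (via the rank conditions, or Tarski--Seidenberg), but the argument is the same.
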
 

\begin{remark} \label{unboundedlag} 
Note, the set of matrices  of a given size which are 
 SSE-$\mathbb R_+$  to a given matrix is not a priori 
semialgebraic when the lag is unbounded. Indeed, 
in contrast to Corollary \ref{finitesse}, 
Chuysurichay 
gave an example
\cite[Theorem 1.9.1]{Ch} 
 of a connected 
component $\mathcal C$ in a conjugacy class of  
 positive $2\times 2$ real matrices 
such that the lag of the SSE over $\mathbb R_+$, guaranteed to 
exist between any two  matrices in $\mathcal C$  
by Theorem \ref{paththeorem}(1) above
 (which was proved in \cite{KR2}),  cannot be uniformly bounded 
in $\mathcal C$. (The unboundedness of the lag 
arises for a component of positive conjugate matrices 
whenever there is a matrix on its boundary with more 
than one irreducible component.) 

The fact that the components method produces the finiteness 
result (\ref{finitesse}) despite the possibility of unbounded 
lag is an indication of the power of the method. 
\end{remark}

\begin{remark}\label{z1overpremark}
There are examples \cite[Appendix E]{B} 
of primes $p$ in $\mathbb Z$ and 
primitive matrices over 
$\mathcal U= \mathbb Z[1/p]$ which are SE over 
$\mathcal U_+$ (and hence SSE over $\mathcal U$, since 
$\mathcal U$ is a principal ideal domain) 
but are not SSE over $\mathcal U_+$. 
(We do not know whether these examples are SSE over $\mathbb Q_+$ 
or $\mathbb R_+$.) 
 There are no positive 
matrix 
examples known, for any nondiscrete unital subring $\mathcal U$ 
of $\mathbb R$, of matrices which are SSE over $\mathcal 
 U$ but not 
SSE over $\mathcal U_+$. 
The examples \cite[Appendix E]{B}, 
based on the work over $\mathbb Z$ in 
\cite{KR6}, are matrices with zero trace, and the 
general method relies in a fundamental way on 
 the existence 
of certain matrix powers having zero trace.

Unfortunately, if 
 $p$ is a prime in $\mathbb Z$, then 
the ring $\mathbb Z[1/p]$ does not satisfy 
the ideal hypothesis
of Theorem \ref{paththeorem}(4), and the 
Centralizer Condition \ref{centralizercondition} is not satisfied by 
$\mathbb Z[1/p]$  (Example \ref{badcentralizer}). 
Therefore Theorem \ref{paththeorem} does not 
rule out the possibility that for some $p$ there are positive 
matrices SSE over 
$\mathbb Z[1/p]$ 
which are not SSE over 
$(\mathbb Z[1/p])_+$, even in the case the matrices are connected by 
a path of positive conjugate matrices. 
\end{remark} 

The rest of this section is devoted to generalizing Theorem 
\ref{finitesse} to arbitrary dense subrings of $\mathbb R$. 
To prepare, we need more definitions. 
Let $\mathcal U$ be a dense subring of $\mathbb R$. Suppose $A$ and $B$ 
are matrices over $\mathcal U$;  
$W \in  
\textnormal{GL}(n,\mathcal U)$; and 
$W^{-1}AW=B$. 
Given a path $\mathcal P=(A_t)_{0\leq t \leq 1}$ 
of positive conjugate matrices from $A$ to $B$, 
let $G$ be a matrix such that there is a path 
$(G_{t})$ in $\textnormal{GL}(n,R)$  such that 
$G_{t}^{-1}AG_{t}=A_t$, $0\leq t \leq 1$, 
with 
$G_{0}=I$ 
 and $G_{1}=G$. 
Let 
$\pi_0^{\mathcal U}(\za)$ denote the subgroup of 
$\pi_0(\za)$ consisting of those connected components which contain 
a matrix with all entries in $\mathcal U$.
Define 
$\pi_0(\mathcal P,W)$ to be the connected component of 
 $\pi_0(\za)$ containing $WG^{-1}$. This component is uniquely 
determined by $\mathcal P$ and $W$. Finally,  
let $\overline{\pi_{0,\mathcal U}}(\mathcal P,W)$ be the coset of 
$\pi_0^{\mathcal U}(\za)$
 in 
$\pi_0(\za)$ which contains $WG^{-1}$. 
(We remark as an aside that the coset space 
$\pi_0(\za)/
\pi_0^{\mathcal U}(\za)$ 
is a group, because the group  
$\pi_0(\za)$ is abelian, because all its elements 
have order two.)

\begin{lemma} 
Let $\mathcal U$ be a dense subring of $\mathbb R$. 
Suppose $A,A_1,A_2$ are positive matrices over 
$\mathcal U$ and for $i=1,2$ that 
\begin{itemize}
\item
  $W_i$  is a matrix in $  
\textnormal{GL}(n,\mathcal U)$ such that  
$(W_i)^{-1}AW_i=A_i$ 
\item 
$\mathcal P_i$ is a path of positive conjugate 
matrices from $A$ to $A_i$. 
\end{itemize}
Suppose 
$\overline{\pi_{0,\mathcal U}}(\mathcal P_1,W_1)=
\overline{\pi_{0,\mathcal U}}(\mathcal P_2,W_2)$. 

Then $A_1$ and $A_2$ are SSE over $\mathcal U_+$. 
\end{lemma}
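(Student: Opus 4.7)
The plan is to concatenate the paths $\mathcal{P}_1$ and $\mathcal{P}_2$ to form a path of positive conjugate matrices from $A_1$ to $A_2$, and then to apply the Path Theorem \ref{paththeorem}(2), where the coset hypothesis will be exactly what is needed to adjust the obvious $\mathcal{U}$-rational conjugator into the connected component of the identity in $\textnormal{Cent}_{\mathbb R}(A_1)$.

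More concretely: using Proposition \ref{pathlift}, lift $\mathcal{P}_i$ to a path $G^{(i)}_t$ in $GL(n,\mathbb R)$ with $G^{(i)}_0=Id$ and $(G^{(i)}_t)^{-1}A G^{(i)}_t = \mathcal P_i(t)$, and set $G_i=G^{(i)}_1$. Reverse $\mathcal P_1$ and concatenate with $\mathcal P_2$ to produce a path $\mathcal P$ of positive conjugate matrices from $A_1$ to $A_2$; a lift of $\mathcal P$ starting at $Id$ is given by $H_t=G_1^{-1}G^{(1)}_{1-2t}$ on $[0,\tfrac12]$ and $H_t=G_1^{-1}G^{(2)}_{2t-1}$ on $[\tfrac12,1]$, so $H_1=G_1^{-1}G_2$. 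A natural $\mathcal U$-rational conjugator from $A_1$ to $A_2$ is $W=W_1^{-1}W_2$; indeed $W^{-1}A_1 W = A_2$ by a direct computation, and conjugation by $W_1$ sends $W H_1^{-1}\in \textnormal{Cent}_{\mathbb R}(A_1)$ to
\[
W_1(WH_1^{-1})W_1^{-1} \;=\; (W_2 G_2^{-1})(W_1 G_1^{-1})^{-1}\in \textnormal{Cent}_{\mathbb R}(A).
\]

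Now the hypothesis $\overline{\pi_{0,\mathcal U}}(\mathcal P_1,W_1)=\overline{\pi_{0,\mathcal U}}(\mathcal P_2,W_2)$ says that the classes of $W_1G_1^{-1}$ and $W_2G_2^{-1}$ in $\pi_0(\textnormal{Cent}_{\mathbb R}(A))$ differ by an element of the subgroup $\pi_0^{\mathcal U}(\textnormal{Cent}_{\mathbb R}(A))$. Since every element of $\pi_0(\textnormal{Cent}_{\mathbb R}(A))$ has order two and this group is abelian, it follows that the product $(W_2G_2^{-1})(W_1G_1^{-1})^{-1}$ lies in the connected component of some matrix $Q\in \textnormal{Cent}_{\mathbb R}(A)\cap GL(n,\mathcal U)$. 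Setting $Q'=W_1^{-1}QW_1\in \textnormal{Cent}_{\mathbb R}(A_1)\cap GL(n,\mathcal U)$ and $\widetilde W=(Q')^{-1}W = W_1^{-1}Q^{-1}W_2$, the element $\widetilde W\in GL(n,\mathcal U)$ still satisfies $\widetilde W^{-1}A_1\widetilde W=A_2$, and, because $\pi_0(\textnormal{Cent}_{\mathbb R}(A_1))$ is a group (so left-multiplication by $(Q')^{-1}$ permutes components), $\widetilde W H_1^{-1}$ now lies in the connected component of the identity of $\textnormal{Cent}_{\mathbb R}(A_1)$.

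At that point the hypotheses of Theorem \ref{paththeorem}(2) are met with $(A_1,A_2)$ in place of $(A,B)$, with the concatenated path $\mathcal P$, the lift $H_t$, and the conjugator $\widetilde W$, so we conclude that $A_1$ and $A_2$ are SSE over $\mathcal U_+$ through positive matrices of size $n$. The main technical obstacle is the component bookkeeping in the previous paragraph: translating the coset equality into an actual element of $GL(n,\mathcal U)$ that adjusts $W$ so as to land $\widetilde W H_1^{-1}$ in the identity component, rather than merely in some $\mathcal U$-rational component. Once one is careful that conjugation by $W_1$ carries $\textnormal{Cent}_{\mathbb R}(A_1)\cap GL(n,\mathcal U)$ to $\textnormal{Cent}_{\mathbb R}(A)\cap GL(n,\mathcal U)$ and respects $\pi_0$, this reduces to the order-two abelian structure of $\pi_0(\textnormal{Cent}_{\mathbb R}(A))$ already recorded in Proposition \ref{gammabound}.
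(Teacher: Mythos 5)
Your proof is correct and follows essentially the same route as the paper's: lift both paths, concatenate, set $W = W_1^{-1}W_2$, transport the component comparison from $\textnormal{Cent}_{\mathbb R}(A_1)$ to $\textnormal{Cent}_{\mathbb R}(A)$ by conjugating with $W_1$, use the coset hypothesis to find a $\mathcal U$-rational $Q$ in the right component, and adjust $W$ by the pullback $Q'$ so that Theorem \ref{paththeorem}(2) applies. The only cosmetic difference is that you compute the transported element as $(W_2G_2^{-1})(W_1G_1^{-1})^{-1}$ while the paper uses $(W_1G_1^{-1})^{-1}(W_2G_2^{-1})$, but these lie in the same component of $\textnormal{Cent}_{\mathbb R}(A)$ since $\pi_0$ is abelian.
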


\begin{proof} 
For each path $\mathcal P_i$, 
let $G_i$ be as  in the preceding definitions, 
with $W_iG_i^{-1}$ in the coset 
$\pi_0(\mathcal P_i,W)$.
We get a path 
$\mathcal P$ of positive conjugate matrices from 
$A_1$ to $A_2$ by composing the reversal of $\mathcal P_1$ 
with  $\mathcal P_2$. Let $W=W_1^{-1}W_2$; then 
$W^{-1}A_1W=A_2$.
For the path 
$\mathcal P$,  we  have  $G=G_1^{-1}G_2$. 
We compute 
\begin{align*} 
WG^{-1} &= W_1^{-1}W_2G_2^{-1}G_1 
 = \Big( W_1^{-1}
  (G_1W_1^{-1}) 
  W_1\Big)  
\Big(
W_1^{-1}(W_2G_2^{-1}) 
  W_1\Big)  
\\
&
= W_1^{-1}C  
  W_1
\end{align*}
where $C=
  (W_1G_1^{-1})^{-1} 
(W_2G_2^{-1})$. 
There is a matrix $V$ over $\mathcal U$ 
which lies in the connected component of 
$\za$ containing $C$, and therefore 
the connected component of 
$ \textnormal{Cent}_{\mathbb R} (A_{1}) $
containing  
$WG^{-1}$ contains the matrix 
$V'=W_1^{-1}C  
  W_1$ from 
$\textnormal{GL}(n,\mathcal U)$. 
Now $((V')^{-1}W)G$ is in the connected component of the 
identity 
in $ \textnormal{Cent}_{\mathbb R} (A_{1}) $
and 
$(V')^{-1}W\in \textnormal{GL}(n,\mathcal U)$. 
It follows from 
the Path Theorem \ref{paththeorem}(2) that $A_1$ and $A_2$ 
are SSE over $\mathcal U_+$.
\end{proof}

 The number 
$\gamma (A)$ below was defined in Definition (\ref{gamma}). 

\begin{theorem} \label{finitesseforu}
Let $\mathcal U$ be a dense subring of $\mathbb R$. 
Suppose 
$\mathcal C$ is a path connected set of positive, 
conjugate  $n\times n$ matrices containing a matrix $A$ 
over $\mathcal U$. 
Then the number of distinct SSE-$\mathcal U_+$ classes 
of matrices which contain a matrix in $\mathcal C$ 
which is conjugate to $A$ over $\mathcal U$  is finite and cannot exceed 
\[
|\pi_0(\za)|/|\pi_0^{\mathcal U}(\za)|\ 
\]
which is not greater than $2^{\gamma(A)-1}$.
 
Consequently, the set of positive matrices 
conjugate over $\mathcal U$ to $A$ intersects 
only finitely many SSE-$\mathcal U_+$ classes. 
\end{theorem}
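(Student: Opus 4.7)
The plan is to use the preceding lemma to define a map from the set of SSE-$\mathcal{U}_+$ classes of matrices in $\mathcal{C}$ that are conjugate to $A$ over $\mathcal{U}$ into the coset space $\pi_0(\za)/\pi_0^{\mathcal{U}}(\za)$, and then to estimate the size of that coset space.

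First, for each positive matrix $B\in\mathcal{C}$ which is conjugate to $A$ over $\mathcal{U}$, I would fix once and for all a conjugator $W_B\in\textnormal{GL}(n,\mathcal{U})$ with $W_B^{-1}AW_B=B$ and a path $\mathcal{P}_B$ inside $\mathcal{C}$ from $A$ to $B$; these exist because $\mathcal{C}$ is path connected. The coset $\overline{\pi_{0,\mathcal{U}}}(\mathcal{P}_B,W_B)$ in $\pi_0(\za)/\pi_0^{\mathcal{U}}(\za)$ is then well defined. By the preceding lemma, whenever $B_1$ and $B_2$ produce the same coset, they are SSE over $\mathcal{U}_+$. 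Hence the number of SSE-$\mathcal{U}_+$ classes of such $B$'s is at most $|\pi_0(\za)/\pi_0^{\mathcal{U}}(\za)|=|\pi_0(\za)|/|\pi_0^{\mathcal{U}}(\za)|$. A minor subtlety here is that the coset depends on the chosen $W_B$ and $\mathcal{P}_B$, but the lemma is stated so as to absorb exactly that ambiguity: equality of chosen cosets yields SSE-$\mathcal{U}_+$, regardless of how the choices were made.

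For the bound $2^{\gamma(A)-1}$, I combine Proposition \ref{gammabound}, which gives $|\pi_0(\za)|=2^{\gamma(A)}$, with the observation that $|\pi_0^{\mathcal{U}}(\za)|\geq 2$. The latter is where positivity of $A$ enters: by Perron--Frobenius, $A$ has a simple positive eigenvalue $r$ whose generalized eigenspace is one-dimensional, so $V(A,r,1)$ is one-dimensional. The matrix $-I$ lies in $\textnormal{GL}(n,\mathcal{U})\cap\za$ and acts on $V(A,r,1)$ by multiplication by $-1$, so $\sigma(r,1)(-I)=-1$; by Proposition \ref{gammabound}, $-I$ and $I$ lie in distinct components of $\za$, both represented by elements of $\textnormal{GL}(n,\mathcal{U})$.

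Finally, for the last assertion, the set of positive $n\times n$ real matrices conjugate to $A$ over $\mathbb{R}$ is semialgebraic and so, by Remark \ref{finitelymanycomponents}, has only finitely many path components $\mathcal{C}_1,\ldots,\mathcal{C}_k$. Any positive matrix conjugate to $A$ over $\mathcal{U}$ lies in some $\mathcal{C}_j$, and whenever $\mathcal{C}_j$ actually contains such a matrix I may choose one as basepoint and apply the finite bound just established to $\mathcal{C}_j$. Summing over $j$ gives the overall finiteness. The substantive work is already done in the preceding lemma; the remaining obstacles are merely the Perron--Frobenius observation that sharpens the bound to $2^{\gamma(A)-1}$ and the reduction of the global statement to path components via semialgebraicity.
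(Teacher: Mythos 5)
Your proposal is correct and follows essentially the same route as the paper's proof: the pigeonhole argument via the preceding lemma for the coset-count bound, Proposition \ref{gammabound} together with the $-I$ observation for the $2^{\gamma(A)-1}$ estimate, and the finiteness of path components (Remark \ref{finitelymanycomponents}) for the final claim. The paper's proof is terser; you correctly fill in the implicit Perron--Frobenius reason (the simple spectral-radius eigenvalue makes $V(A,r,1)$ one-dimensional) that $-I$ and $I$ lie in distinct components of $\za$, which is what the parenthetical ``since $-I\in\textnormal{Cent}_{\mathbb R}(A)$'' is quietly relying on.
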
 

\begin{proof} 
The upper bound by the displayed ratio follows from 
the lemma and the pigeonhole principle. The bound 
$2^{\gamma(A)-1}$ follows from 
Proposition \ref{gammabound} and the observation that 
$|\pi_0^{\mathcal U}(\za)|\geq 2$ (since 
$-I\in  \textnormal{Cent}_{\mathbb R} (A) $).
The final claim follows from the lemma and the 
fact that the set of positive matrices conjugate 
over $\mathbb R$ to $A$ contains only finitely 
many connected components. 
\end{proof} 

\section{Finding positive paths: the case of one nonzero eigenvalue} 
\label{findingsec}
%\textcolor{blue}{Add a sentence about meaning of sends a positive 
%eigenvector to another, somewhere, earlier I guess.} 

\begin{definition} 
A matrix $A$ is {\it eventually rank m} if it is square and 
$\textnormal{rank}(A^k)=m$ for all large $k$.  That $A$ has eventual rank 
1 means that its 
characteristic polynomial has the form $\chi_A(t)=t^m(t-\lambda )$ 
with $\lambda$ nonzero. 
\end{definition} 

\begin{lemma} \label{positivepath1} 
Suppose $A$ and $B$ are positive $n\times n$ 
real matrices with spectral radius $\lambda $. 
Let $\ell_A , r_A$ be  positive left, right 
eigenvectors of $A$. Suppose 
 there is 
$U\in \textnormal{GL}(n,\mathbb R)$ with positive determinant 
such that 
$U^{-1}AU=B$ and 
the eigenvector $\ell_AU$ of $B$ 
is positive. 

 Then there is a path $\{U_t\}_{0\leq t\leq 1}$ 
in $\textnormal{GL}(n, \mathbb R )$ with $U_0=I$ and $U_1=U$ 
such that for $0\leq t \leq 1$,
the 
vectors 
$\ell_AU_t$ and $U_t^{-1}r_A$ 
are positive eigenvectors 
for  eigenvalue $\lambda$ 
for the matrix 
$A_t=U_t^{-1}AU_t$.

\end{lemma}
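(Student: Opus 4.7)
The eigenvector conditions on $\ell_A U_t$ and $U_t^{-1} r_A$ are automatic: for any $U_t \in GL(n,\mathbb{R})$, $(\ell_A U_t) A_t = \ell_A A U_t = \lambda (\ell_A U_t)$ and $A_t (U_t^{-1} r_A) = \lambda U_t^{-1} r_A$. The content of the lemma is therefore just the positivity claim along the path. As a preliminary observation, the hypothesis $\ell_A U > 0$ already forces $U^{-1} r_A > 0$: the matrix $B = U^{-1}AU$ is positive with spectral radius $\lambda$, so Perron--Frobenius makes its right $\lambda$-eigenspace one-dimensional and positively spanned, and the pairing $(\ell_A U)(U^{-1}r_A) = \ell_A r_A > 0$ together with $\ell_A U > 0$ forces the sign of $U^{-1} r_A$ to be positive. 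So the task reduces to producing a continuous path $U_t$ from $I$ to $U$ in $GL(n,\mathbb{R})$ with $\alpha_t := \ell_A U_t$ and $\beta_t := U_t^{-1} r_A$ strictly positive throughout.

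The construction has three stages. First, choose continuous paths $\alpha_t$ from $\ell_A$ to $\ell_A U$ and $\beta_t$ from $r_A$ to $U^{-1}r_A$ through strictly positive vectors, subject to $\alpha_t \beta_t = \ell_A r_A$; convex combinations achieve positivity, and rescaling one family by the continuous positive scalar $(\ell_A r_A)/(\alpha_t \cdot \tilde\beta_t)$ enforces the pairing constraint (this scalar equals $1$ at $t = 0,1$, so the endpoints are preserved). Second, using the direct sum decompositions $\mathbb{R}^n = \mathbb{R}\beta_t \oplus \ker\alpha_t$ and $\mathbb{R}^n = \mathbb{R} r_A \oplus \ker\ell_A$ (valid because $\alpha_t \beta_t$ and $\ell_A r_A$ are nonzero), define $U_t$ by requiring $U_t \beta_t = r_A$, $U_t(\ker\alpha_t)\subseteq\ker\ell_A$, together with the specification of the restriction $\Psi_t := U_t|_{\ker\alpha_t} : \ker \alpha_t \to \ker \ell_A$. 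A direct check then gives $\ell_A U_t = \alpha_t$ and $U_t^{-1} r_A = \beta_t$ automatically, so the required positivity holds; the endpoint conditions $U_0 = I$ and $U_1 = U$ translate to $\Psi_0 = \mathrm{Id}|_{\ker\ell_A}$ and $\Psi_1 = U|_{\ker\alpha_1}$. Third, interpolate $\Psi_t$ continuously by picking continuous bases $k_{t,2},\dots,k_{t,n}$ of $\ker\alpha_t$ that start from a fixed basis of $\ker\ell_A$ and keep $\det(\beta_t \mid k_{t,2} \mid \cdots \mid k_{t,n})$ of constant sign in $t$. In these bases (with a fixed basis of $\ker \ell_A$ as target), $\Psi_t$ becomes a continuously varying element of $GL(n-1,\mathbb{R})$, and a block computation shows that $\operatorname{sgn}\det U_t$ equals $\operatorname{sgn}\det(\text{matrix of }\Psi_t)$ up to a $t$-independent sign factor. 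Since $\det U > 0$ places both endpoints in $GL_+(n-1,\mathbb{R})$, path-connectedness of $GL_+(n-1,\mathbb{R})$ provides the interpolation.

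The main obstacle I expect is the third stage: tracking orientations so that the continuously chosen basis of $\ker\alpha_t$ indeed makes $\Psi_0 = \mathrm{Id}$ and $\Psi_1 = U|_{\ker\alpha_1}$ lie in the same component of $GL(n-1,\mathbb{R})$, and confirming the $\det U_t > 0$ bookkeeping really does factor through $\det \Psi_t$ with the same sign convention at both endpoints. Modulo this sign organization, everything else is a routine assembly of direct sum decompositions, convexity of the positive cone, and connectedness of $GL_+$.
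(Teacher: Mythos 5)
Your proof is correct and reaches the same conclusion, but by a genuinely different route from the paper's. The paper first conjugates $A$ and $B$ by paths of diagonal matrices $(1-t)I + tD$ (with $D$ built from the positive right eigenvectors) to reduce to the case that both are stochastic, so that they share the fixed right eigenvector $r$ with all entries $1$. The annihilator $W = \{v : vr = 0\}$ is then a fixed $U$-invariant hyperplane of row vectors containing no positive vector; the paper defines $U_t$ on $W$ by a path in $GL_+(n-1,\mathbb{R})$ from $I$ to $U|_W$ (after checking $\det(U|_W) > 0$ via the factorization $\det U = c\det(U|_W)$ with $c = (\ell_A U)r/\ell_A r > 0$), and on the complementary line by the convex combination $\ell_A U_t = (1-t)\ell_A + t\,\ell_A U$. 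Your argument skips the stochastic normalization and lets both the left and right eigenvectors $\alpha_t, \beta_t$ move in the positive cone, using the $t$-dependent decomposition $\mathbb{R}^n = \mathbb{R}\beta_t \oplus \ker\alpha_t$. That symmetry is attractive, but it is exactly what forces the orientation bookkeeping you flag as the weak point of Stage 3 — the paper's device of fixing $W$ once and for all makes that step vanish. Your bookkeeping does close up: projecting a fixed basis $m_2,\dots,m_n$ of $\ker\ell_A$ onto $\ker\alpha_t$ along $\beta_t$ (an isomorphism because $\ell_A\beta_t > 0$ along the convex path) gives a continuous basis with $k_{0,i} = m_i$; then $\det(\beta_t \mid k_{t,2} \mid \cdots \mid k_{t,n})$ is continuous, never zero, and equals $\det(r_A \mid m_2 \mid \cdots)$ at $t=0$, so it keeps a constant sign, which forces $\mathrm{sgn}\det U_t = \mathrm{sgn}\det\hat\Psi_t$ with the sign agreeing at $t=0$; the hypothesis $\det U > 0$ then places $\hat\Psi_1$ in $GL_+(n-1,\mathbb{R})$, and connectedness of that group finishes the interpolation. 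So your approach is slightly more general (no stochastic reduction needed) at the cost of tracking moving subspaces; the paper's is shorter because the normalization kills the moving part.
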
 

\begin{proof} 
After passing to $(1/\lambda)A $ and $(1/\lambda)B$, 
without loss of generality we can suppose $\lambda=1$. 
Let $D$ be the diagonal matrix such that $D(i,i)=r_A(i)$. 
Then $D^{-1}AD$ is stochastic (the right eigenvector has 
every entry 1). Let  $D_t= (1-t)I + tD$. Then  
$\{ D_t^{-1}AD_t\}_{0\leq t\leq 1}$  is a path of positive 
matrices from $A$ 
to a positive stochastic matrix. The same argument  holds for $B$, 
so without loss of generality 
 we may suppose $A$ and $B$ are stochastic, with positive 
right eigenvector $r=r_A$ having every entry 1.

Because the subspace of row vectors 
$W=\{v\in \mathbb R^n: vr=0\}$ is the annihilator of $r$, 
the matrix $U$ maps $W$ to $W$. 
From the assumptions, if $\mathcal B$ is a basis of $W$, then 
the matrix representing the restriction to $W$ of $U$ with respect to 
$\mathcal B$ must have positive determinant. Because there is a path 
from the identity to this matrix in 
$\textnormal{SL}(n-1,\mathbb R)$, 
 there is a path $\{T_t\}_{0\leq t\leq 1}$ of invertible  
linear transformations
$T_t:W\to W $
such that $T_0=I$ and 
$T_1=U|_W$. 

Now we determine the required path 
of matrices, $\{U_{t}\}_{0\leq t\leq 1}$,
by specifying the corresponding linear transformations. 
For $w\in W$, set $wU_t=T_t(w)$. 
Also require 
$\ell_AU_t =(1-t)\ell_A+t\ell_AU:=\ell_t$. 
Then $\ell_t>0$ for all $t$. Because $W$ contains no positive vector 
and $W$ has codimension one, 
the matrices $U_t$ are well defined and invertible. 
The vectors 
$\ell_AU_t $ and $U_t^{-1}r_A:=r_t$ are  eigenvectors of $A_t$ 
for the eigenvalue 1. If $w\in W$, then there is a $w'$ in 
$W$ such that $w'U_t=w$, and therefore 
$wr_t= w' U_tU_t^{-1}r_A=w'r_A=0$. Since $W$ has codimension 1, 
there must be a constant $c_t$ such that $r_t=c_tr$. 
Because $0<\ell_Ar = \ell_aU_tU_t^{-1}r=\ell_tcr=c\ell_tr $, we 
conclude $c>0$.  Consequently, both $\ell_t$ and $r_t$ are positive, as required. 
Clearly, 
 $U_0=I$ 
and $U_1=U$.
\end{proof}

For the next lemma, we note that if $M$ is a 
 nilpotent real matrix and $c\neq 0$, then 
$M$ is conjugate to $cM$. For a concrete example, 
\[
\begin{pmatrix} 1 & 0 & 0 \\ 0 & c^{-1} & 0\\ 0 & 0 & c^{-2} 
\end{pmatrix} 
\begin{pmatrix} 0 & 1 & 0 \\ 0 & 0 & 1\\ 0 & 0 & 0
\end{pmatrix} 
\begin{pmatrix} 1 & 0 & 0 \\ 0 & c & 0\\ 0 & 0 & c^2 
\end{pmatrix} 
= c \begin{pmatrix} 0 & 1 & 0 \\ 0 & 0 & 1\\ 0 & 0 & 0
\end{pmatrix}  \ . 
\] 

\begin{lemma}\label{positivepath}
Suppose $A$ and $B$ are positive eventually rank one 
 matrices with  nonzero eigenvalue  
$\lambda $, and there is a path 
$(U_t)_{0\leq t\leq 1}$ in 
$\textnormal{GL}(n,\mathbb R)$ such that 
$U_0=I$, $U_1^{-1}AU_1=B$ and 
for each $A_t=U_t^{-1}AU_t$, the 
left and right eigenvectors of $A_t$ are positive. 

Then there is a path 
$(V_t)_{0\leq t\leq 1}$ in 
$\textnormal{GL}(n,\mathbb R)$ such that $V_0=I$,  $V_1=U_1$ and each 
matrix $V_t^{-1}AV_t$ is positive. 
\end{lemma}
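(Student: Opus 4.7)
The plan is to exploit the eventually rank one structure. Write $A = \lambda r\ell + N$ where $P := r\ell$ is the rank-one spectral projection (normalized so $\ell r = 1$), the vectors $r$ and $\ell$ are positive eigenvectors of $A$, and $N := A - \lambda P$ is nilpotent satisfying $Nr = 0 = \ell N$ (and hence $PN = NP = 0$). The key observation is that for every $c > 0$, the matrix $\lambda P + cN$ is conjugate to $A$: extending $\{r\}$ by a Jordan basis for $N$ restricted to $\ker N^m$, one builds a block-diagonal conjugator $T_c$ (with $T_1 = I$) that fixes $r$ and $\ell$ and scales each Jordan chain of $N$ by appropriate powers of $c$, so that $T_c^{-1}AT_c = \lambda P + cN$. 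This family depends continuously on $c > 0$, and for every $c \in [0,1]$, the matrix $\lambda P + cN = (1-c)\lambda P + cA$ is a convex combination of positive matrices, hence positive.

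Along the given path set $r_t := U_t^{-1}r$, $\ell_t := \ell U_t$, $N_t := U_t^{-1}NU_t$, so $A_t = \lambda r_t\ell_t + N_t$ and $\ell_t r_t = 1$. The hypothesis $r_t,\ell_t > 0$, combined with compactness of $[0,1]$, gives a uniform positive lower bound on the entries $r_t(i)\ell_t(j)$ and a uniform upper bound on the entries of $N_t$. Fix $\epsilon > 0$ small enough that
\[
\tilde A_t := \lambda r_t\ell_t + \epsilon N_t = U_t^{-1}\bigl(T_\epsilon^{-1}AT_\epsilon\bigr)U_t
\]
is strictly positive for every $t \in [0,1]$.

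Build $V_t$ in three concatenated phases (reparametrizing time as needed). Phase (i) uses $V_t := T_{c_1(t)}$, where $c_1$ continuously decreases from $1$ to $\epsilon$; then $V_t^{-1}AV_t = (1-c_1(t))\lambda P + c_1(t)A$, positive by convexity. Phase (ii) uses $V_t := T_\epsilon U_t$; then $V_t^{-1}AV_t = \tilde A_t$, positive by the uniform bound. Phase (iii) uses $V_t := T_{c_3(t)}U_1$, where $c_3$ continuously increases from $\epsilon$ to $1$; then $V_t^{-1}AV_t = (1-c_3(t))\lambda r_1\ell_1 + c_3(t)B$, positive by convexity (noting $\lambda r_1\ell_1 > 0$). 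The pieces agree at their common endpoints, since $V_{1/3} = T_\epsilon = T_\epsilon U_0$ and $V_{2/3} = T_\epsilon U_1$, while $V_0 = T_1 = I$ and $V_1 = T_1 U_1 = U_1$, as required.

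The main subtlety is matching the three phases without ever needing to commute $T_\epsilon$ past $U_1$: placing $T_\epsilon$ on the \emph{left} of $U_t$ in phase (ii) and on the \emph{left} of $U_1$ in phase (iii) forces automatic agreement at $t = 2/3$. Apart from this, positivity in each phase is either a convex combination (phases (i) and (iii)) or the uniform estimate on the nilpotent perturbation of a uniformly positive rank-one matrix (phase (ii)), and continuity is transparent.
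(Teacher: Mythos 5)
Your proposal is correct and follows essentially the same route as the paper: decompose $A_t = \lambda P_t + N_t$ with $P_t = r_t\ell_t$, use compactness of the path to get a uniform lower bound on entries of $P_t$ and upper bound on $|N_t|$, choose $\epsilon$ so $P_t + \epsilon N_t$ is positive for all $t$, and concatenate three paths. Your write-up is somewhat more careful than the paper's in one respect: the paper concludes with "Reparametrizing, we get the path $\{V_t\}$ such that $V_0 = Id$ and $V_1 = U_1$" without spelling out how the conjugating matrices from the three phases are chosen to match at the joints and to end exactly at $U_1$, whereas you make $T_c$ explicit and verify that placing it to the left of $U_t$ gives the required agreement at $t = 1/3, 2/3, 1$ — a detail worth recording since $V_1 = U_1$ is precisely what is needed for the later appeal to part (2) of the Path Theorem.
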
 

\begin{proof} Without loss of generality, suppose $\lambda =1 $. 
Let $\ell_t$ and $r_t$ be the left and right
positive  eigenvectors of $A_t$, 
normalized so that $\ell_t r_t= (1)$. Let $P_t=r_t\ell_t$. 
Let $Q_t$ be the nilpotent matrix such that $A_t=P_t+Q_t$. 
Then $P_t>0$, $P_tQ_t=Q_tA_t=0$, 
$U_t^{-1}P_0U_t=P_t$ and 
$U_t^{-1}Q_0U_t=Q_t$. 

Along the path, the entries of the $P_t$ 
have a positive lower bound $m$ and the absolute values of 
entries of the  $Q_t$ have a positive upper bound $M$. 
Choose a positive $\epsilon < m/M$. Then we have a path 
of positive conjugate matrices 
$P_t+ \epsilon Q_t$  from $P_0+\epsilon Q_0$ to 
$P_1 + \epsilon Q_1$. 
Taking $s$ from $\epsilon $ to 1, 
we get a path of positive conjugate matrices from $P+\epsilon Q_0$ to 
$P+Q_0=A$, and likewise from 
$P_1+\epsilon Q_1$ to 
$P_1+Q_1=B$. Composing paths, we get  a path of positive 
conjugate matrices from $A$ and $B$. Reparametrizing, we get 
the path $(V_t)_{0\leq t\leq 1}$ such that $V_0=I$ 
and $V_1=U_1$. 
\end{proof} 

The next result was proved in \cite{KR2} for 
the case $\mathcal U= \mathbb Q$ or $\mathbb R$. 
The positive matrix path construction below 
is a  matrix version of the 
invariant tetrahedra argument in \cite{KR2}. 
We describe the approach from \cite{KR2} of ``positive invariant 
tetrahedra'' in Appendix 
\ref{tetrahedrasec}.

\begin{theorem}\label{oneeigenvalue} 
Suppose $\mathcal U$ is a nondiscrete unital 
subring of $\mathbb R$, and $A$ and $B$ are nonnegative 
eventually rank one matrices
 which are SSE over 
 $\mathcal U$. 

 Then $A$ and $B$ are SSE over $\mathcal U_+$. 
\end{theorem}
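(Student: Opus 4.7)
The plan is to use Theorem \ref{ssetosim} to pass to a pair of positive conjugate matrices over $\mathcal U$, build a path of positive conjugate matrices between them via Lemmas \ref{positivepath1} and \ref{positivepath}, and finally invoke Theorem \ref{paththeorem}(2).

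The first step is to reduce to the case where $A$ and $B$ are strictly positive. If $A$ is nonnegative with eventual rank one and spectral radius $\lambda>0$, then some power $A^k$ is rank-one nonnegative; after invoking Proposition \ref{nondegeneratesseprop} to force nondegeneracy throughout the SSE, the rank-one Perron structure lets one produce an SSE-$\mathcal U_+$ from $A$ to a strictly positive matrix by a finite sequence of elementary splittings with carefully chosen positive entries in $\mathcal U$. Assume henceforth that $A$ and $B$ are strictly positive.

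Apply Theorem \ref{ssetosim} to obtain positive matrices $A', B'$ over $\mathcal U$ with $A$ SSE-$\mathcal U_+$ to $A'$, $B$ SSE-$\mathcal U_+$ to $B'$, and a matrix $W\in GL(n,\mathcal U)$ with $W^{-1}A'W=B'$, $\det W>0$, and $W$ sending positive eigenvectors to positive eigenvectors. Since SSE preserves the nonzero spectrum, $A'$ and $B'$ share a unique nonzero eigenvalue $\lambda>0$ and are eventually rank one; in particular $\ell_{A'}W$ is a positive left eigenvector of $B'$ for $\lambda$. Lemma \ref{positivepath1} applied to $(A',B',W)$ then yields a path $\{U_t\}_{0\leq t\leq 1}$ in $GL(n,\mathbb R)$ with $U_0=I$, $U_1=W$, such that each $A_t:=U_t^{-1}A'U_t$ has positive left eigenvector $\ell_{A'}U_t$ and positive right eigenvector $U_t^{-1}r_{A'}$ for $\lambda$. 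Each $A_t$ remains eventually rank one, being conjugate to $A'$, so Lemma \ref{positivepath} upgrades this to a path $\{V_t\}_{0\leq t\leq 1}$ with $V_0=I$, $V_1=W$, and each $V_t^{-1}A'V_t$ strictly positive.

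Finally, invoke Theorem \ref{paththeorem}(2) with the path $(V_t^{-1}A'V_t)_{t\in[0,1]}$ from $A'$ to $B'$, path-lift $G_t=V_t$, and conjugating matrix $W\in GL(n,\mathcal U)$. The required hypothesis $WG_1^{-1}=WW^{-1}=I$ lies in the identity component of $\textnormal{Cent}_{\mathbb R}(A')$ is automatic, so the theorem yields that $A'$ and $B'$ are SSE over $\mathcal U_+$. Chaining the three SSE-$\mathcal U_+$ equivalences completes the proof. The main obstacle is the preliminary reduction from nonnegative to strictly positive; once that is in hand, the rest is a clean assembly of previously established results, the key point being that the conjugating $W$ supplied by Theorem \ref{ssetosim} has exactly the positive-determinant and eigenvector-preservation properties required by Lemma \ref{positivepath1}, and that its appearance as $V_1$ trivializes the centralizer-component obstruction in the Path Theorem.
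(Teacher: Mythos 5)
Your argument is essentially the paper's own: reduce to positive matrices, invoke Theorem~\ref{ssetosim} to get positive conjugate representatives over $\mathcal U$ with the determinant/eigenvector conditions, chain Lemmas~\ref{positivepath1} and~\ref{positivepath} to get a path of positive conjugates, and close with Theorem~\ref{paththeorem}(2) after observing $WG_1^{-1}=I$. Your spelling-out of why the centralizer-component obstruction vanishes is a nice touch.

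The one place where you drift is the preliminary reduction from nonnegative to strictly positive: Proposition~\ref{nondegeneratesseprop} is the wrong tool there. It concerns making an SSE over a \emph{ring} $\mathcal U$ nondegenerate, and is indeed used (inside the proof of Theorem~\ref{ssetosim}) for the SSE-$\mathcal U$ between $A$ and $B$; it says nothing about producing an SSE over $\mathcal U_+$ from $A$ to a positive matrix. The paper handles this step by first amalgamating to a \emph{primitive} matrix (possible by the eventual-rank-one and nonnegativity assumptions: the Frobenius normal form has a unique non-nilpotent diagonal block, which must be primitive since $\lambda$ is the only eigenvalue on the spectral circle, and the remaining $1\times1$ zero blocks can be amalgamated away over $\mathcal U_+$), and then applying Proposition~\ref{primtopos} to pass from a primitive positive-trace matrix over $\mathcal U_+$ to a strictly positive one SSE over $\mathcal U_+$. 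You should replace your appeal to Proposition~\ref{nondegeneratesseprop} with that two-step route; the rest of your proof is then precisely the paper's.
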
 

\begin{proof}
After passing to matrices SSE over $\mathcal U_+$, 
we may assume that $A$ and $B$ are 
primitive (using the eventually rank one assumption),  
and then positive (by 
 Proposition \ref{primtopos}). By Theorem \ref{ssetosim}, 
we may assume also we have $U\in \textnormal{GL}(n,\mathcal U)$ 
such that $AU=UB$, $\det U>0$ and $U$ sends a positive 
eigenvector of $A$ to a positive eigenvector of $B$. 
By Lemmas \ref{positivepath1} and \ref{positivepath}, 
there is a path $( U_t)_{0\leq t\leq 1}$
in $\textnormal{GL}(n,\mathbb R)$ such that 
$U_0=I, U_1=U$ and each $U_t^{-1}AU_t$ is positive. 
By Theorem \ref{paththeorem}(2), it follows that 
$A$ and $B$ are SSE over $\mathcal U_+$. 
\end{proof} 

\begin{remark} \label{oneremark} 
The one eigenvalue result above looks better in contrast 
to the lack of other general results. 
For every subring $\mathcal U$ of $\mathbb R$, 
for every primitive matrix $A$ over $\mathcal U$, 
it is unknown whether there exists an 
algorithm which given $B$ primitive and 
SSE over $\mathcal U$ to $A$ decides whether 
$B$ is SSE over $\mathcal U_+$ to $A$. 
\end{remark}

Theorem \ref{oneeigenvalue} is not a complete solution 
to the problem of classifying eventually rank one 
positive matrices 
over $\mathcal U$ (for an arbitrary dense subring of $\mathbb R$).
It is complete with regard to addressing positivity, but  
 we do not understand in general 
 how SSE refines SE 
over $\mathcal U$. Especially, 

\begin{problem}\label{rankonesseproblem} 
Suppose 
$\mathcal U$ is a nondiscrete unital subring of $\mathbb R$, 
 and $A,B$ are eventually rank one matrices which 
are shift equivalent  over the ring 
$\mathcal U$. Must they be strong shift equivalent over
$\mathcal U$?
\end{problem}

However, we are able to handle some classes of rings, as 
follows. 

\begin{theorem} \label{dedekindtheorem}
Suppose 
$\mathcal U$ is a nondiscrete unital subring of $\mathbb R$, 
 and $A$ is a nonnegative 
eventually rank one matrix over $\mathcal U$, with nonzero 
eigenvalue $\lambda$. Then the following hold. 
\begin{enumerate} 
\item 
If 
$\mathcal U$ is a Dedekind domain and $A$ is shift equivalent 
over $\mathcal U$ 
to the matrix $\begin{pmatrix}\lambda\end{pmatrix}$, then 
$A$ is SSE over $\mathcal U_+$ to $\begin{pmatrix}\lambda\end{pmatrix}$. 
\item 
If $\mathcal U$ is a principal ideal domain (e.g., 
a field), then $A$ is SSE over $\mathcal U_+$ to 
$\begin{pmatrix}\lambda\end{pmatrix}$. 
\end{enumerate} 
\end{theorem}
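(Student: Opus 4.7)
The plan is to derive both parts from the eventually rank one theorem, Theorem \ref{oneeigenvalue}.

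For part (1), since $\mathcal U$ is a Dedekind domain, shift equivalence over $\mathcal U$ coincides with strong shift equivalence over $\mathcal U$ (as recalled after Definition \ref{sedefn}). Hence the hypothesis that $A$ is SE-$\mathcal U$ to $(\lambda)$ upgrades automatically to SSE-$\mathcal U$. Because $A$ is nonnegative and not nilpotent, Perron--Frobenius forces $\lambda>0$, so $(\lambda)$ is a $1\times 1$ nonnegative matrix, trivially eventually rank one. Theorem \ref{oneeigenvalue} then delivers the desired SSE over $\mathcal U_+$.

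For part (2), a PID is Dedekind, so by (1) it suffices to construct a shift equivalence over $\mathcal U$ from $A$ to $(\lambda)$. The construction is an integral rank--one Perron factorization of $A^\ell$. Over the field of fractions $\mathbb F$ of $\mathcal U$, choose right and left eigenvectors of $A$ for $\lambda$; the PID hypothesis lets us rescale to primitive vectors $r\in\mathcal U^n$ and $s\in\mathcal U^{1\times n}$ (entries with unit gcd). A Bezout identity $\sum_i u_i r_i = 1$ applied to $Ar = \lambda r$ gives $\lambda = \sum_i u_i (Ar)_i \in\mathcal U$.

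Now pick $\ell$ large enough that $A^\ell$ has rank $1$ over $\mathbb F$; then the column and row spaces of $A^\ell$ are $\mathbb F r$ and $\mathbb F s$, so $A^\ell = c\,rs$ for a unique $c\in\mathbb F$. Applying the Bezout identity for $r$ row-wise forces $c s_j\in\mathcal U$ for each $j$, and then the Bezout identity for $s$ forces $c\in\mathcal U$. Set $R := cr\in\mathcal U^n$ and $S := s\in\mathcal U^{1\times n}$. Then $AR = \lambda R$, $SA = \lambda S$, and $RS = c\,rs = A^\ell$. From $AR = \lambda R$ we deduce $A^{\ell+1} = \lambda A^\ell$, whence $A^{2\ell} = \lambda^\ell A^\ell$; comparing this with $A^{2\ell} = (RS)(RS) = (SR)\,A^\ell$ and canceling $A^\ell\neq 0$ yields $SR = \lambda^\ell$. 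Thus $(R,S,\ell)$ is an SE-$\mathcal U$ from $A$ to $(\lambda)$, and part (1) finishes the proof.

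The one genuine obstacle is the integrality step in part (2): extracting $\lambda$ and $c$ in $\mathcal U$ from the $\mathbb F$-level decomposition. Both extractions depend essentially on the Bezout property (equivalently, the existence of primitive vectors), a feature that need not persist in a general Dedekind domain --- which is precisely why part (1) must take SE-$\mathcal U$ as a hypothesis rather than mere shift equivalence over $\mathbb F$.
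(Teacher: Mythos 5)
Your argument is correct, and part (1) is identical to the paper's: Dedekind implies SE-$\mathcal U$ upgrades to SSE-$\mathcal U$ by \cite[Prop.~2.4]{BH2}, and then Theorem~\ref{oneeigenvalue} finishes.

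For part (2) you take a genuinely different route. The paper cites the Effros/Williams result \cite{E,Wi2} that over a PID every matrix is SSE-$\mathcal U$ to a nonsingular matrix; since SSE preserves eventual rank 1, that nonsingular matrix is forced to be $(\lambda)$, and Theorem~\ref{oneeigenvalue} applies directly. You instead construct an explicit shift equivalence over $\mathcal U$ from $A$ to $(\lambda)$: scale the Perron eigenvectors to primitive vectors $r,s$ over the PID, use a Bezout identity to pull $\lambda$ and then the scalar $c$ (with $A^\ell=c\,rs$) into $\mathcal U$, set $R=cr$, $S=s$, and verify the SE axioms; then invoke part (1). Your argument is more self-contained (it replaces the \cite{E,Wi2} citation with a direct computation), at the cost of being longer and of routing part (2) through the Dedekind/SSE fact from part (1) rather than handling it independently. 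One small simplification: since $A$ is eventually rank one, $\chi_A(t)=t^{n-1}(t-\lambda)$, so $\lambda=\mathrm{tr}(A)\in\mathcal U$ immediately; the Bezout extraction of $\lambda$ is unnecessary (the one for $c$ is still needed). Both proofs are valid.
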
 

\begin{proof} 
(1) Over a Dedekind domain $\mathcal U$, SE-$\mathcal U$ implies 
SSE-$\mathcal U$ \cite[Prop. 2.4]{BH2}, 
so Theorem \ref{oneeigenvalue} applies. 

(2) Over the principal ideal domain $\mathcal U$, 
$A$ is SSE-$\mathcal U$ to a nonsingular matrix 
\cite{E,Wi2}. 
This matrix can only be 
$\begin{pmatrix}\lambda\end{pmatrix}$, 
so again  
Theorem \ref{oneeigenvalue} applies. 
\end{proof} 

\begin{remark} \label{dedekindremark}
Example 2.2  in \cite{BH2} provides a
$2\times 2$ positive matrix 
$A'$ with eigenvalues $0$ and $1$ over the 
Dedekind domain $\mathcal U=\mathbb Z[\sqrt{15}]$ 
which is not shift equivalent over $\mathcal U$ 
 to a nonsingular matrix. 

Whenever a 
Dedekind domain $\mathcal U$  
is not a principal ideal domain, there will be matrices 
over $\mathcal U$ which are not SSE-$\mathcal U$ to 
a nonsingular matrix \cite{BH2}.
\end{remark}

\begin{remark} The proofs above 
easily adapt to prove the result stated next, 
which is one version of the  ``positive models'' result 
 in \cite{KR4}. 
\end{remark} 

\begin{theorem} \label{positivemodels}
Suppose $A$ and $B$ are $n\times n$ positive real matrices, 
and there are matrices $P,Q_0,Q_1,U$ such that the following hold. 
\begin{itemize} 
\item $P$ is a positive matrix  

\item $A$ and $B$ are internal direct sums,  
$A=P+Q_0$ and $B=P+Q_1$,  with the matrices 
$Q_0,Q_1$  nilpotent 

\item There is $U\in \textnormal{SL}(n,\mathbb R) $ such that 
$UP=PU$ and $UQ_0=Q_1U$. 
\end{itemize} 

Then there is a path of positive matrices $A_t=U_t^{-1}AU_t$ 
from $A=A_0$ to $B=A_1$, such that $U_0=I$ and $U_1=U$. 
\end{theorem}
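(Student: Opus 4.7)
The proof should adapt the construction in Lemmas \ref{positivepath1} and \ref{positivepath}, with the positive summand $P$ playing the role of the rank-one outer product $r\ell$ from those lemmas and the nilpotent summands $Q_0,Q_1$ playing the role of the nilpotent complement there. First, I would record the consequences of positivity: since $Q_0$ is nilpotent and commutes with $P$, the Perron eigenvectors of $A=P+Q_0$ coincide with those of $P$, and likewise for $B=P+Q_1$. The hypothesis $UP=PU$ forces $U$ to scale the one-dimensional top eigenspaces of $P$ by a common scalar $c$, and the positivity of $B=U^{-1}AU$ forces $c>0$ (otherwise the image of a positive Perron vector under $U^{-1}$ would not be positive). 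This is the analog of the positivity of eigenvectors exploited in Lemma \ref{positivepath1}.

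Next, I would construct a path $(U_t)_{0\le t\le 1}$ in $\textnormal{SL}(n,\mathbb R)$ from $Id$ to $U$ such that $P_t:=U_t^{-1}PU_t$ is positive with entries uniformly bounded below by a positive constant. The cleanest realization is a path inside the identity component of $\textnormal{Cent}_{\mathbb R}(P)\cap\textnormal{SL}(n,\mathbb R)$, which makes $P_t=P$ identically; the observation $c>0$ above, together with the positivity of $A$ and $B$ constraining the signs on the remaining generalized eigenspaces of $P$, is used to place $U$ in this component. Alternatively, one proceeds as in the proof of Lemma \ref{positivepath1}: decompose $U$ into its action on the top eigenspace of $P$ (on which $c>0$ permits a linear interpolation of eigenvectors) and on the complementary $U$-invariant subspace (on which the restriction of $U$ has determinant $1/c>0$, yielding a path in the special linear group of the complement), then interpolate separately.

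With such a path in hand, the remainder of the proof mirrors Lemma \ref{positivepath}. Set $Q_t:=U_t^{-1}Q_0U_t$; this is nilpotent for every $t$ and, by compactness of $[0,1]$, has entries uniformly bounded. Choose $\epsilon_0>0$ small enough that $P_t+\epsilon Q_t$ is positive for every $t\in[0,1]$ and every $\epsilon\in(0,\epsilon_0]$, so that $\{P_t+\epsilon Q_t\}_{t\in[0,1]}$ is a path of positive conjugate matrices from $P+\epsilon Q_0$ to $P+\epsilon Q_1$. Since $A=P+Q_0$ and $P$ are both positive, the linear segment $\{P+sQ_0\}_{s\in[\epsilon,1]}$ consists of positive matrices (all conjugate via the constant identity), giving a positive path from $A$ to $P+\epsilon Q_0$; the analogous segment $\{P+sQ_1\}_{s\in[\epsilon,1]}$ (under the constant conjugation by $U$) gives a positive path from $P+\epsilon Q_1$ to $B$. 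Concatenating these three paths and reparametrizing produces the desired path of positive conjugate matrices from $A$ to $B$ with $U_0=Id$ and $U_1=U$.

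The main obstacle is the second step: arranging that $P_t$ remains positive along the path. In the eventually-rank-one setting of Lemma \ref{positivepath} this was automatic from the outer-product form $r_t\ell_t$; in the present generality one must use the commutation $UP=PU$ together with the positivity of $A$ and $B$ to exhibit $U$ in the identity component of $\textnormal{Cent}_{\mathbb R}(P)\cap\textnormal{SL}(n,\mathbb R)$, or else construct an explicit interpolating path keeping $P_t$ in the positive cone. Once this is done the $\epsilon$-scaling argument transplants directly from Lemma \ref{positivepath} without change.
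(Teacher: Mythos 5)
Your overall architecture — reduce to finding a path along which the positive summand $P$ stays positive, then transplant the $\epsilon$-scaling trick of Lemma \ref{positivepath} — is the right shape, and the $\epsilon$-scaling step at the end is correct. But the proposal has two genuine gaps, and the first one shows the theorem as stated needs an extra hypothesis. The claim that positivity of $B$ forces $c>0$ (where $Ur_P=cr_P$) is false: take $n$ even, $A=B=P$ the all-ones $n\times n$ matrix, $Q_0=Q_1=0$, and $U=-I\in SL(n,\mathbb R)$. All hypotheses of the theorem hold, yet $c=-1$. If $c<0$ then $U^{-1}r_P$ is a negative multiple of $r_P$, but $B$ still has a positive Perron eigenvector, namely $-U^{-1}r_P$, so positivity of $B$ says nothing about the sign of $c$. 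In this example the conclusion also fails outright: writing $P=r\ell$, any path $U_t$ from $Id$ to $-I$ gives $U_t^{-1}PU_t=(U_t^{-1}r)(\ell U_t)$, which is positive exactly when the pair $(U_t^{-1}r,\ell U_t)$ lies in the disjoint union of the two open sets $\{u>0,v>0\}$ and $\{u<0,v<0\}$; a path from the first (at $t=0$) to the second (at $t=1$) cannot stay in that union. So the theorem requires the additional hypothesis that $U$ send positive eigenvectors to positive eigenvectors — compare the ``Moreover'' clause of Theorem \ref{ssetosim} and the hypothesis $\ell_AU>0$ in Lemma \ref{positivepath1} — and you cannot manufacture it from what is given.

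Even granting $c>0$, the step you yourself call ``the main obstacle'' — producing a path $(U_t)$ from $Id$ to $U$ with $P_t:=U_t^{-1}PU_t$ uniformly positive — is not actually carried out. Your route (a) requires $U$ to lie in the identity component of $\textnormal{Cent}_{\mathbb R}(P)\cap SL(n,\mathbb R)$, but by Proposition \ref{gammabound} the centralizer has $2^{\gamma(P)}$ connected components, and the two conditions $c>0$ and $\det U>0$ pin down only a small number of the $\gamma(P)$ sign invariants; when $P$ has more than one nonzero eigenvalue, $U$ can sit in a non-identity component. (For $P$ of rank one the two conditions do suffice to force $U$ into the identity component, which is why Lemma \ref{positivepath1} works — but that is precisely the eventually-rank-one case, not this theorem.) Your route (b), an explicit eigenspace-by-eigenspace interpolation, is mentioned but not constructed; for $P$ of higher rank the real work is to show the interpolated $P_t$ stays in the positive cone, and this is left as an open hope. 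Until that step is supplied, the proposal does not constitute a proof.
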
 

Theorem \ref{positivemodels} looks like a powerful tool, 
but so far it has not led to a general result. 

\begin{problem} \label{positive modelproblem}
Suppose $A$ is a positive real matrix.
Must there exist a positive matrix $P$, with 
$\textnormal{rank}(P)=\textnormal{rank}(P^2)$,  and a nilpotent matrix 
$Q$,  such that $PQ=QP=0$ and 
$A$ is strong shift equivalent over $\mathbb R_+$ 
to $P+Q$? 
\end{problem}

\section{The Connection Theorem} \label{connectionsec}

Below, $||C||_{\textnormal{max}}$ denotes the maximum absolute value of an 
entry of $C$. 

\begin{definition}\label{defnnbhd} 
 For an $n\times n$ real matrix $A$,  and $\epsilon >0$,  
$\mathcal N_{\epsilon}(A)$ denotes the set of $n\times n$ 
matrices $B$ such that  $||B-A||_{\textnormal{max}}< \epsilon $. 
\end{definition}

\begin{definition} For an $n\times n$ real matrix $A$ and $\epsilon >0$,  
$\mathcal N_{\epsilon}^{\textnormal{SE}}(A)$ denotes the set of $n\times n$ 
matrices $B$ which are shift equivalent over $\mathbb R$ to $A$ and 
satisfy $||B-A||_{\textnormal{max}}< \epsilon $. 
\end{definition}

\begin{theorem}[Connection Theorem]\label{connectiontheorem}
Suppose $A$ is an $n\times n$ positive matrix. Then there 
is a $\delta >0$ such that for any $B,C$ in 
$\mathcal N_{\delta}^{\textnormal{SE}}(A)$ and $m\geq n^2/2$,  there are row 
splittings of $B,C$ to positive conjugate matrices $B',C'$ 
such that there exists a path of positive conjugate matrices 
from $B'$ to $C'$, and therefore 
the matrices $B,C$ are 
SSE-$\mathbb R_+$, through positive matrices 
not larger 
than $(n^2/2)\times (n^2/2)$. 

Moreover, if $B$ and $C$ have their 
entries in a nondiscrete subring $\mathcal U$ of $\mathbb R$, then 
the splittings to $B'$ and $C'$ can be done through matrices over 
$\mathcal U_+$. If in addition $\mathcal U$ is a field, then 
the matrices $B,C$ are 
SSE-$\mathcal U_+$, through positive matrices
not larger 
than $(n^2/2)\times (n^2/2)$. 
\end{theorem}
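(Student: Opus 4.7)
The engine of the argument is Theorem \ref{matrixconnection} from Appendix \ref{connectionappendix}, a purely linear-algebraic result which, given matrices $B,C$ shift equivalent to $A$ over $\mathbb R$ and sufficiently close to $A$, produces row splittings of $B,C$ to matrices $B',C'$ of common size $m$ (with $m \geq n^2/2$) that are conjugate over $\mathbb R$. I will assume, as is surely part of that statement (or can be read off from its proof), that the splitting data depends continuously on the input matrix, and that when the entries of $B,C$ lie in a nondiscrete unital subring $\mathcal U$ of $\mathbb R$, the splittings may be chosen over $\mathcal U$.

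Granting this, the plan proceeds in three steps. First, apply the procedure of Theorem \ref{matrixconnection} to $A$ itself to obtain a reference row splitting $A'$ of size $m$, which is positive since $A$ is. By continuity of the construction in the input, shrinking $\delta$ guarantees that $B'$ and $C'$ lie in a small neighborhood of $A'$, hence are positive. Second, by Remark \ref{finitelymanycomponents} the set of positive matrices conjugate over $\mathbb R$ to $A'$ has only finitely many connected components; further shrinking $\delta$ ensures $B'$ and $C'$ both lie in the component of $A'$, so there is a path of positive conjugate matrices from $B'$ to $C'$. Third, invoke the Path Theorem \ref{paththeorem}(1) on that path to obtain an SSE-$\mathbb R_+$ from $B'$ to $C'$ through positive matrices of size $m$. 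Concatenating this with the row splittings $B \to B'$ and $C' \to C$ (which are themselves elementary SSEs over $\mathbb R_+$ through positive matrices of size at most $m$) yields the desired SSE-$\mathbb R_+$ from $B$ to $C$, all through positive matrices of size at most $n^2/2$ (or rather at most $m$, with $m$ chosen as $\lceil n^2/2\rceil$).

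For the refined statement: if $B,C$ have entries in $\mathcal U$, then the $\mathcal U$-version of Theorem \ref{matrixconnection} together with the $\mathcal U$-aware Splitting Lemma \ref{positivitysplittinglemma} gives the splittings $B',C'$ over $\mathcal U_+$, and the path from $B$ to $B'$ and $C'$ to $C$ is through $\mathcal U_+$ matrices. When $\mathcal U$ is a field, matrices over $\mathcal U$ conjugate over $\mathbb R$ are automatically conjugate over $\mathcal U$ (rational canonical form), so $B'$ and $C'$ are conjugate over $\mathcal U$. Applying Theorem \ref{paththeorem}(4) with the trivial ideal $J=\{0\}$ upgrades the positive path from $B'$ to $C'$ to an SSE over $\mathcal U_+$ through positive matrices, completing the proof.

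The principal obstacle is Theorem \ref{matrixconnection} itself: producing row splittings that simultaneously realize conjugacy, depend continuously on the input, and respect the subring $\mathcal U$ is precisely the difficult linear-algebraic content. Once the splittings are at hand, the positivity step is a continuity argument, the path step uses only the finite-components remark, and the SSE step is an application of the Path Theorem; these pieces are comparatively soft.
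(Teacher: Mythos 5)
Your proposal misidentifies both the content of Theorem \ref{matrixconnection} and the central difficulty of the Connection Theorem, and the crucial step of producing a path of positive conjugate matrices from $B'$ to $C'$ does not go through as written.

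First, Theorem \ref{matrixconnection} does not produce row splittings. It is a purely linear-algebraic statement asserting that a conjugacy class $\mathcal C$ of nilpotent matrices is \emph{locally connected} at a nilpotent matrix $N$, under hypotheses on the Jordan block structure. The splittings in the paper's proof come from Lemma \ref{jordanlemma}, and the path comes from Lemma \ref{reduction} (whose hypothesis is the local connectedness supplied by Theorem \ref{matrixconnection}). You have collapsed three distinct tools into a single hypothetical black box with properties none of them individually has.

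Second, your Step 2 fails. You argue that $B'$ and $C'$ are positive, conjugate to $A'$, and close to $A'$, so by the finitely-many-components observation of Remark \ref{finitelymanycomponents} and shrinking $\delta$ they lie in the same component and hence are joined by a path. But $B$ and $C$ are only \emph{shift equivalent} to $A$ over $\mathbb R$, not conjugate; after row splittings the nonsingular Jordan parts agree but the nilpotent parts need not. In general $B'$ and $C'$ are not in the conjugacy class of $A'$ at all — the paper explicitly dispenses with the conjugate case at the outset and devotes the proof to the non-conjugate situation. Moreover, even if one had $B', C'$ conjugate to some common reference, finiteness of components does not let you conclude that two matrices near a boundary matrix of the class lie in the same component: the example following Theorem \ref{matrixconnection} (with $M_t$ and $P_t$ both converging to $N$ from distinct components of the same class) shows exactly this failure, and it is precisely what the local connectedness condition is designed to guard against.

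What is missing is the core idea: the row splittings of $B$ and $C$ must be \emph{engineered}, step by step via Lemma \ref{jordanlemma}, so that the nilpotent parts $N_{B'}$ and $N_{C'}$ land in a single pre-chosen conjugacy class $\mathcal C$ satisfying the hypotheses of Theorem \ref{matrixconnection} relative to $N_{A'}$. One then invokes Lemma \ref{reduction} to turn local connectedness of $\mathcal C$ at $N_{A'}$ into a path of positive conjugate matrices from $B'$ to $C'$ in $\mathcal N_{\epsilon}(A')$. Your passive "continuity of the construction" plus "shrink $\delta$ to stay in one component" skips the active Jordan-form bookkeeping that is the entire substance of the argument.
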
 

In the Connection Theorem, $A=B$ is allowed. 
Before proving the theorem, we record some 
immediate consequences. 

\begin{corollary} 
If $A$ is a 
 positive $n\times n$ matrix and 
$\dim (\ker (A))\geq 1$, then  
$A$ is  SSE over $\mathbb R_+$ to a positive  
$n\times n$ matrix $B$ such that $\dim (\ker (B))=1$. 
\end{corollary}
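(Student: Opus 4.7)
The plan is to apply the Connection Theorem \ref{connectiontheorem} to the pair $(A, A')$, where $A'$ is a suitably small perturbation of $A$ that is positive, of the same size, shift equivalent to $A$ over $\mathbb{R}$, and satisfies $\dim \ker(A') = 1$. If $\dim \ker(A) = 1$ already, take $B := A$ and we are done; otherwise assume $\dim \ker(A) \geq 2$.

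Let $\delta > 0$ be the constant provided by the Connection Theorem for $A$. Choose $P \in \mathrm{GL}(n,\mathbb{R})$ realizing the generalized eigenspace decomposition, so that $P^{-1}AP = A_+ \oplus N_0$ with $A_+$ nonsingular and $N_0$ nilpotent in Jordan canonical form. Since $\dim \ker A \geq 2$, $N_0$ consists of at least two Jordan blocks, say of sizes $k_1, k_2, \dots, k_j$ listed along the diagonal. For $t \in \mathbb{R}$, form $N_t$ from $N_0$ by inserting the value $t$ in position $(\sum_{i=1}^m k_i,\, 1 + \sum_{i=1}^m k_i)$ for each $m = 1, \dots, j-1$, leaving the remaining entries unchanged. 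Then $N_t$ is upper triangular with zero diagonal, hence nilpotent; for every $t \neq 0$, a direct computation of the ranks of the successive powers of $N_t$ shows that $N_t$ is similar to a single Jordan block of its own size, so $\dim \ker N_t = 1$.

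Define $A'_t := P(A_+ \oplus N_t)P^{-1}$. Then $A'_t$ depends continuously on $t$, agrees with $A$ at $t = 0$, and for every $t$ has the same nonzero Jordan form as $A$; in particular $A'_t$ is shift equivalent to $A$ over $\mathbb{R}$ by the characterization of SE over a subfield recalled in Section \ref{elemsec}. Since $A$ has strictly positive entries, for $t > 0$ sufficiently small the matrix $A' := A'_t$ is positive and satisfies $\|A' - A\|_{\max} < \delta$, so $A' \in \mathcal N_{\delta}^{\text{SE}}(A)$. The Connection Theorem \ref{connectiontheorem} applied with $B := A$ and $C := A'$ then yields that $A$ and $A'$ are SSE over $\mathbb{R}_+$, and $A'$ is the desired $n \times n$ positive matrix with $\dim \ker(A') = 1$.

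The only nontrivial step is the explicit perturbation $N_t$ and the verification that $\dim \ker N_t = 1$, which is an elementary rank calculation (equivalent to checking that $N_t^{n-r-1} \neq 0$, where $n - r$ is the size of the nilpotent part). Once that is in hand, the positivity and proximity of $A'$ to $A$ are automatic from continuity of $A'_t$ in $t$, and the conclusion is immediate from the Connection Theorem.
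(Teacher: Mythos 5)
Your proof is correct and takes essentially the same approach as the paper: the paper likewise perturbs the superdiagonal zeros of the nilpotent Jordan part of $A$ by small $\epsilon$'s to obtain nearby positive matrices shift equivalent to $A$ with one-dimensional kernel, and then invokes the Connection Theorem (via Theorem \ref{sepaththeorem}). You spell out the perturbation $N_t$ and the rank verification more explicitly, but the underlying idea is identical.
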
 

\begin{proof} 
Given $\dim (\ker (A))>1$, 
there are positive 
matrices shift equivalent to $A$ 
which are arbitrarily close to $A$ 
such that $\dim (\ker (A))=1$, as one can 
see by replacing each superdiagonal zero in the Jordan form 
of the nilpotent part of $A$ with $\epsilon$. 
Therefore Theorem \ref{sepaththeorem} applies to 
prove the corollary. 
\end{proof}  
 
The Path Theorem \ref{paththeorem} produced SSE's over $\mathbb R_+$ from 
paths of positive matrices which are conjugate. The following 
  consequence of the Connection Theorem 
shows we only need those matrices to be shift equivalent. 

\begin{theorem}\label{sepaththeorem} Suppose $(A_t), 0\leq t \leq 1$, 
is a path of 
positive shift equivalent $n\times n$ matrices. Then $A_0$ and $A_1$ are 
SSE over $\mathbb R_+$. 
\end{theorem}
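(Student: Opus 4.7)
The plan is to deduce Theorem \ref{sepaththeorem} from the Connection Theorem \ref{connectiontheorem} by a standard compactness argument along the path $(A_t)$.

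First I would apply the Connection Theorem at each point of the path. Fix $t \in [0,1]$. Since $A_t$ is a positive $n \times n$ matrix, the Connection Theorem furnishes a constant $\delta_t > 0$ such that any two matrices $B, C$ in the neighborhood $\mathcal{N}_{\delta_t}^{\text{SE}}(A_t)$ are SSE over $\mathbb{R}_+$. Next, since the path is continuous, choose an open subinterval $I_t \subset [0,1]$ containing $t$ on which $\|A_s - A_t\|_{\max} < \delta_t$ for every $s \in I_t$.

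The crucial observation is that for any $s, s' \in I_t$, both $A_s$ and $A_{s'}$ lie in $\mathcal{N}_{\delta_t}^{\text{SE}}(A_t)$: they lie within $\delta_t$ of $A_t$ by construction of $I_t$, and they are shift equivalent to $A_t$ over $\mathbb{R}$ because, by the hypothesis of the theorem, every matrix on the path is shift equivalent to every other. Consequently, the Connection Theorem yields $A_s$ SSE-$\mathbb{R}_+$ to $A_{s'}$ for all $s, s' \in I_t$.

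Now cover the compact interval $[0,1]$ by finitely many of the open intervals $I_{t_1}, \dots, I_{t_k}$, and choose a partition $0 = s_0 < s_1 < \cdots < s_m = 1$ fine enough that each consecutive pair $s_j, s_{j+1}$ lies in a common $I_{t_{i(j)}}$. By the previous paragraph, $A_{s_j}$ is SSE-$\mathbb{R}_+$ to $A_{s_{j+1}}$ for every $j$. Concatenating these strong shift equivalences along the partition yields an SSE-$\mathbb{R}_+$ from $A_0 = A_{s_0}$ to $A_1 = A_{s_m}$, completing the proof.

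There is no serious obstacle here, since the Connection Theorem has already done the heavy lifting (producing SSE-$\mathbb{R}_+$ from the mere shift-equivalent local data, via common row splittings and a path of positive conjugates). The only point requiring care is the verification that $A_s, A_{s'} \in \mathcal{N}_{\delta_t}^{\text{SE}}(A_t)$, which rests on the global hypothesis that every matrix on the path is shift equivalent to $A_t$, not merely to its immediate neighbors.
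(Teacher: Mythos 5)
Your proof is correct and follows essentially the same route as the paper: both deduce the result from the Connection Theorem by a compactness argument along the path. The paper states this more tersely (extracting a uniform $\delta$ independent of $t$), whereas you spell out the open cover and the concatenation over a fine partition, but the underlying idea is identical.
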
 

\begin{proof}[Proof of Theorem \ref{sepaththeorem}] 
It follows from compactness that for $0\leq t\leq 1$, the Connection 
Theorem 
holds for $A_t$ in place of $A$, for a uniform $\epsilon $ 
(independent of $t$). Consequently $A_0$ and $A_1$ are SSE over $\mathbb R_+$. 
\end{proof}

The rest of this section is devoted to 
the proof of the Connection Theorem, which relies also on 
Theorem \ref{matrixconnection}.  The consequences of the Connection Theorem 
in later sections can be read independent of the proof of 
Theorem \ref{matrixconnection}. 
 
We  prepare for the proof with two lemmas. 
The idea behind Lemma \ref{jordanlemma}, apart from the 
generality of $\mathcal U$, 
can be found in \cite{KR5} and \cite[Lemma 1]{KR2}.
 $J_k$ denotes the standard  $k\times k$ Jordan block  matrix 
(zero except for entries 1 in positions $(i,i+1)$, $1\leq i < k$). 
$J_0(A)$ denotes the nilpotent part of the Jordan form of a matrix $A$.

\begin{lemma} \label{jordanlemma} 
Suppose $\mathcal U$ is a nondiscrete subring of $\mathbb R$, 
$n\in \mathbb N$, $A$ and $B$ are positive $n\times n$ 
matrices over $\mathcal U$, $||A-B||<\epsilon$, 
$0<\alpha < 1$ and the $i$th rows of $A$ and $B$ are denoted 
$w_A$ and $w_B$. 
Let $A'$ be the 
 $(n+1)\times (n+1)$ matrix obtained by a splitting 
of its $i$th row corresponding 
to $w = \alpha w + (1-\alpha )w$. Suppose $B$ is an $n\times n$ 
positive matrix. 
Then there is an $(n+1)\times (n+1)$ positive matrix $B'$, 
obtained by a splitting of its $i$th row,
 such that 
the following hold. 
\begin{enumerate} 
\item 
$||B'-A'|| <  \epsilon $ 
\item 
 $J_0(B')$ can be chosen to be either 
(i) $J_0(B) \oplus [0]$ or, (ii) for 
any $k$ such that $J_0(A) $ has a $k\times k$ Jordan block, 
$J_0(B')$ can be obtained  
from $J_0(B) $ by replacing a 
$k\times k$ Jordan block 
with a $(k+1)\times (k+1)$ Jordan block. 
\item 
If $B$ has entries in $\mathcal U$, then 
the splitting to $B'$ can be done over $\mathcal U$. 
\end{enumerate} 
\end{lemma}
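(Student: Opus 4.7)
Proof plan. The plan is to construct $B' = YU$, where $U$ is the fixed $n \times (n+1)$ subdivision matrix that splits row $i$ into rows $i_1, i_2$ (so $B = UY$), and $Y$ agrees with $B$ except that row $i$ is replaced by two rows $s, t$ satisfying $s + t = w_B$; equivalently, $s = \alpha w_B + \eta$ and $t = (1-\alpha) w_B - \eta$ for a vector $\eta \in \mathbb{R}^n$ to be chosen. For property (1), the non-split rows of $B'$ agree with those of $B$ and so differ from the corresponding rows of $A'$ (which are rows of $A$) by at most $\epsilon$; the split rows differ from $\alpha w_A$ and $(1-\alpha) w_A$ by $\alpha(w_B - w_A) + \eta$ and $(1-\alpha)(w_B - w_A) - \eta$. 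Thus $\|B' - A'\| < \epsilon$ whenever $\|\eta\|$ is sufficiently small (including $\eta = 0$), and positivity of $s, t$ persists because $B > 0$.

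For case (i) take $\eta = 0$, so that $s, t$ are parallel. The row space of $Y$ then equals that of $B$, and hence $\ker Y = \ker B$ (as column-vector kernels). From the ESSE identity $(B')^j = Y B^{j-1} U$ together with $\dim \ker U = 1$, a direct computation gives $\ker (B')^j = U^{-1}\bigl((B^{j-1})^{-1}(\ker Y)\bigr)$ and hence $\dim \ker (B')^j = 1 + \dim \ker B^j$ for every $j \geq 1$. Translating kernel dimensions into Jordan block multiplicities via the standard formula shows that every block count of $J_0(B)$ is preserved while exactly one new size-$1$ zero block is added, so $J_0(B') = J_0(B) \oplus [0]$.

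For case (ii), choose $\eta$ to be linearly independent of the rows of $B$, so that $\ker Y = \ker B \cap \eta^\perp$ has codimension one in $\ker B$. The key choice is to pick $\eta$ such that $\eta^\perp$ contains the subspace $V_{k+1} := B^{k}(\ker B^{k+1})$ but does not contain $V_k := B^{k-1}(\ker B^k)$; this is possible precisely because the hypothesis that $J_0(A)$ has a block of size $k$ translates (in the intended application where $B$ is close to $A$ and shares its Jordan structure) to $V_k \setminus V_{k+1} \neq \emptyset$. The analogous kernel computation then yields $\dim \ker (B')^j = \dim \ker B^j$ for $j \leq k$ and $\dim \ker (B')^j = 1 + \dim \ker B^j$ for $j > k$. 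Translated into block multiplicities, this is exactly the pattern produced by replacing a single size-$k$ zero block of $J_0(B)$ by a size-$(k+1)$ block, with the nonzero spectrum preserved (as it is for any ESSE). Part (3) follows because $\mathcal{U}$ is dense in $\mathbb{R}$: one picks $\alpha \in (0,1) \cap \mathcal{U}$ and $\eta \in \mathcal{U}^n$; positivity and the norm bound are open conditions in $\eta$, and the required alignment $V_{k+1} \subseteq \eta^\perp \not\supseteq V_k$ is also open in $\eta$, so a suitable $\eta \in \mathcal{U}^n$ exists.

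The main obstacle is the kernel computation in case (ii): one must verify precisely that the choice $V_{k+1} \subseteq \eta^\perp \not\supseteq V_k$ shifts the kernel dimension from $\dim \ker B^j$ to $1 + \dim \ker B^j$ exactly at the transition $j = k+1$ and no earlier, so as to promote a single size-$k$ block to size $k+1$ without perturbing the other zero blocks of $B$.
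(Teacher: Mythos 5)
Your construction for case (ii) takes a genuinely different route from the paper's. The paper picks a row vector $v'$ with $v'B^k=0$, $v'B^{k-1}\neq 0$, $v'\notin\mathrm{rowspace}(B)$ (a Jordan chain-generator condition on $B^T$) and reads the Jordan form directly from the matrix $\left(\begin{smallmatrix}B&0\\ \gamma v'&0\end{smallmatrix}\right)$, to which $B'$ is conjugate. You instead impose the annihilator conditions $\eta\perp V_{k+1}$, $\eta\not\perp V_k$ on the perturbation, and track $\dim\ker(B')^j$ directly. The "main obstacle" you flag does go through: from $(B')^j=YB^{j-1}U$ and $\dim\ker U=1$ one gets $\dim\ker(B')^j=1+\dim\bigl(\ker B^j\cap(\eta B^{j-1})^\perp\bigr)$, and $\eta B^{j-1}$ vanishes on $\ker B^j$ exactly when $\eta\perp B^{j-1}(\ker B^j)=V_j$; since the $V_j$ are nested, your choice forces the jump to occur exactly at $j=k+1$, which is the claimed block-replacement. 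In fact your criterion on $\eta$ is the precise condition needed for the kernel computation, and it is sharper than the paper's stated conditions on $v'$: those give $v'\perp V_{k+1}$ automatically (from $v'B^k=0$), but the chain-generator clause $v'\notin\mathrm{rowspace}(B)$ only ensures $v'\not\perp V_1$, not $v'\not\perp V_k$; your phrasing makes this explicit. As you correctly note, both arguments tacitly need $J_0(B)$ (not just $J_0(A)$) to have a size-$k$ block, which is equivalent to $V_{k+1}\subsetneq V_k$ and hence to the solvability of your alignment condition.

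The one genuine gap is in your argument for part (3). You claim "the required alignment $V_{k+1}\subseteq\eta^\perp\not\supseteq V_k$ is also open in $\eta$, so a suitable $\eta\in\mathcal{U}^n$ exists." The condition $V_k\not\subseteq\eta^\perp$ is indeed open, but $V_{k+1}\subseteq\eta^\perp$ is the \emph{closed} condition that $\eta$ lie in the linear subspace $V_{k+1}^\perp$; when $V_{k+1}\neq\{0\}$ the admissible $\eta$ form an open subset of a proper subspace of $\mathbb{R}^n$, so density of $\mathcal{U}^n$ in $\mathbb{R}^n$ does not by itself yield an admissible $\eta\in\mathcal{U}^n$. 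What is needed is the observation that $V_{k+1}^\perp$ is cut out by linear equations with coefficients in $\mathcal{U}$, hence is $\mathbb{F}$-rational for $\mathbb{F}$ the fraction field of $\mathcal{U}$, so $V_{k+1}^\perp\cap\mathbb{F}^n$ is dense in $V_{k+1}^\perp$; one then clears denominators by a nonzero $\beta\in\mathcal{U}$ and multiplies by a small $\gamma\in\mathcal{U}$ to land in $\mathcal{U}^n$ while keeping $\eta$ small. This is exactly the maneuver the paper's proof performs on its vector $v'$ (choosing it in $\ker(B^k)\cap\mathbb{F}^n$ and rescaling), and your argument needs the same step.
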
 

\begin{proof}
 Without loss of generality, we may assume 
$\alpha \in \mathcal U$. 
For (i), use the splitting of  row $i$ by $\alpha w + (1-\alpha )w$. 
For (ii), we have by assumption that there is a vector $v$ 
such that $vB^k=0, vB^{k-1} \neq 0$ and $v$ is not in the image of 
$A$. Let $\mathbb F$ denote the field of fractions of $\mathcal U$. 
The kernel of $B^k$ contains a dense subset of vectors from 
$\mathbb F^n$. Pick $v'$ in $\ker(B^k)\cap \mathcal U^n$ with 
$||u-v'||$ small enough that $v'B^{k-1}\neq 0$ and 
$v'$ is not in the image of $B$. Pick $\beta \neq 0$ from $\mathcal U$ 
such that $\beta v'\in \mathcal U^n$. 
Pick $\gamma >0$ in $\mathcal U$ arbitrarily small, and small enough 
that for $s:= \alpha w_B + \gamma \beta v$ we have $s>0$ and 
$w_B-s>0$. Now form $B'$ by splitting row $i$ of $B$ according 
to  $w_B=s + (w_B -s)$. Then $B'$ is conjugate to 
$ \left(\begin{smallmatrix} B & 0 \\s & 0
\end{smallmatrix} \right)$ and to 
 $ \left(\begin{smallmatrix} B & 0 \\\gamma \beta v' & 0
\end{smallmatrix} \right)$. This last matrix has the required 
Jordan form. For small $\gamma$, we have $||B'-A'||< \epsilon$. 
\end{proof}

For the next lemma we establish some notation. 
For a square matrix $M$, we let $G_M,H_M$ denote
the unique matrices $G,H$ such that $M=G+H$,
$GH=HG=0$, $H$ is nilpotent and $\textnormal{rank}(G)=
\textnormal{rank}(G^2)$. We also use $U_M,N_M,F_M$ 
to denote matrices such that $U^{-1}_MMU_M=F_M\oplus N_M$, where 
$F_M$ is nonsingular and $N_M$ is nilpotent, 
and for concreteness $N_M$ is in Jordan form.  
In this case, 
$U^{-1}_MG_MU_M=F_M\oplus 0$ and 
$U^{-1}_MH_MU_M=0\oplus N_M$, where $0$ denotes a zero 
matrix of appropriate size. 
The norm used below is the max norm. 

Finally,   we define a notion critical for our proof 
of the Connection Theorem. 

\begin{definition} \label{localconnectednessdefinition}
Suppose $N$ is an $n\times n$ nilpotent matrix and 
$\mathcal C$ is a 
 conjugacy class of  $n\times n$ 
nilpotent matrices. We say $\mathcal C$ is 
{\it locally connected at} $N$ if  for every $\epsilon >0$ 
there exists 
$\delta >0$ such that any two matrices in 
$\mathcal C \cap \mathcal N_{\delta}(N)$ are connected 
by a path in $\mathcal C \cap\mathcal N_{\epsilon}(N)$.
\end{definition}

\begin{lemma} \label{reduction} Suppose $\epsilon >0$ and $A,B$ are positive 
$n\times n$ matrices such that $G_A$ and $G_B$ are conjugate,  
%(i.e., $A$ and $B$ are shift equivalent over $\mathbb R$)
and the conjugacy class $\mathcal C$ of 
$N_B$ is locally connected at $N_A$. 

Then there is a $\delta >0$ such that any two 
matrices in $\mathcal N_{\delta}(A)$ which are conjugate to 
$B$ are connected by a 
path of positive conjugate matrices 
in $\mathcal N_{\epsilon}(A)$. 
\end{lemma}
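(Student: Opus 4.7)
The plan is to reduce the problem to connecting nilpotent parts, then lift back to full matrices. First I would establish a continuous local section for the Jordan decomposition. On the stratum of $n\times n$ matrices whose characteristic polynomial factors as $t^k q(t)$ with $q(0)\neq 0$ and $k=\dim\ker A^n$, the triple $(U_M,F_M,N_M)$ can be chosen continuously on a neighborhood $W$ of $A$, with values at $A$ equal to $(U_A,F_A,N_A)$; this is because the generalized $0$-eigenspace has locally constant dimension on this stratum, so the associated invariant splitting varies continuously. Any $M\in\mathcal N_\delta(A)$ conjugate to $B$ lies in $W$ for $\delta$ small, and additionally $N_M\in\mathcal C$ since $M$ is conjugate to $B$.

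Given two such matrices $B_1,B_2\in\mathcal N_\delta(A)$, I would build a path from $B_1$ to $B_2$ in three concatenated segments. Segment (i): interpolate $U_{B_1}\to U_A$ by a short path in $GL_n(\mathbb R)$ (valid because both are near one another, by the section in Step 1) together with a straight line $F_{B_1}\to F_A$ in nonsingular matrices, producing a path of matrices conjugate to $B_1$ ending at $U_A(F_A\oplus N_{B_1})U_A^{-1}$. Segment (ii): apply the local connectedness of $\mathcal C$ at $N_A$ to get a path $N_t\in\mathcal C$ from $N_{B_1}$ to $N_{B_2}$ inside any prescribed neighborhood of $N_A$; this lifts to the path $U_A(F_A\oplus N_t)U_A^{-1}$ of mutually conjugate matrices. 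Segment (iii): the reverse of (i) for $B_2$ ends at $B_2$. Every matrix on the composite path is conjugate to $B$.

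For positivity and the max-norm constraint, positivity of $A$ gives a uniform lower bound $\mu>0$ on its entries, so it suffices to keep the entire path within $\min(\epsilon,\mu)$ of $A$ in max norm, whereupon each matrix on the path is automatically positive. The map $(U,F,N)\mapsto U(F\oplus N)U^{-1}$ is continuous near $(U_A,F_A,N_A)$, so choosing $\delta$ small enough and demanding the path $N_t$ in segment (ii) stay in a sufficiently small neighborhood of $N_A$ forces every segment into the desired max-norm ball.

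The main obstacle is segment (ii): we must translate the topological hypothesis on $\mathcal C$ into an effective path estimate. The precise content of Definition \ref{localconnectednessdefinition} is what makes this possible — it must guarantee not merely that nearby points of $\mathcal C$ are joinable in $\mathcal C$, but that the joining path stays within a prescribed matrix-norm neighborhood of $N_A$. Once this is extracted, the remainder is continuity bookkeeping, with the additional care that the interpolations in segments (i) and (iii) must themselves be small enough that the three segments together do not exceed the $\epsilon$-budget.
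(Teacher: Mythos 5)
Your high-level plan is the same as the paper's: pass to a fixed splitting $A\mapsto F_A\oplus N_A$, push each nearby matrix conjugate to $B$ into a form $F_A\oplus N'$ with $N'\in\mathcal C$ close to $N_A$, apply the local connectedness of $\mathcal C$ at $N_A$, and lift back. The local section claim is sound on the constant-rank stratum (the splitting $\mathbb R^n=\operatorname{range}(M^n)\oplus\ker(M^n)$ varies continuously there), and segment (ii) is exactly the paper's use of the local connectedness hypothesis.

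The gap is in segments (i) and (iii). The straight line $F_t=(1-t)F_{B_1}+tF_A$ stays in the open set of nonsingular matrices, but it does not stay in the conjugacy class of $F_A$, so the matrices $U_t(F_t\oplus N_{B_1})U_t^{-1}$ are not all conjugate to $B_1$. This matters: the Connection Theorem feeds the output of Lemma \ref{reduction} into the Path Theorem \ref{paththeorem}, which requires the entire path to lie in one conjugacy class. For a concrete failure, take $F_A=\left(\begin{smallmatrix}2&1\\0&2\end{smallmatrix}\right)$, $W=\left(\begin{smallmatrix}1&0\\\eta&1\end{smallmatrix}\right)$, $F_{B_1}=W^{-1}F_AW$; the midpoint $\tfrac12(F_A+F_{B_1})$ has characteristic polynomial $\lambda^2-4\lambda+4+\eta^2/4$, whose roots are nonreal, so it is not conjugate to $F_A$. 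In general the conjugacy class is a proper submanifold, and the chord leaves it at second order in $\|W-I\|$, no matter how small $\delta$ is taken.

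The repair is direct: since $G_A\sim G_B\sim G_{B_1}$, the nonsingular parts $F_{B_1}$ and $F_A$ are conjugate, and they are close because $B_1$ is close to $A$; so Lemma \ref{closeconjugation} supplies $W$ near the identity with $W^{-1}F_AW=F_{B_1}$, and you should replace the straight line by the conjugating path $s\mapsto W_s^{-1}F_AW_s$ along a short path $W_s$ from $I$ to $W$. The paper sidesteps this entirely by applying Lemma \ref{closeconjugation} once to the full $n\times n$ pair $G_A,G_C$ (close and conjugate because $G_A\sim G_B\sim G_C$) to get a single $V$ near the identity so that $V^{-1}CV$ has nonsingular part $G_A$ and hence $U^{-1}(V^{-1}CV)U=F_A\oplus N'$ in one step; a path from $I$ to $V$ then plays the role of your segments (i) and (iii), with no separate bookkeeping of $U$ and $F$.
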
 

\begin{proof} 
Fix a matrices $U,F_A,N_A$ such that $U^{-1}AU=F_A \oplus N_A$. 
The idea of the proof is the following. 
For $\delta $ small enough, 
given two matrices conjugate to $B$ inside 
$\mathcal N_{\delta}(A)$, we show there are paths from them in 
  $\mathcal N_{\epsilon}(A)$ to matrices
$C_1,C_2$  which are 
conjugated by $U$ to matrices 
$C'_1=\left(
\begin{smallmatrix} F_A & 0 \\ 0 & N_1 
\end{smallmatrix}
\right) $ 
and 
$C'_2=\left(
\begin{smallmatrix} F_A & 0 \\ 0 & N_2 
\end{smallmatrix}
\right) $ 
such that there is a path of conjugate 
matrices from $N_1$ to $N_2$ which induces 
a path from $C'_1$ to $C'_2$ which 
$U^{-1}$ conjugates to the desired path 
from $C_1$ to $C_2$. We spell out quantifiers for this next, 
for a matrix $C$ conjugate to $B$.

Take $\epsilon $ to be smaller than $||A||$. 
Pick $\epsilon_1>0$ such that 
$||N'-N_A||<\epsilon_1$ implies $||U(F_A\oplus N')U^{-1}-A||< \epsilon$ . 
Pick $\epsilon_2>0$ such that if conjugate nilpotent matrices 
$N_1,N_2$ are in $\mathcal N_{\epsilon_2}(N_A)$, then there is a path 
of conjugate matrices in $\mathcal N_{\epsilon_2}(N_A)$ from 
$N_1$ to $N_2$. Pick $\epsilon_3>0$ such that  
$||X-A||<\epsilon_3$ implies $||U^{-1}(X-A)U||<\epsilon_2$ .  

Finally, pick $\delta >0$ such that if $C$ is conjugate to 
$B$ and  $||A-C||<\delta$, then the conjugate matrices 
$A^n$ and $C^n$ are sufficiently close that there is a 
$V$ in $\textnormal{SL}(n,\mathbb R)$ such that $V^{-1}A^nV=
C^n$ (which means 
$V^{-1}(G_A)^nV=
(G_C)^n$)  and $||V-I||$ is sufficiently small that the following 
hold: 
\begin{enumerate} 
\item 
$V^{-1}G_AV=G_C$ . 
\item 
$||V^{-1}CV-A||<\epsilon_3$ . 
\item 
There is a path $(V_t)$ in 
$\textnormal{SL}(n,\mathbb R)$ from $I=V_0$ to
$V=V_1$ remaining sufficiently close 
to $I$ that 
$V_t^{-1}CV_t\in \mathcal N_{\epsilon}(A)$, $\ 0\leq t\leq 1$ . 
\end{enumerate} 

Because $V$ maps $\textnormal{ker}(C^n)$ onto $\textnormal{ker}(A^n)$, 
the matrix 
$U^{-1}(V^{-1}BV)U$ has the form 
$\left(
\begin{smallmatrix} F_A & 0 \\ 0 & N' 
\end{smallmatrix}
\right) $, with 
\begin{align*} 
\begin{pmatrix} 0 & 0 \\ 0 & N' 
\end{pmatrix} 
- 
\begin{pmatrix} 0 & 0 \\ 0 & N_A 
\end{pmatrix} 
\ &= \ 
\begin{pmatrix} F_A & 0 \\ 0 & N' 
\end{pmatrix} 
-
\begin{pmatrix} F_A & 0 \\ 0 & N' 
\end{pmatrix} 
\\ 
&= \
 U^{-1}(V^{-1}BV-A)U
\ . 
\end{align*} 
Since $|| V^{-1}BV - A||<\epsilon_3$, we have 
\[
||N'-N_A||= 
\Bigg|\Bigg|
\begin{pmatrix} 0 & 0 \\ 0 & N' 
\end{pmatrix} 
- 
\begin{pmatrix} 0 & 0 \\ 0 & N_A 
\end{pmatrix} 
\Bigg|\Bigg|
< \epsilon_2
\] 
which shows  our $\delta $ is small enough 
to establish the conclusion of the lemma. 
\end{proof}

\begin{proof}
[Proof of the Connection Theorem]
If $B$ and $A$ are conjugate, then the theorem follows 
from the Path Theorem. So we assume $B$ and $A$ are 
not conjugate, which implies $n\geq 3$. 

The strategy of the proof is to take a row splitting of 
$A$ to a suitable positive matrix $A'$ chosen independent of $B$ and $C$; 
pick a suitable class $\mathcal C$ of nilpotent matrices 
which is locally connected at $N_{A'}$; and then for 
 $\delta >0  $ taken from Lemma \ref{reduction}, 
perform  row splittings of $B$ and $C$ 
to matrices $B'$ and $C'$ in $\mathcal C\cap \mathcal N_{\delta}(A')$.

To begin the proof, we choose 
$A'$ and a sequence of matrices 
$A_i$, 
$n\leq i\leq m$, such that  $A=A_n$, $A'=A_m$ 
and for $n\leq i < m$, the matrix $A_{i+1}$ 
is obtained by splitting a row of $A_i$ 
into two proportional rows. 
 For example, if $m=sn+p$ with 
$s\geq 1 $ and $1\leq p <n$, then we could split 
each of the first $p$ rows into $s$ equal rows and split 
each of the remaining rows into $s-1$ equal rows. 
Then $\textnormal{rank}(A')=
\textnormal{rank}(A)$, and  $A'$ is conjugate to 
$A\oplus 0_{m-n}$.

Using notations from Definition \ref{betasnotation}, 
we define  
 $h=\max\{h(A),h(B),h(C)\}$ and 
$\beta=\max\{\beta(A),\beta(B),\beta(C)\}$. 
(If  $B$ is in 
$\mathcal N_{\epsilon}^{\textnormal{SE}}(A)$ and $\epsilon $ 
is sufficiently small, then $h(B)\geq h(A)$ must 
hold.) 
Let $r$ be the rank of $F_A$. For 
any $C$ shift equivalent to $A$ over $\mathbb R$, 
we have $F_C$ conjugate to $F_A$ and 
$\textnormal{rank}(F_C)=r$. So, if $C$ is $m\times m$ 
and shift equivalent to $A$, then 
$C$ is conjugate to $F_A\oplus N_C$, and the 
nilpotent matrix $N_C$ is $(m-r)\times (m-r)$. We will 
define $\mathcal C$ to 
be the conjugacy class of $(m-r)\times (m-r)$
matrices which contains 
the matrix $(J_h)^{\beta}\oplus 0_q$, 
where $q$ is $ m - r-\beta h$. We need to check 
this makes sense ($q\geq 0$) and also that 
$q>0$. Because  $n\geq 3 $ and 
\[
(\beta )(h) \leq  
\Big(\frac{n-r}2\Big)(n-r-1)
= \frac{n^2}2 -\frac r2(3n-1) \ , 
\]
we have 
\begin{align*} 
(\beta )(h) +r 
&\leq 
\frac{n^2}2 -\frac r2(3n-3)\\
&\leq 
\frac{n^2}2 -\frac 12(3(3)-1) = 
\frac{n^2}2 -3r 
< \frac{n^2}2 \leq m \ ,
\end{align*} 
and therefore $ q > 0 $. 
Clearly 
\begin{enumerate} 
\item 
$\mathcal C$ has a zero Jordan block (because $q>0$). 
\item 
$h(\mathcal C) = h \geq h(A) = h(A')$ . 
\item 
$\beta^{\textnormal{top}}(\mathcal C)
= \beta \geq \beta (A) = \beta (A')$ . 
\end{enumerate} 
It follows from Theorem \ref{matrixconnection} 
that $\mathcal C$ is locally connected at 
$N_{A'}$. Therefore, we can specify any $\epsilon >0$ 
and for that $\epsilon $ pick $\delta > 0$ in the statement 
of Lemma \ref{reduction}. This will be the 
$\delta$ in the statement of the Connection Theorem. 

Now we describe the splitting of $B$ to $B'$ 
(the argument to split $C$ to $C'$ is the same), 
such that $N_{B'}\in \mathcal C$. 
Starting with $B_n=B$ and $||B-A||<\delta$, 
for $n\leq i <m$ we inductively appeal to 
Lemma \ref{jordanlemma} to split $B_i$ to 
$B_{i+1}$, with $||A_{i+1}-B_{i+1}||<\delta $. 
We use condition (2) of
Lemma \ref{jordanlemma} at each stage as 
follows, applying the first listed criterion 
for which $B_i$ satisfies the required 
condition. 
\begin{enumerate} 
\item 
If $N_{B_i}$ has no zero Jordan block, 
then  $N_{B_{i+1}}=N_{B_i}\oplus 0_1$. 
\item 
If $N_{B_i}$ has fewer than $\beta $ 
Jordan blocks which are $2\times 2$ or larger, 
then 
$N_{B_{i+1}}$ is $ N_{B_i}$
with  a zero block 
replaced by $2\times 2$ Jordan block. 
\item 
If 
$N_{B_i}$ has 
 a $k\times k$ Jordan block of 
 with $2\leq k<h$, 
then for a maximum such $k$, 
$N_{B_{i+1}}$ is $ N_{B_i}$
with  a Jordan $k$-block 
replaced by a $(k+1)$-block. 
\item 
If $N_{B_i}$ has $\beta $ 
 Jordan blocks which are $h\times h$, 
then $N_{B_{i+1}}=N_{B_i}\oplus 0_1$. 
\end{enumerate} 
Clearly $B'$ has the required form, and there is a 
path of positive conjugate matrices from $B'$ to $C'$. 
 By the Path Theorem (\ref{paththeorem}),
$B'$ and $C'$ are SSE over $\mathbb R_+$, through 
positive $m\times m$ matrices. 

The ``Moreover'' condition of keeping splittings over 
$\mathcal U$ can be achieved by condition (3) 
of Lemma \ref{jordanlemma}. 
If $\mathcal U$ is a field, then the conjugacy of $B'$ and $C'$ over 
$\mathbb R$ implies their conjugacy over 
$\mathcal U$, and (again using that $\mathcal U$ 
is a field) by the Path Theorem we have an SSE-$\mathcal U_+$ 
through positive matrices from $B'$ to $C'$, and 
hence also from $B$ to $C$. This finishes the proof. 
\end{proof}

\section{From SSE over $\mathbb R_+$ 
to SSE over $\mathbb U_+$} 
\label{sseoverplus}
%Strong shift equivalence through positive matrices 
%was defined in Definition \ref{throughpositivedefinition}. 

\begin{theorem} \label{positivetheorem}
Suppose $A,B$ are positive matrices over $\mathbb R$ which 
are SSE-$\mathbb R_+$. Then 
$A,B$ are SSE over $\mathbb R_+$ through positive matrices: 
there are positive 
matrices  $A=A_0,A_1, \dots , A_{\ell}=B$, with 
$A_i$ elementary SSE over $\mathbb R_+$ to $A_{i+1}$, $0\leq i < \ell$. 
%for , such that all the matrices $A_i$ 
%probably we could later get $S_i,R_i>0$ also if we like
%where $A_i=R_iS_i$ and 
%$A_{i+1}=S_iR_i$
\end{theorem}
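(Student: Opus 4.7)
The plan is to reduce to the case where $A$ and $B$ are positive conjugate matrices via Theorem \ref{ssetosim}, and then handle that reduced case using the Path Theorem together with the Connection Theorem.

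First, I would apply Theorem \ref{ssetosim} to obtain positive matrices $A', B'$ conjugate over $\mathbb R$ with $A$ SSE-$\mathbb R_+$ to $A'$ and $B$ SSE-$\mathbb R_+$ to $B'$. A close inspection of the proof shows these SSEs can be taken through positive matrices: Lemma \ref{positivitysplittinglemma}(2) explicitly constructs the target from the source via ``row splittings of positive matrices over $\mathcal U_+$'', and in the proof of Lemma \ref{laststep} the positive matrix $M_\epsilon$ is linked directly to the positive matrix $C$ by the ESSE-$\mathbb R_+$ factorization exhibited there (a chain of just two positive matrices). This reduces the theorem to the case where $A$ and $B$ are positive, conjugate over $\mathbb R$, and SSE-$\mathbb R_+$.

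Second, by Remark \ref{finitelymanycomponents} the set of positive matrices conjugate over $\mathbb R$ to $A$ has only finitely many path-components. If $A$ and $B$ lie in the same path-component, the Path Theorem \ref{paththeorem}(1) directly yields an SSE-$\mathbb R_+$ through positive matrices of the same size and we are done. The essential remaining case is when $A$ and $B$ lie in distinct components of the positive part of their common conjugacy class.

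For this last case, the Connection Theorem \ref{connectiontheorem} is the intended bridging tool. Starting from an SSE-$\mathbb R_+$ chain $A=A_0,A_1,\ldots,A_\ell=B$ with possibly non-positive intermediates, I would identify, near enough of the $A_i$ (possibly after initial row splittings through positive matrices of the type produced by Theorem \ref{ssetosim}), positive matrices to which the Connection Theorem can be applied; each invocation produces an SSE-$\mathbb R_+$ through positive matrices of size at most $n^2/2$ between two nearby SE-equivalent positive matrices. Concatenating these local positive sub-chains, with size-matching at the seams handled by Theorem \ref{ssetosim}-style adjustments through positive matrices, should assemble a global SSE-$\mathbb R_+$ from $A$ to $B$ through positive matrices.

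The main obstacle is exactly this last concatenation step: the Connection Theorem is only local (a $\delta$-neighborhood in the SE class around a single positive matrix), while the given chain may wander through nonnegative matrices of widely varying norm and far from either endpoint. Executing the plan rigorously will require applying the Theorem \ref{ssetosim} reduction not only at the endpoints but iteratively along the given chain, producing at each stage a positive matrix sufficiently close in the SE sense to trigger the Connection Theorem and allow successive positive sub-chains to be glued.
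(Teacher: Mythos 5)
Your opening reduction is sound as far as it goes: Theorem \ref{ssetosim} does produce positive $A'$, $B'$ conjugate over $\mathbb R$, and the SSEs from $A$ to $A'$ and from $B$ to $B'$ can indeed be taken through positive matrices (via Lemma \ref{positivitysplittinglemma}(2) and Lemma \ref{laststep} as you note). But this reduction does not actually simplify the problem. After it you are left with two positive matrices which are conjugate over $\mathbb R$ and SSE-$\mathbb R_+$, and you still need to show they are SSE-$\mathbb R_+$ through positive matrices. Conjugacy over $\mathbb R$ gives no path of positive conjugate matrices between them, so the Path Theorem \ref{paththeorem}(1) only handles the case where they happen to lie in the same component; and the Connection Theorem \ref{connectiontheorem} is useless here because $A'$ and $B'$ need not be anywhere near each other. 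Your proposal for the remaining case --- iterating Theorem \ref{ssetosim} along the given chain --- is not executable as stated: the intermediate matrices $A_1,\dots,A_{\ell-1}$ in the given SSE-$\mathbb R_+$ chain are only nonnegative (Theorem \ref{ssetosim} needs both matrices positive), and even where they are primitive there is no mechanism in what you describe to bring two SE positive matrices that are far apart into the $\delta$-window that the Connection Theorem requires. So there is a genuine gap, and you have correctly identified its location but not how to close it.

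The paper closes this gap by a different, more structured reduction. Rather than passing to conjugate matrices, it invokes Theorem \ref{easystreet}, which for matrices SSE over a subfield $\mathcal U_+$ of $\mathbb R_+$ produces a \emph{meet-in-the-middle} SSE: a single primitive matrix $C$ and diagonal $D$ such that $A$ reaches $C$ by row splittings and $B$ reaches $D^{-1}CD$ by column splittings, all over $\mathcal U_+$. One then fixes a positive matrix $\widetilde C$ SSE-$\mathbb R_+$ to $C$ by the Boolean construction of Appendix \ref{booleanappendix}, applies the Connection Theorem at $\widetilde C$ to get a $\delta>0$, and uses Lemma \ref{pospos} (the continuity of the Boolean construction) to pick $\nu>0$ so that any positive matrix $\nu$-close to $C$ can be carried, through positive matrices, to a $\delta$-neighborhood of $\widetilde C$. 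The row splittings $A\to C$ and column splittings $B\to D^{-1}CD$ are then perturbed to pass through positive matrices, delivering positive $A^*$ near $C$ and $B^{**}$ near $C$; Lemma \ref{pospos} pushes both into the $\delta$-ball around $\widetilde C$, and the Connection Theorem glues them. This is why Theorem \ref{easystreet} is essential: it is what guarantees the two sides arrive at a common small neighborhood where the Connection Theorem can be applied. Your proposal never establishes that closeness, which is precisely the step you flagged as the obstacle.
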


\begin{proof} 
Appealing to Theorem \ref{easystreet},
choose a primitive matrix $C$ over $\mathbb R_+$ 
 and 
a nonsingular diagonal matrix $D$ over $\mathbb R_+$ 
such that $C$ is reached from $A$ by finitely many row splittings 
through  primitive matrices  and 
 $D^{-1}CD$ is reached from $B$ by finitely many column splittings 
through primitive  matrices. 
Then choose a construction, 
by the procedure described  
 in Appendix \ref{booleanappendix}, 
of a positive matrix 
 $\widetilde C$ SSE over $\mathbb R_+$ to $C$. 
Appealing to the Connection 
Theorem, 
choose $\delta >0$ such that 
matrices shift equivalent to 
$\widetilde C$
and 
$\delta $ close to  
$\widetilde C$ are SSE to each other through positive matrices. 
Appealing to Lemma \ref{pospos}, 
pick $\nu >0$ such that for any positive matrix 
$M$  with $||M-C||
< \nu$ there is a strong shift equivalence over 
$\mathbb R_+$ through positive matrices from $M$ to a positive matrix $\widetilde M$ 
 such that $||\widetilde M - \widetilde C||<\delta$. 

Now, perform row splittings from $A$ through positive matrices 
 to a positive matrix $A^*$ such that 
$||A^*-C||<\nu$. This is done simply by approximating 
the string of splittings from $A$ to $C$ over $\mathbb R_+$ by 
a string of splittings 
 from $A$ to $C$  through positive matrices.
By composition, we have an SSE over $\mathbb R_+$ from $A$ to a matrix 
$\widetilde A$ within $\delta$ of $\widetilde C$.  

The argument to obtain
 an SSE over $\mathbb R_+$ through positive matrices from $B$ to a matrix 
$\widetilde B$ within $\delta$ of $\widetilde C$ is similar. 
 We obtain a positive matrix $B^*$ 
near $D^{-1}CD$ by approximating 
the given column splittings from $B$ to $D^{-1}CD$. 
There is an elementary SSE  over $\mathbb R_+$  from 
$B^*$ to the positive matrix $D^{-1}B^*D:=B^{**}$. 
We take $B^*$ close enough to 
$D^{-1}CD$
to guarantee $||B^{**}-C||<\nu$. Then we apply 
 Lemma \ref{pospos} again to obtain the SSE through positive matrices from 
$B^{**}$ to the desired $\widetilde B$ near $\widetilde C$.
By the Connection Theorem, 
$\widetilde A$ and $\widetilde B$ are SSE over $\mathbb R_+$ through 
positive matrices. By composing the assembled SSEs, the theorem is proved.
\end{proof} 

%By Proposition \ref{nondegeneratesseprop}, $A$ and $B$ are 
%SSE through a chain of nondegenerate ESSEs. 
%By iterative application of  the Fiber Lemma, there is a 
%primitive real matrix $A$ such that $A$ is obtained by a sequence 
%of row splittings from $B$ and a diagonal refactorization of $A$ 
%is obtained by a sequence of column splittings from $C$. 
Theorem 8.2 below was proved in \cite{KR5} for $\mathcal U=\mathbb Q$ 
under the additional assumption that 
$A$ and $B$ are SSE over $ \mathbb R_+$ through positive matrices.

\begin{theorem} \label{field}
Let $\mathcal U$ be a subfield of $\mathbb R$.
Suppose $A,B$ are positive matrices over $\mathcal U$ 
which  are SSE over $\mathbb R_+$. Then 
$A$ and $B$ are SSE over $ \mathcal U_+$, through positive matrices. 
\end{theorem}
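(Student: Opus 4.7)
The plan is to mirror the proof of Theorem~\ref{positivetheorem} while tracking the subfield $\mathcal{U}$, and to invoke the field clause of the Connection Theorem~\ref{connectiontheorem} at the end. Since every subfield of $\mathbb{R}$ contains $\mathbb{Q}$, $\mathcal{U}$ is dense (in particular nondiscrete) in $\mathbb{R}$, so density-based perturbation arguments carry over from $\mathbb{R}$ to $\mathcal{U}$.

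First I would apply Theorem~\ref{easystreet} with $\mathcal{U}$ replaced by $\mathbb{R}$ (its hypotheses are met since $A,B$ are positive and SSE-$\mathbb{R}_+$): this produces a primitive matrix $C$ over $\mathbb{R}_+$ and a nonsingular positive diagonal $D$ over $\mathbb{R}_+$ such that $C$ is reached from $A$ by finitely many row splittings through primitive matrices over $\mathbb{R}_+$, and $D^{-1}CD$ is reached from $B$ by finitely many column splittings through primitive matrices over $\mathbb{R}_+$. As in the proof of Theorem~\ref{positivetheorem}, pass via the Boolean construction of Appendix~\ref{booleanappendix} to a positive matrix $\widetilde{C}$ which is SSE-$\mathbb{R}_+$ to $C$. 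Then pick $\delta>0$ from the Connection Theorem applied at $\widetilde{C}$, small enough that any two matrices over $\mathcal{U}$ in $\mathcal{N}_{\delta}^{\mathrm{SE}}(\widetilde{C})$ are SSE-$\mathcal{U}_+$, by the ``moreover'' field clause of Theorem~\ref{connectiontheorem}.

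Next, use density of $\mathcal{U}$ in $\mathbb{R}$ to push the construction down. Approximate the row splittings from $A$ to $C$ by row splittings over $\mathcal{U}_+$; this is feasible because $A$ already has entries in $\mathcal{U}$ and the real splittings pass through strictly positive matrices, leaving open room to choose split values in $\mathcal{U}_+$. Via the $\mathcal{U}$-analogue of Lemma~\ref{pospos}, produce a positive matrix $\widetilde{A}$ over $\mathcal{U}_+$ with $\|\widetilde{A}-\widetilde{C}\|<\delta/2$ together with an SSE-$\mathcal{U}_+$ from $A$ to $\widetilde{A}$ through positive matrices. For $B$, first approximate $D$ by a positive diagonal $\widetilde{D}$ over $\mathcal{U}_+$ and then approximate the column splittings from $B$ to $D^{-1}CD$ by column splittings over $\mathcal{U}_+$, yielding a positive $B^{\ast}$ over $\mathcal{U}_+$ close to $D^{-1}CD$; then $\widetilde{D}B^{\ast}\widetilde{D}^{-1}$ is positive over $\mathcal{U}$ and close to $C$, and the pair $(B^{\ast}\widetilde{D}^{-1},\widetilde{D})$ is a diagonal refactorization, hence an ESSE-$\mathcal{U}_+$, linking $B^{\ast}$ to $\widetilde{D}B^{\ast}\widetilde{D}^{-1}$. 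Iterating the positivity refinement produces a positive $\widetilde{B}$ over $\mathcal{U}_+$ with $\|\widetilde{B}-\widetilde{C}\|<\delta/2$ and an SSE-$\mathcal{U}_+$ from $B$ to $\widetilde{B}$ through positive matrices. Both $\widetilde{A}$ and $\widetilde{B}$ are SE-$\mathbb{R}$ to $\widetilde{C}$ (via the chain $\widetilde{A}\sim A\sim B\sim \widetilde{B}$ and $\widetilde{C}\sim C\sim A$) and so lie in $\mathcal{N}_{\delta}^{\mathrm{SE}}(\widetilde{C})$, so the Connection Theorem supplies an SSE-$\mathcal{U}_+$ between them. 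Composing the three SSE-$\mathcal{U}_+$'s yields the desired SSE-$\mathcal{U}_+$ from $A$ to $B$ through positive matrices.

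The main obstacle I expect is the density/positivity step: replacing real positive splittings by $\mathcal{U}_+$ splittings while keeping every intermediate matrix strictly positive and within the prescribed $\delta/2$-neighborhood of its real counterpart. It works because the real chain passes through strictly positive matrices, leaving open room to perturb split values into $\mathcal{U}$; the Boolean construction of Appendix~\ref{booleanappendix} and Lemma~\ref{pospos} that handle this passage over $\mathbb{R}$ in the proof of Theorem~\ref{positivetheorem} adapt directly to $\mathcal{U}$ by density.
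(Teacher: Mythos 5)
Your proposal is correct and follows essentially the same route as the paper: reapply the proof of Theorem~\ref{positivetheorem} (Theorem~\ref{easystreet} over $\mathbb R$, the Boolean construction from Appendix~\ref{booleanappendix}, Lemma~\ref{pospos}) while using density of $\mathcal U$ and the field clause of the Connection Theorem to keep everything over $\mathcal U_+$. Two small remarks: Lemma~\ref{pospos} as stated already covers the $\mathcal U$ case, so you do not need to speak of a ``$\mathcal U$-analogue''; and your choice of conjugate $\widetilde{D}B^{\ast}\widetilde{D}^{-1}$ (rather than $\widetilde{D}^{-1}B^{\ast}\widetilde{D}$) is the one consistent with the orientation of $D$ in Theorem~\ref{easystreet}.
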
 

\begin{proof} 
We examine the proof of Theorem \ref{positivetheorem} 
and check that the SSEs constructed in the various steps 
can be taken through positive matrices over $\mathcal U$.

The splittings to $A^*$ and  $B^*$ can be done over  $\mathcal U$.

Approximate $D$ by a diagonal matrix $D'$ over  $\mathcal U_+$.
In place of $B^{**}$, use $(D')^{-1}B^*D':=B'$. 
Because $\mathcal U$ is a field, the positive matrix $B'$ 
has its  entries  in
$\mathcal U$ and is ESSE over $\mathcal U_+$ 
to 
$B^*$ (by the matrices 
$(D')^{-1}$ and $B^*D'$). 

The matrices 
$\widetilde A$ 
and  $\widetilde B$ are constructed 
from $A^*$ and $B'$ (matrices over 
$\mathcal U$) by appeal to 
Proposition \ref{primtopos}, and therefore they can be 
taken over  $\mathcal U$. Lemma \ref{pospos} allows the 
approximating SSE through positive matrices to be taken over $\mathcal U$.
 Because $\mathcal U$ is a field, the 
Connection Theorem then 
gives an SSE through positive matrices over $\mathcal U$ 
from 
$\widetilde A$ 
to  $\widetilde B$.

This completes the proof.
\end{proof}

\begin{theorem}\label{finitessefield}  Suppose $A$ is a positive $n\times n$ 
matrix. The collection of positive $n\times n$ matrices 
conjugate over $\mathbb R$ to $A$ contains  only finitely many 
SSE-$\mathbb R_+$ classes. 
\end{theorem}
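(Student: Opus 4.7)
The statement as printed duplicates Theorem \ref{finitesse}, so the plan is to recover it directly from the machinery already assembled, and then to note the natural strengthening suggested by the label \texttt{finitessefield}. The key observation is that
\[
\mathcal P(A) = \{B \in \mathbb R^{n\times n} : B > 0 \text{ and } B \text{ is conjugate over } \mathbb R \text{ to } A\}
\]
is a semialgebraic subset of $\mathbb R^{n^2}$: positivity is the conjunction of the open polynomial inequalities $B(i,j) > 0$, while conjugacy over $\mathbb R$ to $A$ is cut out by polynomial equations in the entries of $B$ expressing that, for every real eigenvalue $\lambda$ of $A$, the ranks $\text{rank}(B-\lambda I)^k$ equal the corresponding ranks for $A$ (and similarly for conjugate complex pairs). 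By \cite[Thm.~2.4.4]{BCR98}, $\mathcal P(A)$ has only finitely many connected components $\mathcal C_1,\dots,\mathcal C_k$, each of which is path connected (semialgebraic sets are locally path connected).

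Within each $\mathcal C_i$, any two matrices are joined by a path of positive matrices that lie in a common $\mathbb R$-conjugacy class; the Path Theorem \ref{paththeorem}(1) then produces an SSE over $\mathbb R_+$ between them. Consequently each $\mathcal C_i$ lies in a single SSE-$\mathbb R_+$ class, so $\mathcal P(A)$ meets at most $k$ such classes, which gives the stated finiteness. This is exactly the argument sketched in Remark \ref{finitelymanycomponents}.

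Given the label and the placement of the theorem immediately after Theorem \ref{field}, the intended refinement is presumably the field version: for a subfield $\mathcal U \subseteq \mathbb R$ and a positive matrix $A$ over $\mathcal U$, the set of positive matrices over $\mathcal U$ conjugate over $\mathbb R$ to $A$ meets only finitely many SSE-$\mathcal U_+$ classes. The plan is to combine the argument above with Theorem \ref{field}: any two matrices over $\mathcal U$ lying in the same $\mathcal C_i$ are SSE over $\mathbb R_+$, and Theorem \ref{field} upgrades this to SSE over $\mathcal U_+$. Therefore each $\mathcal C_i$ contributes at most one SSE-$\mathcal U_+$ class, and the total is bounded by the same $k$ as before.

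I anticipate no genuine obstacle: the two inputs, namely the semialgebraic finiteness of the real conjugacy class intersected with the positive cone and the Path Theorem, are already established, and the field upgrade is exactly the content of Theorem \ref{field}. The only point warranting care in writing the proof is to verify that the conjugacy locus is actually semialgebraic in the entries of $B$; this is handled cleanly by using rank equalities on the matrices $(B - \lambda I)^k$ (which are polynomial in the entries of $B$) rather than trying to work directly with the existence of a conjugating matrix.
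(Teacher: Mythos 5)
Your reading of the situation is correct: the printed statement of Theorem \ref{finitessefield} is indeed a verbatim duplicate of Theorem \ref{finitesse}, and the paper's own proof (``This follows immediately from Theorem \ref{finitesse} and Theorem \ref{field}'') shows the intended result is the subfield version you describe. Your argument --- finitely many components of the semialgebraic set of positive matrices conjugate to $A$, the Path Theorem within each component, and then Theorem \ref{field} to upgrade SSE-$\mathbb R_+$ to SSE-$\mathcal U_+$ for matrices over the subfield $\mathcal U$ --- is exactly the paper's route, just spelled out in more detail.
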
 
\begin{proof}
This follows immediately from Theorem \ref{finitesse} and Theorem \ref{field}.
\end{proof}

\begin{problem}\label{sserefinementproblem}
Let $\mathcal U$ be a dense subring of 
$\mathbb R$ Suppose  $A$ and $B$ are positive matrices which are strong shift 
equivalent over $\mathcal U$ and also over $\mathbb R_+$. 
Must they be strong shift equivalent over $\mathcal U_+$? 
\end{problem}

\appendix 
\section{Making SSE nondegenerate} \label{nondegenerateappendix} 

The purpose of the appendix is to prove
Proposition \ref{nondegeneratesseprop}, which we now restate. 
\newline \newline 
%\noindent 
{\bf Proposition \ref{nondegeneratesseprop}} 
{\it 
Suppose $\ri$ is a ring which is torsion free as an additive group. 
Suppose nondegenerate matrices $A$ and $B$ are SSE over $\ri$. 
Then they are SSE through a chain of ESSEs 
$A_{i-1} =R_iS_i, A_{i+1}=S_iR_i$ such that all the matrices 
$A_i, R_i,S_i $ are nondegenerate. } 
 
Recall, a matrix is nondegenerate if it has no zero row and no zero 
column. Below, by a nonzero matrix we mean a matrix which is not 
the zero matrix.

We will prove  Proposition 
 \ref{nondegeneratesseprop} after proving three lemmas. 

\begin{lemma} \label{onezerotonozero}
Suppose $\ri$ is a ring and there are matrices 
$A,B,C,R,S,R',S'$ 
over $\ri$ 
satisfying the following conditions
$A = RS\ , \   B=SR\ ;\ \ 
B = R'S'\ , \   C=S'R'   ;\ \ 
A \neq 0\ ,  \ \  B = 0 \ ,  \ \  C \neq 0 \  
$.  
Then there are nonzero matrices $A=A_0, A_1, \dots , A_6=C$  such 
that $A_{i-1} $ is ESSE over $\ri$ to $A_i$, $1\leq i \leq 6$. 
\end{lemma}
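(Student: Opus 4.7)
The plan is to construct an explicit chain of six ESSEs $A = A_0, A_1, \dots , A_6 = C$, all through nonzero matrices, that detours around the zero matrix $B$ appearing in the given two-step chain. The first and sixth ESSEs are straightforward ``inflation'' and ``deflation'' steps that do not rely on the given factorizations $A = RS$ or $C = S'R'$. Concretely, for step~1 take $R_1 = (A\ 0)$ (of size $m \times (m+k)$) and $S_1 = \binom{I_m}{0}$ (of size $(m+k) \times m$), so that $R_1 S_1 = A$ and $A_1 := S_1 R_1 = \left(\begin{smallmatrix} A & 0 \\ 0 & 0_k \end{smallmatrix}\right)$ is nonzero of size $m+k$. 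Step~6 is the symmetric deflation of $A_5 := \left(\begin{smallmatrix} 0_m & 0 \\ 0 & C \end{smallmatrix}\right)$ down to $C$.

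The four middle ESSEs bridge $A_1$ to $A_5$. The key structural observation is that the hypotheses $SR = 0$ and $R'S' = 0$ force $A^2 = 0$ and $C^2 = 0$, and also make every ``pure'' reverse product built from $R, S, R', S'$ collapse to zero; for instance the natural factorization $\binom{R}{S'}(S\ R')$ of the ``full bridge'' $M := \left(\begin{smallmatrix} A & RR' \\ S'S & C \end{smallmatrix}\right)$ has reverse product $SR + R'S' = 0$, and $M^2 = 0$ by a direct block computation. To avoid passing through zero we \emph{pad} each factorization with an auxiliary identity (or $(0,1)$) block. For example, the factorization $R_2 = \left(\begin{smallmatrix} R & 0 \\ 0 & I_k \end{smallmatrix}\right)$, $S_2 = \left(\begin{smallmatrix} S & R' \\ 0 & 0 \end{smallmatrix}\right)$ still gives $R_2 S_2 = \left(\begin{smallmatrix} A & RR' \\ 0 & 0 \end{smallmatrix}\right)$, but the reverse product is $S_2 R_2 = \left(\begin{smallmatrix} 0 & R' \\ 0 & 0 \end{smallmatrix}\right)$, which is nonzero because $R' \neq 0$ (otherwise $C = S'R' = 0$). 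The same trick on the other side produces $\left(\begin{smallmatrix} 0 & 0 \\ S' & 0 \end{smallmatrix}\right)$ type intermediates using $I_m$ padding and the factorization of $C = S'R'$.

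A representative middle chain threads together four such padded ESSEs, passing through matrices of the form $\left(\begin{smallmatrix} A & RR' \\ 0 & 0 \end{smallmatrix}\right)$, $\left(\begin{smallmatrix} 0 & R' \\ 0 & 0 \end{smallmatrix}\right)$ (size $n+k$), $\left(\begin{smallmatrix} 0 & 0 \\ S' & 0 \end{smallmatrix}\right)$ (size $n+k$), and $\left(\begin{smallmatrix} 0 & 0 \\ S'S & C \end{smallmatrix}\right)$ (size $m+k$), each chosen so that at least one displayed block is nonzero. The intermediate matrices need not all have the same size, and the padding dimensions are dictated by the block structure of the padded factorizations.

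The main obstacle is the combinatorial design of the chain: one must choose block structures and padded factorizations carefully so that each of the seven matrices in the chain is nonzero and each ESSE is correctly realized, all while keeping to exactly six steps. Each individual ESSE, however, is verified by a routine block matrix multiplication using only the four identities $RS = A$, $SR = 0$, $R'S' = 0$, $S'R' = C$. Since all padding blocks have entries in $\{0,1\}$, no ring-theoretic hypothesis on $\mathcal U$ (such as torsion-freeness, which is needed only later in Proposition \ref{nondegeneratesseprop}) is used in this lemma.
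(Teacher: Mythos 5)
Your overall strategy (build an explicit block chain that detours around the zero matrix, using the factorizations $A=RS$, $SR=0$, $R'S'=0$, $C=S'R'$ to keep the intermediates nonzero) is the same as the paper's, but you have not actually produced the chain, and as written the proposal is not internally consistent. In the first paragraph you fix $A_1=\left(\begin{smallmatrix} A & 0 \\ 0 & 0_k \end{smallmatrix}\right)$ and $A_5=\left(\begin{smallmatrix} 0_m & 0 \\ 0 & C \end{smallmatrix}\right)$, so the middle stretch $A_1\to A_2\to A_3\to A_4\to A_5$ consists of exactly four ESSEs and only three intermediate matrices $A_2,A_3,A_4$; yet you then list \emph{four} intermediate matrices $\left(\begin{smallmatrix} A & RR' \\ 0 & 0 \end{smallmatrix}\right)$, $\left(\begin{smallmatrix} 0 & R' \\ 0 & 0 \end{smallmatrix}\right)$, $\left(\begin{smallmatrix} 0 & 0 \\ S' & 0 \end{smallmatrix}\right)$, $\left(\begin{smallmatrix} 0 & 0 \\ S'S & C \end{smallmatrix}\right)$, none of which equals your $A_1$ or $A_5$. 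Counting steps, that is seven ESSEs, one too many; and you concede the point by calling ``keeping to exactly six steps'' part of ``the main obstacle.'' That obstacle \emph{is} the lemma, so leaving it unresolved is a genuine gap.

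More importantly, you never exhibit the ESSE that bridges the two sides: an elementary strong shift equivalence taking $\left(\begin{smallmatrix} 0 & R' \\ 0 & 0 \end{smallmatrix}\right)$ to $\left(\begin{smallmatrix} 0 & 0 \\ S' & 0 \end{smallmatrix}\right)$ (or whatever middle link you intend) is not a trivial block manipulation — the obvious factorizations of $\left(\begin{smallmatrix} 0 & R' \\ 0 & 0 \end{smallmatrix}\right)$ all produce $0$ as the reverse product. One can in fact do it in a single step, e.g. with $P=\left(\begin{smallmatrix} I_n & 0 \\ 0 & 0_k \end{smallmatrix}\right)$ and $Q=\left(\begin{smallmatrix} 0 & R' \\ S' & 0 \end{smallmatrix}\right)$, so that $PQ=\left(\begin{smallmatrix} 0 & R' \\ 0 & 0 \end{smallmatrix}\right)$ and $QP=\left(\begin{smallmatrix} 0 & 0 \\ S' & 0 \end{smallmatrix}\right)$, but nothing in your sketch identifies this or any substitute, and without it your chain has a hole exactly where the hard work is. (The paper bridges the two sides differently: it passes through the three-block staircase $\left(\begin{smallmatrix} 0 & 0 & 0 \\ S & 0 & 0 \\ 0 & S' & 0 \end{smallmatrix}\right)$, which is carried by two explicit subdivision/amalgamation ESSEs.) Finally, note that the paper's chain uses $S$ and $S'$ directly, whereas you use the compound products $RR'$ and $S'S$; these can vanish (e.g.\ $RR'=0$ is consistent with the hypotheses), which is harmless for the nonzero-ness of the particular matrices you display but is a sign that your chain is not forced by the data in the way the paper's is. The lemma asks for an explicit six-step chain through nonzero matrices; the paper supplies one, and you supply a plan plus an acknowledgment that the plan isn't finished.
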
 

\begin{proof} 
In block form, define 
\begin{alignat*}{2} 
A_1 &=  \begin{pmatrix} RS & 0 \\ S & 0 \end{pmatrix} 
& 
A_4 &=  \begin{pmatrix} 0 & 0 \\ S' & 0 \end{pmatrix} 
= \begin{pmatrix} R'S' & 0 \\ S' & 0 \end{pmatrix} \\ 
A_2 &=  \begin{pmatrix} 0 & 0 \\ S & SR \end{pmatrix} 
=  \begin{pmatrix} 0 & 0 \\ S & 0 \end{pmatrix} \qquad 
&
A_5 &=  \begin{pmatrix} 0 & 0 \\ S' & S'R'  \end{pmatrix} \\ 
A_3 &=  \begin{pmatrix} 
0 & 0 & 0 \\ S & 0 & 0 \\ 0 & S' & 0 \end{pmatrix} 
&
A_6 & =  \begin{pmatrix} S'R' \end{pmatrix} = B \ . 
\end{alignat*}
The matrices $S$ and $S'$ cannot be zero (because $A$ and $C$ 
are not zero), so the $A_i$ are not zero. An ESSE from 
$A\to A_1$ is given by 
\[ 
A  = 
  \begin{pmatrix} I & 0  \end{pmatrix} 
 \begin{pmatrix} RS  \\ S  \end{pmatrix} 
\ , \qquad 
A_1 =  
 \begin{pmatrix} RS  \\ S  \end{pmatrix} \\ 
  \begin{pmatrix} I & 0  \end{pmatrix}  \ . 
\] 
There are ESSEs $A_2\to A_3, A_3\to A_4, A_5\to A_6$ of this 
type or a transpose type. 
An ESSE $A_1\to A_2$ is given by 
\begin{align*}  
A_1  & = 
  \begin{pmatrix} RS  & 0  \\ S & 0 \end{pmatrix} 
 =  \ 
  \begin{pmatrix} RS  & RSR \\  S  & SR  \end{pmatrix} 
  \begin{pmatrix} I  & -R \\ 0  & I  \end{pmatrix}  \\
A_2  & = 
  \begin{pmatrix} 0  & 0 \\ S & SR  \end{pmatrix} 
 =  
  \begin{pmatrix} I  & -R \\ 0  & I  \end{pmatrix} 
  \begin{pmatrix} RS  & RSR \\  S  & SR  \end{pmatrix} 
\end{align*} 
The remaining ESSE $A_4\to A_5$ is of the same type. 
\end{proof} 

\begin{lemma} \label{longzerotoonezero} 
Suppose $\mathcal U$ is a unital semiring and $A$ is ESSE over 
$\mathcal U$ to $0_m$, the $m\times m$ zero matrix. 
Then $A$ is ESSE over $\mathcal U$ to $0_{m+k}$, for 
all $k$ in $\mathbb N$. 
\end{lemma}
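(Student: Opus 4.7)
The plan is to give a direct padding construction: start with the given ESSE matrices witnessing $A \sim 0_m$, and enlarge them by zero blocks to witness the ESSE from $A$ to $0_{m+k}$. No obstacles arise and no hypothesis on $\mathcal{U}$ beyond what is stated is needed.

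Concretely, suppose $A$ is $n \times n$ and write the given ESSE as $A = RS$, $0_m = SR$, where $R$ is $n \times m$ and $S$ is $m \times n$. For each $k \in \mathbb{N}$, define
\[
R' = \bigl( R \;\big|\; 0_{n \times k} \bigr), \qquad
S' = \begin{pmatrix} S \\ 0_{k \times n} \end{pmatrix},
\]
so that $R'$ is $n \times (m+k)$ and $S'$ is $(m+k) \times n$. Then
\[
R' S' = R S + 0 = A,
\]
while
\[
S' R' = \begin{pmatrix} S R & 0 \\ 0 & 0 \end{pmatrix} = \begin{pmatrix} 0_m & 0 \\ 0 & 0_k \end{pmatrix} = 0_{m+k}.
\]
Thus $(R', S')$ is an ESSE over $\mathcal{U}$ from $A$ to $0_{m+k}$.

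There is no real obstacle here; the only thing to note is that the construction uses only zero entries and the existing entries of $R$ and $S$, so it is valid over any unital semiring. This lemma will then feed into the proof of Proposition \ref{nondegeneratesseprop}, where the freedom to inflate a zero intermediate matrix to arbitrary size is used to align dimensions when splicing together the ESSE chains produced by Lemma \ref{onezerotonozero}.
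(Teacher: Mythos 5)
Your proof is correct and is essentially identical to the paper's: both pad $R$ on the right and $S$ on the bottom with zero blocks, giving $A = R'S'$ and $0_{m+k} = S'R'$. Nothing to add.
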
 
\begin{proof} 
We are given $A=RS,\  0_M = SR$. Then 
$A= 
\left(
\begin{smallmatrix} R & 0 
\end{smallmatrix}
\right) 
\left(
\begin{smallmatrix} S \\ 0 
\end{smallmatrix}
\right) 
$ 
and 
$0_{m+k}= 
\left(
\begin{smallmatrix} S \\ 0 
\end{smallmatrix}
\right) 
\left(
\begin{smallmatrix} R & 0 
\end{smallmatrix}
\right) 
$ 
where $0$ denotes a zero block of the necessary size. 
\end{proof} 

\begin{lemma} \label{nozerotonondegenerate} 
Let $\mathcal U$ be a unital ring which is torsion free 
as an additive group. Suppose $A$ is an $n\times n$ matrix 
over $\mathcal U$ which is not the zero matrix. 
Then there is $V$ in $\textnormal{SL}(n,\mathbb Z)$ such that 
$V^{-1}AV$ is nondegenerate. 
\end{lemma}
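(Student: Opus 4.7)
The plan is to build $V$ as a product of finitely many matrices in $SL(n,\mathbb{Z})$ of two simple types: signed permutation matrices of determinant $1$, and elementary transvections $I + t\, e_{i,j}$ ($i\neq j$, $t\in\mathbb{Z}$), which also have determinant $1$. The argument has two phases: first I prepare $A$ by conjugation so that $A(1,1)$ is nonzero, then I iteratively eliminate zero rows and zero columns one at a time while keeping $A(1,1)\neq 0$.

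For the preparation, pick any nonzero entry $A(i_0,j_0) =: a$. If $i_0 = j_0$, a signed permutation in $SL(n,\mathbb{Z})$ moves it to position $(1,1)$. Otherwise $i_0\neq j_0$, and a direct computation shows that conjugation by $V = I + t\, e_{j_0,i_0}$ transforms the $(i_0,i_0)$-entry into $A(i_0,i_0) + ta$. Since $\mathcal{U}$ is torsion free and $a\neq 0$, the equation $A(i_0,i_0) + ta = 0$ has at most one integer solution, so for most $t$ the new $(i_0,i_0)$-entry is nonzero; a signed permutation then moves it to $(1,1)$.

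For the iteration, assume $A(1,1) = a \neq 0$ and some row $i\neq 1$ of $A$ is zero; the symmetric argument handles a zero column. I conjugate by $V = I + t\, e_{i,1}$. Using $A(i,\cdot)=0$, one computes that the resulting matrix $B = V^{-1}AV$ agrees with $A$ outside row $i$ and column $1$, and on these it is given by
\[
B(1,1) = a + tA(1,i), \quad B(i,1) = -t\bigl[a + tA(1,i)\bigr], \quad B(i,\ell) = -tA(1,\ell)\ (\ell\neq 1),
\]
and $B(k,1) = A(k,1) + tA(k,i)$ for $k\neq i$. By torsion-freeness, for every nonzero integer $t$ outside the (at most one) value making $a + tA(1,i) = 0$, both $B(1,1)$ and $B(i,1) = -t\,B(1,1)$ are nonzero. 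A short case analysis then shows: row $i$ of $B$ is nonzero; $B(1,1)\neq 0$; every row or column of $A$ that was nonzero remains nonzero in $B$; and every zero row $k\neq i$ of $A$ and every zero column of $A$ remains zero in $B$. Hence one such conjugation strictly decreases the count of zero rows plus zero columns by one.

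Iterating the second phase at most $2(n-1)$ times produces the required $V\in SL(n,\mathbb{Z})$. The main point requiring care is the case analysis in the third paragraph: one must check that the conjugation fixes a single zero row without inadvertently creating any new zero row or zero column elsewhere. The torsion-free hypothesis is used only through the elementary facts that any equation $\alpha + t\beta = 0$ with $\alpha,\beta\in\mathcal{U}$ and $\beta\neq 0$ has at most one integer solution $t$, and that $t\gamma\neq 0$ whenever $t\neq 0$ and $\gamma\neq 0$.
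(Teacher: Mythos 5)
Your proof is correct and follows essentially the same strategy as the paper's: conjugate by integer elementary transvections $I + t\,e_{i,j}$ so as to add a multiple of a nonzero row to a zero row, using torsion-freeness of $\mathcal{U}$ to pick $t$ avoiding the finitely many bad values that would annihilate an entry. The paper jumps directly to the WLOG situation \lq\lq row $1$ nonzero, row $n$ zero\rq\rq\ and leaves the iteration (\lq\lq with other indices $(i,j)$ in place of $(1,n)$\ldots\rq\rq) somewhat informal; your variant first normalizes so that $A(1,1)\neq 0$ and keeps that pivot through the iteration, which makes the bookkeeping (no new zero row or column appears, and $B(1,1)\neq 0$ persists) cleaner and the induction explicit. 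This is a presentational improvement rather than a genuinely different route.
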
 
\begin{proof} We can assume $n>1$.  
For example, suppose row 1 of $A$ is nonzero and 
row $n$ of $A$ is zero. Given $M\in \mathbb N$, 
let $E$ be the basic elementary matrix such that 
$E(n,1)=M$ and set  $C= EAE^{-1}$. Then 
\begin{align*} 
C (i,1) &= A(i,1) - MA(i,n) \quad 
\textnormal{ if } i<n \\ 
C(n,1) & = MA(1,1)-M^2A(1,n) \\ 
C(n,j) & = MA(1,j) 
\textnormal{ if } j>1 \\ 
C(i,j) & = A(i,j) 
\textnormal{ if } i<n 
\textnormal{ and } j>1 \ .  
\end{align*} 
Appealing to the torsion free assumption, 
choose $M$ such that 
\begin{align*} 
1\leq i < n \textnormal{ and }  
A(i,1)\neq 0 
& \implies 
A(i,1) \neq MA(i,n) 
%\ , \ \  \textnormal{ and } \\
%A(1,1)\neq 0 
%& \implies 
%MA(1,1) \neq A(1,n) 
\ . 
\end{align*} 
Then $A(i,j)\neq 0 $ implies $C(i,j)\neq 0$. 
In addition, row $n$ of $C$ is not zero, as follows.
If there exists $j>1$ with $A(1,j)\neq 0$, then 
$C(n,j)\neq 0$; otherwise, $A(1,1)$ is the 
only nonzero entry of row 1 of $A$, and 
$C(n,1) = MA(1,1) \neq 0$. 

Iterating this move as needed, with 
other indices $(i,j)$ in place of $(1,n)$, 
and interchanging the role of column and row as needed, 
we produce $V\in \textnormal{SL}(n,\mathbb Z)$ such that $V^{-1}AV$ 
is nondegenerate. 

\end{proof} 

\begin{remark} 
We are not concerned in this paper with finding the 
sharpest version of Proposition \ref{nondegeneratesseprop}.
However, we note that Lemma \ref{nozerotonondegenerate} would be false if 
the ``torsion free'' assumption were simply dropped. 
Over the field $\mathbb Z/2$, let 
$A= 
\left(
\begin{smallmatrix} 1 & 0 \\ 0 & 0 
\end{smallmatrix}
\right)$ and 
$B= \left(
\begin{smallmatrix} 1 & 1 \\ 1 & 1 
\end{smallmatrix}
\right)
$. 
Then $A\neq 0$ but $A$ is not conjugate over $\mathbb Z/2$ to 
a nondegenerate matrix, because $B$ is the only rank one 
nondegenerate $2\times 2$ matrix over $\mathbb Z/2$, and 
$A^2 \neq 0 = B^2$. 
\end{remark} 

We are now ready to prove Proposition 
\ref{nondegeneratesseprop}.

\begin{proof}[Proof of Proposition \ref{nondegeneratesseprop}] 
We are given some string of ESSEs over $\mathcal U$ from $A=A_0$ to
$B=A_{\ell}$, 
\[
A=A_0 \to A_1 \to A_2 \to \cdots \to A_{\ell -1} \to A_{\ell}=B  
\] 
with matrices $R_i,S_i$ over $\mathcal U$ such that 
$A_{i-1}=R_iS_i, \ A_{i}= S_iR_i$, for $1\leq i \leq \ell$. 

Suppose for some $i$ and some $k>2$ that $A_i$ and $A_{i+k}$ are 
not zero, but $A_j$ is a zero matrix for $i<j<i+k$. 
By Lemma \ref{longzerotoonezero}, 
there is a zero matrix $Z$ ESSE to $A_i$ and to $A_{i+k}$. 
We replace the ESSEs 
$A_i \to A_{i+1} \to \cdots \to A_{i+k}$ 
with ESSEs $A_i\to Z \to A_{i+k}$. After iterating this 
move as necessary, we may assume that $A_i=0 $
implies $A_{i-1}\neq 0$ and $A_{i+1}\neq 0$.  

Then, by Lemma \ref{onezerotonozero}, if $A_i=0$, 
we may replace the ESSEs $A_{i-1} \to A_i\to A_{i+1}$ 
with a string of ESSEs from $A_{i-1}$ to $A_{i+1}$  through nonzero 
matrices. After iterating as needed, we may assume every $A_i$
is not zero. 

If $0< i< \ell$ and 
$U^{-1}A_iU=A'_i$, then we can replace 
\begin{equation}
\xymatrix@=
10pt{
A_{i-1} 
\ar^{(R_i,S_i)}[rrrrr] &&&&& A_i 
\ar^{(R_{i+1},S_{i+1})}[rrrrrrr]&&&&&&&  A_{i+1}  
}
\end{equation}
with 
\begin{equation}
\xymatrix@=
10pt{
A_{i-1} 
\ar^{(R_iU,U^{-1}S_i)}[rrrrr] &&&&& A'_i 
\ar^{(U^{-1}R_{i+1},S_{i+1}U^{-1})}[rrrrrrr]&&&&&&&  A_{i+1}  
}\ . 
\end{equation}
Thus by repeated application of this move, with 
$U^{-1}A_iU=A'_i$ nondegenerate by Lemma \ref{nozerotonondegenerate}, 
we can pass to an SSE through nondegenerate matrices as required. 
\end{proof}

\section{Boolean matrices and positivity } \label{booleanappendix} 

Let $\mathcal U$ be a nondiscrete unital subring of $\mathbb R$. 

We will include in this section a proof 
of the result of \cite{KR0} that every 
primitive matrix over $\mathcal U$ is SSE over $\mathcal U_+$ to a 
positive matrix. 
As in  \cite{KR0},this is done by proving the result for Boolean 
matrices and then carrying it over. 
We can then prove the approximation result Lemma 
\ref{pospos}, which we need  in Section 
\ref{sseoverplus}.

Boolean matrices are matrices with entries in the 
Boolean semiring $\mathcal B= \{ 0,1 \}$, in which $1+1=1$.  
The usual row and column splitting and amalgamations can be 
used to produce SSEs over $\mathcal B$. In 
particular, if a row $i$ of $A$ is less than or equal 
to row $j$ of $A$, then adding column $j$ of $A$ to column $i$ 
produces a matrix $B$ SSE over $\mathcal B$ to $A$; for the 
corresponding  elementary matrix $E$, we have  
$A=EA$ and $B=AE$.
An example,  
assuming row 1 of $A$ is less than or equal to row 2, is 
\begin{align*}
A= \begin{pmatrix} 
a_1 & a_2 & a_3 \\
b_1 & b_2 & b_3 \\
c_1 & c_2 & c_3 
\end{pmatrix} 
& = EA = 
\begin{pmatrix} 
1 & 0 & 0 \\ 
1 & 1 &0 \\ 0 & 0 & 1 
\end{pmatrix} 
\begin{pmatrix} 
a_1 & a_2 & a_3 \\
b_1 & b_2 & b_3 \\
c_1 & c_2 & c_3 
\end{pmatrix} \\ 
B =
\begin{pmatrix} 
a_1+a_2  &a_2  & a_3 \\
b_1+b_2  &b_2  & b_3 \\
c_1+c_2  &c_2  & c_3 
\end{pmatrix} 
&= AE = 
\begin{pmatrix} 
a_1 & a_2 & a_3 \\
b_1 & b_2 & b_3 \\
c_1 & c_2 & c_3 
\end{pmatrix} 
\begin{pmatrix} 
1 & 0 & 0 \\ 
1 & 1 &0 \\ 0 & 0 & 1 
\end{pmatrix} \ . 
\end{align*}
If $A$ is the Boolean image of a matrix $A'$ over $\mathcal U_+$, 
then there are $E',B'$ over   $\mathcal U_+$ 
with Boolean images  $E,B$ such that  
$A'=E'A'$ and $B'=A'E'$. Here, $E'$ is an elementary matrix 
whose off diagonal entry  can be chosen arbitrarily close to 
zero, and $B'$ is conjugate 
over $\mathcal U$ to $A'$. In 
the example (using the letter entries in $A$ above to denote 
entries of $A'$, for simplicity), we have 
\begin{align*}
B' &=(E')^{-1} A'E' =
\begin{pmatrix} 
1 & 0 & 0 \\ 
-\epsilon & 1 &0 \\ 0 & 0 & 1 
\end{pmatrix} 
 \begin{pmatrix} 
a_1 & a_2 & a_3 \\
b_1 & b_2 & b_3 \\
c_1 & c_2 & c_3 
\end{pmatrix} 
\begin{pmatrix} 
1 & 0 & 0 \\ 
\epsilon & 1 &0 \\ 0 & 0 & 1 
\end{pmatrix} 
\\
&=
\begin{pmatrix} 
a_1 & a_2 & a_3 \\
b_1 -\epsilon a_1 & b_2 -\epsilon a_2& b_3 -\epsilon a_3\\
c_1 & c_2 & c_3 
\end{pmatrix} 
\begin{pmatrix} 
1 & 0 & 0 \\ 
\epsilon & 1 &0 \\ 0 & 0 & 1 
\end{pmatrix} 
\\
&=
\begin{pmatrix} 
a_1+\epsilon a_2 & a_2 & a_3 \\
b_1 -\epsilon a_1+\epsilon b_2-\epsilon^2a_2 \ 
& b_2 -\epsilon a_2 \  & b_3 -\epsilon a_3\\
c_1 +\epsilon c_2& c_2 & c_3 
\end{pmatrix} \ .
 \end{align*}
For any sufficiently small and positive $\epsilon$ 
from $\mathcal U$, we have 
$(E')^{-1} A'\geq 0$, and therefore an ESSE over $\mathcal U_+$ 
between $A'$ and 
$(E')^{-1} A'E'$. 

The next result is proved in \cite{KR0} and we take that proof.

\begin{proposition} \label{booleanprop}
Suppose $A$ is a  primitive Boolean matrix with positive 
trace. Then $A$ is SSE over $\mathcal B$ to 
$[1]$.
\end{proposition}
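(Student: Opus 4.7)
The plan is to prove the statement in two steps. First, the $n\times n$ all-ones Boolean matrix $J_n$ is elementary SSE over $\mathcal B$ to $[1]$: take $R = \mathbf 1_n$ (column of ones) and $S = \mathbf 1_n^{\top}$, giving $RS = J_n$ and $SR = [1]$, since the Boolean sum of $n$ ones equals $1$. So it suffices to show that $A$ is SSE over $\mathcal B$ to $J_m$ for some $m \ge n$.

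For that reduction, I would use the two kinds of ESSE moves available for Boolean matrices: row/column splittings (which may change the matrix size), together with the \emph{dominance moves} exhibited just before the statement---if row $i \le$ row $j$ of the current matrix $M$, the elementary matrix $E = I + e_{ji}$ satisfies $EM = M$ (because row $i$ $\vee$ row $j$ equals row $j$ in $\mathcal B$) and $ME$ equals $M$ with column $i$ replaced by column $i$ $\vee$ column $j$; dually for columns. Both kinds of moves preserve primitivity and the presence of a self-loop. WLOG $A(1,1) = 1$. The overall strategy is to alternate: apply dominance moves to monotonically fill in $0$ entries of the current matrix until no strictly enlarging dominance move applies; then, if the current matrix $M$ is still not $J$, perform a splitting of a suitably chosen sparse row (or column) into two Boolean-disjoint pieces $r = r' + r''$, which produces two new rows neither of which is pointwise maximal, thereby creating new strict dominance relations with other rows and re-enabling dominance moves; then repeat.

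Primitivity together with the self-loop at $1$ is what makes this alternation work. For any zero entry $M(i,j)$ of a stuck intermediate matrix $M$, the identity $M^k = J$ for large $k$ provides a directed walk $i \to 1 \to \cdots \to j$ in the graph of $M$, whose length can be freely adjusted by looping at $1$. A splitting targeted along this walk introduces rows that are strictly dominated by suitable other rows; a subsequent dominance move then OR's the missing entries into column $j$, strictly increasing the matrix toward $J$. Iterating, one reaches $J_m$ after finitely many stages, and Step~1 completes the proof.

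The main obstacle is exactly the combinatorial content of this last paragraph: certifying that for every primitive Boolean matrix $M$ with a self-loop and $M \neq J$, one can always engineer a splitting plus dominance step that makes measurable progress toward $J$ (for instance, strictly decreases the total number of zero entries per row length). This is the substance of the argument in \cite{KR0}; the crucial feature is that the self-loop at $1$ allows walks of all sufficiently large lengths to be aligned between any pair of vertices, so that primitivity translates, after an appropriately chosen state splitting, into an actual row or column dominance in the split matrix. Once this is in hand, the alternating procedure terminates and yields the desired SSE from $A$ to $J_m$, hence to $[1]$.
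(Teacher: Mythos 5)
Your Step~1 (showing the all-ones matrix is ESSE to $[1]$ via $R=\mathbf 1$, $S=\mathbf 1^{\top}$) is correct and is exactly how the paper finishes. You have also identified the right toolkit — row/column splittings and dominance moves, with primitivity and a self-loop supplying long walks between any two vertices — so the strategy is broadly the same as the paper's. However, there is a genuine gap: the entire reduction to $J_m$ rests on the claim that, whenever the current matrix is ``stuck,'' one can ``engineer a splitting plus dominance step that makes measurable progress toward $J$,'' and this is never actually proved. You do not exhibit which splitting to perform, why the split rows are strictly dominated by existing rows, or why the proposed potential function (``number of zero entries per row length'') decreases under the move; indeed a row splitting enlarges the matrix and in general \emph{increases} the raw count of zeros. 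You essentially acknowledge this by deferring to the substance of the argument in \cite{KR0}.

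The paper sidesteps the need for any iterative/terminating alternation by a concrete one-shot construction. First it takes a single closed walk visiting every vertex and does \emph{one} row splitting along it (copying row $i$ once for each visit), yielding an $m\times m$ matrix $B$ that dominates an explicit Hamiltonian-cycle matrix $P$ (with $P(1,1)=P(1,2)=P(2,3)=\cdots=P(m,1)=1$). It then performs one ESSE to the doubled matrix $C=\left(\begin{smallmatrix}P & B\\ P & B\end{smallmatrix}\right)$; this doubling is the crucial trick, because it creates the initial dominance ``column $1\ge$ column $2$'' that starts the cascade. Finally a fixed, explicitly ordered sequence of dominance moves — add row $i$ to row $i+1$ for $i=1,\dots,m-1$, then add column $i$ to every other column for $i=m,\dots,1$ — provably fills in all entries of $C$, with each move's preconditions verified from the structure of $P$. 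There is no open-ended alternation and no need for a monotone potential. I'd encourage you to work out that specific construction rather than aim at a general ``progress'' argument.
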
 

\begin{proof} 
$A$ is the adjacency matrix of a directed graph. 
 Take a closed walk through the graph  which passes through
every vertex at least once. 
% starting with the location of
%a main diagonal 1 and returns to 1.  
Suppose the walk passes through vertex $i$ $n_i$ times. 
Define a matrix $V$ which has, for each $i$, $n_i$ copies 
of row $i$ of $A$. 
(Over $\mathcal B$, a row copying is an example of a row splitting.) 
Let $U$ be the subdivision matrix such that 
$UV=A$, and set $VA=B$, SSE over $\mathcal B$ to 
$A$.  Then there is a closed walk through the graph of 
$B$ which hits every vertex exactly once. Without loss of generality, 
then, suppose $B$ is $m\times m$ and 
$B(1,1)=1$ and $P\leq B$, where $P$ is 
 the  matrix with positive entries at 
$(1,1),(1,2),(2,3), \ldots ,(m,1)$. 

Next, 
define an ESSE from $B$ to a $2m\times 2m$ matrix $C$, by 
\begin{align*} 
C = 
\begin{pmatrix} 
P & B \\ 
P & B 
\end{pmatrix} 
= &
\begin{pmatrix} 
I_m \\ I_{m}  
\end{pmatrix} 
\begin{pmatrix} 
P&B
\end{pmatrix} \\ 
B  = &
 \begin{pmatrix} 
P & B 
\end{pmatrix}
\begin{pmatrix} 
I_m \\ I_{m}
\end{pmatrix} 
\end{align*} 
An example  with $C$ $10 \times 10$ is  
\[ \label{tenbyten}
C =
\begin{pmatrix} 
P & B \\ P & B 
\end{pmatrix} 
= 
\begin{pmatrix} 
1 & 1 & 0 & 0 & 0 &   \ \ 
1  & 1 & \bu & \bu & \bu \\ 
0 & 0 & 1 & 0 & 0   &   \ \ 
\bu  & \bu & 1 & \bu & \bu \\ 
0 & 0 & 0 & 1 & 0   &   \ \ 
\bu  & \bu & \bu & 1 & \bu \\ 
0 & 0 & 0 & 0 & 1     &   \ \ 
\bu & \bu & \bu & \bu & 1 \\ 
1 & 0 & 0 & 0 & 0   &   \ \ 
1 & \bu & \bu & \bu & \bu \\ 
                                       \\ 
1 & 1 & 0 & 0 & 0   &   \ \ 
1 & 1 & \bu & \bu & \bu \\ 
0 & 0 & 1 & 0 & 0   &   \ \ 
\bu  & \bu & 1 & \bu & \bu \\ 
0 & 0 & 0 & 1 & 0    &   \ \ 
\bu & \bu & \bu & 1 & \bu \\ 
0 & 0 & 0 & 0 & 1   &   \ \ 
 \bu  & \bu & \bu & \bu & 1 \\ 
1 & 0 & 0 & 0 & 0   &   \ \ 
1  & \bu & \bu & \bu & \bu 
\end{pmatrix}    
\]
in which a bullet denotes an entry which could be 0 or 1, depending on $A$. 

Note, column 1 of $C$ is greater than or equal to column 2. 
So, we may add row 1 of $C$  to row 2 (to produce an SSE matrix). 
Now in the order $i=2,3, ..., m-1$, add row $i$ to row $i+1$. 
At the point row $i$ is added, column $i$ will be greater than or 
equal to column $i+1$, so the addition will 
give an ESSE.  After these moves, row $m$ has every entry 1. 
 In order, for $i=m, m-1, \dots , 2,1$, add column 
$i$ to every other column. At the point column $i$ is added, 
row $i$ will be all 1's, so SSE is respected.  Because every row 
has an entry 1 in one of the columns $1,2,3,\dots , m$, at the conclusion 
of this $C$ will be transformed to a matrix with every entry 1. 
Such a matrix equals $vv^{tr}$, where $v$ is a column vector with every 
entry 1, and then $v^{tr}v=[1]$. 
\end{proof} 

The next result extends a result in \cite{KR3}, with essentially
the same proof. Let $\omega(n)$ denote the maximum size of a minimal 
length closed walk which hits all vertices 
in a strongly connected directed graph with exactly $n$ vertices. 
Clearly, $\omega (n)\leq n^2$ by composition of shortest paths 
$i\to i+1$ to get $1\to 2 \to\cdots \to n \to 1$.
On the other hand, there is an example which shows that up to a 
modest multiplicative factor, in general one can't do better. 
We thank Richard Brualdi for this example. 

\begin{example}\label{brualdiexample}  For $n=2k\geq 4$, 
 consider  
the directed graph on vertices $\{1,2, \dots , 2k\}$ 
for which the set of nonzero entries of the adjacency matrix 
is the union of the following  sets:  
\begin{align*}
\{ (1,j):1\leq j \leq k\} \ , \ \ \ 
& \{ (i,k+1):1\leq j \leq k\} \ ,  \\ 
\{ (i,i+1):k\leq i <2k\}\ , \ \ \ 
&\{ (2k,1)\} \ .
\end{align*}  
For this directed graph, 
$\omega (2k)\geq (k-1)(k+2)$ and therefore 
$\omega (n)\geq \frac 14 n^2$. 
 \end{example} 

\begin{proposition} \label{primtopos} 
Suppose $\mathcal U$ is a unital nondiscrete subring 
of $\mathbb R$ and $A$ is an $n\times n$  primitive 
matrix over $U_+$ with positive trace. 
Then $A$ is SSE over $U_+$
to a positive matrix $\widetilde A$ 
which is not larger than $2n^2\times 2n^2$. 
\end{proposition}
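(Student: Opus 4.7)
My plan is to lift the proof of Proposition~\ref{booleanprop} from the Boolean semiring $\mathcal B$ to $\mathcal U_+$, one elementary move at a time. The preceding discussion in this appendix provides the two mechanisms this requires. First, a Boolean row or column splitting lifts to one over $\mathcal U_+$ simply by partitioning each positive entry of the matrix into nonnegative pieces summing to that entry. Second, a Boolean ESSE coming from an elementary matrix $E=I+E_{ij}$ (valid when the appropriate Boolean row or column comparability holds) lifts to an ESSE over $\mathcal U_+$ realized as a conjugation $M\mapsto E'M(E')^{-1}$ with $E'=I+\epsilon E_{ij}$ for $\epsilon$ a sufficiently small positive element of $\mathcal U$. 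Such $\epsilon$ exists because $\mathcal U$ is nondiscrete, and the Boolean comparability is exactly what is needed to ensure that the nontrivial factor of the ESSE (either $M(E')^{-1}$ or $(E')^{-1}M$) lies in $\mathcal U_+$.

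Proceeding in order, let $\widehat A$ denote the Boolean image of $A$, which is primitive with positive trace. Apply the recipe of Proposition~\ref{booleanprop}: choose a closed walk in the graph of $\widehat A$ that visits every vertex, arranged so that its starting vertex carries a self-loop (supplied by the positive trace); its length $m$ can be taken at most $\omega(n)\le n^2$. Carry out the corresponding row copying over $\mathcal U_+$ to produce a matrix $B'$ of size $m$ whose Boolean image $B$ contains the Hamiltonian cycle $1\to 2\to\cdots\to m\to 1$ together with a self-loop at vertex $1$. Next form the $2m\times 2m$ lift
\[
C' \ =\ \begin{pmatrix} P' & B'-P' \\ P' & B'-P' \end{pmatrix},
\]
where $P'$ has small positive entries from $\mathcal U$ at the positions of the Boolean matrix $P$ (namely $(1,1),(1,2),(2,3),\dots,(m,1)$) and zeros elsewhere. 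Taking $P'$ small enough ensures $B'-P'\ge 0$, and the factorization through $\begin{pmatrix}I_m\\I_m\end{pmatrix}$ exhibits an ESSE over $\mathcal U_+$ between $B'$ and $C'$; by construction, the Boolean image of $C'$ is the matrix $C$ of Proposition~\ref{booleanprop}. Finally, carry out the sequence of row-addition and column-addition moves from the Boolean argument, each realized as a conjugation by $I+\epsilon E_{ij}$ for an inductively chosen small $\epsilon>0$ in $\mathcal U$. After all moves, the resulting matrix $\widetilde A$ over $\mathcal U_+$ has the all-ones Boolean image and is therefore strictly positive; its size is $2m\le 2n^2$, and it is SSE over $\mathcal U_+$ to $A$ by composition of the lifted ESSEs.

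The main obstacle is bookkeeping. At each conjugation step one must verify that both ESSE factors lie in $\mathcal U_+$, which reduces to checking that the Boolean comparability used at the corresponding step of Proposition~\ref{booleanprop} still holds in the Boolean image of the current $\mathcal U_+$ matrix. The point is that for sufficiently small $\epsilon$, the conjugation $E'M(E')^{-1}$ has the same Boolean image as $EM$ (because entries that are combinations of a positive leading term and smaller perturbations remain positive, and cancellations of the form $-\epsilon M(i,j)$ only occur where the Boolean comparability forces a dominant positive term to already be present). Thus the Boolean image is preserved stage-by-stage under the lifts, and one can shrink $\epsilon$ further at each stage, using the nondiscreteness of $\mathcal U$. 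Nothing deeper than this uniform-smallness argument is required.
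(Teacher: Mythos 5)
Your proposal is correct and follows essentially the same route as the paper: mimic each Boolean move of Proposition~\ref{booleanprop} over $\mathcal U_+$ by matrices with the same zero/nonzero pattern, realizing the row/column-addition steps as conjugations by $I+\epsilon E_{ij}$ for sufficiently small $\epsilon\in\mathcal U_+$. The paper's own proof is just a one-paragraph summary of this idea; your version supplies the detailed bookkeeping (the $\begin{pmatrix}P'&B'-P'\\P'&B'-P'\end{pmatrix}$ lift of the doubling step, and the inductive choice of $\epsilon$) that the paper leaves implicit.
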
 

\begin{proof} 

The 
 matrix operations used in the proof 
of Proposition \ref{booleanprop} give   elementary SSEs 
over $\mathcal B$, 
which can be mimicked over $U_+$ by matrices with the same 
zero/nonzero pattern  
to give the SSE over $\mathcal U_+$ to a positive matrix 
$\widetilde A$.  
The matrix $\widetilde A$ is not larger than 
$2\omega(n)\times 2\omega(n)$, and 
$2\omega(n)\leq 2n^2 $. 
\end{proof} 

\begin{lemma}\label{pospos} 
Suppose $\mathcal U$ is a unital nondiscrete subring 
of $\mathbb R$, $A$ is an $n\times n$  primitive 
real matrix positive trace and 
$\widetilde A$ is a positive matrix 
SSE over  $\mathbb R$ to $A$ and 
constructed from $A$ using the algorithm of the proof 
of Proposition \ref{primtopos}. 
Suppose $\delta >0$. 
Then there is $\nu >0$ such that the following holds. 
If $A'$ is a positive matrix over $\mathcal U$ and $||A'-A||< \nu$, 
then $A'$ is SSE over $\mathcal U_+$, through positive matrices over 
 $U$, to a matrix $\widetilde{A'}$
such that  $||\widetilde{A'}-\widetilde{A}||< \delta$ .
\end{lemma}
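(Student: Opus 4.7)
The plan is to show that the entire construction of $\widetilde{A}$ from $A$ in Proposition \ref{primtopos} is a finite composition of operations that depend continuously on the input matrix and on a bounded list of scalar parameters, and that every such parameter can be chosen arbitrarily close to a corresponding element of $\mathcal{U}$. Applying this deformed construction to $A'$ will then produce $\widetilde{A'}$ close to $\widetilde{A}$, while keeping the entire chain of ESSEs over $\mathcal{U}_+$.

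First, I would unpack the algorithm of Proposition \ref{primtopos}. It consists of (i)~a finite number of row-copy splittings (each an SSE through matrices whose entries are entries of the current matrix), (ii)~a finite number of elementary operations of the form $B\mapsto(E')^{-1}BE'$, where $E'=I+\epsilon E_{ij}$ is a basic elementary matrix with a single small positive off-diagonal entry, and (iii)~a concluding Williams-style amalgamation to a positive matrix. The total number of operations is bounded by a function of $n$ alone (in fact by $2\omega(n)\le 2n^2$). Each operation is a continuous function of the input matrix and of the chosen scalar $\epsilon$; the positivity requirement appears only as an open condition on $\epsilon$ (``$\epsilon$ is sufficiently small that $(E')^{-1}B$ has no negative entry'').

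Second, I would run the same scheme on $A'$. Enumerate the operations used to build $\widetilde{A}$ as $T_1,\dots,T_N$ with associated real scalars $\epsilon_1,\dots,\epsilon_M$. Since $\mathcal{U}$ is dense in $\mathbb{R}$, pick $\epsilon'_k\in\mathcal{U}_+$ with $|\epsilon'_k-\epsilon_k|$ arbitrarily small. Define $\widetilde{A'}$ as the output of the analogous chain $T'_1,\dots,T'_N$ applied to $A'$ using the $\epsilon'_k$. Because $A'$ is positive and the $\epsilon'_k$ are close to the $\epsilon_k$, the open positivity conditions at each step remain satisfied, so every $T'_j$ is a genuine ESSE over $\mathcal{U}_+$ through positive matrices. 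This gives the required SSE over $\mathcal{U}_+$.

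Third, I would bound the error. Let $B_j,B'_j$ denote the matrices produced at step $j$ starting from $A,A'$ respectively. The map $(B_{j-1},\epsilon_j)\mapsto B_j$ is locally Lipschitz on the (open) locus where all positivity conditions strictly hold. Choose the tolerances backwards: first choose $\eta_N=\delta$ for the final output, then by the local Lipschitz estimate for $T_N$ pick $\eta_{N-1}>0$ and a permissible tolerance on $\epsilon'_N$; iterate down to $\eta_0$, and set $\nu=\eta_0$. For $\|A'-A\|<\nu$ with $\epsilon'_k$ chosen sufficiently close to $\epsilon_k$, each $\|B'_j-B_j\|<\eta_j$, and in particular $\|\widetilde{A'}-\widetilde{A}\|<\delta$.

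The main obstacle I expect is not conceptual but bookkeeping: carefully verifying that every ``sufficiently small $\epsilon$'' positivity condition in the proof of Proposition \ref{primtopos} is in fact an open condition whose validity is stable under small perturbation of both the input matrix and the $\epsilon$'s. Once this is checked at each individual step, the overall approximation follows from a straightforward finite induction, since $N$ is bounded independently of how close $A'$ is to $A$.
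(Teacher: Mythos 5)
Your high-level plan matches the paper's: unpack $\widetilde A$ as a bounded-length chain of splittings and elementary conjugations, run the analogous chain on $A'$ with scalars taken from $\mathcal U$, and push tolerances backwards. However, the step you flag at the end as ``not conceptual but bookkeeping'' is in fact the crux, and your treatment of it is incorrect. You assert that each positivity requirement is an open condition stable under perturbing the input. That is false at exactly the subtraction steps of Step 3 of the construction. In the exact construction from $A$, when $C(i,t)=C(i,t+1)=0$, the relevant entries of $M_{t-1}$ are \emph{exactly zero}, and the inequality $M_{t-1}(i,t)-\epsilon_t M_{t-1}(i,t+1)\ge 0$ holds with equality $0\ge 0$; the set of matrices on which it holds is not a neighborhood of $M_{t-1}$. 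After perturbing to a positive $A'$, both corresponding entries become small positives $a,b>0$ with no a priori relation, and $a-\epsilon_t b\ge 0$ can fail no matter how small $\|A'-A\|$ is, since $\epsilon_t$ is fixed by $\widetilde A$. Your backwards Lipschitz scheme does not save this, because the Lipschitz estimate is only valid on the open locus where the inequalities hold strictly, and the base construction sits on the boundary of that locus.

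The paper's proof resolves this by a genuine additional idea, not bookkeeping: it notices that the decomposition $M'_0=\bigl(\begin{smallmatrix}P'&B'\\P'&B'\end{smallmatrix}\bigr)$ produced in the splitting step is not forced by $A'$ --- one has freedom in choosing $P'$ and $B'$ subject to $P'+B'=B'^*$ --- and it uses this freedom to impose the explicit inequalities $M'_0(i,t+1)<(1/\epsilon_t)M'_0(i,t)$ at all positions where $C$ is zero (condition (ii) in the paper's proof). Only together with this imposed constraint do the subsequent subtraction steps produce nonnegative matrices. To repair your proposal you would need to add this point: identify which positivity requirements are marginal rather than open, and show how the freedom in the earlier splitting can be spent to satisfy them; ``take $\epsilon$ small'' is not available here because the $\epsilon_t$ are fixed by the target $\widetilde A$.
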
 

\begin{proof} 
The construction of 
 $\widetilde{A}$ in 
proposition \ref{primtopos} proceeds by mimicking over $\mathbb R_+$ the 
Boolean construction in the proof of 
Proposition \ref{booleanprop}. There are three steps in the construction of 
$\widetilde A$.
\begin{enumerate}
\item
Mimicking the row splittings of the Boolean image of $A$ to a Boolean
matrix $B$, there are row splittings 
over $\mathbb R_+$ of $A$ to some matrix $B^*$.
\item 
Mimicking the  splitting of the $m\times m$ Boolean $B$ to the 
$2m\times 2m$ Boolean $C$,
there is  a real 
$2m\times 2m$ matrix  $C^*$ split from $B^*$.
\item
Mimicking the Boolean SSE from $C$ to a 
matrix with no zero entry, 
there is a string of elementary matrices 
$E_t$ 
which produces 
an SSE from $C^*$ to $\widetilde A$. 
Let $\ell$ be the lag of this SSE, and use a notation 
$C^*=M_0$,
$\widetilde A=M_{\ell}$, 
$M_t=(E_t)^{-1}M_{t-1}E_t$. For 
$1\leq t \leq \ell$,  
 there is an $\epsilon_t>0$ 
which without loss of generality  we assume 
is in $\mathcal U$, such that
\begin{itemize} 
\item 
if   $1\leq t \leq m-1$,
then $(E_t)^{-1}\geq 0$ and  $M_{t-1}E_t$ is obtained 
from  $M_{t-1}$  
by  subtracting $\epsilon_t$ times column  $t+1$ 
from column $t$.
\item 
if $m\leq t\leq \ell$, then $E_t\geq 0$ and 
there are $i,j$ such that 
$(E_t)^{-1}M_{t-1}$ is obtained 
from  $M_{t-1}$, in which row $j$ has no zero entry,   
by  subtracting $\epsilon_t$ times 
 row $i$ 
from row $j$.
\end{itemize}
\end{enumerate}
First consider step 3. 
Suppose $M'_0$ is a 
 positive $2m\times 2m$ matrix. 
We recursively define 
$M'_t=(E_t)^{-1}M'_{t-1}E_t$, for $t=1,2,\dots , \ell$. 
Then define 
 $\delta'$ to be the minimum of $\delta$ 
and the smallest positive entry in a matrix of the form  
$M_t$, 
$E_t$, 
$(E_t)^{-1}M_{t-1}$ or 
$M_{t-1}E_t$, 
 $1\leq t\leq \ell$.
Suppose the following hold:
\begin{enumerate}
\item[] (i)
 $M'_0$ is  close enough to $M_0$ that 
for $1\leq t\leq \ell$,  
if $G$ is a matrix in one of the four forms above, 
and $G'$ is defined by replacing 
$M_t$ with $M'_t$ wherever it appears in the definition of $G$, 
then $||G'-G||_{\textnormal{max}}<\delta'$. 
\item[] (ii)
For $1\leq  t< m$ and $1\leq i \leq 2m$,  if 
$C(i,t)=C(i,t+1)=0$, then 
$M'_0(i,t+1)< (1/\epsilon_t)M'_0(i,t)$.
\end{enumerate} 
We claim that the matrices $M'_t$ are then positive, 
and for $1\leq t\leq \ell$, there is an ESSE over $\mathcal U_+$ 
from $M'_{t-1}$ to $M'_{t}$. 
For $1\leq t <m-1$, the conditions 
(i) and (ii) imply that 
$M'_{t-1}E_t\geq 0$ and 
$M'_t>0$, and the matrices 
 $(E_t)^{-1}$ and  $M'_{t-1}E_t$ give the required ESSE 
over $\mathcal U_+$.
For $t\geq m$, condition (i) 
implies the matrices 
$(E_t)^{-1}M'_{t-1}$ will be nonnegative, 
and again we get the ESSE over $\mathcal U_+$. 
It also follows from (i) that 
$||M_{\ell}'-\widetilde A||_{\textnormal{max}}<\delta$.

To finish, we 
first note  that by taking $\nu$ sufficiently small we can approximate 
the splittings from $A$ to $B^*$ in the first step
arbitrarily closely  by splittings 
from $A'$ to a matrix $B'^*$ through 
positive matrices over $\mathcal U$; and for the second 
step, 
given a positive matrix over $\mathcal U$ close to $B$, 
we can split $B'^*$  to a positive matrix 
$M'_0=
\left(
\begin{smallmatrix} P' & B'  \\ P' & B' 
\end{smallmatrix}
\right)$
over $\mathcal U$ close to $C^*$ and also satisfying the 
inequalities listed in condition (ii). 
\end{proof}

\section{Positive invariant tetrahedra}\label{tetrahedrasec}

Suppose $A$ and $B$ are positive  $n\times n$ matrices over 
$\mathbb R$, and are conjugate over $\mathbb R$.  
In this section we describe the 
approach introduced in \cite{KR2} 
for finding a positive path $A_t$  of conjugate matrices 
from $A$ to $B$. 
Without loss of generality, we suppose that $A$ and $B$ 
have spectral radius 1.  

First we give some terminology from \cite{KR2}. 
A {\it positive tetrahedron} is 
an $n$-tuple $\tau$ of vectors 
in an $n-1$ dimensional 
real vector space
such that the origin 
is contained in the interior of 
its  convex hull $C(\tau )$. With respect to a given 
linear endomorphism $T$, an 
{\it invariant positive tetrahedron} is a positive 
tetrahedron $\tau$ such that $C(\tau )$ is mapped into 
its interior by $T$. 

Now, suppose that $(A_t)=(G_t^{-1}AG_t)$ is a path of positive 
matrices from $A=A_0$ to $B=A_1$. We can deform such a path 
to a path of positive stochastic matrices, so we assume now 
that the $A_t$ are positive and stochastic (so, letting $r$ 
denote 
the column vector with every entry 1, we have $A_tr=r$).  
Let $e_i$ denote the row vector which is the $i$th canonical 
basis vector. Let $\ell^t$ be the stochastic left eigenvector 
of $A_t$ and set $v_i^t= e_i-\ell^t$, the projection 
of $e_i$ along $\mathbb Rr$ to the $A_t$ invariant subspace 
$W$ of vectors whose entries sum to zero. The tuple 
$(v_1^t, \dots ,v_n^t)$ has convex hull which contains the origin 
in its interior. Set $w_i^t=v_iG_t^{-1}$. 
Then $\tau_t = (w_1^t, \dots ,w_n^t)$ is a positive invariant 
tetrahedron with respect to the linear transformation 
$T:W\to W$ defined by $w\mapsto wA$. 

The path $(A_t)$ gives rise to the path $\tau_t$. 
Given $t$, the matrix $A_t$ is recovered from the action of $A$ on 
$\tau_t$, as follows. For each $i$, the vector 
$w_i^tA$ is a unique convex combination of the
$w_j^t$ (which are the extreme points of 
$C(\tau_t)$), and the coefficients for this convex combination are 
provided by row $i$ of the matrix $A_t$, as follows:  
%Since 
%$v_i^t= \sum_j(A_t)_{ij}v_j^t$, we have 
\begin{align*} 
 w_i^tA & = (v_i^tG_t^{-1})(G_tA_tG_t^{-1}) 
 = v_i^tA_tG_t^{-1} 
= \sum_jA_{ij}^tv_j^tG_t^{-1} \\ 
& = \sum_jA_{ij}^tw_j^t\ . 
\end{align*} 

Conversely, starting from  a path of positive invariant 
tetrahedra from $\tau_0$ to $\tau_1$, we have 
a path $(A_t)$ of positive stochastic matrices, 
with the $A_t$ defined as above. 
Given $t$, there is a unique matrix $G_t$ such that 
$v_i^0G_t = v_i^t$ for $1\leq i \leq n$ and $G_tr=r$, 
and for this matrix we have $A_t=G_t^{-1} AG_t$. 

Now, to find  a path of positive invariant tetrahedra, 
one passes (for example, see Lemmas 
\ref{positivepath1}, 
\ref{positivepath}) to considering a path of matrices 
$A_t=G_t^{-1}AG_t$ with $A_tr=r$ for every $t$. 
As before, define the vectors $v_i^t$ and 
$w_i^t$ to get a path of positive 
tetrahedra $\tau_t$. Now, if $A_t$ is not 
positive, then  $\tau_t$ 
will  not be an invariant positive tetrahedron. The 
problem of deforming the path $(A_t)$ to a 
path of conjugate positive matrices is replaced  
with the problem of deforming the path $(\tau_t) $ 
to a path of invariant positive tetrahedra. So, one 
is led to study the set of connected components of 
invariant positive tetrahedra for a positive matrix. 

There is more information about these components in 
the thesis \cite{Ch} of Chuysurichay.

\section{A local connectedness condition for nilpotent matrices}
\label{connectionappendix} 

Recall,  $||C||_{\textnormal{max}}$ denotes the maximum absolute value of an 
entry of $C$. This is the norm we use through this appendix. 
For convenient reference, we repeat two definitions. 
\\ \\ 
{\bf Definition \ref{defnnbhd}.}
 For an $n\times n$ real matrix $A$,  and $\epsilon >0$,  
$\mathcal N_{\epsilon}(A)$ denotes the set of $n\times n$ 
matrices $B$ such that  $||B-A||_{\textnormal{max}}< \epsilon $. 
\\ \\ 
{\bf Definition \ref{localconnectednessdefinition}.} 
Suppose $N$ is an $n\times n$ nilpotent matrix and 
$\mathcal C$ is a 
 conjugacy class of  $n\times n$ 
nilpotent matrices. We say $\mathcal C$ is 
{\it locally connected at} $N$ if  for every $\epsilon >0$ 
there exists 
$\delta >0$ such that any two matrices in 
$\mathcal C \cap \mathcal N_{\delta}(N)$ are connected 
by a path in $\mathcal C \cap\mathcal N_{\epsilon}(N)$.
\\ \\ 
We introduce 
some  notation. 
$I_t$ is the $t\times t$ identity matrix, 
$0_t$ is the $t\times t$ zero matrix, 
and  
$e_i$ denotes a zero-one row vector whose 
only nonzero entry is in coordinate $i$. 
The direct sum of square matrices $A,B$ is 
the matrix 
$
\left(
\begin{smallmatrix} A & 0 \\ 0 & B 
\end{smallmatrix}
\right)
$. 
$J_n$ is the $n\times n$ 
Jordan block matrix: 
the $n\times n$ zero-one matrix $J$ 
 such that $J(i,j) =1$ iff 
$1\leq i < n$ and $j=i+1$. 
A matrix in Jordan form is a direct sum of Jordan blocks; 
the matrix  
has a zero 
Jordan block iff its kernel is not contained in its image. 

\begin{definition}\label{betasnotation} 
Suppose  $M$ is a  nilpotent matrix in Jordan form. 
Then 
\begin{itemize} 
\item 
$h(M)$ is the maximum size of a Jordan block summand of $M$.  
\item 
$\beta(M)$ is the number of Jordan block summands of $M$ of size 
at least $2\times 2$. 
\item 
$\beta^{\textnormal{top}}(M)$ is the number of Jordan block summands of $M$ 
of size $h(M)\times h(M)$. 
\end{itemize} 
For $N$ nilpotent in a conjugacy class $C$, we define 
$h(N)$ and $h({\mathcal C})$ to be $h(M)$ for any $M$ in Jordan 
form conjugate to $N$. Similarly for $\beta$ and 
$\beta^{\textnormal{top}}$. 
\end{definition} 

\begin{theorem} \label{matrixconnection}
Suppose $N$ is a nilpotent $n\times n$ matrix and  $\mathcal C$ is a conjugacy class of nilpotent 
$n\times n$ matrices, such that the following hold: 
\begin{enumerate} 
\item The Jordan form of a matrix in $\mathcal C$ has a 
zero block.  
\item  $h(\mathcal C)\geq h(N)$. 
\item 
$\beta^{\textnormal{top}}(\mathcal C)\geq \beta(N)$. 
\end{enumerate} 
Then $\mathcal C$ is locally connected at $N$. 
\end{theorem}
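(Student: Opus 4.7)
The plan is to reduce local connectedness of $\mathcal{C}$ at $N$ to two steps: (i) constructing a canonical matrix $N_0 \in \mathcal{C}$ as close to $N$ as we wish (in max norm), and (ii) using the principal-bundle structure of the $GL_n(\mathbb{R})$-conjugation action on $\mathcal{C}$ to connect any matrix of $\mathcal{C}$ near $N_0$ back to $N_0$ by a short path in $\mathcal{C}$. Any two matrices of $\mathcal{C}$ near $N$ are then joined through $N_0$.

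For step (i), I put $N$ in Jordan form and combinatorially fit the Jordan structure of $N$ inside that of $\mathcal{C}$. Let $h=h(\mathcal{C})$. By (2), $h \geq h(N)$, so every Jordan block of $N$ of size $n_i \geq 2$ fits inside a top block of $\mathcal{C}$; by (3) there are enough top blocks of $\mathcal{C}$ to receive every such block of $N$ in a distinct top block. Within the assigned top block I keep the $n_i-1$ superdiagonal $1$'s of $N$'s block and append a chain of $h-n_i$ additional superdiagonal $\epsilon$'s, using $h-n_i$ zero blocks of $N$ as the extended coordinates. The remaining top blocks of $\mathcal{C}$ and the non-top, non-zero blocks of $\mathcal{C}$ are then filled entirely by $\epsilon$-chains built from zero blocks of $N$, while the zero block of $\mathcal{C}$ guaranteed by (1) absorbs a single zero block of $N$ with no perturbation at all. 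A direct count of block sizes shows this scheme uses exactly the zero blocks of $N$ available, and an obvious rescaling of the perturbed basis vectors (replacing each $e_j$ in an $\epsilon$-extended chain by an appropriate power of $\epsilon$ times $e_j$) shows the resulting matrix $N_0$ has Jordan type $\mu(\mathcal{C})$, hence $N_0 \in \mathcal{C}$. Taking the $\epsilon$'s small makes $\|N_0-N\|_{\mathrm{max}}$ arbitrarily small.

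For step (ii), Proposition \ref{fiberprop} applied with $A=N_0$ identifies $\mathcal{C}$ with $GL_n(\mathbb{R})/\textnormal{Cent}_{\mathbb{R}}(N_0)$ as a principal bundle; by local triviality there exist a neighborhood $V$ of $N_0$ in $\mathcal{C}$ and a continuous section $s\colon V \to GL_n(\mathbb{R})$ with $s(N_0)=I$ and $s(M)^{-1}N_0 s(M)=M$ for all $M\in V$. If $M$ lies close enough to $N_0$ in $\mathcal{C}$, then $s(M)$ is close to $I$, so $P_t := (1-t)I + t\,s(M)$ lies in $GL_n(\mathbb{R})$ for all $t\in[0,1]$, and the path $t\mapsto P_t^{-1}N_0 P_t$ lies in $\mathcal{C}$, goes from $N_0$ to $M$, and stays in any prescribed neighborhood of $N_0$. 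Given $\epsilon>0$, I first pick $N_0$ within $\epsilon/3$ of $N$ by step (i), then pick $\delta>0$ so small that $\mathcal{C}\cap\mathcal{N}_\delta(N)\subset V$ and the associated path from $M$ to $N_0$ stays within $\mathcal{N}_\epsilon(N)$ for every such $M$; any two matrices in $\mathcal{C}\cap\mathcal{N}_\delta(N)$ are then joined through $N_0$ by a path in $\mathcal{C}\cap\mathcal{N}_\epsilon(N)$.

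The main obstacle is step (i). All three hypotheses are essential to the counting: (2) lets each big block of $N$ fit inside a top block of $\mathcal{C}$, (3) provides enough top blocks to house every big block of $N$ in a different one, and (1) supplies the zero block of $\mathcal{C}$ needed for the final block-count to close up. Once $N_0$ is constructed, the principal-bundle argument of Proposition \ref{fiberprop} and the linear-interpolation trick in step (ii) handle the remaining work without difficulty.
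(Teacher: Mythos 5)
The plan in Step (i) is sound as a sketch: choose a reference $N_0\in\mathcal C$ near $N$ by embedding each Jordan block of $N$ of size $\ge 2$ into a distinct top block of $\mathcal C$ (here you need (3)), extending by $\epsilon$-chains, and absorbing the leftover zero blocks; the count closes because both $N$ and $\mathcal C$ have total size $n$. This is essentially what the paper does with its $M(\epsilon)$.

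The difficulty is in Step (ii), and it is a genuine gap. You pick $N_0$ within $\epsilon/3$ of $N$, take a local-triviality neighborhood $V\subset\mathcal C$ of $N_0$ with a section $s\colon V\to GL_n(\mathbb R)$, $s(N_0)=I$, and then assert that for small $\delta$ one has $\mathcal C\cap\mathcal N_\delta(N)\subset V$ with $s$ uniformly close to $I$ there. Neither half of this assertion follows. First, since $N\notin\mathcal C$, the matrix $N_0$ lies a fixed ambient distance $\approx\epsilon/3$ from $N$, so for small $\delta$ the set $\mathcal C\cap\mathcal N_\delta(N)$ does not shrink to $N_0$; in fact $N_0\notin\mathcal N_\delta(N)$ once $\delta<\epsilon/3$. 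The ``neighborhood $V$ of $N_0$ in $\mathcal C$'' supplied by local triviality of the bundle is an intrinsic object whose size you do not control, and there is no reason it should contain everything in $\mathcal C$ that happens to lie in a small \emph{ambient} ball around the boundary point $N$. Second, even if $V$ did contain $\mathcal C\cap\mathcal N_\delta(N)$, continuity of $s$ only makes $\lVert s(M)-I\rVert$ small when $M$ is near $N_0$ \emph{inside $\mathcal C$}; matrices in $\mathcal C\cap\mathcal N_\delta(N)$ sit at ambient distance $\approx\epsilon/3$ from $N_0$ and can be far from $N_0$ intrinsically, so $s(M)$ need not be near $I$ and the linear interpolation $P_t=(1-t)I+ts(M)$ may leave $GL_n(\mathbb R)$ or carry the path far outside $\mathcal N_\epsilon(N)$. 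The example in the paper after Theorem \ref{matrixconnection} (with $M_t$ and $P_t$) shows how delicate the geometry of $\mathcal C$ near its boundary can be: when the zero-block hypothesis fails, distinct connected components of $\mathcal C$ accumulate at $N$, and a single reference $N_0$ cannot serve. Condition (1) removes that obstruction, but it does not make your reduction to local triviality automatic.

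What the paper does instead is not a soft bundle argument but a long explicit construction (the four Stages in Lemma \ref{blockconnectionlemma}): given any $C\in\mathcal C$ with $\lVert C-N\rVert<\delta$, it builds a concrete chain of conjugations that first normalizes the ``big-block'' rows of $C$, then kills or rescales small entries, and finally straightens out the nilpotent complement, at each stage bounding the drift of the path (e.g., by $2n\sqrt\mu$). The quantitative bookkeeping is exactly what replaces your assumed inclusion $\mathcal C\cap\mathcal N_\delta(N)\subset V$ and the assumed smallness of $s(M)-I$. Without a substitute for that bookkeeping, your Step (ii) does not close.
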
 

The necessity of condition (2) in the statement is clear. 
Without condition (1), $N$ can be the limit of matrices from 
different connected components of $\mathcal C$, as happens 
with $t> 0$ in the following example: 
\[
N= 
\begin{pmatrix} 
0 & 1 & 0 & 0 \\ 0 & 0 & 0 & 0 \\ 0 & 0 & 0 & 0 \\ 0 & 0 & 0 & 0 
\end{pmatrix} \ , \qquad 
M_t=
\begin{pmatrix} 
0 & 1 & 0 & 0 \\ 0 & 0 & t & 0 \\ 0 & 0 & 0 & t \\ 0 & 0 & 0 & 0 
\end{pmatrix} \ , \qquad 
P_t=
\begin{pmatrix} 
0 & 1 & 0 & 0 \\ 0 & 0 & t & 0 \\ 0 & 0 &  0& -t \\ 0 & 0 & 0 & 0 
\end{pmatrix} \ .  
\]

\begin{question} 
Does Theorem \ref{matrixconnection} remain true if the assumption 
(3) is removed? 
\end{question} 

Our partial result and the structure of the nilpotent matrices as 
a stratified space \cite{Ve1,Ve2,Wh} suggest the answer may be yes. 

It is clear that the theorem holds for $N$ if and only if 
it holds for some matrix conjugate to $N$. For the proof, 
we will make explicit constructions using a matrix of a 
specific form. We will formulate the constructive result 
 below as a technical lemma, for which we make some 
preparations.

Theorem \ref{matrixconnection} is true if $N\in \mathcal C$ 
(Lemma \ref{closeconjugation}) and it is vacuously true 
for $\mathcal C$  if $N$ is not 
a limit of matrices from $\mathcal C$. So, we assume from here 
 that $N\notin \mathcal C$ and 
$N$ is a limit of matrices from $\mathcal C$, which implies 
for $M$ in $\mathcal C$ that 
$\textnormal{rank}(M^k)\geq \textnormal{rank}(N^k)$, $0\leq k\leq n$. 
If $N=0$, then Theorem \ref{matrixconnection} can be proved 
quickly with the argument of Step 2 of Stage 4 below. So 
we also assume from here that $N\neq 0$, which means $\beta (N)\geq 1$. 
Set $\beta =\beta (N)$ and $h=   h(\mathcal C) $.

Given $k$ with $1\leq k <h$, we define 
the $h\times h$ matrix $N_k$ by the rule 
\begin{align*} 
N_k(i,j) \ &= \ 1 \quad \textnormal{if } 1\leq i \leq k \ \textnormal{ and } \ j=i+1 \\ 
&= \ 0 \quad \textnormal{otherwise } . 
\end{align*} 
The first $k$ rows of $N_k$ equal those of $J_h$ and the remaining rows 
of $N_k$ are zero. 

We also fix a list 
$k_1, \dots , k_{\beta}$ with $k_i\geq 2$ for each $i$, such that 
$N$ is conjugate to the direct sum of 
$J_{k_1} \oplus J_{k_2} \oplus \cdots \oplus J_{k_{\beta}}$ and a 
zero matrix. Then we fix the form we will use for our matrix $N$:  
\[
N \ = \ N_{k_1} \oplus N_{k_2} \oplus \cdots \oplus N_{k_{\beta}} \oplus 0_{n-\beta h}  
\]
and define some associated subsets of $\{1,2, \dots , n\}$,  
\begin{align*} 
\mathcal J \ &=\ \{i: \textnormal{row } i \textnormal{ of } N  \textnormal{ is nonzero} \}\\  
\mathcal K \ &=\ \{i: \textnormal{row } i \textnormal{ of } N  \textnormal{ is zero}\} \\ 
\mathcal T\ &=\{1+rh: 0\leq r < \beta\} \ . 
\end{align*} 

\begin{definition}\label{Jdefinition} Given 
$M \ = \ J_h \oplus J_h \oplus \cdots \oplus J_h$,  
let $J$ be an $(n-\beta h) \times (n-\beta h )$ matrix in Jordan form 
such that 
$
\left(
\begin{smallmatrix} M & 0 \\ 0 & J 
\end{smallmatrix}
\right)
$
is in $\mathcal C$ and has row $n$ and 
column $n$ zero. 
\end{definition} 
(The condition that the $n$th row and column of $C$ can 
be chosen zero is possible by the condition 
  (1) in Theorem \ref{matrixconnection}.) 
The set $\mathcal T$ indexes the rows of $M$ 
 through the 
top rows of its first $\beta $ Jordan blocks, each of which 
is  $J_h$. These are also 
the top rows of the diagonal blocks $N_{k_i}$ in $N$. 

Given $\epsilon >0$, define 
$M(\epsilon )$ to be the $n\times n$ 
matrix such that 
\begin{align*} 
M(\epsilon )(i,j) \ &=  N(i,j)\quad \ \ \  \textnormal{if } i\in \mathcal J \\
\ &= \ \epsilon M(i,j)\quad \textnormal{if } i\notin \mathcal J \ . 
\end{align*} 
Given $\delta > 0$, $\mathcal M_{\delta}$ denotes the set of 
$n\times n$ matrices 
$C$ such that the following hold: 
\begin{enumerate}
\item[] (i) 
$C$ is conjugate to $M$.
\item[] (ii) 
$||C-N||_{\textnormal{max}}< \delta$. 
\end{enumerate} 
We say $C\in \mathcal M^0_{\delta}$ if
$C\in \mathcal M_{\delta}$ and 
 in addition  
\begin{enumerate}  
\item[] (iii)  
If $i\in \mathcal J$, then row $i$ of $C$ equals row $i$ of $N$. 
\end{enumerate} 
We will use $\mathcal M$ and $\mathcal M^0$ to denote the union 
over $\delta >0$ of 
$ \mathcal M_{\delta}$ and 
$ \mathcal M^0_{\delta}$ 
 (respectively).

\begin{example} \label{blockexample} 
For the matrix arguments to follow, it may be helpful to 
have the block structure of an example in view. 
For this example, we take 
\begin{align*} 
h \ & =  5 \ , \ \beta =2 \ , \ n=12 \ , k_1= 2 \ , \ k_2=1\ , \\ 
M \ &=\ J_5 \oplus J_5\oplus 0_2 \ , \\ 
N\ &=\ N_2 \oplus N_1 \oplus 0_2 \ .
\end{align*} 
Now a matrix $C$ in $\mathcal M^0_{\delta }$ has a block structure: 
%\begin{equation}\label{samplec} 
\begin{align*}\label{samplec} 
\setcounter{MaxMatrixCols}{14} 
C\ &= 
\begin{pmatrix} 
0&\mathbf 1 &0&0&0&& 0&0&0&0&0&& 0&0 \\
0&0&\mathbf 1 &0&0&& 0&0&0&0&0&& 0&0 \\ 
\bu &\bu &\bu &\bu &\bu & &\bu &\bu &\bu &\bu &\bu & &\bu &\bu \\ 
\bu &\bu &\bu &\bu &\bu & &\bu &\bu &\bu &\bu &\bu & &\bu &\bu \\ 
\bu &\bu &\bu &\bu &\bu & &\bu &\bu &\bu &\bu &\bu & &\bu &\bu \\ 
 & & & & &  & & & & &  & & & \\
0&0&0& 0&0& &0&\mathbf 1 &0&0&0& &0&0 \\
\bu &\bu &\bu &\bu &\bu & &\bu &\bu &\bu &\bu &\bu & &\bu &\bu \\ 
\bu &\bu &\bu &\bu &\bu & &\bu &\bu &\bu &\bu &\bu & &\bu &\bu \\ 
\bu &\bu &\bu &\bu &\bu & &\bu &\bu &\bu &\bu &\bu & &\bu &\bu \\ 
\bu &\bu &\bu &\bu &\bu & &\bu &\bu &\bu &\bu &\bu & &\bu &\bu \\ 
 & & & & &  & & & & &  & & & \\
\bu &\bu &\bu &\bu &\bu & &\bu &\bu &\bu &\bu &\bu & &\bu &\bu \\ 
\bu &\bu &\bu &\bu &\bu & &\bu &\bu &\bu &\bu &\bu & &\bu &\bu 
\end{pmatrix} 
\end{align*}
%\end{equation} 
in which each $\bu$ has absolute value less than $\delta$.
If $C$ is only in $\mathcal M_{\delta}$, then the entries marked 
$0$ and $1$ above are only approximated to within $\delta$. 
Continuing the example, we have 
%\begin{equation}
\begin{align*}
\setcounter{MaxMatrixCols}{14} 
M(\epsilon ) \ &= 
\begin{pmatrix} 
0&\mathbf 1 &0&0&0&& 0&0&0&0&0&& 0&0 \\
0&0&\mathbf 1 &0&0&& 0&0&0&0&0&& 0&0 \\ 
0&0&0&\boldsymbol\epsilon   &0&& 0&0&0&0&0&& 0&0 \\ 
0&0&0&0&\boldsymbol\epsilon   && 0&0&0&0&0&& 0&0 \\ 
0&0&0&0&0&& 0&0&0&0&0&& 0&0 \\ 
 & & & & & & & & & & &  & & \\
0&0&0&0&0&  & 0&\mathbf 1 &0&0&0            & &0&0 \\
0&0&0&0&0&  & 0&0&\boldsymbol\epsilon   &0&0& &0&0 \\ 
0&0&0&0&0&  & 0&0&0&\boldsymbol\epsilon   &0& &0&0 \\ 
0&0&0&0&0&  & 0&0&0&0&\boldsymbol\epsilon   & &0&0 \\ 
0&0&0&0&0&  & 0&0&0&0&0                     & &0&0 \\ 
 & & & & & & & & & & &  & & \\
0&0&0&0&0&& 0&0&0&0&0&& 0&0 \\ 
0&0&0&0&0&& 0&0&0&0&0&& 0&0 
\end{pmatrix} 
\end{align*} 
\end{example} 
%\end{equation} 
The example is somewhat special in that the summand 
$0_2$ of $M$ could have been much more complicated. 
However, it turns out that this possible complication 
doesn't matter in the proof below until the last stage, 
where it is not a big problem. 

We are finally ready to state the technical lemma, 
from which Theorem \ref{matrixconnection} follows immediately. 

\begin{lemma} \label{blockconnectionlemma} 
Given $0< \gamma <1/49$, there exists $\delta >0$ such that 
for all $C$ in $\mathcal M_{\delta}$ and all $\epsilon $ 
such that $0<\epsilon < \delta$, there is a path in 
$\mathcal M_{\gamma}$ from $C$ to 
 $M(\epsilon )$. 
\end{lemma}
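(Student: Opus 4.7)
My strategy is to deform an arbitrary $C\in\mathcal M_\delta$ to $M(\epsilon)$ through a path lying in $\mathcal M_\gamma$, built in four successive stages, each stage normalizing one feature of $C$. Throughout, all conjugations are chosen to be $O(\delta)$-close to the identity, so that the path stays within $\mathcal M_\gamma$ provided $\delta$ is sufficiently small relative to $\gamma$.

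\textbf{Stage 1 (clean the $\mathcal J$-rows).} For each $i\in\mathcal J$, row $i$ of $N$ has a single nonzero entry (equal to $1$) in some column $j(i)$; row $i$ of $C$ differs from it by $O(\delta)$. I would compose small elementary conjugations of the form $(I+tE_{pq})^{-1}C(I+tE_{pq})$, with $t=O(\delta)$, taken one at a time to annihilate the stray entries of rows indexed by $\mathcal J$. Each step perturbs columns only at positions already of size $O(\delta)$, and the designated ``$1$''-entries remain $\geq 1-O(\delta)$. The endpoint lies in $\mathcal M^0_{\delta'}$ for some $\delta'=O(\delta)$.

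\textbf{Stages 2--3 (normalize the $\mathcal K$-rows).} Once the $\mathcal J$-rows of $C$ exactly equal those of $N$, the relations $e_iC=e_{j(i)}$ for $i\in\mathcal J$ encode rigid Jordan partial-chains. The remaining data of $C$ lives in the $\mathcal K$-rows (each of max-norm $<\delta$), subject to the constraint that $C$ be conjugate to $M$. I would use conjugation by elements of the parabolic subgroup $P\subset GL_n$ that preserves the $\mathcal J$-rows of $N$: these elements are parametrized by a connected group, which allows continuous normalization. Using $P$-conjugations, I reshape the $\mathcal K$-rows into the pattern of $\epsilon' M$ for some $\epsilon'\in(0,\delta)$, producing the matrix $M(\epsilon')$. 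The hypotheses $h(\mathcal C)\geq h(N)$ and $\beta^{\mathrm{top}}(\mathcal C)\geq\beta(N)$ guarantee that the target form $\epsilon' M$ is compatible with the partial chains already in place, while the assumed zero Jordan block of $M$ provides the slack needed for the $\mathcal K$-rows to be brought into standard form without collision.

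\textbf{Stage 4 (rescale).} The family $\{M(s)\}_{0<s\leq\delta}$ is a continuous path of matrices all conjugate to $M$: on each $N_{k_s}$-block, the diagonal rescaling $e_{k_s+1+j}\mapsto s^j e_{k_s+1+j}$ conjugates $M(s)$ to $M(1)$. Rescaling $s$ from $\epsilon'$ to $\epsilon$ therefore traces a path in $\mathcal M_\delta$ from $M(\epsilon')$ to $M(\epsilon)$, completing the overall path.

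\textbf{Main obstacle.} The difficult step is Stages 2--3. One must show that the conjugations in $P$ actually act transitively on normal forms with fixed $\mathcal J$-rows near $N$ within the orbit $\mathcal C$, and that the implementing elements of $P$ can be taken $O(\delta)$-close to the identity, with explicit constants compatible with the factor $\gamma<1/49$. This demands a careful block-by-block and cross-block accounting of how $P$-conjugations perturb the various entries of $C$, and it is where the hypotheses on $h(\mathcal C)$, $\beta^{\mathrm{top}}(\mathcal C)$, and the existence of a zero Jordan block are used decisively to produce the required conjugators within the allotted norm budget. Stage 1 and Stage 4 are comparatively routine.
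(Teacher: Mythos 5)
Your outline of Stage~1 (cleaning the $\mathcal J$-rows) matches the paper's Stage~1 in spirit, and your Stage~4 rescaling observation that $\{M(s)\}_{0<s\le\delta}$ is a path of mutually conjugate matrices is correct and is also used (in a slightly different role) in the paper's final stage. The difficulty is that Stages~2--3, which you explicitly flag as ``the Main obstacle,'' are not proved: you describe a desideratum (that conjugation by a parabolic-type group $P$ acts transitively on the nearby normal forms with frozen $\mathcal J$-rows, by implementing elements $O(\delta)$-close to $I$) rather than establishing it. That transitivity-with-norm-control statement \emph{is} the content of the lemma. The hypotheses $\beta^{\mathrm{top}}(\mathcal C)\ge\beta(N)$, $h(\mathcal C)\ge h(N)$, and the existence of a zero Jordan block in $\mathcal C$ have to enter somewhere concrete, and in your sketch they enter only as an assertion that they ``provide the slack needed,'' with no mechanism shown.

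For comparison, the paper's proof spends almost all its effort on precisely this middle portion. Its Stage~2 uses $\beta^{\mathrm{top}}(\mathcal C)\ge\beta(N)$ in an explicit counting argument to deform $C$ (by small conjugations fixing the $\mathcal J$-rows) to a matrix in which every $\mathcal T$-indexed row $e_i$ begins a full-length chain, $e_iC^{h-1}\neq 0$; this is what makes the later cleanup well-posed. Its Stage~3 is a careful row-by-row induction that enlarges a ``normalized'' index set $\mathcal S_g$ step by step, choosing a signed transposition and a diagonal rescaling to isolate a dominant off-pattern entry, then a shear to annihilate the rest of that row; the crucial bookkeeping is a $\mu\mapsto 2n\sqrt\mu$ degradation of the max-norm error at each step, which is what makes the path stay in $\mathcal M_\gamma$ for small enough $\delta$. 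Its Stage~4 then handles the residual block $Z$ separately: it first shows the coupling block between the chain part and $Z$ can be deformed to zero (using the filtration induced by the $\mathcal J$-chains), and only then uses the zero Jordan block of $\mathcal C$ to connect $Z$ to the fixed nilpotent $J$. None of this is present in your proposal, and it is not merely ``careful accounting'' to be filled in later---the $\sqrt{\mu}$ bookkeeping and the Stage~2 chain-extension are the nontrivial ideas. As written, the proposal contains a genuine gap at its core.

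One smaller issue: you claim Stages 2--3 land you at $M(\epsilon')$, after which Stage~4 is a mere rescaling. But the lower-right block of $C$ is not a priori a scalar multiple of $J$; it is an arbitrary small nilpotent matrix conjugate to $J$. Connecting it to $\epsilon J$ within $\mathcal M_\gamma$ is a separate step (and is where the zero-block hypothesis is used in the paper), not something that falls out of a diagonal rescaling.
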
 

\begin{proof} 

The path will be a concatenation of paths constructed in four  stages. 
Combining the estimates, given $0< \gamma < 1/49$, the lemma 
will hold for 
\[
\delta = \frac{ n^{1/2^n} \gamma^{2^n}}{4(2n+2)^n}\ . 
\]
We do not claim this estimate or the requirement $\gamma < 1/49$ are sharp.  

Below,  subscripted matrices $C$ in different 
stages are dummy variables not related to  subscripted matrices $C$
in other stages. 

{\bf Stage 1.} 
 Given $C$ in $\mathcal M_{\mu}$, we produce 
a path in $\mathcal M_{\kappa\mu}$ to a matrix $C_S$
 in $\mathcal M^0_{\kappa\mu}$, where 
$\kappa =[2(n+1)]^S$ and $S=\# \mathcal J<n$. 
For this stage,  
let  $i_1< i_2 < \dots < i_S$  denote the elements of $\mathcal J$. 
Set $C_0=C$. 
For $1\leq s\leq S$, given $C_{s-1}$, 
we will 
define inductively 
$C_s$ and 
a path $(\widetilde{C_t})_{0\leq t \leq 1}$  
from $C_{s-1}=\widetilde{C_0}$ to $C_{s}=\widetilde{C_1}$
such that there is  a $\kappa >0$, independent of 
$C$, such that the following hold whenever  $0<\mu < 1/2$.

\begin{enumerate} 
\item 
For $i=i_s$,  $\ e_i\widetilde{C_1} =e_{i+1}$ . 
\item 
For $i=i_t < i_s$, 
  $\ e_i\widetilde{C_1} =e_{i+1}$ . 
\item 
If $C_{s-1}\in \mathcal M_{\mu}$, then 
$\widetilde{C_t}\in \mathcal M_{\kappa \mu}$, $\ 0\leq t \leq 1$.  
\end{enumerate} 
(Then the rows $i_1,i_2,\dots , i_s$ of $C_s$ will equal 
the corresponding rows in  $N$.) 
The path will be a (renormalized) 
concatenation of two paths. The first 
path $(C'_t)_{0\leq t \leq 1}$  
moves the $(i_s,i_s+1)$ entry of $C_{s-1}$ to $1= N(i_s,i_s+1)$. 
Let $\eta= C_{s-1}(i_s,i_s+1)$. 
For $0\leq t\leq 1$, define 
$C'_t= D_tC_{s-1}D_t^{-1}$, where $D_t$ is diagonal and equal to $I$ except at 
$D_t(i_s + 1, i_s +1) = \eta^t$. (Note, given 
 $\mu < 1$, we have $\eta >0$.) 
For each $t$, the rows $i_1,i_2, ... ,i_{s-1}$ of $C'_t$ equal 
those of $N$, by the induction hypothesis. 
At the two types of entry where $C'_t(i,j)$ might not 
equal $_{s-1}(i,j)$, we have the following.   
\begin{itemize} 
\item 
If $j\neq i_s+1$ and $i=i_s+1$, 
then $C'_t(i,j)=\eta^tC_{s-1}(i,j)$ . 
\item 
If $i\neq i_s+1$ and $j=i_s+1$, then 
$C'(i,j)=\eta^{-t}C_{s-1}(i,j) $ .
\end{itemize} 
In both of these cases, if $(i,j)\neq (i_s,i_s+1)$, then 
$N(i,j)=0$ and $|C_{s-1}(i,j)|<\mu$. 
Also, because $1/2< \eta < 3/2$, 
for $|t|\leq 1$ we have $\eta^t < 2$, 
and consequently in both cases  
$|N(i,j)-C'_t(i,j)|= |C'_t(i,j)|<2\mu$. 
Lastly, as $t$ moves from 0 to 1 , 
$C'_t(i_s,i_s+1)$ moves monotonically from $\eta $ to 1 .  
We conclude $(C'_t)_{0\leq t \leq 1}$ is a path in 
$\mathcal M_{2\mu}$.

We now replace $C_{s-1}$ with $C'_1$, and for notational simplicity denote it 
as $C$.  
For $0\leq t\leq 1$, define an $n\times n$ matrix 
$V_t$ by setting 
\begin{align*} 
V_t(i_s+1,i) & = -tC(i_s,i) \quad \qquad \textnormal{if } i\neq i_s+1 \\ 
V_t(i,j) &= I(i,j) \ \  \qquad  \qquad \textnormal{otherwise }. 
\end{align*} 
Define $\widetilde{C_t}=V_t^{-1}CV_t$. 
Then $V_1$ acts to add multiples of column $i_{s}+1 $ of $C$ to other columns so that 
row $i_s$ of $CV_1$ equals $e_{i_s+1}$. 
The rows of $\widetilde{C_t}$ and $CV_t$ must be equal, except for row $i_s+1$. It 
follows that (1) and (2) hold. 
Also, 
\begin{align*} 
||N-V_t^{-1}CV_t || & \leq
||N-C || + 
||C-CV_t || + 
||CV_t -V_t^{-1}CV_t || \\ 
&\leq \mu + \mu^2 + (n-1)\mu < (n+1)\mu \ . 
\end{align*}  
So, combining this path together with the
diagonal conjugation path, property (3) holds with $\kappa = 2(n+1)$. 
We now pass from $C_0$ to $C_S$. This completes the proof 
for Stage 1.

{\bf Stage 2.} Given $C_0$ in 
$\mathcal M_{\mu}^0$, with $\mu>0$,
we produce 
a path in $\mathcal M_{\mu}^0$  to a matrix $M$  satisfying the following 
condition: 
\begin{enumerate}
\item[]
(iv) If $ i\in \mathcal T$, then  
$e_iM^{h-1}\neq 0$. 
%The collection of vectors $\{e_iC^j: i\in \mathcal T, 0\leq j<h\}$ 
%is linearly independent. 
\end{enumerate} 

So, suppose $C_0\in \mathcal M^0_{\mu}$, and  
let $\mathcal U=\{i: e_iC_0^{h-1}\neq 0\}$.
Suppose $\mathcal T \not\subset\mathcal U $. 
We have $\# \mathcal U \geq \# \mathcal T $, because 
$\beta^{\textnormal{top}}(C_0)\geq \beta (N) = \beta= \# \mathcal T$.
Therefore, we can choose an injection 
$\mathcal T \setminus (\mathcal T \cap \mathcal U)\to 
\mathcal U \setminus (\mathcal T \cap \mathcal U)$, $i\mapsto \xi (i)$. 
Let  $\{i_1, \dots , i_R\}$ now denote the set 
$\mathcal T \setminus (\mathcal T \cap \mathcal U)$.
We will define matrices $C_1, \dots , C_R$ inductively. 
For $1\leq r \leq R$, given $C_{r-1}$ 
we will
define  a path $\widetilde{C_t}$, $0\leq t \leq \nu$, such that 
$\widetilde{C_0}=C_{r-1}$ and 
such that the following hold for each $t$. 
\begin{enumerate} 
\item For $t\neq 0$ and $i=i_r$,  $e_i(\widetilde{C_t})^{h-1} \neq 0$ . 
\item 
If $i \in \mathcal T$ 
and $e_i(C_{r-1})^{h-1} \neq 0$, then 
$e_i(\widetilde{C_t})^{h-1} \neq 0$. 
\item 
If $i\in \mathcal J$, then $e_i\widetilde{C_t}=e_{i+1}$
\item 
$||\widetilde{C_t} -C_{r-1}||_{\textnormal{max}} < \mu - 
||C_{r-1}-N||_{\textnormal{max}}$ . 
\end{enumerate} 
(The conditions (3) and (4) keep the path in 
$\mathcal M^0_{\mu}$.)  
We then define the matrix $M$ of (iv) to be $C_R$. 

So, to define the path, suppose we are given $C_{r-1}$. 
For notational simplicity, we let $C$ denote $C_{r-1}$;  
$j$ denote $\xi(i_r)$; $k$ be the $k_i$ such that row 
$i_r$ is the top row of $N_{k_i}$; and let $i_r$ be $1$. 
 Because $e_i$ is 
in the image of $C$ if $1< i\leq k$, we have $j\notin \{ 1, \dots , k\}$. 
Given a scalar $t$, let $V_t$ be the $n\times n$ matrix such that 
\begin{align*} 
\textnormal{row } i \textnormal{ of } V_t \ &= \ e_i + te_jC^{i-1}\ , \quad 
1\leq i \leq k+1 \ , \\ 
&= \  e_i\ , \qquad \qquad \quad \quad \textnormal{otherwise} \ . 
\end{align*} 
We keep $t$ small enough that $V_t$ is invertible, 
and define 
\[
\widetilde C_t = V_tCV_t^{-1} \ . 
\]
Now we verify the induction conditions. 
The proof of (1) is a computation: 
\begin{align*} 
e_1(V_tCV_t^{-1})^{h-1} \ &= \ 
e_1V_t(C^{h-1}V_t^{-1}) 
= \ (e_1 + te_j)( C^{h-1}V_t^{-1}) \\ 
&= \ te_jC^{h-1}V_t^{-1} \neq 0\ . 
\end{align*} 
The proof for (2) is similar. If $i\in \mathcal T$ and 
$e_iC^{h-1}\neq 0$, then 
\begin{align*}  
e_i(V_tCV_t^{-1})^{h-1} \ &= \ 
e_iV_t(C^{h-1}V_t^{-1})  
= \ e_i(C^{h-1}V_t^{-1})  \neq 0\ . 
\end{align*} 
For (3), given $i\in \mathcal J$, we must show 
that $e_i\widetilde{C_t} = e_{i+1}$. 
We do this for two cases. 
If $1\leq i \leq k$, then 
\begin{align*} 
e_i(V_tCV_t^{-1}) \ &= \ 
 (e_iV_t)(CV_t^{-1}) \ = \ 
( e_i + te_jC^{i-1})(CV_t^{-1}) \\ 
\ &= \ 
 (e_{i+1} + te_jC^{i})V_t^{-1} 
= \ e_{i+1} \ . 
\end{align*} 
If $i\in \mathcal J\setminus \{1, \dots , k\}$, 
then $\{i,i+1\}\cap  \{ 1, \dots , k\}=\emptyset$; 
so, if  
$e_iC = e_{i+1}$, then  
\[
e_iV_tCV_t^{-1} = 
e_iCV_t^{-1} = 
e_{i+1}V_t^{-1} = 
e_{i+1} \ . 
\] 
It is clear that (4) holds if $\nu$ is sufficiently small. 
This completes the proof for Stage 2. 

{\bf Stage 3.} We begin with
 $C$ in $\mathcal M^0_{\mu}$, with $0< \mu < 1/49$, 
with $C$ (from Stage 2) the matrix $M$ 
satisfying condition (iv) of Stage 2.  
We produce a path in $\mathcal M^0_{\nu}$ 
from $C$ to a matrix $C_G$ whose first $\beta h$ rows have the 
form of $M(\epsilon )$, but with the $\epsilon $'s 
replaced perhaps by various positive numbers, 
with 
$\nu = (2n)^2\mu^{1/2^n}$.  Then, given  $0<\epsilon <1$, 
we will have 
$C_G\in \mathcal M^0_{\epsilon}$ if 
\[
\mu\  <\  \frac{n^{(1/2^n)} \epsilon^{2^n}}{4} \quad . 
\]

For the proof, we will inductively  produce a finite sequence of matrices 
$C_g$ and index sets $\mathcal S_g$,  
$0\leq g \leq G$, 
with $G <  \beta h$,    
beginning with $C_0=C$. 
Property (iv) from Stage 2 will 
be preserved at every step, because 
successive matrices 
will be  conjugate  by a conjugacy respecting the subspaces 
$\mathbb Re_i, i\in \mathcal T$. 

Given $M$ in $\mathcal M^{0}$, define 
the set $\mathcal S(M)$ to be the largest subset 
$\mathcal S$ 
of  $\{1,2,\dots , \beta h\}$ satisfying the following conditions: 
\begin{enumerate} 
\item[] (A1) 
If $i\in \mathcal S$, then 
\begin{align*} 
\textnormal{row } i \textnormal{ of } 
M  \ &= \ e_{i+1} \qquad \qquad   \qquad \qquad \qquad  \ \
 \quad \textnormal{ if } i\in \mathcal J \\ 
&=\ 0 \qquad \qquad \qquad \qquad \qquad  \quad  \quad 
\quad \textnormal{ if } h \textnormal{ divides } i \\ 
&= \ \textnormal{a positive multiple of } e_{i+1} \ , \quad \  
 \textnormal{ otherwise .} 
\end{align*}
\item[] (A2)  
If $0\leq r < \beta$ and $1 \leq j\leq h$ and 
$rh+ j\in \mathcal S$, \newline 
then 
$\mathcal S$ contains 
 $\{rh+ i: 1\leq i \leq j\}$. 
\end{enumerate} 
Note, $\mathcal S(M)$ contains $\mathcal J$. 
Also define 
\begin{align*} 
\mathcal R(M)\ &=\ \{ i: \ i\leq \beta h, \  i\notin \mathcal S (M),\ \  
i-1\in \mathcal S(M) \} \\ 
\mathcal P(M) \ &=\  \{(i,j): \ i\in \mathcal R(M) ,\  \  
j\notin \mathcal S(M)
\cup \mathcal R(M), \ \  M(i,j) \neq 0\}\ . 
\end{align*}   
Let $\mathcal S_g=\mathcal S(C_g)$, 
$\mathcal R_g=\mathcal R(C_g)$, 
$\mathcal P_g=\mathcal P(C_g)$. 
Continuing Example \ref{blockexample}, 
with $\mathcal S_{g}=\{1,2,3,6,7\}$ we would see 
the matrix   
$C_{g}$ having the following form: 
\begin{align*}
\setcounter{MaxMatrixCols}{14} 
\begin{pmatrix} 
0&\mathbf 1 &0&0&0&& 0&0&0&0&0&& 0&0 \\
0&0&\mathbf 1 &0&0&& 0&0&0&0&0&& 0&0 \\ 
0&0&0&\boldsymbol\epsilon_1   &0&& 0&0&0&0&0&& 0&0 \\ 
\bu &\bu &\bu &\bu &\bu   && \bu &\bu &\bu &\bu &\bu && \bu &\bu  \\ 
\bu &\bu &\bu &\bu &\bu && \bu &\bu &\bu &\bu &\bu && \bu &\bu  \\ 
 & & & & & & & & & & &  & & \\
0&0&0&0&0&  & 0&\mathbf 1 &0&0&0            & &0&0 \\
0&0&0&0&0&  & 0&0&\boldsymbol\epsilon_2   &0&0& &0&0 \\ 
\bu &\bu &\bu &\bu &\bu &  & \bu &\bu &\bu &\bu   &\bu & &\bu &\bu  \\ 
\bu &\bu &\bu &\bu &\bu &  & \bu &\bu &\bu &\bu &\bu   & &\bu &\bu  \\ 
\bu &\bu &\bu &\bu &\bu &  & \bu &\bu &\bu &\bu &\bu                      & &\bu &\bu  \\ 
 & & & & & & & & & & &  & & \\
\bu &\bu &\bu &\bu &\bu && \bu &\bu &\bu &\bu &\bu && \bu &\bu  \\ 
\bu &\bu &\bu &\bu &\bu && \bu &\bu &\bu &\bu &\bu && \bu &\bu  
\end{pmatrix} 
\end{align*} 
In this example, 
$\mathcal R_{g}=\{4,8\}$; 
$(i,j)\in \mathcal P_{g}$ iff 
$i\in \{4,8\}$ and 
$j\in \{5,9,10,11,12\}$.

We will arrange by induction that the 
following hold for $g\geq 1$.  
\begin{enumerate} 
\item[] (B1) 
If $\# \mathcal S_{g-1}\neq \beta h$, then $\mathcal S_{g-1}$
is properly contained in  $\mathcal S_{g}$. 
\item[] (B2)  
If  $C_{g-1}\in \mathcal M^0_{\mu}$, 
with $\mu <1$, 
then there is a path 
 in $\mathcal M^0_{2n\sqrt{ \mu}}$
from $C_{g-1}$ to 
$C_{g}$.  
\end{enumerate} 
Given all this, we  define 
 $G$ to be the index $g$ at which $\mathcal S_g=
\{1,2,\dots , \beta h\}$.  

Now, suppose we are given $C_{g-1}$
 and $\mathcal S_{g-1}$ with 
$\#\mathcal S_{g-1} < \beta h$ 
(i.e.,  $\mathcal R_{g-1}$ is nonempty). 
We will show  $\mathcal P_{g-1}$ is 
nonempty. 
Pick $i\in \mathcal R_{g-1}$. 

If $i$ is divisible by $h$, then 
(by property A2) let $t$ in 
$\mathcal T$ be such that row $i$ of $C_{g-1}$ 
(which is $e_iC_{g-1}$) is a positive 
multiple of $e_tC^h$, and therefore is zero. 
Since $i-1\in \mathcal S_{g-1}$, it follows 
that $i\in \mathcal S_{g-1}$, a contradiction. 
So $i$ is not divisible by $h$. Let $k$ be 
the positive integer  in $[1,h-1]$ 
such that $e_iC_{g-1} = e_tC_{g-1}^k $ 

Now suppose  $C_{g-1}(i,j)\neq 0$ implies 
$j\in \mathcal S_{g-1} \cup 
\mathcal R_{g-1}$. 
Then $e_iC_{g-1}$ is 
a linear combination of the vectors 
$e_{\tau}C^j$ such that $\tau\in \mathcal T$ 
and $0\leq k< h$. This is a contradiction, 
because the set 
 $\{e_tC_{g-1}^j: t\in \mathcal T, 0\leq j < h\}$ 
is linearly independent, by property (iv).
 Therefore there is a $j$ 
such that $(i,j)\in \mathcal P_{g-1}$.

From here, we will handle the inductive transition from $g-1$ to $g$
in three steps,  given $C_{g-1}$ and $\mathcal S_{g-1}$ with 
$\#\mathcal S_{g-1} < \beta h$.  By a {\it signed transposition 
matrix for indices i,j } we mean a matrix $Q$ which is equal 
to the permutation matrix $P$ for the transposition 
exchanging 
$i$ and $j$, except that one of the entries 
$Q(i,j)$ or $Q(j,i)$ is $-1$. 

STEP 1. 
Given $C_{g-1}\in \mathcal M^0_{\mu}$, 
we produce 
 an index $i$ in $\mathcal R_{g-1}$, and a matrix $Q$ 
which is either $I$ or is 
 a signed transposition matrix 
for indices outside $\mathcal R_{g-1}  \cup \mathcal S_{g-1}$, 
 such that  the following hold
for the matrix $\overline C= Q^{-1}C_{g-1}Q$: 
\begin{enumerate} 
\item[] (D1)
$(i,i+1)\in \mathcal P(\overline C)$ .  
\item[] (D2)
$\overline C(i,i+1)= \max \{|\overline C(i',j')|: (i',j')
\in \mathcal P(\overline C)\}$   . 
\item[] (D3) 
There is  a path in  
$\mathcal M^0_{\sqrt{\mu}}$ from
$C_{g-1}$ to $\overline C$ .
%, which is a matrix in 
%$\mathcal M^0_{\mu}$.   
\item[] (D4) 
$\mathcal S(\overline C) = \mathcal S_{g-1}$, and 
$||N-\overline C||_{\textnormal{max}}= 
||N-C_{g-1}||_{\textnormal{max}} < \mu $ .  
\end{enumerate}

%\item[] (C1) 
%$i\notin \mathcal S_{g-1}$  and $i-1\in \mathcal S_{g-1}$ . 
%\item[] (C2) 
%$1\leq i \leq \beta h$ and $i$ is not divisible by $h$ . 
%\item[] (C3) 
%$j\notin \mathcal S_{g-1}$  and $j\neq i$  

STEP 2. 
For the matrix $\overline C$ produced in Step 1, 
defining $ \alpha=||N-\overline C||_{\textnormal{max}}$,  
we produce a matrix $C'$ and $(i,i+1)\in \mathcal P_{g-1}$ 
 such that the following 
hold. 
\begin{enumerate} 
\item[] (C1) 
$||N-C'||_{\textnormal{max}}= |C'(i,i+1)| =
\sqrt{\alpha } $ .
\item[] (C2)
If $r\in \mathcal S_{g-1}$, then $|C'(i,r)|<\mu$ . 
\item[] (C3) 
There is a path in $\mathcal M^0_{\sqrt{\mu}}$ from $\overline C$ to $C'$ . 
\end{enumerate}

%(Note, $||N-C'||_{\textnormal{max}}=\max\{|C'(i',j')|:i'\notin \mathcal J\}$, 
%and is greater than zero because $C'$ is not conjugate to $N$.) 

STEP 3. Given $C'$ from Step 2, we produce a path in 
$\mathcal M^0_{2n\sqrt{\mu}}$ from $C'$ to the desired matrix $C_g$.

PROOF  FOR STEP 1. 
\\ 
Choose $(i,j)$ from the nonempty set
$\mathcal P_{g-1}$ such that  
\[|C_{g-1}(i,j)|= \max \{|C_{g-1}(i',j')|: (i',j')\in \mathcal P_{g-1}\}\ .
\]     
There are two cases. 

CASE 1: $j\neq i+1$.  Both $j$ and $i+1$ are outside 
 $\mathcal R_{g-1} \cup \mathcal S_{g-1}$. 
Let $Q$ denote the $n\times n$ signed transposition 
matrix for indices 
$i+1$ and $j$ such that 
\begin{align*} 
Q(j,i+1) &  = -1 \ , \quad 
\textnormal{ if } C_{g-1}(i,j) < 0 \\ 
Q(i+1,j) &  = -1 \ , \quad 
\textnormal{ if } C_{g-1}(i,j) > 0 \ . 
\end{align*} 
Set $\overline C=Q^{-1}C_{g-1}Q$. 
We have $Q\in \textnormal{SL}(n,\mathbb R)$, and for a path 
from $C_{g-1}$ to $\overline C$ we can use 
$U_t^{-1}C_{g-1}U_t$, $0\leq t\leq 1$, with $(U_t)$ a path 
from $I$ to $Q$. 
E.g., for $Q=
\begin{pmatrix} 
0 & 1 \\ -1 & 0 
\end{pmatrix}$, we may use (in the principal submatrix on 
coordinates $\{i+1,j\}$) 
\begin{align*} 
U_t &= 
\begin{pmatrix} 
1 & 0 
\\ 
-t & 1
\end{pmatrix}
\begin{pmatrix} 
1 & t 
\\ 
0 & 1
\end{pmatrix}
\begin{pmatrix} 
1 & 0 
\\ 
-t & 1
\end{pmatrix} \\
&= 
\begin{pmatrix} 
1-t^2 & t 
\\ 
-2t +t^3 & 1-t^2
\end{pmatrix} \ . 
\end{align*} 
Then for $0\leq t\leq 1$, 
\[
||U_t^{-1}C_{g-1}U_t||_{\textnormal{max}} \ 
\leq \ 
4
||U_t||_{\textnormal{max}} 
||C_{g-1}||_{\textnormal{max}} \ 
 < \ 
5 ||C_{g-1}||_{\textnormal{max}} 
\ . 
\]

CASE 2: $j=i+1$. If $C_{g-1}(i,i+1)>0$, then set $Q=I$ and 
$\overline C=C_{g-1}$. If $C_{g-1}(i,i+1)<0$, then 
let $W$ be the matrix 
in $\textnormal{SL}(n,\mathbb R)$ 
obtained from $I_n$ by 
multiplying rows $i+1$ and $n$ by $-1$, and 
define 
$\overline C=W^{-1}C_{g-1}W $. 
As in Case 1,  we may produce a path from 
$C_{g-1}$ to $\overline C$ by conjugating with a path 
$(W_t)$, $0\leq t\leq 1$, from $I$ to $W$.
One such path is given (on the principal submatrix 
on indices $\{i+1,n\}$) by 
\begin{align*} 
W_t \ 
&= 
\begin{pmatrix} 
1&0
\\ 
-2t&1
\end{pmatrix}
\begin{pmatrix} 
1&t
\\ 
0&1
\end{pmatrix}
\begin{pmatrix} 
1&0
\\ 
-2t&1
\end{pmatrix}
\begin{pmatrix} 
1&t
\\ 
0&1
\end{pmatrix}
\\
&=
\begin{pmatrix} 
1-2t^2 & 2t-2t^3 
\\ 
-4t +4t^3 & 1 - 6t^2 + 4t^4 
\end{pmatrix}
\end{align*} 
Then 
\[
||W_t^{-1}C_{g-1}W_t||_{\textnormal{max}} \ 
\leq \ 
4
||W_t||_{\textnormal{max}} 
||C_{g-1}||_{\textnormal{max}} \ 
 < \ 
7 ||C_{g-1}||_{\textnormal{max}} 
\ . 
\] 
In both cases, (D1) and (D2) hold,  
and the path from $C_{g-1}$ to $\overline C$ 
is contained in 
$\mathcal M^0_{7\mu}$, and consequently in    
$\mathcal M^0_{\sqrt{\mu}}$, since     
$\mu < 1/49$. 
This completes the proof for Step 1. 

PROOF FOR STEP 2. 

Given $\overline C$ and $(i,i+1)\in \mathcal P(\overline C)$ 
from Step 1, define 
$\beta = 
 \max \{|\overline C(i,j)|: (i,j)\in \mathcal P(\overline C)\}$ . 
%We have $0< \beta \leq \alpha < \mu < 1$. 
Define a path
$C'_t=D_t\overline{C}D_t^{-1}, 1\leq t \leq \sqrt{\alpha}/\beta$, 
 from $\overline C=C'_1$ to $C'=
 C'_{\sqrt{\alpha}/\beta}$,   
in which $D_t$ is the diagonal matrix defined by 
\begin{align*} 
D_t(i',i') \  & = \ t \ , \quad 
\textnormal{if } i'\in \mathcal S_{g-1}\cup \mathcal R_{g-1} \\ 
& = \  1 \ , \quad 
\textnormal{otherwise .} 
\end{align*} 
We have $|C_{g-1}(i,r)|<\mu$ 
for all $r$, 
because $i\notin \mathcal J$.  
 Therefore, $|\overline C(i,r)|<\mu$ 
for all $r$.  
So, if 
 $r\in \mathcal S_{g-1}\cup \mathcal R(\overline C)$, then 
$|C'_t(i,r)|\leq |\overline C(i,r)|< \mu$; this establishes 
(C2). 
If $i'\in 
\mathcal S_{g-1}$ and 
$j'\notin 
\mathcal S_{g-1}\cup \mathcal R(\overline C)$,
then  $C_{g-1}(i',j') =0$, and $\overline C(i',j')=0$. 
Therefore,  
$|C'_t(i',j')|> |\overline C(i',j')|$ is possible 
only if 
$i'\in \mathcal R_{g-1}$ 
and $j'\notin \mathcal S_{g-1}\cup \mathcal R(\overline C)$. 
It follows then from (D2) that 
$||C'_t-N||\leq \sqrt{\alpha} $ for all $t$, 
with (C1) holding for $C'=C'_1$. Because $\alpha < \mu$, 
$(C'_t)$ is a path in $\mathcal M^0_{\sqrt{\mu}}$, and 
then (C3) follows from (D3). 
This  
 finishes the argument for Step 2.  

PROOF FOR STEP 3. 

For a lighter notation, in this step we will write $C$ in place 
of $C'$ for the matrix satisfying (C1)-(C3) for a given 
$(i,i+1)$ from $ \mathcal P( C')$.  
For $0\leq t\leq 1$, we define an $n\times n$ matrix 
$V_t$ equal to $I$ outside row $i+1$. In that row, 
we define $V_t(i+1,i+1)=1$ and  
\[
V_t(i+1,r)  = -tC(i,r)/C(i,i+1) \ , \quad  
\textnormal{ if } r \neq i+1\ . 
\] 
Define the path $\widetilde{C_t}=V_t^{-1}CV_t$, $0\leq t \leq 1$. 
Then $C=\widetilde{C_0}$,   and we define 
$C_g=\widetilde{C_1}$. 

First we will check that $C_g$ satisfies condition (B1). 
The matrix  $V_1$ acts to add multiples of column $i+1 $ of $C $ to other columns so that 
row $i$ 
of $CV_1$ 
has exactly one 
nonzero entry, which 
 is at position $(i,i+1)$. 
Suppose 
 $i'\in \mathcal S_{g-1}$. If  $C(i',i+1) \neq 0$, 
then $i+1=i'+1$, which forces $i=i'\in \mathcal S_{g-1}$, 
contradicting $(i,i+1)\in \mathcal P$. Therefore  $C(i',i+1) = 0$. 
Therefore  
row $i'$ of $CV_t$ equals row $i'$ of $C$.  
 The matrix 
$V_t^{-1}CV_t$ can differ from $CV_t$ only in row $i+1$, which 
 is not in 
$\mathcal S_{g-1}$, since $(i,i+1)\in \mathcal P$.   Consequently,
 $ \mathcal S_g$ contains $ \mathcal S_{g-1}$ and also $\{i\}$.   Since 
 $i\notin \mathcal S_{g-1}$, 
the condition (B1) is satisfied.

Now we turn to (B2). We have 
\begin{align} \label{threeterms}
&\ ||N-V_t^{-1}CV_t ||_{\textnormal{max}}  \\ 
\notag 
\leq &\ 
||N-C ||_{\textnormal{max}} + 
||C-CV_t ||_{\textnormal{max}} + 
||CV_t -V_t^{-1}CV_t ||_{\textnormal{max}} 
\end{align}  
and we will bound the three terms on the 
right. 

We have 
$||N-C ||_{\textnormal{max}} < \sqrt{\mu}$,  
since $C\in \mathcal M^0_{\sqrt{\mu}}$ .

Because 
 $C-CV_t = C(I-V_t)$, 
 column $r$ of $C-CV_t$ is zero if $r=i+1$ 
and otherwise equals column $i+1$ of $C$ (whose entries 
are smaller in absolute value than $\sqrt{\mu}$, since $i\notin 
\mathcal S_{g-1}\supset \mathcal J$) multiplied by 
$-tC(i,r)/C(i,i+1)$ (which by (C1) has absolute value at most 1). Therefore 
$||C-CV_t ||_{\textnormal{max}} <  \sqrt{\mu}$ (and then 
 $||N-CV_t ||_{\textnormal{max}} <  2\sqrt{\mu}$).

The last term in (\ref{threeterms}) is the maximum 
 over $(i',j')$ of the absolute value of 
\[
(CV_t -V_t^{-1}CV_t)(i',j') \ 
= \ 
\Big((I -V_t^{-1})(CV_t)\Big)(i',j') \ . 
\]
This quantity is zero if $i'\neq i+1$, 
since row $i+1$ is the only nonzero row of 
$ (I -V_t^{-1})$. 
For $i'=i+1$, we have 
\begin{align*} 
\Big((I -V_t^{-1})(CV_t)\Big)(i+1,j') 
\ &= \ 
\sum_r
%\sum_{r: 1\leq r \leq n, r\neq i+1}
(I -V_t^{-1})(i+1,r)(CV_t)(r,j') \\
& = 
\sum_{ r:\,  r\neq i+1}
%\sum_{\{r: r\neq i+1\}}
%\sum_{r: 1\leq r \leq n, r\neq i+1}  
\Bigg( \frac{-tC(i,r)}{C(i,i+1)}\Bigg)
(CV_t)(r,j') \ . 
\end{align*}
Now we bound the terms in the last sum by two cases. 
\\
CASE 1: $r\in \mathcal J$. \\
Then $(CV_t)(r,j')=V_t(r+1,j')$  and also $r\neq i$. 
 If $j'\neq r+1$, then 
$(CV_t)(r,j')=V_t(r+1,j')=0$.
If $j'=r+1$, then 
 $(CV_t)(r,r+1)=V_t(r+1,r+1)=
1$. So, 
\begin{align*}
%|(CV_t)(r,r+1)| \ &= 
\Bigg| \frac{-tC(i,r)}{C(i,i+1)}\Bigg| 
(CV_t)(r,r+1) 
\ &= \
 \Bigg| \frac{-tC(i,r)}{C(i,i+1)}\Bigg|  \\ 
\ &= \
 \Bigg| \frac{-tC(i,r)}{\sqrt{\alpha}}\Bigg|  
\quad \textnormal{by (C1) }\\
\ &\ <  \frac{\mu}{\sqrt{\alpha}}  
\qquad \qquad  \textnormal{by (C2)} \\
\ &\ <
\frac{\mu}{\sqrt{\mu}} = \sqrt{\mu }
\quad \textnormal{by (C3) . } 
\end{align*} 
CASE 2:  $r\notin \mathcal J$. \\  
Then $N(r,j')=0$ and 
\[
\Bigg| \frac{-tC(i,r)}{C(i,i+1)}\Bigg| 
|(CV_t)(r,j')|  \ 
\leq \ 
(1)||N-CV_t||_{\textnormal{max}} \ < \ 
2\sqrt{\mu} \ . 
\]

Using the estimates above to bound the third term of 
(\ref{threeterms}) by $(n-1)2\sqrt{\mu}$ , and then substituting bounds 
into (\ref{threeterms}), we get 
\begin{align*} 
||N-V_t^{-1}CV_t ||_{\textnormal{max}}  
< \sqrt{\mu }
+ \sqrt{\mu }
+
 {(n-1)2\sqrt{\mu}}  = 2n\sqrt{\mu} \ .
\end{align*} 
This finishes the proof for Step 3, and for  
 Stage 3. 

{\bf Stage 4.} 

%THE NEXT COMMENTED SECTION WAS THE STAGE 4 PROOF WHEN THE RESULT 
%ONLY CONCERNED M(EPSILON) OF THE FORM (J_h)^ETA}+0 

%Given $0<\epsilon < \mu$ and 
%a matrix $C$  in $\mathcal M^0_{\mu}$ 
%with the first $\beta h$ rows agreeing with those of 
%$M(\epsilon)$ (except that the epsilons are allowed to 
%be different positive numbers), we produce a path in 
%$\mathcal M^0_{\mu}$ from $C$ to $M(\epsilon)$. 
%
%The $n\times n$ matrix $C$ has a block form 
%$C= 
%\left(\begin{smallmatrix} X & 0 \\ Y & Z 
%\end{smallmatrix} \right)$, in which $X$ is $\beta h\times \beta h$. 
%Because $\textnormal{rank}(C)=\textnormal{rank}(X)$, the matrix $Z$ must be 
%zero, and there exists a matrix $W$ such that $Y=WX$. 
%Define a path of nilpotent matrices 
%$C_t= 
%\left(\begin{smallmatrix} X & 0 \\ tY & 0 
%\end{smallmatrix} \right)$,  $1\geq t \geq 0$
%from $C=C_1$ to the matrix $C_0= 
%\left(\begin{smallmatrix} X & 0 \\ 0& 0 
%\end{smallmatrix} \right)$.   
%If $1\leq k\leq n$ and  
% $0\leq t \leq 1$, then 
%$
%(C_t)^k = 
%\left(\begin{smallmatrix} X^k & 0 \\ tWX^k & 0 
%\end{smallmatrix} \right)$,
%so $\textnormal{rank} (C_t^k) = \textnormal{rank} (X^k)$. 
%Therefore the matrices $C_t$ are conjugate, 
%and the path $(C_t)$ is a path 
%in $\mathcal M^0_{\mu}$. Clearly we may 
%concatenate this with  a path in $\mathcal M^0_{\mu}$ 
%from $C_0$  to $M(\epsilon )$, since 
%$C_0$ 
%equals $M(\epsilon )$ apart from the possibly 
%varying values for $\epsilon$.  This completes the proof of 
%Stage 4, and the lemma. 

Given $0<\epsilon < \mu$ and 
a matrix $C$  in $\mathcal M^0_{\mu}$ 
with the first $\beta h$ rows agreeing with those of 
$M(\epsilon)$ (except that the epsilons are allowed to 
be different positive numbers), we produce a path in 
$\mathcal M^0_{\mu}$ from $C$ to $M(\epsilon)$. 

The $n\times n$ matrix $C$ has a block form 
$C= 
\left(\begin{smallmatrix} X & 0 \\ Y & Z 
\end{smallmatrix} \right)$, in which $X$ is $\beta h\times \beta h$. 

STEP 1: Defining 
$C_t= 
\left(\begin{smallmatrix} X & 0 \\ tY & Z 
\end{smallmatrix} \right)$, $1\geq t\geq 0$, we 
show $(C_t)$ is a path of conjugate matrices. 
Conjugacy is clear for $0<t\leq 1$, where 
\[
C_t= 
\begin{pmatrix} 
(1/t)I & 0 \\ 0 & I 
\end{pmatrix} 
\begin{pmatrix} 
X & 0 \\ Y & Z 
\end{pmatrix} 
\begin{pmatrix} 
tI & 0 \\ 0 & I 
\end{pmatrix} \ . 
\] 
For $1\leq i \leq h$, let 
$\mathcal Z_i$ denote the set of indices from 
$\{1,\dots ,\beta h\}$ (which indexes rows and 
columns of $X$) which are congruent to $i$ 
mod $h$. 
To check conjugacy of $C_0$ and $C$, 
we first note there is a conjugacy of the form 
\[
C'= 
\begin{pmatrix} 
X & 0 \\ Y' & Z 
\end{pmatrix} 
=
\begin{pmatrix} 
I & 0 \\ L & I 
\end{pmatrix} 
\begin{pmatrix} 
X & 0 \\ Y & Z 
\end{pmatrix} 
\begin{pmatrix} 
I & 0 \\ -L & I 
\end{pmatrix} \  
\] 
such that column $i$ of $Y'$ is zero if $i\notin \mathcal Z_1$. 
This result from a composition of conjugations 
arising from  elementary
row and column operations as follows. In decreasing order for
$i=h-1,h-2,\dots , 1$:  for each $j\in \mathcal Z_i$, 
and for each $i'$ in 
$[\beta h +1,  n]$ such that position 
$(i',j+1)$ of the current matrix 
has  a nonzero entry, 
 we add a multiple of row $j$ to row $i'$, and 
then subtract the same multiple of 
column $i'$ from column $j$. 

We will show $Y'$ must be zero. For this, 
 consider row vectors in the form $(u,v)$, where 
$u$ has $\beta h$ entries and $v$ has $n-\beta h$ 
entries. 
For $1\leq t \leq h$, 
let $W_t$ be the subspace of $\mathbb R^{\beta h}$ 
spanned by the vectors $e_j$ such that  $j\in \mathcal Z_i$ 
and $1\leq i \leq t$. Let $W_0$ be the trivial 
space $\{0\}$. 
Given $(0,v)$ and $k\geq 1$, 
define  
 $(u^{(k)},v^{(k)})
=(0,v)C^k$.  
We claim for $1\leq k\leq h$ that 
\begin{enumerate} 
\item 
 $u^{(k)}\in  W_k$. 
\item 
If $u^{(1)}\neq 0$, then 
$u^{(k)}\in 
 W_k\setminus W_{k-1}$. 
\end{enumerate} 
The claim is clear for $k=1$, because column 
$j$ of $Y'$ is zero if $j\notin \mathcal Z_1$.  
Suppose $1\leq k<h$,  and 
the claim  holds for $k$. 
 Then 
\begin{align*} 
(0,v)C^{k+1}&= (u^{(k)},v^{(k)})C=
(u^{(k)},0)C + (0,v^{(k)})C \\ 
&= (u^{(k)}X,0) + 
(u'',v'')
\end{align*}  
for some $u''\in W_1$.
The  claim then holds for $k+1$ 
 because $X$ maps 
$ W_k\setminus W_{k-1}$ injectively into 
$ W_{k+1}\setminus W_{k}$ (since $k<h$) 
and $W_k$ contains $W_1$. 

If $Y'\neq 0$, then there must be some $v$ for which 
$ (0,v)C= (u^{(1)},v^{(1)})$ with 
$u^{(1)}\neq 0$. It follows from the claim that 
$(0,v)C^h$ is nonzero. That contradicts 
$C^h=0$. So, $Y'=0$, and $C_0$ is conjugate to $C$. 

STEP 2. 
We begin with 
$C= 
\left(\begin{smallmatrix} X & 0 \\ 0 & Z 
\end{smallmatrix} \right)$ 
in $\mathcal M^0_{\mu}$ and 
define a conjugacy in 
 $\mathcal M^0_{\mu}$ 
 to 
$M(\epsilon)$. 
Clearly, after applying a path 
in $\mathcal M^0_{\mu}$, we can suppose that 
$X$ exactly equals the corresponding 
$\beta h \times \beta h$
block in  
 $M(\epsilon )$, and write 
 $M(\epsilon )$ as 
$ 
\left(\begin{smallmatrix} X & 0 \\ 0 & J  
\end{smallmatrix} \right)$, where 
$J$ is the matrix  of 
Definition \ref{Jdefinition}.
  Let $m=n-\beta h$. 

The conjugacy of $C$ and $M(\epsilon )$  
and the fact that $J$ has a Jordan 
zero block imply that there is a matrix 
$U \in \textnormal{SL}(m, \mathbb R)$ such that    
$U^{-1}ZU=J$.  Then there is a path 
$(U_t)$ in  
$ \textnormal{SL}(m, \mathbb R)$ from $I$ to 
$U$, giving a path of conjugate matrices 
from $Z$ to $J$, 
$Z_t= U_t^{-1}ZU_t$. Given $\delta >0$, 
we can follow a path 
$\left(\begin{smallmatrix} X & 0 \\ 0 & sZ 
\end{smallmatrix} \right)$, $1\geq s\geq \delta$,  
with a path 
$\left(\begin{smallmatrix} X & 0 \\ 0 & sZ_t 
\end{smallmatrix} \right)$, $1\geq s\geq \delta$,  
and then a path 
$\left(\begin{smallmatrix} X & 0 \\ 0 & sJ 
\end{smallmatrix} \right)$, $\delta \leq s\leq 1$. 
This gives a path of conjugate matrices from 
$C$ to $M(\epsilon )$, and with  $\delta $ 
small enough,  the path is contained in  
$\mathcal M^0_{\mu}$.   
  This completes the proof of 
Stage 4, and the lemma. 

\end{proof}


\begin{thebibliography}{99}

\bibitem {BCR98} J. Bochnak, M. Coste, M-F. Roy. 
Real Algebraic Geometry. Springer, 1998. 

\bibitem {B} M. Boyle, Open problems in symbolic dynamics, 
Geometric and probabilistic structures in dynamics, 69-–118,
Contemp. Math., 469, Amer. Math. Soc., Providence, RI, 2008. 



%\bibitem {BH1} M. Boyle and D. Handelman, The spectra of nonnegative
%matrices via symbolic dynamics, Ann. Math. 133(1991), 249-316.                      

\bibitem {BH2} M. Boyle and D. Handelman,
Algebraic shift equivalence and primitive matrices,
Trans. AMS 336, No. 1 (1993), 121--149. 

\bibitem{BK1}
M. Boyle and W. Krieger, 
Almost Markov and shift equivalent sofic systems. 
Dynamical systems, 33–-93, 
Lecture Notes in Math., 1342, Springer, Berlin, 1988.


%\bibitem{BK2} 
%M. Boyle and W. Krieger,  Automorphisms and subsystems of the shift. 
%J. Reine Angew. Math. 437 (1993), 13–28.


%\bibitem{BMT} M. Boyle, B. Marcus, and P. Trow, Resolving maps and the
%dimension group for shifts of finite type, Memoirs A.M.S. 70(1987), no.377.

\bibitem{BS}
M. Boyle and M.  Sullivan, 
Equivariant flow equivalence for shifts of finite type, 
by matrix equivalence over group rings.
Proc. London Math. Soc. (3) 91 (2005), no. 1, 184--214.


\bibitem{Ch} S. Chuysurichay, Positive rational strong shift equivalence \
and the mapping class group of a shift of finite type, Ph.D. Thesis, 
University of Maryland, 2011. 


\bibitem{E} E.G. Effros, 
	{\em Dimensions and $C^*$-algebras}, 
	CBMS Reg. Conf. Series in Math {\bf  46} (1981).

\bibitem{FN} M. Field and M. Nicol, 
Ergodic theory of equivariant diffeomorphisms: 
Markov partitions and stable ergodicity. Mem. Amer. Math. Soc. 169 
(2004), no. 803. 

\bibitem{GR} I. Gohberg and L. Rodman, 
On the distance between lattices of invariant subspaces of matrices.
Linear Algebra Appl. 76 (1986), 85-–120. 

\bibitem{HN}
T. Hamachi and M. Nasu, 
Topological conjugacy for 1-block factor maps 
of subshifts and sofic covers. Dynamical systems (College Park, MD, 1986-–87),
 251-–260,
Lecture Notes in Math., 1342, Springer, Berlin, 1988. 

\bibitem{Hus} D. Husemoller, Fibre bundles, Third Edition, Springer, 1994.

\bibitem{KR0} K. H. Kim and F. W. Roush, 
On strong shift equivalence over a Boolean semiring, Ergodic Theory 
Dynam. Systems 6 (1986), 81--97.

\bibitem{KR1} K. H. Kim and F. W. Roush, 
An algorithm for sofic shift equivalence.
Ergodic Theory Dynam. Systems 10 (1990), no. 2, 381-–393. 

\bibitem {KR2} K. H. Kim and F. W.Roush,
 Full shifts over $\mathbb Q^+$ and invariant tetrahedra, 
Pure Math. Appl. Ser. B 1(1990), no. 4, 251--256 (1991). 

\bibitem{KR3}K. H. Kim and F. W. Roush, Path components of matrices and strong shift equivalence over 
$\mathbb Q^+$, Linear Alg. \& Appl. 145(1991), 177--186.

\bibitem{KR4} K. H. Kim and F. W. Roush, 
Strong shift equivalence over subsemirings of $\mathbb Q^+$, 
Pure Math. Appl. Ser. B 2 (1991), no.1, 33--42.

\bibitem{KR5} 
 K. H. Kim and F. W. Roush, 
Strong shift equivalence of Boolean and positive rational matrices.
Linear Algebra Appl. 161 (1992), 153-–164. 

\bibitem{KR6}
 K.H. Kim and F.W. Roush, 
       {\em The Williams conjecture is false for irreducible subshifts},
       Ann. of Math. (2) {\bf  149}  (1999),  no. 2, 545--558.

\bibitem{LM} D. Lind and B. Marcus, 
An introduction to symbolic dynamics and coding. 
Cambridge University Press, Cambridge, 1995.

\bibitem{Ki} B. Kitchens, Symbolic Dynamics, Springer, 1998. 


\bibitem{Ma} D.A. Marcus, Number Fields, Springer-Verlag, 1977. 



%Kim, Ki Hang; Ormes, Nicholas S.; Roush, Fred W. 
%The spectra of nonnegative integer matrices via 
%formal power series. J. Amer. Math. Soc. 13 (2000), 
%no. 4, 773–806 (electronic).

\bibitem{MT} 
B. Marcus and S. Tuncel, 
The weight-per-symbol polytope and scaffolds of 
invariants associated with Markov chains.
Ergodic Theory Dynam. Systems 11 (1991), no. 1, 129-–180. 

\bibitem{Mat1} 
K. Matsumoto, 
Presentations of subshifts and their topological conjugacy invariants. 
Doc. Math. 4 (1999), 285-–340. 

\bibitem{Mat2} 
K. Matsumoto, 
On strong shift equivalence of symbolic matrix systems. 
Ergodic Theory Dynam. Systems 23 (2003), no. 5, 1551-–1574. 


\bibitem{Pa} W. Parry, 
Notes on coding problems for finite state processes.
Bull. London Math. Soc. 23 (1991), no. 1, 1-–33.

%\bibitem{Sp} E. Spanier, Algebraic Topology, Springer, 1981.

\bibitem{Ve1} A. Verona, Triangulation of stratified fibre bundles.
Manuscripta Math. 30(1980), 425--445. 

\bibitem{Ve2} A. Verona, Stratified mappings: structure and triangulability, Lecture Notes in Mathematics 1102, Springer, 
Berlin, 1984.

\bibitem{Wa} 
J. B. Wagoner,  Strong shift equivalence theory and the shift equivalence 
problem. Bull. Amer. Math. Soc. (N.S.) 36 (1999), no. 3, 271-–296. 


%\bibitem{We} E. Weiss, Algebraic number theory, McGraw-Hill, 1963.

%\bibitem{Weyl} H. Weyl, The classical groups, Princeton University Press, 1997.

\bibitem{Wh} 
H. Whitney, Tangents to an analytic variety, 
Annals of Math. (2), 81(1965),496--546.

\bibitem {Wi} R.F. Williams, Classification of subshifts of finite type, Annals of Math.(2) 98 (1973), 120--153; Errata ibid. 99 (1974), 380--381.


\bibitem{Wi2} R. F. Williams, 
         {\em Strong shift equivalence of matrices in ${\rm GL}(2,\mathbb Z)$}, 
        Symbolic dynamics and its applications (New Haven, CT, 1991), 
    445--451,
        Contemp. Math. {\bf 135} Amer. Math. Soc., Providence, RI, 1992. 


 
\end{thebibliography}
\end{document}